\documentclass[mathpazo]{cicp}
\usepackage{algorithm}
\usepackage{algpseudocode}
\usepackage{float}
\usepackage{color}
\usepackage{amsmath, bm}
\usepackage{tikz}
\usetikzlibrary{quantikz2}

\usepackage{indentfirst} 
\usepackage{subfigure} 
\usepackage{array}       
\usepackage{multirow} 
\usepackage{makecell} 
\usepackage{booktabs}   

\usepackage{amsthm}
\newtheorem{test}{Experiment}  

\usepackage[
    colorlinks=true,    
    linkcolor=red,     
    anchorcolor=black,  
    citecolor=green,       
    urlcolor=blue        
]{hyperref}
\usepackage{cleveref}

\usepackage{geometry}
\begin{document}

\title{Quantum circuits for the advection-diffusion equation with boundary conditions based on LCHS}




\author[Chen L Y et.~al.]
{Leyu Chen\affil{1}, Tiegang Liu\affil{1,2}, Liang Xu\affil{3}, and Kun Wang\affil{1,2}\comma\corrauth}

\address{
    \affilnum{1}\ LMIB and School of Mathematical Sciences, Beihang University, Beijing 100191, P.R. China. \\
    \affilnum{2}\ International Research Center for Mathematics and Interdisciplinary Sciences, Hangzhou International Innovation Institute of Beihang University, Hangzhou 311115, P.R. China. \\
    \affilnum{3}\ China Academy of Aerospace Aerodynamics, Beijing 100074, P.R. China.
}

\emails{
    {\tt wangkun@buaa.edu.cn} (K.~Wang).
}


\begin{abstract}

This paper proposes a systematic and explicit quantum circuit framework for solving advection-diffusion equations with boundary conditions, based on the Linear Combination of Hamiltonian Simulations (LCHS) method.
By employing the Finite Volume Method (FVM) combined with various flux construction schemes, we elaborate the design of quantum circuits tailored explicitly for Robin boundary conditions (including Dirichlet and Neumann boundary conditions as special cases) and periodic boundary conditions.
In contrast to prior works on quantum simulation of advection-diffusion equations, we present a detailed error analysis for the linear combination of unitaries (LCU) induced by the constructed quantum circuits.
A comprehensive gate complexity analysis demonstrates the quantum advantages over classical computing in high-dimensional scenarios.
We simulate the proposed circuits on a fault-tolerant emulator, and numerical results validate the effectiveness of the proposed framework across homogeneous, inhomogeneous, and high-dimensional cases.
The proposed framework is compatible with numerous spatial discretization methods and numerical schemes, extends naturally to other linear PDEs, and establishes a practical foundation for solving large-scale PDE problems on future fault-tolerant quantum computers.

\end{abstract}


\keywords{Advection-diffusion equation, Boundary conditions, Hamiltonian simulation, LCHS, Quantum circuits.}

\maketitle

\section{Introduction}

Partial differential equations (PDEs) constitute a foundational mathematical framework for describing physical phenomena, underpinning applications in fluid dynamics, electromagnetism, quantum mechanics, heat transfer, and beyond.
Consequently, developing efficient numerical methods for solving PDEs is of paramount importance.
Despite the immense computing capabilities of modern supercomputers, solving PDEs remains computationally demanding, particularly for large-scale, high-dimensional, and multi-scale problems, where classical methods often suffer from the curse of dimensionality.

Quantum computing~\cite{feynman1982_simulating, deutsch1985_quantum, nielsen2010_quantum}, an emerging computational paradigm, offers the potential for exponential speedups and has demonstrated quantum advantage over classical computing in representative tasks including integer factorization and cryptography~\cite{shor1997_polynomial, hallgren2007_polynomial, bennett2014_theoretical}, solving linear systems~\cite{harrow2009_quantum, gilyen2019_quantum, subasi2019_quantum, lin2020_optimal, costa2022_optimal}, and quantum system simulation~\cite{lloyd1996_universal, buluta2009_quantum, alan2005_simulated, georgescu2014_quantum}.
This potential has motivated growing interest in quantum algorithms for solving PDEs.
Nevertheless, a fundamental obstacle arises in the construction of quantum algorithms for PDEs: quantum computers support solely unitary operations, whereas the evolution of most practical PDEs is inherently non-unitary.

To resolve this critical mismatch, numerous strategies have been developed, including Carleman embedding~\cite{liu2021_efficient, sanavio2024_three, gonzalez-conde2025_quantum}, the Koopman-von Neumann approach~\cite{joseph2020_koopman, joseph2023_quantum}, the Koopman operator~\cite{giannakis2022_embedding, zhang2025_data}, the Liouville equation~\cite{jin2023_time, succi2024_ensemble}, the Schrödinger-Pauli equation~\cite{meng2023_quantum, meng2024_simulating} and the homotopy analysis method~\cite{xue2025_quantum, bharadwaj2025_quantum, choi2026_lindbladian} for nonlinear dynamics; as well as linear system discretization~\cite{berry2014_high, berry2017_quantum}, the quantum spectral method~\cite{childs2020_quantum, febrianto2026_a}, the Schr\"{o}dingerization method~\cite{jin2023_quantum, jin2024_quantum, jin2024_quantum_siam, jin2024_quantum_jcp, jin2024_analog, jin2024_quantum_esaim, jin2026_quantum} and the LCHS method~\cite{an2023_linear, an2023_quantum, yang2025_circuit, lu2025_infinite, huang2025_fourier, low2025_optimal, meng2026_toward} for linear equations.
While considerable attention has been directed toward nonlinear PDEs, a fully explicit gate-level circuit construction for linear advection-diffusion processes—particularly in the presence of non-trivial boundary conditions—has yet to be realized; existing QSVT-based architectures for such equations remain at the block-encoding level of abstraction~\cite{novikau2025_quantum}.
Moreover, from the perspective of computational fluid dynamics (CFD), advection-diffusion equations serve as the prototypical model problems for transport phenomena.
A robust quantum circuit primitive for such linear models thus carries both immediate algorithmic value and lays essential groundwork for extensions to nonlinear conservation laws and the incompressible Navier–Stokes equations.



Against this backdrop, this work focuses on linear advection-diffusion equations with boundary conditions.
These equations describe scalar transport (heat, mass, vorticity) under combined advection and diffusion, and reduce to pure diffusion or pure advection equations in appropriate limits, yielding a unified algorithmic primitive with broad applicability.
Crucially, real-world transport phenomena rarely occur in isolated, unbounded domains; they are governed by boundary conditions—Dirichlet, Neumann, and the more general Robin conditions, whose incorporation into a quantum circuit is far from trivial.
Specifically, when the governing PDE is semi-discretized using the FVM with diverse flux reconstruction schemes, the resulting system of ODEs exhibits a specific algebraic sparsity and source-term structure.
The central challenge in this paper lies in translating this discretized structure into an explicit, low-depth sequence of quantum gates, a task that necessitates careful architectural design to accommodate the non-unitary flux contributions at the domain boundaries.


For the advection-diffusion equations considered herein, we adopt the optimal approximate LCHS method~\cite{low2025_optimal} to reformulate the time evolution operator as a linear combination of unitaries.
In terms of circuit synthesis, our construction is inspired by the scalable framework recently introduced by Sato et al.~\cite{sato2024_hamiltonian} for wave and Schrödinger-type PDEs, wherein the Bell basis is employed to diagonalize the Hamiltonian terms.
This design paradigm has been successfully extended to heat and advection equations~\cite{hu2024_quantum}, and further elaborated for heat equations with physical boundary conditions~\cite{jin2025_quantum}.
Building upon these foundational works, the present study provides a concrete circuit realization tailored to the FVM discretization of advection-diffusion equations, and supplements the theoretical construction with comprehensive numerical validation on a fault-tolerant quantum emulator.
It is worth noting that the proposed framework accommodates a variety of spatial discretization methods and numerical schemes, generalizes seamlessly to other linear PDEs, and provides a practical bedrock for tackling large-scale PDE problems on future fault-tolerant quantum computers.


The paper is structured as follows.
Section~\ref{sec:overview of the LCHS method} reviews the LCHS method and discusses the quantum circuit design for implementing both the homogeneous and inhomogeneous terms of an ODE system.
Section~\ref{sec:the advection-diffusion equation} details the advection-diffusion model, FVM discretization, and construction of boundary-condition-dependent coefficient matrices.
Section~\ref{sec:quantum circuits for different boundary conditions} presents the quantum circuits for Robin and periodic boundaries, leveraging matrix product operator representations (adopted from~\cite{sato2024_hamiltonian}).
Section~\ref{sec:complexity analysis} analyzes gate complexity and total error.
Section~\ref{sec:numerical experiments} validates the effectiveness of the proposed quantum circuits through fault-tolerant simulations.
Finally, Section~\ref{sec:conclusion} concludes with future directions.

\section{Overview of the LCHS method}\label{sec:overview of the LCHS method}

\subsection{The LCHS method}

Consider the following system of ordinary differential equations (ODEs):
\begin{equation}\label{eq:ode}
    \frac{\mathrm{d} u(t)}{\mathrm{d} t} = -A(t)u(t) + f(t), \quad u(0) = u_0,
\end{equation}
where $A(t) \in \mathbb{C}^{N \times N}$, and $u(t), f(t) \in \mathbb{C}^N$. By Duhamel's principle, the solution of Eq.~\eqref{eq:ode} can be represented as
\begin{equation}\label{eq:ode_solution}
    u(T) = \mathcal{T} e^{-\int_0^T A(s) \, \mathrm{d}s} u_0 + \int_0^T \mathcal{T} e^{-\int_s^T A(s') \, \mathrm{d}s'} f(s) \, \mathrm{d}s,
\end{equation}
where $\mathcal{T}$ denotes the time-ordering operator.

Define the real and imaginary parts of $A(t)$ as $L(t) = \frac{A(t)+A(t)^\dagger}{2}, H(t) = \frac{A(t)-A(t)^\dagger}{2i}$ respectively, if $L(s) \succeq 0$ for all $0 \leq s \leq t$, then
\begin{equation}\label{eq:inf_int}
    \mathcal{T} e^{-\int_0^T A(s) \, \mathrm{d}s} = \frac{1}{\sqrt{2\pi}} \int_{\mathbb{R}} \hat{f}(r) \mathcal{U}(T,r) \, \mathrm{d}r, \quad
    \mathcal{U}(T,r) := \mathcal{T} e^{-i \int_0^T (rL(s) + H(s)) \, \mathrm{d}s}
\end{equation}
holds under some additional requirements imposed on the kernel function $\hat{f}$ of the LCHS method.
Here, the inverse Fourier transform $f$ of $\hat{f}$ satisfies an exponential decay property for $x\geq0$, while it is arbitrary for $x<0$~\cite{an2023_linear, an2023_quantum}.


Truncate the infinite integral on the right-hand side of Eq.~\eqref{eq:inf_int} to the finite interval $[-R, R]$ and apply numerical quadrature to obtain
\begin{equation}\label{eq:integrate}
\begin{aligned}
    \frac{1}{\sqrt{2\pi}} \int_{\mathbb{R}} \hat{f}(r)  \mathcal{U}(T,r) \, \mathrm{d} r
    \approx \frac{1}{\sqrt{2\pi}} \int_{-R}^{R} \hat{f}(r) \mathcal{U}(T,r) \, \mathrm{d} r 
    \approx \frac{1}{\sqrt{2\pi}} \sum_{j=0}^{M-1} w_j \hat{f}(r_j) \mathcal{U}(T,r_j).
\end{aligned}
\end{equation}
Here, $M$ denotes the total number of quadrature nodes, while $r_j$ and $w_j$ represent the predefined quadrature nodes and quadrature weights on $[-R, R]$.

In this work, we adopt the kernel function
\begin{equation}
    \hat{f}(r; \gamma, \delta) = \sqrt{\frac{2}{\pi}} \frac{\exp \left( \delta - \frac{1+r^2}{4\gamma^2} - ir\delta \right)}{1+r^2}
\end{equation}
with
\begin{equation}
    \alpha_{\hat{f},\infty} := \frac{1}{\sqrt{2\pi}} \int_\mathbb{R} \left| \hat{f}(r; \gamma, \delta) \right| \, \mathrm{d}r = e^\delta \mathrm{erfc}\left(\frac{1}{2\gamma}\right)
\end{equation}
from~\cite{low2025_optimal}, then for any $\varepsilon_{\mathrm{lchs}} \leq 0.9027$ and $c > 0$, choosing $\gamma = \frac{1}{\delta}\sqrt{\delta + \log \frac{1 + 1/2\pi}{\varepsilon_{\mathrm{lchs}}}}$ and $R = 2\delta\gamma^2 = \mathcal{O} \left(\log \frac{1}{\varepsilon_{\mathrm{lchs}}}\right)$ yields
\begin{equation}
    \left\| \mathcal{T} e^{-\int_0^T A(s) \,\mathrm{d}s} - \frac{1}{\sqrt{2\pi}} \int_{-R}^R  \hat{f}(r; \gamma, \delta) \mathcal{U}(T, k) \, \mathrm{d}r \right\| \leq \frac{2\pi+1}{2\pi} e^{\delta - \delta^2 \gamma^2} = \varepsilon_{\mathrm{lchs}}.
\end{equation}
This further gives
\begin{equation}
    \alpha_{\hat{f},R}
    := \frac{1}{\sqrt{2\pi}} \int_{-R}^{R} \left| \hat{f}(r; \gamma, \delta) \right| \, \mathrm{d}r
    \leq \frac{1}{\sqrt{2\pi}} \int_\mathbb{R} \left| \hat{f}(r; \gamma, \delta) \right| \, \mathrm{d}r
    = e^{\delta} \mathrm{erfc}\left( \frac{1}{2\gamma} \right)
    \leq e^{\delta}.
\end{equation}


For the quadrature scheme, we employ the trapezoidal rule, which is proven to achieve exponential convergence when applied within the LCHS framework using the kernel function $\hat{f}(r; \gamma, \delta)$ \cite{low2025_optimal}.
With the above LCHS parameters $\varepsilon_{\mathrm{lchs}}, R, \gamma, \delta$, for $\varepsilon_{\mathrm{quad}} \leq \frac{4}{15}$, step size $\Delta r \leq \frac{\pi}{\frac{1}{2}\|L\|_{L^1} + \log \frac{64\exp(3\delta/2)}{15\varepsilon_{\mathrm{quad}}}}$ and $R/\Delta r \in \mathbb{Z}$, the trapezoidal rule satisfies the following bound:
\begin{equation}\label{eq:quadrature error}
    \left\| \mathcal{T} e^{-\int_0^T A(s) \, \mathrm{d}s} - \frac{\Delta r}{\sqrt{2\pi}} \sum_{j=0}^{M-1} \hat{f}(r_j; \gamma, \delta) \mathcal{U}(T,r_j) \right\| 
    \leq \frac{2\pi+1}{2\pi} e^{\delta - \delta^2 \gamma^2} + \frac{64}{15}e^{\frac{3\delta}{2} + \frac{1}{2}\|L\|_{L^1} - \frac{\pi}{\Delta r}}
    \leq \varepsilon_{\mathrm{lchs}} + \varepsilon_{\mathrm{quad}},
\end{equation}
where $r_j = -\widetilde{R} + j\Delta r, \widetilde{R} = R - \Delta r/2$, $\| A \|_{L^p} := \left( \int_0^T \| A(s) \|^p \, \mathrm{d}s \right)^{1/p}$, $\varepsilon_{\mathrm{quad}} = \mathcal{O}\left( e^{\frac{1}{2}\|L\|_{L^1} - \pi/\Delta r} \right)$ and $M = 2R/\Delta r = \mathcal{O} \left( \log \frac{1}{\varepsilon_{\mathrm{lchs}}} \left( \|L\|_{L^1} + \log \frac{1}{\varepsilon_{\mathrm{quad}}} \right) \right)$.
Additionally, the normalization factor
\begin{equation}
    \alpha_{\hat{f}, R, \Delta r} := \frac{\Delta r}{\sqrt{2\pi}} \sum_{j=0}^{M-1} \left| \hat{f}(r_j; \gamma, \delta) \right|
\end{equation}
satisfies
\begin{equation}
    \left| \alpha_{\hat{f}, R, \Delta r} - \alpha_{\hat{f}, \infty} \right| \leq \frac{\varepsilon_{\mathrm{lchs}}}{1 + 2\pi} + \frac{\varepsilon_{\mathrm{quad}}}{e^{(\|L\|_{L^1} + \delta)/2}}.
\end{equation}

\begin{remark}
    For the ODE system (Eq.~\eqref{eq:ode}), if $f(t)\equiv0$, only the homogeneous term needs to be implemented.
    If $f(t) \neq 0$, we perform a transformation $v(t)=u(t)-u_0$, yielding the ODE system:
    \begin{equation}\label{eq:transformed ode}
        \frac{\mathrm{d} v(t)}{\mathrm{d} t} = -A(t)v(t) + f(t) - Au_0, \quad v(0) = 0,
    \end{equation}
    meaning only the inhomogeneous term needs to be implemented.
\end{remark}

\subsection{Implementation of the homogeneous term}
According to Eq.~\eqref{eq:integrate}, the homogeneous term of Eq.~\eqref{eq:ode} can be approximated as
\begin{equation}\label{eq:u1(t)}
    u_1(T) = \mathcal{T} e^{-\int_0^T A(s) \, \mathrm{d}s} u_0
    \approx \frac{1}{\sqrt{2\pi}} \sum_{j=0}^{M-1} w_j \hat{f}(r_j; \gamma , \delta) \mathcal{U}(T,r_j) u_0.
\end{equation}
In the implementation, we utilize $n = \lceil \log_2 N \rceil$ qubits to encode the system state $u_1(t)$, with the initial state normalized as $|u(0)\rangle = \frac{u(0)}{\|u(0)\|_2}$,
and $m = \lceil \log_2 M \rceil$ ancilla qubits to encode the LCHS coefficients $\mathbf{c} = [c_0, \dots, c_{M-1}]$, where $c_j = \frac{w_j\hat{f}(r_j;\gamma,\delta)}{\sqrt{2\pi}}$.
Additionally, the following preparation oracles are required:
\begin{equation}
    O_{\text{prep}} |0^n\rangle = \ket{u(0)}, \quad
    O_{\text{coef},r} \ket{0^m} =  \sum_{j=0}^{M-1} \frac{\sqrt{c_j}}{\sqrt{\| \mathbf{c} \|_1}} |j\rangle, \quad
    O_{\text{coef},l} \ket{0^m} = \sum_{j=0}^{M-1} \frac{\overline{\sqrt{c_j}}}{\sqrt{\| \mathbf{c} \|_1}} |j\rangle.
\end{equation}
Using techniques such as quantum polynomial approximation and hierarchical construction~\cite{soklakov2006_efficient}, these oracles can be realized with precision $\varepsilon$ at a negligible cost of $\mathcal{O}(n \log(1/\varepsilon))$ or $\mathcal{O}(m \log(1/\varepsilon))$.

Based on these oracles, Eq.~\eqref{eq:u1(t)} can be implemented as follows:
\begin{equation}\label{eq:inner LCHS}
\begin{aligned}
    \ket{0}^{\otimes m} \ket{0}^{\otimes n} &\xrightarrow{O_{\mathrm{coef,r}} \otimes O_{\mathrm{prep}}} \left( \sum_{j=0}^{M-1} \frac{\sqrt{c_j}}{\sqrt{\|\mathbf{c}\|_1}} |j\rangle\right) \otimes|u(0)\rangle \\
    & \xrightarrow{\mathrm{SEL}(T) := \sum_{j=0}^{M-1}|j\rangle\langle j|\otimes \mathcal{U}(T,r_j)}  \sum_{j=0}^{M-1} \frac{\sqrt{c_j}}{\sqrt{\|\mathbf{c}\|_1}} |j\rangle \otimes \mathcal{U}(T,r_j)|u(0)\rangle \\
    & \xrightarrow{O_{\mathrm{coef,l}}^\dagger \otimes I^{\otimes n}} \left(\frac{1}{\|\mathbf{c}\|_1}|0\rangle^{\otimes m}\right) \otimes \left(\sum_{j=0}^{M-1} c_j \mathcal{U}(T,r_j) |u(0) \rangle \right) + |\perp\rangle.
\end{aligned}
\end{equation}
The corresponding quantum circuit for Eq.~\eqref{eq:inner LCHS} is illustrated in Fig.~\ref{fig:homo_Select}, where $sq_i$ denotes the $i$-th qubit used for encoding the system state, and $aq_j$ denotes the $j$-th ancilla qubit for encoding the LCHS coefficients.

For sufficiently small $\varepsilon_{\mathrm{lchs}}$ and $\varepsilon_{\mathrm{quad}}$ and sufficiently large $M$, the approximations
\begin{equation}
    \|\mathbf{c}\|_1^{-2} = \alpha_{\hat{f}, R, \Delta r} ^{-2} \approx \alpha_{\hat{f},R}^{-2} \geq e^{-2\delta}, \quad
    \left\| \sum_{j=0}^{M-1} c_j \mathcal{U}(T,r_j) |u(0) \rangle \right\|_2^2 \approx \frac{\left\| u_1(T) \right\|_2^2}{\left\| u(0) \right\|_2^2}
\end{equation}
holds, which yields the success probability of the measurement of Eq.~\eqref{eq:inner LCHS} can be estimated as
\begin{equation}\label{eq:probability of homo}
    P_1
    = \frac{1}{\|\mathbf{c}\|_1^2} \left\| \sum_{j=0}^{M-1} c_j \mathcal{U}(T,r_j) |u(0) \rangle \right\|_2^2
    \gtrsim \frac{\left\| u_1(T) \right\|_2^2}{e^{2\delta}  \left\| u(0) \right\|_2^2}.
\end{equation}

\begin{figure}[htbp]
\centering
\begin{minipage}[b]{0.32\textwidth}
    \centering
    \includegraphics[width=0.75\textwidth]{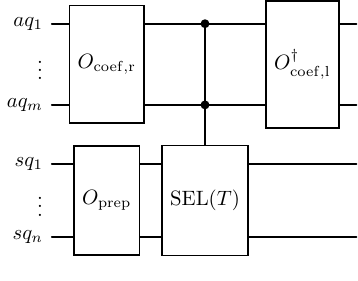}
    \caption{Quantum circuit for the homogeneous term.}
    \label{fig:homo_Select}
\end{minipage}
\hfill
\begin{minipage}[b]{0.32\textwidth}
    \centering
    \includegraphics[width=0.75\textwidth]{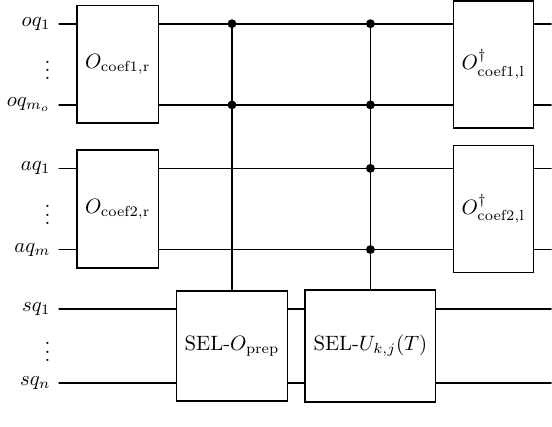}
    \caption{Quantum circuit for the inhomogeneous term.}
    \label{fig:inhomo_Select}
\end{minipage}
\hfill
\begin{minipage}[b]{0.32\textwidth}
    \centering
    \includegraphics[width=\textwidth]{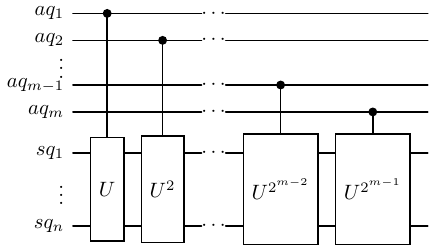}
    \caption{Quantum circuit for the select oracle in Lemma~\ref{lem:select oracle}.}
    \label{fig:circuit of Select}
\end{minipage}
\end{figure}

\subsection{Implementation of the inhomogeneous term}
By applying a suitable quadrature rule, the inhomogeneous term of Eq.~\eqref{eq:ode} can be approximated as
\begin{equation}\label{eq:inhomo_of_ode}
    u_2(T)
    = \int_0^T \mathcal{T} e^{-\int_s^T A(s') \, \mathrm{d}s'} f(s) \, \mathrm{d}s \\
    \approx \sum_{k = 0}^{M_o - 1} w_k \|f(T_k)\| \mathcal{T} e^{-\int_{T_k}^T A(s') \, \mathrm{d}s'} \ket{f(T_k)},
\end{equation}
where $M_o$ denotes the number of outer quadrature nodes, $T_k$ and $w_k$ are the preassigned quadrature nodes and weights on the interval $[0,T]$, respectively.
We define the outer LCU coefficients $\mathbf{d} = [d_0,\cdots,d_{M_o-1}]$,
where $d_k = w_
k\|f(T_k)\|$ and $\ket{v_k} = \ket{f(T_k)}$, then by following the LCHS implementation (Eq.~\eqref{eq:inner LCHS}) for $\mathcal{T} e^{-\int_0^T A(s) \,\mathrm{d} s}$ and defining $U_{k,j}(T) := \mathcal{T} e^{-i \int_{T_k}^T (H(s) + r_jL(s)) \, \mathrm{d}s}$, we can further approximate $u_2(T)$ as
\begin{equation}\label{eq:u2(t)_1}
    u_2(T) \approx \sum_{k = 0}^{M_o - 1} d_k \mathcal{T} e^{-\int_{T_k}^T A(s') \, \mathrm{d}s'} \ket{v_k}
    \approx \sum_{k=0}^{M_o-1} d_k \left( \sum_{j=0}^{M-1} c_j U_{k,j}(T) \right) |v_k\rangle.
\end{equation}

Analogous to the implementation of the homogeneous term, we employ $n = \lceil \log_2 N \rceil$ qubits to encode the system state, $m = \lceil \log_2 M \rceil$ ancilla qubits for inner LCHS coefficients, and $m_o = \lceil \log_2 M_o \rceil$ ancilla qubits for outer LCU coefficients.
Similarly to the homogeneous case, the following preparation oracles can be efficiently constructed:
\begin{equation}
\begin{aligned}
    O_{\text{prep}}(k) \ket{0^n} = \ket{v_k}, \quad
    O_{\text{coef1},r} \ket{0^{m_o}} &= \sum_{k=0}^{M_o-1} \frac{\sqrt{d_k}}{\sqrt{\| \mathbf{d} \|_1}} |k\rangle, \quad
    O_{\text{coef1},l} \ket{0^{m_o}} = \sum_{k=0}^{M_o-1} \frac{\overline{\sqrt{d_k}}}{\sqrt{\| \mathbf{d} \|_1}} |k\rangle, \\
    O_{\text{coef2},r} \ket{0^m} &= \sum_{j=0}^{M-1} \frac{\sqrt{c_j}}{\sqrt{\| \mathbf{c} \|_1}} |j\rangle, \quad
    O_{\text{coef2},l} \ket{0^m} = \sum_{j=0}^{M-1} \frac{\overline{\sqrt{c_j}}}{\sqrt{\| \mathbf{c} \|_1}} |j\rangle.
\end{aligned}
\end{equation}
With these oracles, Eq.~\eqref{eq:u2(t)_1} can be implemented as follows:
\begin{equation}\label{eq:outer LCHS}
\begin{aligned}
    &\ket{0}^{\otimes m_o} \ket{0}^{\otimes m} \ket{0}^{\otimes n}
    \xrightarrow{O_{\text{coef1},r} \otimes O_{\text{coef2},r} \otimes I^{\otimes n}} \left( \sum_{k=0}^{M_o-1} \frac{\sqrt{d_k}}{\sqrt{\| \mathbf{d} \|_1}} |k\rangle \right) \otimes \left( \sum_{j=0}^{M-1} \frac{\sqrt{c_j}}{\sqrt{\| \mathbf{c} \|_1}} |j\rangle \right) \otimes |0\rangle^{\otimes n} \\
    & \xrightarrow{\text{SEL-} O_{\text{prep}} := \sum_{k=0}^{M_o-1} |k\rangle\langle k| \otimes I^{\otimes m} \otimes O_{\text{prep}}(k)} \sum_{k=0}^{M_o-1} \frac{\sqrt{d_k}}{\sqrt{\| \mathbf{d} \|_1}} |k\rangle \otimes \left( \sum_{j=0}^{M-1} \frac{\sqrt{c_j}}{\sqrt{\| \mathbf{c} \|_1}} |j\rangle \right) \otimes |v_k\rangle \\
    & \xrightarrow{\text{SEL-} U_{k,j}(T) := \sum_{k=0}^{M_o-1}\sum_{j=0}^{M-1} |k\rangle\langle k| \otimes |j\rangle\langle j| \otimes U_{k,j}(T)} \sum_{k=0}^{M_o-1} \sum_{j=0}^{M-1} \frac{\sqrt{d_k}}{\sqrt{\| \mathbf{d} \|_1}} |k\rangle \otimes \frac{\sqrt{c_j}}{\sqrt{\| \mathbf{c} \|_1}} |j\rangle \otimes U_{k,j}(T) |v_k\rangle \\
    & \xrightarrow{O_{\text{coef1},l}^{\dagger} \otimes O_{\text{coef2},l}^{\dagger} \otimes I^{\otimes n}} 
    \frac{1}{\| \mathbf{d} \|_1 \| \mathbf{c} \|_1} |0\rangle^{\otimes m_o} |0\rangle^{\otimes m} \otimes \sum_{k=0}^{M_o-1} d_k \left( \sum_{j=0}^{M-1} c_j U_{k,j}(T) \right) |v_k\rangle + |\perp\rangle.
\end{aligned}
\end{equation}

The corresponding quantum circuit for Eq.~\eqref{eq:outer LCHS} is illustrated in Fig.~\ref{fig:inhomo_Select}, where $oq_k$ denotes the $k$-th ancilla qubit used for encoding the outer LCU coefficients.

In this work, we adopt the trapezoidal rule
\begin{equation}\label{eq:outer trapezoidal}
    T_k = T - \frac{\Delta t}{2} - k \Delta t, \quad k=0,1,\dots,M_o-1, \quad \Delta t = \frac{T}{M_o}
\end{equation}
for the outer quadrature to simplify the subsequent quantum circuit implementation.

We first derive an upper bound for $M_o$ for the trapezoidal rule, which is a straightforward result of the standard quadrature error bound \cite{süli2003_an} and the chain rule.

\begin{lemma}\label{lem:outer quadrature}
Suppose $A(t) \in C^1([0,T]; \mathbb{C}^{N \times N})$, and assume the time-ordered exponential $\mathcal{T} e^{-\int_s^T A(s') \, \mathrm{d}s'}$ is bounded by $\left\| \mathcal{T} e^{-\int_s^T A(s') \, \mathrm{d}s'} \right\|
    \leq e^{- \int_0^T \widetilde{\alpha}(s') \, \mathrm{d}s'}
    =: C_A(T)$
with $\widetilde{\alpha}(s) = \min_{1\leq j \leq N} \{ 0, \mathrm{Re}(\lambda_j(A(s))) \}$.
Assume $f(t) \in C^2([0,T]; \mathbb{R}^N)$, then applying the trapezoidal rule (Eq.~\eqref{eq:outer trapezoidal}) yields
\begin{equation}\label{eq:midpoint_error_td}
    \left\| \int_0^T \mathcal{T} e^{-\int_s^T A(s') \,\mathrm{d}s'} f(s) \,\mathrm{d}s - \Delta t \sum_{k=0}^{M_o-1} \mathcal{T} e^{-\int_{T_k}^T A(s') \, \mathrm{d}s'} f(T_k) \right\| 
    \leq \frac{T^3 C_A(T)}{24 M_o^2} B_{A,f}(T),
\end{equation}
where $B_{A,f}(T) := \sup_{t \in [0,T]} \left( (\|A'(t)\| + \|A(t)\|^2) \|f(t)\| + 2\|A(t)\| \|f'(t)\| + \|f''(t)\| \right)$.
In order to bound the quadrature error by $\varepsilon_{\mathrm{quad,o}} > 0$, it suffices to choose
\begin{equation}\label{eq:midpoint_M_td}
    M_o = \mathcal{O} \left( T^{3/2} C_A(T)^{1/2} B_{A,f}(T)^{1/2} \varepsilon_{\mathrm{quad,o}}^{-1/2} \right).
\end{equation}
\end{lemma}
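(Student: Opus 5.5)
Define the integrand $g(s) := \mathcal{T} e^{-\int_s^T A(s')\,\mathrm{d}s'} f(s) = E(s) f(s)$, with $E(s) := \mathcal{T} e^{-\int_s^T A(s')\,\mathrm{d}s'}$. The nodes $T_k = T - \Delta t/2 - k\Delta t$ are exactly the midpoints of the $M_o$ uniform subintervals of $[0,T]$. So the scheme in Eq.~\eqref{eq:outer trapezoidal} is the composite midpoint rule, and the plan is to invoke the standard error bound from \cite{süli2003_an}, applied componentwise or directly in the Banach-space form:
\begin{equation*}
\left\| \int_0^T g(s)\,\mathrm{d}s - \Delta t \sum_{k=0}^{M_o-1} g(T_k) \right\| \leq \frac{T (\Delta t)^2}{24} \sup_{s\in[0,T]} \|g''(s)\|.
\end{equation*}
The Banach-space form follows from the Taylor expansion with integral remainder on each subinterval: the linear term cancels at the midpoint, and the remainder is bounded by $\frac{(\Delta t)^3}{24}\sup\|g''\|$. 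Summing over the $M_o$ subintervals and using $\Delta t = T/M_o$ gives the prefactor $\frac{T^3}{24 M_o^2}$.

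The remaining work is to bound $\|g''\|$ by the chain rule. First we need the derivative of the propagator with respect to its lower limit. Since $E(s)$ satisfies $\partial_s E(s) = E(s) A(s)$, differentiating again gives $\partial_s^2 E(s) = E(s)A(s)^2 + E(s)A'(s)$. This uses $A \in C^1$, which also justifies the differentiations. By the product rule,
\begin{equation*}
g''(s) = E(s)\big[(A(s)^2 + A'(s)) f(s) + 2A(s) f'(s) + f''(s)\big].
\end{equation*}
Taking norms, using submultiplicativity and the assumed bound $\|E(s)\| \leq C_A(T)$ uniformly in $s$, we get $\sup_s\|g''(s)\| \leq C_A(T) B_{A,f}(T)$. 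This yields Eq.~\eqref{eq:midpoint_error_td}.

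Finally, we require $\frac{T^3 C_A(T) B_{A,f}(T)}{24 M_o^2} \leq \varepsilon_{\mathrm{quad,o}}$ and solve for $M_o$. This gives $M_o \geq \big(T^3 C_A B_{A,f}/(24\varepsilon_{\mathrm{quad,o}})\big)^{1/2}$, which is Eq.~\eqref{eq:midpoint_M_td}.

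The main point needing care is the derivative formula $\partial_s E(s) = E(s)A(s)$. The time ordering places the earliest time on the right, so differentiating in the lower limit produces a right multiplication by $A(s)$ with a positive sign. I would verify this by noting that $E(s)E(s\to\cdot)$ satisfies the semigroup identity $E(s) = E(s+h)\,\mathcal{T}e^{-\int_s^{s+h}A}$ and expanding to first order in $h$. Everything else is routine.
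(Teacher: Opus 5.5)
Your proposal is correct and takes essentially the route the paper indicates: the standard composite quadrature error bound combined with the chain rule. You also fill in the details the paper leaves implicit. The nodes $T_k$ are midpoints, so the $1/24$ constant is that of the midpoint rule; the integral-remainder form gives the bound for vector-valued integrands; and $\partial_s E(s) = E(s)A(s)$ yields $g'' = E\,[(A^2+A')f + 2Af' + f'']$, whose norm is at most $C_A(T)B_{A,f}(T)$, so solving $T^3 C_A(T) B_{A,f}(T)/(24M_o^2) \le \varepsilon_{\mathrm{quad,o}}$ gives the stated $M_o$.
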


For subsequent analysis, we assume $A$ is time-independent, i.e.,
\begin{equation}
    U_{k,j}(T)
    = e^{-i (H + r_j L)(T-T_k)}
    = e^{-i (H + r_j L)(k \Delta t + \frac{1}{2}\Delta t)}.
\end{equation}
Define $U_j(T) := e^{-i (H + r_jL)T}$, then combine $\mathrm{SEL}(T) = \sum_{j=0}^{M-1}|j\rangle\langle j|\otimes U_j(T)$, the select oracle $\text{SEL-}U_{k,j}(T)$ can be rewritten as:
\begin{equation}\label{eq:sel-U_kj}
    \text{SEL-} U_{k,j}(T)
    = \sum_{k=0}^{M_o-1} |k\rangle\langle k| \otimes \left( \mathrm{SEL}(\Delta t) \right)^{k}
    \cdot I^{\otimes m_o} \otimes \mathrm{SEL}\left( \frac{\Delta t}{2} \right).
\end{equation}

\begin{lemma}[\cite{Lin2022_Lecture}]\label{lem:select oracle}
Let $n$ and $m$ denote the numbers of first and second registers, respectively. For any $2^n\times2^n$ unitary operator $U$, the select oracle $\sum_{j=0}^{2^m - 1}\ket{j}\bra{j} \otimes U^j$ can be implemented as $\prod_{j=1}^m \left[ \mathrm{C}U^{2^{j-1}} \right]_{[1,n]}^{j}$, where $\left[ \mathrm{C}U^{2^{j-1}} \right]_{[1,n]}^{j}$ denotes the multi-controlled gate $U^{2^{j-1}}$ acting on the first register and controlled by the $j$-th qubit of the second register.
\end{lemma}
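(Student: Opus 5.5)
The plan is to write the integer index in binary and exploit the fact that powers of a single unitary $U$ commute with one another. For $j\in\{0,\dots,2^m-1\}$ write $j=\sum_{\ell=1}^m j_\ell 2^{\ell-1}$ with $j_\ell\in\{0,1\}$, so that $\ket{j}=\ket{j_1}\otimes\cdots\otimes\ket{j_m}$ in the computational basis of the second register, with the bit-to-qubit ordering chosen so that the $\ell$-th qubit carries $j_\ell$. Then $U^j=\prod_{\ell=1}^m U^{j_\ell 2^{\ell-1}}$, and because all factors are powers of the same $U$, the order of the product is irrelevant.

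Next, I would write each controlled gate explicitly as
\begin{equation*}
\left[\mathrm{C}U^{2^{\ell-1}}\right]^{\ell}_{[1,n]}=\sum_{b\in\{0,1\}}\ket{b}\bra{b}_{\ell}\otimes U^{b\,2^{\ell-1}},
\end{equation*}
with the identity acting on all other qubits of the second register. I would then multiply these $m$ operators. Projectors on distinct qubits tensor together, and on each qubit $\ell$ only one projector is involved, so expanding the product gives
\begin{equation*}
\prod_{\ell=1}^m\left[\mathrm{C}U^{2^{\ell-1}}\right]^{\ell}_{[1,n]}=\sum_{j_1,\dots,j_m\in\{0,1\}}\Bigl(\bigotimes_{\ell}\ket{j_\ell}\bra{j_\ell}\Bigr)\otimes\prod_{\ell}U^{j_\ell2^{\ell-1}}=\sum_{j=0}^{2^m-1}\ket{j}\bra{j}\otimes U^j .
\end{equation*}
An equivalent check, which avoids the multi-index expansion, is to apply the product to a basis state $\ket{j}\otimes\ket{\psi}$. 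Each controlled gate leaves $\ket{j}$ unchanged and multiplies $\ket{\psi}$ by $U^{j_\ell 2^{\ell-1}}$, so the output is $\ket{j}\otimes U^j\ket{\psi}$. Linearity then finishes the argument.

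I expect no step to be a real obstacle; the argument is essentially bookkeeping. The only care needed is to keep the bit-ordering convention (which qubit is least significant) consistent with the labelling of $\ket{j}$. The commutativity of the powers of $U$ is what makes the order of the product $\prod_j$ immaterial.
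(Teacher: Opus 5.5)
Your proof is correct and is essentially the standard argument: the paper gives no proof of its own and defers to the cited lecture notes, where the select oracle is built from controlled powers $U^{2^{j-1}}$ and checked on basis states $\ket{j}\otimes\ket{\psi}$ via the binary expansion of $j$, exactly as you do. The only points needing care are that the factors commute, so the product-ordering convention is immaterial, and that the least-significant bit sits on the qubit controlling $U^{2^{0}}$; you handle both.
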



\begin{remark}
    We denote $\prod_{j=1}^m A_j
    = A_m \cdots A_2 A_1$ and $\prod_{j=m}^1 A_j
    = A_1 A_2 \cdots A_m$ as defined in \cite{Childs2021_Theory}.
\end{remark}

Notice that $\mathrm{SEL}(\Delta t)$ is a $2^{n+m} \times 2^{n+m}$ unitary gate, then combine Lemma~\ref{lem:select oracle}, we can further rewrite Eq.~\eqref{eq:sel-U_kj} as
\begin{equation}\label{eq:implementation of sel-U_kj}
    \text{SEL-} U_{k,j}(T)
    = \prod_{k=1}^{m_o} \left[ \text{C-} \mathrm{SEL}(2^{k-1} \Delta t) \right]_{[1,n+m]}^k
    \cdot I^{\otimes m_o} \otimes \mathrm{SEL}\left( \frac{\Delta t}{2} \right).
\end{equation}

Combining Eq.~\eqref{eq:quadrature error}, we thus derive an estimation for the implementation of the inhomogeneous term.
\begin{lemma}
Assume $A$ is time-independent. Applying the trapezoidal rule for outer quadrature, the following error bound holds:
\begin{equation}\label{eq:quadrature error of inhomo term}
\begin{aligned}
    &\left\| \int_0^{T} e^{-A(T-s)} b(s) \, \mathrm{d}s - \sum_{k=0}^{M_o-1} \Delta t \left( \sum_{j=0}^{M-1} c_j U_{k,j}(T) \right) f (T_k) \right\| \\
    \leq& \frac{T^3 e^{-\widetilde{\alpha} T}}{24 M_o^2} \cdot B_{A,f}(T)
    + \Delta t \sum_{k=0}^{M_o-1} \left( \varepsilon_{\mathrm{lchs}}
    + \frac{64}{15} e^{3\delta/2 - \pi/\Delta r + \|L\| T_k/2} \right) \|f\|_T \\
    \leq& \varepsilon_{\mathrm{quad,o}}
    + \varepsilon_{\mathrm{lchs}} T \|f\|_T
    + \frac{64T\|f\|_T}{15M_o} \frac{e^{\|L\|T/2}-1}{e^{\|L\|T/2M_o}-1} e^{3\delta/2 +\|L\|T/4M_o - \pi/\Delta r}.
\end{aligned}
\end{equation}
where $\widetilde{\alpha} = \min_{1\leq j\leq n}\{0, \mathrm{Re}(\lambda_j(A))\}, \|f\|_T:=\sup_{t \in [0,T]} \|f(t)\|$.

To bound the total error in Eq.~\eqref{eq:quadrature error of inhomo term} by $\varepsilon > 0$, it suffices to choose
\begin{equation}
    R = \mathcal{O} \left( \log \frac{1}{\varepsilon}, \log \frac{T \|f\|_T}{\varepsilon} \right), \quad
    M = \mathcal{O} \left( R \left( \log  \frac{T \|f\|_T(e^{\|L\|T/2}-1)}{\varepsilon M_o (e^{\|L\|T/2M_o}-1)} + \frac{\|L\|T}{4M_o} \right) \right).
\end{equation}
with $M_o = \mathcal{O} \left( T^{3/2} C_A(T)^{1/2} B_{A,f}(T)^{1/2} \varepsilon^{-1/2} \right)$.
\end{lemma}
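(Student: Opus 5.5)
The plan is a triangle-inequality split of the total error into an outer quadrature part and an inner LCHS part, followed by parameter selection. With time-independent $A$, write $E_k := e^{-A(T-T_k)}$ and $\widetilde{E}_k := \sum_{j=0}^{M-1} c_j U_{k,j}(T)$. (The $b(s)$ in the statement is read as $f(s)$.) Then
\begin{equation*}
\int_0^T e^{-A(T-s)} f(s)\,\mathrm{d}s - \Delta t\sum_{k} \widetilde{E}_k f(T_k)
= \Bigl(\int_0^T e^{-A(T-s)} f(s)\,\mathrm{d}s - \Delta t\sum_k E_k f(T_k)\Bigr) + \Delta t\sum_k (E_k-\widetilde{E}_k) f(T_k).
\end{equation*}

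\textbf{First term.} Lemma~\ref{lem:outer quadrature} applies directly with $C_A(T)=e^{-\widetilde{\alpha}T}$, since $A$ is constant. This gives the bound $\frac{T^3 e^{-\widetilde{\alpha}T}}{24M_o^2}B_{A,f}(T)$. Choosing $M_o$ as in Eq.~\eqref{eq:midpoint_M_td} makes this at most $\varepsilon_{\mathrm{quad,o}}$.

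\textbf{Second term.} For each $k$ I apply Eq.~\eqref{eq:quadrature error} on the interval $[T_k,T]$, of length $T-T_k$. For constant $L$ one has $\|L\|_{L^1}=\|L\|(T-T_k)$. This gives
\begin{equation*}
\|E_k-\widetilde{E}_k\|\le \varepsilon_{\mathrm{lchs}} + \tfrac{64}{15}e^{3\delta/2-\pi/\Delta r+\|L\|(T-T_k)/2}.
\end{equation*}
Bound $\|f(T_k)\|\le\|f\|_T$. The statement writes $T_k$ in the exponent; with the reindexing $T-T_k=(k+\tfrac12)\Delta t$ the sum is the same geometric series either way. The $\varepsilon_{\mathrm{lchs}}$ contributions sum to $\Delta t\,M_o\,\varepsilon_{\mathrm{lchs}}\|f\|_T=T\varepsilon_{\mathrm{lchs}}\|f\|_T$. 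For the exponential contributions, $\sum_{k=0}^{M_o-1} e^{\|L\|(k+1/2)\Delta t/2}$ is geometric with ratio $q=e^{\|L\|T/2M_o}$ and equals
\begin{equation*}
e^{\|L\|T/4M_o}\,\frac{e^{\|L\|T/2}-1}{e^{\|L\|T/2M_o}-1}.
\end{equation*}
Multiplying by $\Delta t=T/M_o$ yields exactly the third term of the statement.

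\textbf{Parameter choice.} I allot $\varepsilon/3$ to each of the three terms.
\begin{itemize}
\item Outer quadrature: set $\varepsilon_{\mathrm{quad,o}}=\varepsilon/3$, which gives the stated $M_o$.
\item LCHS truncation: require $\varepsilon_{\mathrm{lchs}}\le \varepsilon/(3T\|f\|_T)$. Since $R=\mathcal{O}(\log(1/\varepsilon_{\mathrm{lchs}}))$, this gives $R=\mathcal{O}(\log\frac{T\|f\|_T}{\varepsilon})$. I keep $\log(1/\varepsilon)$ as well to cover the $\varepsilon_{\mathrm{lchs}}\le0.9027$ constraint.
\item Inner quadrature: require the third term to be $\le\varepsilon/3$. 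Solving for $\pi/\Delta r$ gives
\begin{equation*}
\pi/\Delta r \ge \tfrac{3\delta}{2}+\tfrac{\|L\|T}{4M_o}+\log\frac{64\,T\|f\|_T(e^{\|L\|T/2}-1)}{5\varepsilon M_o(e^{\|L\|T/2M_o}-1)}.
\end{equation*}
Then $M=2R/\Delta r$ gives the stated $M$, with $\delta=\mathcal{O}(1)$ absorbed.
\end{itemize}

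The main obstacle is making sure Eq.~\eqref{eq:quadrature error} may be applied uniformly in $k$ with a single $\Delta r$, $R$, $\gamma$ and $\delta$. Its step-size hypothesis depends on $\|L\|_{L^1}$, which varies with $k$. I would handle this by taking the worst case $k=M_o-1$, which gives the most restrictive $\Delta r$; the same $\Delta r$ then satisfies the hypothesis for every $k$. The remaining constraints, $\varepsilon_{\mathrm{quad}}\le 4/15$ and $R/\Delta r\in\mathbb{Z}$, can be met by slightly shrinking $\Delta r$, which changes only constants.
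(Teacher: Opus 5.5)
Your proposal is correct and follows the route the paper indicates with ``Combining Eq.~\eqref{eq:quadrature error}'': a triangle-inequality split into the outer midpoint-rule error (Lemma~\ref{lem:outer quadrature} with $C_A(T)=e^{-\widetilde{\alpha}T}$) and the per-node LCHS error from Eq.~\eqref{eq:quadrature error} on $[T_k,T]$, followed by summing the geometric series and dividing $\varepsilon$ among the three terms. You also fill in details the paper leaves implicit: reading $b(s)$ as $f(s)$; noting that the exponent should be $\|L\|(T-T_k)/2$, which gives the same sum because $T_k$ and $T-T_k$ both range over $\{(k+\tfrac12)\Delta t\}$; and fixing a single $\Delta r$ from the worst node $k=M_o-1$.
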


For sufficiently small $\varepsilon_{\mathrm{lchs}}, \varepsilon_{\mathrm{quad}}$ and $\varepsilon_{\mathrm{quad,o}}$ and sufficiently large $M$ and $M_o$, the approximations
\begin{equation}
    \| \mathbf{d} \|_1
    = \sum_{k=0}^{M_o-1} w_k \|f(T_k)\|
    \leq M_o \cdot \Delta t \cdot \|f\|_T
    = T \|f\|_T, \quad
    \left\| \sum_{k=0}^{M_o-1} d_k \left( \sum_{j=0}^{M-1} c_j U_{k,j}(T) \right) |v_k\rangle \right\|_2^2 \approx \left\| u_2(T) \right\|_2^2
\end{equation}
holds, then combining $\|\mathbf{c}\|_1^{-2} \gtrsim e^{-2\delta}$ we can estimate the success probability of the measurement of Eq.~\eqref{eq:outer LCHS} as
\begin{equation}\label{eq:probability of inhomo}
    P_2
    = \frac{1}{\|\mathbf{d}\|_1^2 \cdot \|\mathbf{c}\|_1^2} \left\| \sum_{k=0}^{M_o-1} d_k \left( \sum_{j=0}^{M-1} c_j U_{k,j}(T) \right) |v_k\rangle \right\|_2^2
    \gtrsim \frac{\left\| u_2(T) \right\|_2^2}{e^{2\delta} T^2 \|f\|_T^2}.
\end{equation}

\section{The advection-diffusion equation}\label{sec:the advection-diffusion equation}

In this section, we consider the anisotropic advection-diffusion equation in $d$-dimensional space:
\begin{equation}\label{eq:gov_eq}
    \left\{
    \begin{aligned}
    \frac{\partial u}{\partial t} + \sum_{p=1}^d a_p \frac{\partial u}{\partial x_p} - \sum_{p=1}^d b_p  \frac{\partial^2 u}{\partial x_p^2} &= -cu + f,  \\
    u(0,\boldsymbol{x}) &= u_0(\boldsymbol{x}),
    \end{aligned}
    \right.
    \quad \boldsymbol{x} = (x_1, \cdots, x_d) \in \Omega \subset \mathbb{R}^d,
\end{equation}
where $\boldsymbol{a} = (a_1, \cdots, a_d) \in \mathbb{R}^{d}$ denotes the advection coefficients; $\boldsymbol{b} = (b_1, \cdots, b_d) \in \mathbb{R}_{\ge 0}^d$ denotes the anisotropic diffusion coefficients; $c \in \mathbb{R}_{\ge 0}$ denotes the attenuation coefficient, satisfying $c \geq 0$ to guarantee the feasibility of the LCHS method.

\subsection{Spatial discretization via the finite volume method}\label{subsec:spatial discretization}

For the spatial discretization of Eq.~\eqref{eq:gov_eq}, we firstly rewrite it into the conservative form
\begin{equation}
    \frac{\partial u}{\partial t} + \sum_{p=1}^d \frac{\partial F_p}{\partial x_p} = -cu+f,
\end{equation}
then adopt the FVM and define the $d$-dimensional control volume for cell $\Omega_{j_1,\cdots,j_d}$ as $[x_{j_1-1/2}, x_{j_1+1/2}]\times \cdots \times [x_{j_d-1/2}, x_{j_d+1/2}]$, where $h_p = x_{j_p+1/2} - x_{j_p-1/2}$ denotes the cell width along the $p$-th dimension. 
Integrating the conservative equation over the control volume $\Omega_{j_1 \cdots j_d}$ gives the semi-discrete equation
\begin{equation}
    \frac{\mathrm{d} u_{j_1,\cdots,j_d}}{\mathrm{d} t} + \sum_{p=1}^d \frac{F_{j_p+1/2} - F_{j_p-1/2}}{h_i} = -c u_{j_1,\cdots,j_d} + f_{j_1,\cdots,j_d},
\end{equation}
where $u_{j_1,\cdots,j_d}$ and $f_{j_1,\cdots,j_d}$ is the cell-averaged value of $u$ and $f$ over $\Omega_{j_1,\cdots,j_d}$, $F_{j_p+1/2}$ represents the fluxes at the interfaces $x_{j_p+1/2}$ (between cells $\Omega_{j_1,\cdots,j_p,\cdots j_d}$ and $\Omega_{j_1,\cdots, j_p+1,\cdots,j_d}$).

By adopting a suitable flux construction scheme and without thinking the soure term, we derive the ODE system for the $p$-th dimension as $\frac{\mathrm{d} u}{\mathrm{d} t} + A_p u = 0$, where $A_p \in \mathbb{R}^{N_p \times N_p}$ ($N_i = 2^{n_i}$) is the coefficient matrix.
Assembling the contributions from all dimensions, we obtain the global semi-discrete system
\begin{equation}\label{eq:d-dimensional equation}
    \frac{\mathrm{d} u}{\mathrm{d} t} + A u = f,
\end{equation}
with the global coefficient matrix $A$ expressed via tensor products as
\begin{equation}\label{eq:d-dimensional matrix}
    A = A_1 \otimes I^{\otimes n_2} \otimes \cdots\otimes I^{\otimes n_d} + \cdots + I^{\otimes n_1} \otimes \cdots \otimes I^{\otimes n_{d-1}} \otimes A_d  + c I^{\otimes n_1} \otimes I^{\otimes n_2} \otimes \cdots\otimes I^{\otimes n_d}.
\end{equation}




\subsection{Flux construction schemes}\label{subsec:flux construction}
We use the $1$-dimensional advection-diffusion equation to introduce various flux construction schemes:
\begin{equation}
    \frac{\partial u}{\partial t} + \frac{\partial F}{\partial x} = 0, \quad
    F = F_{\text{adv}} + F_{\text{diff}} = a u - b \frac{\partial u}{\partial x}.
\end{equation}

\subsubsection{Central scheme}
At the interface $x_{j+1/2}$, we employ the central scheme to compute the diffusive flux and advective flux, which gives
\begin{equation}
     F_{j+1/2}^{\text{c}}
     = F_{\text{adv}, j+1/2}^{\text{c}} + F_{\text{diff}, j+1/2}^{\text{c}}
     = a \frac{u_j + u_{j+1}}{2} - b \frac{u_{j+1}-u_j}{h},
\end{equation}
from which we derive
\begin{equation}
    \frac{F_{j+1/2}^{\text{c}} - F_{j-1/2}^{\text{c}}}{h}
    = \frac{a}{2h}(u_{j+1} - u_{j-1}) + \frac{b}{h^2}(-u_{j-1} + 2u_j -u_{j+1}).
\end{equation}

\subsubsection{Exponential scheme}

We further adopt the exponential scheme \cite{allen1955_relaxation, spalding1972_a}, which is derived from the exact steady-state solution of the advection-diffusion equation (i.e., $\frac{\partial u}{\partial t} = 0$) and rigorously guarantees flux conservation.
Direct derivation and calculation provide
\begin{equation}\label{eq:F^exp}
\begin{aligned}
    F_{j+1/2}^{\text{exp}}
    &= F_{\text{adv}, j+1/2}^{\text{exp}} + F_{\text{diff}, j+1/2}^{\text{exp}}
    = a u_{j+1/2} -b \left( \frac{\partial u}{\partial x} \right)_{j+1/2} \\
    &= \frac{a\left( u_j e^{\lambda h /2} + u_{j+1} \right)}{e^{\lambda h /2} + 1} + \frac{a (u_j - u_{j+1}) e^{\lambda h/2}}{e^{\lambda h} - 1}
    = \frac{a (u_j e^{\lambda h} - u_{j+1})}{e^{\lambda h} - 1},
\end{aligned}
\end{equation}
which further gives
\begin{equation}
    \frac{F_{j+1/2}^{\text{exp}} - F_{j-1/2}^{\text{exp}}}{h} 
    = \frac{a}{h}(u_j - u_{j-1}) + \frac{a}{h(e^{\lambda h} - 1)}(-u_{j-1} + 2u_j -u_{j+1}).
\end{equation}
where $\lambda = a/b$ is a parameter associated with the Peclet number.

It is straightforward to verify that
\begin{equation}
\begin{aligned}
    \lim_{a\rightarrow 0} F_{j+1/2}^{\text{exp}}
    &= -b \frac{u_{j+1}-u_j}{h}
    = F_{\text{diff}, j+1/2}^{\text{c}}, \\
    \lim_{b\rightarrow 0} F_{j+1/2}^{\text{exp}}
    &= \frac{a+|a|}{2} u_j + \frac{a-|a|}{2} u_{j+1} = F_{\text{adv}, j+1/2}^{\text{upwind}},
\end{aligned}
\end{equation}
which indicates that the upwind scheme is an approximation of the exponential scheme under specific conditions.

\subsection{Coefficient matrices under different boundary conditions}

We mainly focus on two commonly used boundary conditions in numerical discretization.

The first type is the Robin boundary conditions (BCs), given by
\begin{equation}\label{eq:robin_bc}
    \alpha_L u(x_L) + \beta_L \left.\frac{\partial u}{\partial x}\right|_{x_L}=g_L, \quad \alpha_R u(x_R) + \beta_R \left.\frac{\partial u}{\partial x}\right|_{x_R}=g_R,
\end{equation}
where $\alpha_L \beta_L \neq 0$ and $\alpha_R \beta_R \neq 0$ to guarantee the physical validity of the formulation.
Obviously we can set $\beta_L = \beta_R = 0$ to obtain the Dirichlet BCs and substitute $\alpha_L = \alpha_R = 0$ to get the Neumann BCs.
Thus, we only consider the Robin BCs in the subsequent derivation, as both the Dirichlet and Neumann BCs are special cases of the Robin BCs.

The second type is the periodic BCs, expressed as
\begin{equation}\label{eq:periodic_bc}
    u(x_L) = u(x_R), \quad
    \left.\frac{\partial u}{\partial x}\right|_{x_L} = \left.\frac{\partial u}{\partial x}\right|_{x_R},
\end{equation}
which enforce that the solution and its derivative are equal at the left and right boundaries, ensuring periodicity across the computational domain.

\subsubsection{Robin boundary conditions}

We adopt the ghost cell method to derive the boundary flux under the Robin BCs, which means a ghost cell $[x_{-1/2}, x_{1/2}]$ at the left boundary and a ghost cell $[x_{N+1/2}, x_{N+3/2}]$ at the right boundary. For simplicity, we denote $x_L = x_{1/2}$ and $x_R = x_{N+1/2}$.

For the central scheme, we substitute $u_L^{\text{c}} = \frac{u_0^{\text{c}}+u_1}{2}$ and $\left( \frac{\partial u}{\partial x} \right)_{x_L}^{\text{c}} = \frac{u_1-u_0^{\text{c}}}{h}$ into Eq.~\eqref{eq:robin_bc}, yielding
\begin{equation}\label{eq:left boundary of robin(central)}
    u_0^{\text{c}} = \frac{(2\beta_L + h\alpha_L) u_1 - 2hg_L}{2\beta_L - h\alpha_L}, \quad
    u_L^{\text{c}} = \frac{2\beta_L u_1 - hg_L}{2\beta_L - h\alpha_L}, \quad
    F_L^{\text{c}} = \frac{2(a\beta_L +b\alpha_L)u_1 - (ah+2b)g_L}{2\beta_L - h\alpha_L}.
\end{equation}
Similarly, substituting $u_R^{\text{c}} = \frac{u_N+u_{N+1}^{\text{c}}}{2}$ and $\left( \frac{\partial u}{\partial x} \right)_{x_R}^{\text{c}} = \frac{u_{N+1}^{\text{c}}-u_N}{h}$ into Eq.~\eqref{eq:robin_bc}, we obtain
\begin{equation}\label{eq:right boundary of robin(central)}
    u_{N+1}^{\text{c}} = \frac{(2\beta_R-h\alpha_R) u_N + 2hg_R}{2\beta_R + h\alpha_R}, \quad
    u_R^{\text{c}} = \frac{2\beta_R u_N + hg_R}{2\beta_R + h\alpha_R} \quad
    F_R^{\text{c}} = \frac{2(a\beta_R +b\alpha_R)u_N + (ah-2b)g_R}{2\beta_R + h\alpha_R}.
\end{equation}
Thus, the corresponding $N \times N$ ODE system is obtained with the coefficient matrix $A$ and source term vector $f$ given by
\begin{equation}\label{eq:robin_central_matrix}
    A = 
    \begin{bmatrix}
        \frac{2b}{h^2} - \frac{(2b + ah)(2\beta_L + h\alpha_L)}{2h^2(2\beta_L - h\alpha_L)} & \frac{a}{2h} - \frac{b}{h^2} & & & \\
        -\frac{a}{2h} - \frac{b}{h^2} & \frac{2b}{h^2} & \frac{a}{2h} - \frac{b}{h^2} & & \\
        & \ddots & \ddots & \ddots & \\
        & & -\frac{a}{2h} - \frac{b}{h^2} & \frac{2b}{h^2} & \frac{a}{2h} - \frac{b}{h^2} \\
        & & & -\frac{a}{2h} - \frac{b}{h^2} & \frac{2b}{h^2} - \frac{(2b - ah)(2\beta_R - h\alpha_R)}{2h^2(2\beta_R + h\alpha_R)} \\
    \end{bmatrix}, \quad
    f = 
    \begin{bmatrix}
        \frac{ah+2b}{h(2\beta_L - h\alpha_L)}g_L \\
        0 \\
        \vdots \\
        0 \\
        \frac{ah-2b}{h(2\beta_R + h\alpha_R)}g_R
    \end{bmatrix}.
\end{equation}

\begin{remark}
    For the central scheme, to avoid singularities in the boundary expressions given by Eqs.~\eqref{eq:left boundary of robin(central)} and \eqref{eq:right boundary of robin(central)}, the step size $h$ must be selected to satisfy $2\beta_L - h\alpha_L \neq 0$ and $ 2\beta_R + h\alpha_R \neq 0$.
\end{remark}

For the exponential scheme, we substitute $u_L^{\text{exp}} = \frac{u_0 e^{\lambda h/2} + u_1}{e^{\lambda h/2} + 1}$ and $\left( \frac{\partial u}{\partial x} \right)_{x_L}^{\text{exp}} = \frac{\lambda (u_1 - u_0) e^{\lambda h /2}}{e^{\lambda h} - 1}$ into Eq.~\eqref{eq:robin_bc}, leading to
\begin{equation}\label{eq:left boundary of robin(exp)}
\begin{aligned}
    u_L^{\text{exp}} &= \frac{u_1 \beta_L \lambda - g_L (e^{\lambda h/2}-1)}{\beta_L \lambda - \alpha_L (e^{\lambda h/2}-1)}, \quad
    u_0^{\text{exp}} = \frac{(\beta_L \lambda + \alpha_L (1-e^{-\lambda h/2})) u_1  - g_L (e^{\lambda h}-1)e^{-\lambda h/2}}{\beta_L \lambda - \alpha_L (e^{\lambda h/2}-1)}, \\
    F_L^{\text{exp}} &= \frac{a(\beta_L \lambda + \alpha_L) u_1 -  a g_L e^{\lambda h/2}}{\beta_L\lambda - \alpha_L(e^{\lambda h/2}-1)}.
\end{aligned}
\end{equation}
Substituting $u_R^{\text{exp}} = \frac{u_N e^{\lambda h/2} + u_{N+1}}{e^{\lambda h/2} + 1}$ and $\left( \frac{\partial u}{\partial x} \right)_{x_R}^{\text{exp}} = \frac{\lambda (u_{N+1} - u_N) e^{\lambda h /2}}{e^{\lambda h} - 1}$ into Eq.~\eqref{eq:robin_bc}, we get
\begin{equation}\label{eq:right boundary of robin(exp)}
\begin{aligned}
    u_{N+1}^{\text{exp}} &= \frac{(\beta_R \lambda - \alpha_R (e^{\lambda h/2}-1)) u_N  + g_R (e^{\lambda h}-1)e^{-\lambda h/2}}{\beta_R \lambda + \alpha_R (1-e^{-\lambda h/2})}, \quad
    u_R^{\text{exp}} = \frac{u_N \beta_R \lambda + g_R (1-e^{-\lambda h/2})}{\beta_R \lambda + \alpha_R (1-e^{-\lambda h/2})} \\
    F_R^{\text{exp}} &= \frac{a(\beta_R \lambda + \alpha_R) u_N - a g_R e^{-\lambda h/2}}{\beta_R\lambda + \alpha_R (1-e^{-\lambda h/2})}.
\end{aligned}
\end{equation}
Thus, the $N \times N$ ODE system for the exponential scheme is obtained with
\begin{equation}\label{eq:robin_exp_matrix}
\begin{aligned}
    A =& 
    \begin{bmatrix}
        \frac{a}{h} + \frac{2a}{h(e^{\lambda h} - 1)} &  - \frac{a}{h(e^{\lambda h} - 1)} & & &  \\
        -\frac{a}{h} - \frac{a}{h(e^{\lambda h} - 1)} & \frac{a}{h}+\frac{2a}{h(e^{\lambda h} - 1)} & -\frac{a}{h(e^{\lambda h} - 1)} & &  \\
        & \ddots & \ddots & \ddots & \\
        & & -\frac{a}{h} - \frac{a}{h(e^{\lambda h} - 1)} & \frac{a}{h} + \frac{2a}{h(e^{\lambda h} - 1)} & -\frac{a}{h(e^{\lambda h} - 1)} \\
        & & & -\frac{a}{h} - \frac{a}{h(e^{\lambda h} - 1)} & \frac{a}{h} + \frac{2a}{h(e^{\lambda h} - 1)} \\
    \end{bmatrix} \\
    &- 
    \begin{bmatrix}
         \frac{a \beta_L \lambda e^{\lambda h} + a \alpha_L (e^{\lambda h}- e^{\lambda h/2})}{h (e^{\lambda h}-1) \left[ \beta_L \lambda - \alpha_L (e^{\lambda h/2}-1) \right]} & & \\
         & \ddots &  \\
         & & \frac{a \beta_R \lambda - a \alpha_R (e^{\lambda h/2}-1)}{h (e^{\lambda h}-1) \left[ \beta_R \lambda + \alpha_R(1-e^{-\lambda h/2}) \right]}
    \end{bmatrix}, \quad
    f = 
    \begin{bmatrix}
        \frac{a e^{\lambda h/2}}{h\left[ \beta_L \lambda - \alpha_L(e^{\lambda h/2}-1) \right]}g_L \\
        \vdots \\
        - \frac{a e^{-\lambda h/2}}{h\left[ \beta_R \lambda + \alpha_R(1-e^{-\lambda h/2}) \right]}g_R
    \end{bmatrix}.
\end{aligned}
\end{equation}

\begin{remark}
    For the exponential scheme, to avoid singularities in the boundary expressions given by Eqs.~\eqref{eq:left boundary of robin(exp)} and ~\eqref{eq:right boundary of robin(exp)}, the step size $h$ must be selected to satisfy $\beta_L \lambda - \alpha_L(e^{\lambda h/2}-1) \neq 0$ and $\beta_R \lambda + \alpha_R(1 - e^{\lambda h/2}) \neq 0$.
\end{remark}

\subsubsection{Periodic boundary conditions}

For the periodic BCs, Eq.~\eqref{eq:periodic_bc} directly implies $u_0 = u_N$ and $u_{N+1} = u_1$. This yields $F_L = F_R$ for all flux construction schemes.

For the central scheme, the boundary flux are derived as
\begin{equation}
    F_L^{\text{c}} = F_R^{\text{c}}
     = a \frac{u_1 + u_N}{2} - b \frac{u_1-u_N}{h},
\end{equation}
and the corresponding $N \times N$ ODE system is given by
\begin{equation}\label{eq:periodci_central_matrix}
    A = 
    \begin{bmatrix}
        \frac{2b}{h^2} & \frac{a}{2h} - \frac{b}{h^2} & & & -\frac{a}{2h} - \frac{b}{h^2} \\
        -\frac{a}{2h} - \frac{b}{h^2} & \frac{2b}{h^2} & \frac{a}{2h} - \frac{b}{h^2} & & \\
        & \ddots & \ddots & \ddots & \\
        & & -\frac{a}{2h} - \frac{b}{h^2} & \frac{2b}{h^2} & \frac{a}{2h} - \frac{b}{h^2} \\
        \frac{a}{2h} - \frac{b}{h^2} & & & -\frac{a}{2h} - \frac{b}{h^2} & \frac{2b}{h^2} \\
    \end{bmatrix}, \quad
    f = 
    \begin{bmatrix}
        0 \\
        \vdots \\
        0
    \end{bmatrix}.
\end{equation}

For the exponential scheme, the relevant terms are
\begin{equation}
    F_L^{\text{exp}} = F_R^{\text{exp}} = \frac{a (u_N e^{\lambda h} - u_1)}{e^{\lambda h} - 1}.
\end{equation}
It is straightforward to obtain the corresponding $N \times N$ ODE system with
\begin{equation}\label{eq:periodci_exp_matrix}
    A = 
    \begin{bmatrix}
        \frac{a}{h} + \frac{2a}{h(e^{\lambda h} - 1)} & - \frac{a}{h(e^{\lambda h} - 1)} & & & -\frac{a}{h} - \frac{a}{h(e^{\lambda h} - 1)} \\
        -\frac{a}{h} - \frac{a}{h(e^{\lambda h} - 1)} & \frac{a}{h}+\frac{2a}{h(e^{\lambda h} - 1)} & -\frac{a}{h(e^{\lambda h} - 1)} & & \\
        & \ddots & \ddots & \ddots & \\
        & & -\frac{a}{h} - \frac{a}{h(e^{\lambda h} - 1)} & \frac{a}{h}+\frac{2a}{h(e^{\lambda h} - 1)} & -\frac{a}{h(e^{\lambda h} - 1)} \\
        -\frac{a}{h(e^{\lambda h} - 1)} & & & -\frac{a}{h} - \frac{a}{h(e^{\lambda h} - 1)} & \frac{a}{h} + \frac{2a}{h(e^{\lambda h} - 1)} \\
    \end{bmatrix}, \quad
    f = 
    \begin{bmatrix}
        0 \\
        \vdots \\
        0
    \end{bmatrix}.
\end{equation}

\section{Quantum circuits for different Boundary conditions}\label{sec:quantum circuits for different boundary conditions}

In this section, we analyze the matrix structure and construct the corresponding quantum circuits via the LCHS framework, based on the coefficient matrix $A$ of the ODE system derived from the FVM with various flux schemes under different boundary conditions.

\subsection{Matrix product operator representations of coefficient matrices}\label{subsec:mpos}

We introduce several key operators and their matrix product operator (MPO) representations to characterize the coefficient matrices generated by the FVM. We also present the corresponding time-evolution operators for these basic operators, which lay the foundation for constructing quantum circuits via the LCHS framework in the subsequent analysis.

We first consider the shift operators, which capture the subdiagonal and superdiagonal structures of the FVM coefficient matrices.
As defined in \cite{sato2024_hamiltonian}, the left and right shift operators are given by
\begin{equation}\label{eq:def of S^- and S^+}
    S^- : = \sum_{j=1}^{N-1} \ket{j - 1} \bra{j} =  \sum_{j=1}^{n} s_j^- = 
    \begin{bmatrix}
     0 &  1&  & \\
      &  0&  \ddots& \\
      &  &  \ddots &1 \\
      &  &  &0
    \end{bmatrix}_{N\times N}, \quad
    S^+ := \sum_{j=1}^{N-1} \ket{j} \bra{j + 1} = \sum_{j=1}^{n} s_j^+ =
    \begin{bmatrix}
     0 &  &  & \\
      1&  0&  & \\
      &  \ddots&  \ddots & \\
      &  &  1&0
    \end{bmatrix}_{N\times N},
\end{equation}
where the operators $s_j^-$ and $s_j^+$ take the forms
\begin{equation}
    s_j^- := I^{\otimes (n-j)} \otimes \sigma_{01} \otimes \sigma_{10}^{\otimes (j-1)}, \quad
    s_j^+ := I^{\otimes (n-j)} \otimes \sigma_{10} \otimes \sigma_{01}^{\otimes (j-1)},
\end{equation}
and the elementary $2\times2$ matrices are defined as
\begin{equation}
    \sigma_{01} := \begin{bmatrix} 0 & 1 \\ 0 & 0 \end{bmatrix}, \quad
    \sigma_{10} := \begin{bmatrix} 0 & 0 \\ 1 & 0 \end{bmatrix}, \quad
    I := \begin{bmatrix} 1 & 0 \\ 0 & 1 \end{bmatrix}.
\end{equation}

\begin{lemma}[\cite{sato2024_hamiltonian}]\label{lem:operator1}
The operator $e^{i\lambda} s_j^- + e^{-i\lambda} s_j^+$ admits the following form
\begin{equation}
\begin{aligned}
    e^{i\lambda} s_j^- + e^{-i\lambda} s_j^+ &= e^{i\lambda} I^{\otimes (n-j)} \otimes \sigma_{01} \otimes \sigma_{10}^{\otimes (j-1)} + e^{-i\lambda} I^{\otimes (n-j)} \otimes \sigma_{10} \otimes \sigma_{01}^{\otimes (j-1)} \\
    &= I^{\otimes (n-j)} \otimes Q_j(-\lambda) Z \otimes |1\rangle \langle 1|^{\otimes (j-1)} Q_j(-\lambda)^\dagger,
\end{aligned}
\end{equation}
where $Q_j(\lambda) := \left( \prod_{m=1}^{j-1} \mathrm{CNOT}_m^j \right) P_j(\lambda) H_j$.
Here, $H_j$ is the Hadamard gate acting on the $j$-th qubit, $P_j(\lambda)$ is the phase gate acting on the $j$-th qubit as $P_j(\lambda) := \begin{bmatrix} 1 & 0 \\ 0 & e^{i \lambda} \end{bmatrix}$, and $\mathrm{CNOT}_m^j$ is the $\mathrm{CNOT}$ gate acting on the $m$-th qubit controlled by the $j$-th qubit.

The corresponding time-evolution operator $\exp \left( -i \gamma \tau (e^{i \lambda} s_j^- + e^{-i \lambda} s_j^+) \right)$ can be explicitly implemented by
\begin{equation}
    W_j(\gamma \tau, \lambda) :=I^{\otimes (n - j)} \otimes Q_j(-\lambda) \mathrm{CRZ}_j^{[1,j-1]} (2 \gamma \tau) Q_j(-\lambda)^\dagger,
\end{equation}
where $\mathrm{CRZ}_j^{[1,j-1]}(\theta)$ is the multi-controlled $RZ$ gate ($e^{-i\theta Z/2}$) acting on the $j$-th qubit, controlled by $1,\cdots,(j-1)$-th qubits.
\end{lemma}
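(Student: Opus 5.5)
The plan is to reduce everything to a two-dimensional invariant subspace and then read off a spectral decomposition. Since both terms carry the factor $I^{\otimes(n-j)}$ on the top $n-j$ qubits, and $\exp$ of $I\otimes X$ is $I\otimes \exp(X)$, it suffices to work on the lowest $j$ qubits. I will write basis states there as $\ket{x_j}\ket{x_{j-1}\cdots x_1}$, with qubit $j$ leftmost, matching the tensor ordering in $s_j^\pm$.

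\emph{Step 1: the operator lives on a two-dimensional subspace.} Using $\sigma_{01}=\ket{0}\bra{1}$ and $\sigma_{10}=\ket{1}\bra{0}$, we have
\[
e^{i\lambda}\sigma_{01}\otimes\sigma_{10}^{\otimes(j-1)}+e^{-i\lambda}\sigma_{10}\otimes\sigma_{01}^{\otimes(j-1)}
= e^{i\lambda}\ket{0\,1^{j-1}}\bra{1\,0^{j-1}}+e^{-i\lambda}\ket{1\,0^{j-1}}\bra{0\,1^{j-1}} .
\]
This is Hermitian and supported on $\mathcal{V}=\mathrm{span}\{\ket{0\,1^{j-1}},\ket{1\,0^{j-1}}\}$, and it vanishes on $\mathcal{V}^\perp$. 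Its eigenvectors are
\[
\ket{\phi_\pm}=\tfrac{1}{\sqrt2}\bigl(\ket{0\,1^{j-1}}\pm e^{-i\lambda}\ket{1\,0^{j-1}}\bigr),
\]
with eigenvalues $\pm1$, so the operator equals $\ket{\phi_+}\bra{\phi_+}-\ket{\phi_-}\bra{\phi_-}$.

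\emph{Step 2: $Q_j(-\lambda)$ maps computational states onto this eigenbasis.} Apply $Q_j(-\lambda)=\bigl(\prod_m \mathrm{CNOT}_m^j\bigr)P_j(-\lambda)H_j$ to $\ket{x}\ket{1^{j-1}}$ with $x\in\{0,1\}$. The gate $H_j$ gives $\tfrac1{\sqrt2}(\ket0+(-1)^x\ket1)\ket{1^{j-1}}$. The phase gate $P_j(-\lambda)$ attaches $e^{-i\lambda}$ to the $\ket1$ branch. The CNOTs, which commute with one another so their order is irrelevant, flip every lower qubit exactly in the $\ket1$ branch, turning $\ket{1^{j-1}}$ into $\ket{0^{j-1}}$. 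The result is precisely $\ket{\phi_{(-1)^x}}$.

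Hence $Q_j(-\lambda)\bigl(Z\otimes\ket1\bra1^{\otimes(j-1)}\bigr)Q_j(-\lambda)^\dagger = \ket{\phi_+}\bra{\phi_+}-\ket{\phi_-}\bra{\phi_-}$. The only inputs not killed by $Z\otimes\ket1\bra1^{\otimes(j-1)}$ are the two states $\ket{x}\ket{1^{j-1}}$, and those are exactly the ones just computed. This proves the first identity. The only care needed is bookkeeping of the qubit ordering and of the phase sign convention. That bookkeeping is the main (mild) obstacle: one must confirm that $P(-\lambda)$, and not $P(\lambda)$, produces $e^{-i\lambda}$ in front of $\ket{1\,0^{j-1}}$ in $\ket{\phi_\pm}$, consistent with the $e^{i\lambda}$ coefficient on $s_j^-$.

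\emph{Step 3: the time evolution.} For unitary $Q$ and Hermitian $D$, $\exp(-i\theta QDQ^\dagger)=Q\exp(-i\theta D)Q^\dagger$. Take $D=Z\otimes\ket1\bra1^{\otimes(j-1)}$ and $\theta=\gamma\tau$. Because $D$ is block diagonal in the computational basis of the control qubits,
\[
\exp(-i\gamma\tau D)=I\otimes\bigl(I-\ket1\bra1^{\otimes(j-1)}\bigr)+e^{-i\gamma\tau Z}\otimes\ket1\bra1^{\otimes(j-1)}.
\]
This is exactly the multi-controlled $RZ(2\gamma\tau)=e^{-i\gamma\tau Z}$ on qubit $j$ controlled by qubits $1,\dots,j-1$, namely $\mathrm{CRZ}_j^{[1,j-1]}(2\gamma\tau)$. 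Tensoring with $I^{\otimes(n-j)}$ yields $W_j(\gamma\tau,\lambda)$. The case $j=1$ is consistent: there are no controls or CNOTs, and the operator reduces to the single-qubit Hermitian matrix $e^{i\lambda}\sigma_{01}+e^{-i\lambda}\sigma_{10}$, which equals $P(-\lambda)HZHP(-\lambda)^\dagger$.
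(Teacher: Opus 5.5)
Your proof is correct: the spectral decomposition on $\mathrm{span}\{|0\,1^{j-1}\rangle,|1\,0^{j-1}\rangle\}$, the check that $Q_j(-\lambda)$ maps $|x\rangle|1^{j-1}\rangle$ to $|\phi_{(-1)^x}\rangle$ (including the $P(-\lambda)$ sign), and the identity $\exp(-i\theta QDQ^\dagger)=Q\exp(-i\theta D)Q^\dagger$ with $\exp(-i\gamma\tau D)=\mathrm{CRZ}_j^{[1,j-1]}(2\gamma\tau)$ establish both claims. The paper gives no proof of its own and only cites Sato et al.; your argument is the standard Bell-basis diagonalization underlying that reference, done by the same basis-state verification the paper uses for Theorem~\ref{th:exp_sigma11}.
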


\begin{figure}[htbp]
    \centering
    \includegraphics[width=0.8\textwidth]{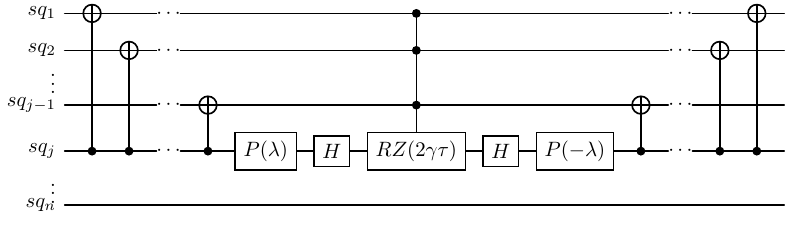}
    \caption{Quantum circuit for $W_j(\gamma \tau, \lambda)$.}
    \label{fig:W_j}
\end{figure}

Next, we introduce two projection operators that characterize the corner entries of the coefficient matrix under the Robin BCs:
\begin{equation}
    \sigma_{00} := \begin{bmatrix} 1 & 0 \\ 0 & 0 \end{bmatrix}, \quad
    \sigma_{11} := \begin{bmatrix} 0 & 0 \\ 0 & 1 \end{bmatrix}.
\end{equation}
It is straightforward to verify that their $n$-fold tensor products are diagonal projectors
\begin{equation}
    \sigma_{00}^{\otimes n} =
    \begin{bmatrix}
        1 & & & \\
        & 0 & & \\
        & & \ddots & \\
        & & & 0
    \end{bmatrix}_{N\times N}, \quad
    \sigma_{11}^{\otimes n} = 
    \begin{bmatrix}
        0 & & & \\
        & \ddots & & \\
        & & 0 & \\
        & & & 1
    \end{bmatrix}_{N\times N},
\end{equation}
which correspond to the top-left and bottom-right corner modifications in the FVM matrix under the Robin BCs.

\begin{theorem}\label{th:exp_sigma11}
The time-evolution operator $\exp \left( -i \gamma \tau \sigma_{11}^{\otimes n} \right)$ can be explicitly implemented by
\begin{equation}
    S_n^{(1)}(\gamma \tau) := \mathrm{CP}_n^{[1,n-1]} (-\gamma\tau),
\end{equation}
where $\mathrm{CP}_n^{[1,n-1]}(\theta)$ is the multi-controled phase gate acting on the $n$-th qubit controled by $1,\cdots,(n-1)$-th qubits.
\end{theorem}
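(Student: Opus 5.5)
Since $\sigma_{11}^{\otimes n}=\ket{1^n}\bra{1^n}$ is a rank-one projector, the proof reduces to a direct computation. The first step is to note that every power of a projector equals the projector itself, so $(\sigma_{11}^{\otimes n})^k=\sigma_{11}^{\otimes n}$ for $k\ge1$. Summing the exponential series then gives
\begin{equation*}
    \exp\left(-i\gamma\tau\,\sigma_{11}^{\otimes n}\right) = I^{\otimes n} - \sigma_{11}^{\otimes n} + e^{-i\gamma\tau}\,\sigma_{11}^{\otimes n}.
\end{equation*}
This operator is diagonal in the computational basis. It multiplies $\ket{1\cdots1}$ (the state with all $n$ qubits equal to $1$) by $e^{-i\gamma\tau}$ and leaves every other basis state unchanged.

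The second step is to compute the action of $\mathrm{CP}_n^{[1,n-1]}(\theta)$ on a basis state $\ket{x_n x_{n-1}\cdots x_1}$, using the paper's ordering in which qubit $1$ is the least significant. If some control $x_m$ with $m\le n-1$ equals $0$, the gate acts as the identity. If all controls equal $1$, the gate applies $P(\theta)=\mathrm{diag}(1,e^{i\theta})$ to qubit $n$. That produces the phase $e^{i\theta}$ exactly when $x_n=1$ as well. The gate is therefore $I^{\otimes n}+(e^{i\theta}-1)\ket{1^n}\bra{1^n}$.

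Setting $\theta=-\gamma\tau$ and comparing with the expression from the first step gives the claimed identity. The argument is symmetric under permuting qubits, so any choice of target among the $n$ qubits works, and the qubit-ordering convention does not matter. The only point needing care is checking that the sign of the phase matches the convention $P(\lambda)=\mathrm{diag}(1,e^{i\lambda})$ from Lemma~\ref{lem:operator1}. There is no real obstacle here. For $n=1$ the gate has no controls and reduces to the single-qubit phase gate $P(-\gamma\tau)$, which is consistent with the formula.
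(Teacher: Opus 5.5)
Your proof is correct and follows essentially the same route as the paper. Both arguments use idempotence of $\sigma_{11}^{\otimes n}$ to write $\exp(-i\gamma\tau\,\sigma_{11}^{\otimes n}) = I^{\otimes n} + (e^{-i\gamma\tau}-1)|1\rangle\langle 1|^{\otimes n}$, and then check that $\mathrm{CP}_n^{[1,n-1]}(-\gamma\tau)$ acts identically on the computational basis. Your case analysis is slightly more careful than the paper's displayed one: you correctly require the target bit $x_n=1$, in addition to all controls being $1$, for the phase $e^{-i\gamma\tau}$ to appear.
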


\begin{proof}
Note that $\exp\left(-i\gamma\tau\,\sigma_{11}^{\otimes n}\right) = I^{\otimes n} + \left(e^{-i\gamma\tau}-1\right)\ket{1}\bra{1}^{\otimes n}$ and
\begin{equation}
    \mathrm{CP}_n^{[1,n-1]}(-\gamma\tau)\ket{x_1x_2\cdots x_n} =
    \begin{cases}
    e^{-i\gamma\tau}\ket{x_1x_2\cdots x_n}, & \text{if } x_1=x_2=\cdots=x_{n-1}=1, \\
    \ket{x_1x_2\cdots x_n}, & \text{otherwise}.
    \end{cases}
\end{equation}
coincide on the complete orthonormal basis $\left\{ \ket{x_1x_2\cdots x_n} | x_i \in\{0,1\} \right\}$ of the $2^n$-dimensional $n$-qubit Hilbert space, hence they are identical.
\end{proof}

\begin{proposition}\label{prop:exp_sigma00}
    Since $\sigma_{00} = X\sigma_{11}X$ and $\exp(-i\gamma\tau UAU^\dagger) = U\exp(-i\gamma\tau A)U^\dagger$ holds for any unitary $U$ and Hermitian $A$, the time evolution operator $\exp \left( -i \gamma \tau \sigma_{00}^{\otimes n} \right)$ can be explicitly implemented by
\begin{equation}
    S_n^{(0)}(\gamma \tau) := X^{\otimes n} S_n^{(1)}(\gamma \tau) X^{\otimes n} = X^{\otimes n} \mathrm{CP}_n^{[1,n-1]} (-\gamma\tau) X^{\otimes n}.
\end{equation}
\end{proposition}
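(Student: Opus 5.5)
The plan is to reduce the statement to Theorem~\ref{th:exp_sigma11} by a unitary conjugation, exactly as the statement itself suggests. First I will verify the single-qubit identity $\sigma_{00} = X\sigma_{11}X$ by direct computation: $X$ swaps $\ket{0}$ and $\ket{1}$, so $X\ket{1}\bra{1}X = \ket{0}\bra{0}$. Taking tensor products and using the mixed-product property then gives
\begin{equation}
    \sigma_{00}^{\otimes n} = (X\sigma_{11}X)^{\otimes n} = X^{\otimes n}\,\sigma_{11}^{\otimes n}\,X^{\otimes n},
\end{equation}
where $X^{\otimes n}$ is unitary and Hermitian, so $(X^{\otimes n})^\dagger = X^{\otimes n}$.

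Second, I will justify the conjugation rule $\exp(-i\gamma\tau UAU^\dagger) = U\exp(-i\gamma\tau A)U^\dagger$. The route is the power series: $(UAU^\dagger)^k = UA^kU^\dagger$ because $U^\dagger U = I$, and the series converges absolutely, so it can be summed term by term. Hermiticity of $A$ is not needed for this identity. In our case $A = \sigma_{11}^{\otimes n}$ is a diagonal projector, so the identity certainly applies.

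Combining the two steps with Theorem~\ref{th:exp_sigma11} gives
\begin{equation}
    \exp\left(-i\gamma\tau\sigma_{00}^{\otimes n}\right) = X^{\otimes n}\exp\left(-i\gamma\tau\sigma_{11}^{\otimes n}\right)X^{\otimes n} = X^{\otimes n}\,\mathrm{CP}_n^{[1,n-1]}(-\gamma\tau)\,X^{\otimes n} = S_n^{(0)}(\gamma\tau),
\end{equation}
which is the claim.

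As a sanity check, I would also verify the result on basis states, in the same way as the proof of Theorem~\ref{th:exp_sigma11}. The operator $X^{\otimes n}$ maps $\ket{0\cdots0}$ to $\ket{1\cdots1}$, which acquires the phase $e^{-i\gamma\tau}$, and every other basis state is left unchanged. This matches $I^{\otimes n} + (e^{-i\gamma\tau}-1)\ket{0}\bra{0}^{\otimes n}$.

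There is no real obstacle in this argument. The only point needing care is that the conjugating unitary must be applied to all $n$ qubits, including the target qubit of the multi-controlled phase gate.
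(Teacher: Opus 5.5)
Your proposal is correct and follows the paper's own argument: conjugate by $X^{\otimes n}$ using $\sigma_{00}^{\otimes n} = X^{\otimes n}\sigma_{11}^{\otimes n}X^{\otimes n}$ and $\exp(-i\gamma\tau UAU^\dagger)=U\exp(-i\gamma\tau A)U^\dagger$, then invoke Theorem~\ref{th:exp_sigma11}. Your power-series justification of the conjugation rule and your basis-state check make explicit what the paper only asserts.
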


\begin{figure}[htbp]
\centering
\begin{minipage}[t]{0.48\textwidth}
    \centering
    \includegraphics[height=0.15\textheight, keepaspectratio]{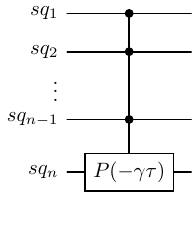}
    \caption{Quantum circuit for $S_n^{(1)}(\gamma \tau)$.}
    \label{fig:S_n^(1)}
\end{minipage}
\begin{minipage}[t]{0.48\textwidth}
    \centering
    \includegraphics[height=0.15\textheight, keepaspectratio]{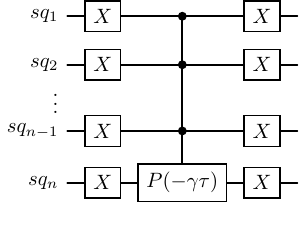}
    \caption{Quantum circuit for $S_n^{(0)}(\gamma \tau)$.}
    \label{fig:S_n^(0)}
\end{minipage}
\end{figure}

Finally, we introduce an operator to represent the top-right and bottom-left corner entries of the coefficient matrix under the the periodic BCs.


\begin{lemma}[\cite{sato2024_hamiltonian}]\label{lem:operator2}
The operator $e^{i \lambda}\sigma_{10}^{\otimes n} + e^{-i \lambda}\sigma_{01}^{\otimes n}$ admits the following form
\begin{equation}
    e^{i \lambda}\sigma_{10}^{\otimes n} + e^{-i \lambda}\sigma_{01}^{\otimes n}
    = Q_n(\lambda) \left( I \otimes X^{\otimes (n-1)} \right) Z \otimes |1\rangle \langle 1|^{\otimes (n-1)} \left( I \otimes X^{\otimes (n-1)} \right) Q_n(\lambda)^\dagger.
\end{equation}
The corresponding time-evolution operator $\exp \left( -i \gamma \tau \left( e^{i \lambda}\sigma_{10}^{\otimes n} + e^{-i \lambda}\sigma_{01}^{\otimes n} \right) \right)$
can be explicitly implemented by
\begin{equation}
    V_n(\gamma \tau, \lambda) := Q_n(\lambda) \left(I\otimes X^{\otimes (n-1)}\right) \mathrm{CRZ}_n^{[1,n-1]} (2 \gamma \tau) \left(I\otimes X^{\otimes (n-1)}\right) Q_n(\lambda)^\dagger.
\end{equation}
\end{lemma}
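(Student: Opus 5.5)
The plan is to compare spectral decompositions. The operator $G := e^{i\lambda}\sigma_{10}^{\otimes n} + e^{-i\lambda}\sigma_{01}^{\otimes n}$ is supported on a two-dimensional subspace, and I will show that the right-hand side has the same spectral decomposition. Since $\sigma_{10}^{\otimes n} = \ket{1^n}\bra{0^n}$ and $\sigma_{01}^{\otimes n} = \ket{0^n}\bra{1^n}$, we have
\begin{equation*}
G = e^{i\lambda}\ket{1^n}\bra{0^n} + e^{-i\lambda}\ket{0^n}\bra{1^n}.
\end{equation*}
This operator is Hermitian and vanishes on the orthogonal complement of $\mathrm{span}\{\ket{0^n},\ket{1^n}\}$. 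On that span it has eigenvectors $\ket{\pm} := (\ket{0^n} \pm e^{i\lambda}\ket{1^n})/\sqrt{2}$ with eigenvalues $\pm 1$. Hence
\begin{equation*}
G = \ket{+}\bra{+} - \ket{-}\bra{-}.
\end{equation*}

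Next I simplify the middle factor. Since $X\ket{1}\bra{1}X = \ket{0}\bra{0}$, we get
\begin{equation*}
\left(I\otimes X^{\otimes(n-1)}\right)\left(Z\otimes \ket{1}\bra{1}^{\otimes(n-1)}\right)\left(I\otimes X^{\otimes(n-1)}\right) = Z\otimes \ket{0}\bra{0}^{\otimes(n-1)}.
\end{equation*}
This equals $\ket{0}\ket{0^{n-1}}\bra{0}\bra{0^{n-1}} - \ket{1}\ket{0^{n-1}}\bra{1}\bra{0^{n-1}}$, where the first tensor factor is qubit $n$, following the ordering convention of Lemma~\ref{lem:operator1}. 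Conjugating by the unitary $Q_n(\lambda)$ therefore gives
\begin{equation*}
Q_n(\lambda)\ket{0,0^{n-1}}\bra{\cdot}Q_n(\lambda)^\dagger - Q_n(\lambda)\ket{1,0^{n-1}}\bra{\cdot}Q_n(\lambda)^\dagger.
\end{equation*}
It thus suffices to compute $Q_n(\lambda)$ on these two basis states. Applying $H_n$ sends qubit $n$ to $(\ket{0}\pm\ket{1})/\sqrt{2}$, and $P_n(\lambda)$ then produces $(\ket{0}\pm e^{i\lambda}\ket{1})/\sqrt{2}$. The CNOTs controlled by qubit $n$ copy qubit $n$ onto qubits $1,\dots,n-1$, which are all $\ket{0}$. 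The results are exactly $\ket{+}$ and $\ket{-}$, so the two spectral decompositions coincide and the first identity follows.

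For the time-evolution operator, I use $\exp(-i\theta UAU^\dagger) = U\exp(-i\theta A)U^\dagger$, as in Proposition~\ref{prop:exp_sigma00}, with $U = Q_n(\lambda)(I\otimes X^{\otimes(n-1)})$. It then remains to check that $\exp\left(-i\gamma\tau\, Z\otimes\ket{1}\bra{1}^{\otimes(n-1)}\right) = \mathrm{CRZ}_n^{[1,n-1]}(2\gamma\tau)$. The generator is a projector onto the all-ones control states tensored with $Z$, so the exponential is $e^{-i\gamma\tau Z}$ on that subspace and the identity elsewhere. This matches the multi-controlled $RZ(2\gamma\tau) = e^{-i\gamma\tau Z}$, which can be verified on the computational basis as in the proof of Theorem~\ref{th:exp_sigma11}.

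The only real obstacle is bookkeeping of the qubit ordering. Qubit $n$ must correspond to the leftmost tensor factor (the most significant bit), consistent with $s_j^{\pm}$ in Lemma~\ref{lem:operator1}. Otherwise the $\ket{0^{n-1}}$ control pattern and the CNOT targets would not line up.
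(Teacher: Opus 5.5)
Your proof is correct. The paper gives no proof of its own here; the lemma is imported from \cite{sato2024_hamiltonian}. Your argument, which exhibits $Q_n(\lambda)|0\rangle|0^{n-1}\rangle$ and $Q_n(\lambda)|1\rangle|0^{n-1}\rangle$ as the $\pm 1$ eigenvectors $(|0^n\rangle \pm e^{i\lambda}|1^n\rangle)/\sqrt{2}$, is the same Bell/GHZ-basis diagonalization that underlies that reference. You then obtain the exponential from the conjugation identity of Proposition~\ref{prop:exp_sigma00} plus a basis-state check as in Theorem~\ref{th:exp_sigma11}, and you handle correctly both the sign ($Q_n(\lambda)$ here, versus $Q_j(-\lambda)$ in Lemma~\ref{lem:operator1}) and the qubit ordering.
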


\begin{figure}[htbp]
    \centering
    \includegraphics[width=0.8\textwidth]{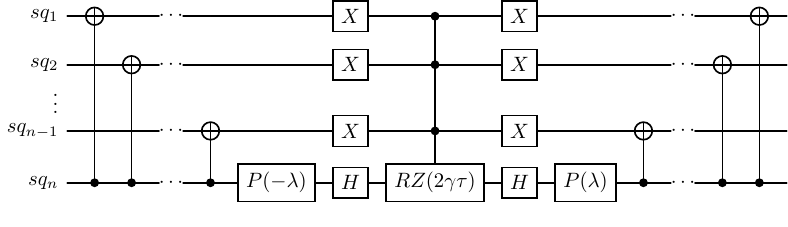}
    \caption{Quantum circuit for $V_n(\gamma \tau, \lambda)$.}
    \label{fig:V_n}
\end{figure}

\subsection{Robin boundary conditions}

Building on the operator representations in Section~\ref{subsec:mpos} and the explicit matrix forms of the ODE system under the Robin BCs (Eqs.~\eqref{eq:robin_central_matrix} and \eqref{eq:robin_exp_matrix}), we rewrite the coefficient matrix $A$ in a unified operator form as
\begin{equation}
    A = 2\alpha I - \left( (\alpha + \beta)S^- + (\alpha - \beta)S^+  + s_0 \sigma_{00}^{\otimes n} + s_1 \sigma_{11}^{\otimes n} \right),
\end{equation}
which decomposes $A$ into its real and imaginary parts
\begin{equation}\label{eq:L and H under robin}
\begin{aligned}
    L &= \frac{A + A^\dagger}{2} = 2\alpha I - \left( \alpha S^- + \alpha S^+  + s_0 \sigma_{00}^{\otimes n} + s_1 \sigma_{11}^{\otimes n} \right), \\
    H &= \frac{A - A^\dagger}{2i} = \beta \left( e^{i\pi/2}S^- + e^{-i\pi/2}S^+ \right),
\end{aligned}
\end{equation}
where $\alpha \geq 0, \beta, s_0, s_1$ are parameters depending on the flux construction scheme.
For the LCHS implementation, the minimum eigenvalue of $L$ is essential.
To simplify the subsequent analysis, provided that $\alpha \neq 0$, we define the matrix
\begin{equation}
    B := S^- + S^+ + \mu_0 \sigma_{00}^{\otimes n} + \mu_1 \sigma_{11}^{\otimes n} = 
    \begin{bmatrix}
    \mu_0 & 1 & & \\
    1 & 0 & 1 & & \\
    & \ddots & \ddots & \ddots & \\
    & & 1 & 0 & 1 & \\
    & & & 1 & \mu_1
    \end{bmatrix}_{N\times N},
\end{equation}
where the parameters are given by $\mu_0 = s_0/\alpha$ and $\mu_1 = s_1/\alpha$.
Accordingly, $L$ simplifies to $L = 2\alpha I - \alpha B$ with the relationship $\lambda_{\min}(L) = \alpha(2-\lambda_{\max}(B))$, where the maximum and minimum eigenvalues of $B$ are analyzed in detail in Appendix~\ref{sec:eigenvalue_of_B}.

\subsubsection{Central scheme}
For the central scheme, the parameters are given by
\begin{equation}
\begin{gathered}
    \alpha = \frac{b}{h^2}, \quad
    \beta = -\frac{a}{2h}, \quad
    s_0 = \frac{(2b + ah)(2\beta_L + h\alpha_L)}{2h^2(2\beta_L - h\alpha_L)}, \quad
    s_1 = \frac{(2b - ah)(2\beta_R - h\alpha_R)}{2h^2(2\beta_R + h\alpha_R)}.
\end{gathered}
\end{equation}

For $b=0$, the governing equation reduces to a pure advection equation, leading to $\alpha = 0$ and $L = - \text{diag}(s_0, \cdots, s_1)$ with $s_0 = \frac{a(2\beta_L + h\alpha_L)}{2h(2\beta_L - h\alpha_L)}, s_1 = -\frac{a(2\beta_R - h\alpha_R)}{2h(2\beta_R + h\alpha_R)}$.
It is straightforward to verify that under the Robin BCs, $\lambda_{\text{min}}(L) = -\max\{s_0, s_1\}$.
Specifically, this identity simplifies to $\lambda_{\text{min}}(L) = -\frac{|a|}{2h}$ for both Dirichlet BCs ($\beta_L = \beta_R = 0$) and Neumann BCs ($\alpha_L = \alpha_R = 0$).

For $b\neq0$, we have
\begin{equation}
    \mu_0 = \frac{s_0}{\alpha} = \frac{(2b + ah)(2\beta_L + h\alpha_L)}{2b(2\beta_L - h\alpha_L)}, \quad
    \mu_1 = \frac{s_1}{\alpha} = \frac{(2b - ah)(2\beta_R - h\alpha_R)}{2b(2\beta_R + h\alpha_R)}.
\end{equation}
Under the Robin BCs, $\lambda_{\text{min}}(L)$ is determined by the specific values of $\mu_0$ and $\mu_1$.
In particular, under the Dirichlet BCs, the coefficients simplify to $\mu_0 = -1 - \frac{ah}{2b}$ and $\mu_1 = -1 + \frac{ah}{2b}$.
Following the analysis in Appendix~\ref{subsec:poly case}, if $(\mu_0, \mu_1)$ lies below the curve of Eq.~\eqref{eq:lam=2}, then we have
\begin{equation}
    \lambda_\text{min}(L) \approx 2\alpha - \alpha \cos \left( \frac{\pi}{N - 1 + \frac{1}{1-\mu_0} + \frac{1}{1-\mu_1} } \right) \geq 0.
\end{equation}
If instead $(\mu_0, \mu_1)$ lies above the curve of Eq.~\eqref{eq:lam=2},
defining $\mu = \max\bigl\{|\mu_0|, |\mu_1|\bigr\}$, we derive
\begin{equation}
    \lambda_\text{min}(L) \approx 2\alpha - \alpha\left( \mu + \frac{1}{\mu} \right) = - \frac{a^2}{2(|a|h+2b)}.
\end{equation}
Similarly, under the Neumann BCs, we have $\mu_0 = 1 + \frac{ah}{2b}, \quad \mu_1 = 1 - \frac{ah}{2b}$.
Note that $\mu_0 + \mu_1 = 2$, so the only solution to Eq.~\eqref{eq:lam=2} is $(\mu_0, \mu_1) = (1,1)$, corresponding to $a=0$.
If $a=0$, then $\mu_0 = \mu_1 = 1$ and $\lambda_\text{min}(L) = 0$.
If $a\neq0$, we still obtain $\lambda_\text{min}(L) \approx 2\alpha - \alpha\left( \mu + \frac{1}{\mu} \right) = - \frac{a^2}{2(|a|h+2b)}$.

\subsubsection{Exponential scheme}
For the exponential scheme, the parameters read
\begin{equation}
    \alpha= \frac{a(e^{\lambda h}+1)}{2h(e^{\lambda h}-1)}, \quad
    \beta = -\frac{a}{2h}, \quad
    \mu_0 = \frac{2 \beta_L \lambda e^{\lambda h} + 2 \alpha_L (e^{\lambda h/2}-1) e^{\lambda h/2}}{(e^{\lambda h}+1) \left[ \beta_L \lambda - \alpha_L (e^{\lambda h/2}-1) \right]}, \quad
    \mu_1 = \frac{2 \beta_R \lambda - 2 \alpha_R (e^{\lambda h/2}-1)}{ (e^{\lambda h}+1) \left[ \beta_R \lambda + \alpha_R(1-e^{-\lambda h/2}) \right]}.
\end{equation}
Under the Robin BCs, $\lambda_{\text{min}}(L)$ is determined by the specific values of $\mu_0$ and $\mu_1$.
In particular, we derive explicit results for the Dirichlet and Neumann BCs as two special cases.

First, under the Dirichlet BCs, we directly derive $\mu_0 = \mu_1 = -\frac{2e^{\lambda h/2}}{e^{\lambda h}+1}$, which yields
\begin{equation}
    \lambda_\text{min}(L) \approx 2\alpha - \alpha \cos \left( \frac{\pi}{N - 1 + \frac{1}{1-\mu_0} + \frac{1}{1-\mu_1} } \right) \geq 0.
\end{equation}

Next, under the Neumann BCs, we have $\mu_0 = \frac{2e^{\lambda h}}{e^{\lambda h}+1}, \mu_1 = \frac{2}{e^{\lambda h}+1}$, which still gives $\mu_0 + \mu_1 = 2$.
If $a=0$, then $\mu_0 = \mu_1 = 1$ and $\lambda_\text{min}(L)= 0$; if $a>0$, we obtain
\begin{equation}
    \lambda_\text{min}(L) \approx 2\alpha - \alpha \left( \mu_0 + \frac{1}{\mu_0} \right)
    = - \frac{a(e^{\lambda h}-1)}{4he^{\lambda h}};
\end{equation}
and if $a<0$, it follows that
\begin{equation}
    \lambda_\text{min}(L) \approx 2\alpha - \alpha \left( \mu_1 + \frac{1}{\mu_1} \right) = - \frac{a(e^{\lambda h}-1)}{4h}.
\end{equation}

\subsubsection{Quantum circuit for the Robin boundary conditions}\label{subsubsec:quantum circuit under the robin boundary conditions}

From the definitions of $S^-$ and $S^+$ in Eq.~\eqref{eq:def of S^- and S^+}, and the commutation relations (detailed in Appendix~\ref{sec:trotter error}, Eq.~\eqref{eq:first order commutator relationships})
\begin{equation}
\begin{aligned}
    \left[ e^{i\lambda} s_{j,n}^- + e^{-i\lambda} s_{j,n}^+, e^{i\lambda} s_{1,n}^- + e^{-i\lambda} s_{1,n}^+ \right] &\neq 0, \quad j >1, \\
    \left[ e^{i\lambda} s_{j,n}^- + e^{-i\lambda} s_{j,n}^+, e^{i\lambda} s_{j',n}^- + e^{-i\lambda} s_{j',n}^+ \right] &= 0, \quad j \geq j' >1,
\end{aligned} 
\end{equation}
we define the following Hamiltonian components
\begin{equation}\label{eq:def of H1 and H2}
    \mathcal{H}_1(\lambda, n) :=  e^{i\lambda} s_{1,n}^- + e^{-i\lambda} s_{1,n}^+, \quad
    \mathcal{H}_2(\lambda, n) := \sum_{j=2}^{n} \left(e^{i\lambda} s_{j,n}^- + e^{-i\lambda} s_{j,n}^+\right).
\end{equation}
Substituting Eq.~\eqref{eq:def of H1 and H2} into Eq.~\eqref{eq:L and H under robin}, we rewrite $L$ (suppose $\alpha \neq 0$, the case that $\alpha = 0$ is analyzed in Appendix~\ref{subsec:robin with alpha=0}) and $H$ as
\begin{equation}
\begin{aligned}
    L &= 2\alpha I - \alpha B = 2\alpha I - \left( \alpha \mathcal{H}_1 \left( 0, n \right) + \alpha \mathcal{H}_2 \left( 0, n \right) + s_0 \sigma_{00}^{\otimes n} + s_1 \sigma_{11}^{\otimes n} \right), \\
    H &=  \beta \mathcal{H}_1 \left( \frac{\pi}{2}, n \right) + \beta \mathcal{H}_2 \left( \frac{\pi}{2}, n  \right).
\end{aligned}
\end{equation}
The commutator between $L$ and $H$ is computed as
\begin{equation}
\begin{aligned}
    \left[L, H \right]
    =& 2i \alpha \beta \sum_{j=1}^n I^{\otimes(n-j)} \otimes \left( \sigma_{00} \otimes \sigma_{11}^{\otimes(j-1)} - \sigma_{11} \otimes \sigma_{00}^{\otimes(j-1)} \right) \\
    &- \beta \left( s_0 \sigma_{00}^{\otimes(n-1)} - s_1 \sigma_{11}^{\otimes(n-1)} \right) \otimes \left( e^{i \pi/2} \sigma_{01} - e^{-i \pi/2} \sigma_{10} \right),
\end{aligned}
\end{equation}
which is non-vanishing. We therefore apply the second-order Trotter-Suzuki (TS) formula to $U_j(\tau) = \exp\left(-i(H + r_j L)\tau \right)$ in the LCHS framework, obtaining
\begin{equation}\label{eq:trotter of Uj under robin}
    U_j(\tau) \approx \exp\left( -i H\frac{\tau}{2} \right) \exp\left( -ir_j L\tau \right) \exp\left( -iH\frac{\tau}{2} \right)
\end{equation}
with the Trotter error bounded by \cite{Childs2021_Theory}
\begin{equation}
\begin{aligned}
    e_{j,1}
    &:= \left\| U_j(\tau) - \exp\left( -i H\frac{\tau}{2} \right)
    \exp\left( -ir_j L\tau \right)
    \exp\left( -iH\frac{\tau}{2} \right) \right\| \\
    &\leq \frac{r_j^2 \tau^3}{12} \left\| \left[ L, [L, H] \right] \right\| + \frac{|r_j|\tau^3}{24} \left\| \left[ H, [H, L] \right] \right\| \\
    & = \frac{|r_j\beta|\tau^3}{12} \left( |r_j|(2\alpha^2 + s^2) + |\beta| \sqrt{\alpha^2 + s^2} \right),
\end{aligned}
\end{equation}
where $s := \max \left\{ |s_0|, |s_1| \right\}$ and we utilize the results
\begin{equation}
    \left\| \left[L, [L, H] \right] \right\| = |\beta| \left( 2\alpha^2 + s^2 \right), \quad
    \left\| \left[H, [H, L] \right] \right\| = 2 \beta^2 \sqrt{\alpha^2 + s^2}
\end{equation}
in Eqs.~\eqref{eq:norm of [L,[L,H]]} and \eqref{eq:norm of [H,[H,L]]}.

Next apply the second-order TS formula to $\exp\left( -iH\frac{\tau}{2} \right)$, yielding
\begin{equation}\label{eq:trotter of H under robin}
    \exp\left( -iH\frac{\tau}{2} \right)
    \approx \exp\left(-i \beta \mathcal{H}_1\left(\frac{\pi}{2},n\right) \frac{\tau}{4} \right) 
    \exp\left(-i \beta \mathcal{H}_2\left(\frac{\pi}{2},n\right)\frac{\tau}{2} \right) 
    \exp\left(-i \beta \mathcal{H}_1\left(\frac{\pi}{2},n\right) \frac{\tau}{4} \right)
\end{equation}
with the Trotter error bounded by
\begin{equation}
\begin{aligned}
    e_{j,2}
    \leq& \frac{|\beta|^3 \tau^3}{96} \left\| \left[ \mathcal{H}_2\left(\frac{\pi}{2}, n\right), \left[ \mathcal{H}_2\left(\frac{\pi}{2}, n\right), \mathcal{H}_1\left(\frac{\pi}{2}, n\right) \right] \right] \right\|
    + \frac{|\beta|^3 \tau^3}{192} \left\| \left[ \mathcal{H}_1\left(\frac{\pi}{2}, n\right), \left[ \mathcal{H}_1\left(\frac{\pi}{2}, n\right), \mathcal{H}_2\left(\frac{\pi}{2}, n\right) \right] \right] \right\| \\
    \leq& \frac{|\beta|^3 \tau^3}{96} \cdot 4 + \frac{|\beta|^3 \tau^3}{192} \cdot 4 = \frac{|\beta|^3 \tau^3}{16},
\end{aligned}
\end{equation}
where we employ the bounds
\begin{equation}
\begin{aligned}
    \left\| \left[ \mathcal{H}_2\left(\frac{\pi}{2}, n\right), \left[ \mathcal{H}_2\left(\frac{\pi}{2}, n\right), \mathcal{H}_1\left(\frac{\pi}{2}, n\right) \right] \right] \right\|
    &\leq 4, \\
    \left\| \left[ \mathcal{H}_1\left(\frac{\pi}{2}, n\right), \left[ \mathcal{H}_1\left(\frac{\pi}{2}, n\right), \mathcal{H}_2\left(\frac{\pi}{2}, n\right) \right] \right] \right\|
    &= 4 \cos \left( \frac{\pi}{2^{n-1}+1} \right)
    \leq 4,
\end{aligned}
\end{equation}
established in Eqs.~\eqref{eq:norm of [H2,[H2,H1]]} and \eqref{eq:norm of [H1,[H1,H2]]}.

Finally, using the fact that $\mathcal{H}_2(\lambda, n)$, $\sigma_{00}^{\otimes n}$, and $\sigma_{11}^{\otimes n}$ mutually commute (see Eq.~\eqref{eq:first order commutator relationships}), we apply the second-order TS formula to $\exp\left(-ir_jL\tau\right)$ and notice that $r_j = j\Delta r - \widetilde{R}$, giving
\begin{equation}\label{eq:trotter of L under robin}
\begin{aligned}
    \exp\left( -i r_j L\tau \right)
    \approx& \exp\left( -i 2\alpha r_j\tau \right) 
    \exp\left( i\alpha r_j \mathcal{H}_1\left(0,n\right) \frac{\tau}{2} \right)
    \exp\left(i\alpha r_j \mathcal{H}_2\left(0,n\right) \tau \right) \\
    &\exp\left(i s_0 r_j \sigma_{00}^{\otimes n} \tau \right)
    \exp\left(i s_1 r_j \sigma_{11}^{\otimes n} \tau \right) 
    \exp\left( i\alpha r_j \mathcal{H}_1\left(0,n\right) \frac{\tau}{2} \right) \\
    =& \exp\left(i 2\alpha \widetilde{R}\tau \right) 
    \exp\left(-i2\alpha \Delta r \tau j\right) 
    \exp\left(-i \alpha \widetilde{R} \mathcal{H}_1\left(0,n\right) \frac{\tau}{2} \right) \\ 
    &\exp\left(-i (-\alpha\Delta r) \mathcal{H}_1\left(0,n\right) \frac{\tau}{2}j \right)
    \exp\left(-i \alpha \widetilde{R} \mathcal{H}_2\left(0,n\right) \tau \right) \\
    &\exp\left(-i (-\alpha\Delta r) \mathcal{H}_2\left(0,n\right) \tau j \right)
    \exp\left(-i s_0 \widetilde{R} \sigma_{00}^{\otimes n} \tau \right) \\
    &\exp\left(-i (-s_0\Delta r) \sigma_{00}^{\otimes n} \tau j \right)
    \exp\left(-i s_1 \widetilde{R} \sigma_{11}^{\otimes n} \tau \right) 
    \exp\left(-i (-s_1\Delta r) \sigma_{11}^{\otimes n} \tau j \right) \\
    &\exp\left(-i \alpha \widetilde{R} \mathcal{H}_1\left(0,n\right) \frac{\tau}{2} \right) 
    \exp\left(-i (-\alpha\Delta r) \mathcal{H}_1\left(0,n\right) \frac{\tau}{2}j \right),
\end{aligned}
\end{equation}
with the Trotter error bounded by
\begin{equation}
\begin{aligned}
    e_{j,3}
    \leq& \frac{|r_j|^3 \tau^3}{12} \left\| \left[ \alpha \mathcal{H}_2(0, n) + s_0 \sigma_{00}^{\otimes n} + s_1 \sigma_{11}^{\otimes n}, [\alpha \mathcal{H}_2(0, n) + s_0 \sigma_{00}^{\otimes n} + s_1 \sigma_{11}^{\otimes n}, \alpha \mathcal{H}_1(0, n)] \right] \right\| \\
    &+ \frac{|r_j|^3 \tau^3}{24} \left\| \left[ \alpha \mathcal{H}_1(0, n), [\alpha \mathcal{H}_1(0, n), \alpha \mathcal{H}_2(0, n) + s_0 \sigma_{00}^{\otimes n} + s_1 \sigma_{11}^{\otimes n}] \right] \right\| \\
    \leq& \begin{cases}
        \frac{\alpha^3|r_j|^3\tau^3}{2}, &|\lambda|_{\max}(B) \leq 2; \\
        \frac{|r_j|^3\tau^3}{12} \left( 4 \alpha^3 + \alpha \sqrt{\alpha^4 + s^4 - \alpha^2 s^2} + \alpha^3 \left( \frac{s}{\alpha} + \frac{\alpha}{s} \right) \right), &|\lambda|_{\max}(B) > 2.
    \end{cases}
\end{aligned}
\end{equation}
Here we use the facts that if $|\lambda|_{\max}(B) \leq 2$ then
\begin{equation}
\begin{aligned}
    \left\| \left[ \alpha \mathcal{H}_2(0, n) + s_0 \sigma_{00}^{\otimes n} + s_1 \sigma_{11}^{\otimes n}, [\alpha \mathcal{H}_2(0, n) + s_0 \sigma_{00}^{\otimes n} + s_1 \sigma_{11}^{\otimes n}, \alpha \mathcal{H}_1(0, n)] \right] \right\|
    &\leq 4 \alpha^3, \\
    \left\| \left[ \alpha \mathcal{H}_1(0, n), [\alpha \mathcal{H}_1(0, n), \alpha \mathcal{H}_2(0, n) + s_0 \sigma_{00}^{\otimes n} + s_1 \sigma_{11}^{\otimes n}] \right] \right\|
    &\leq 4 \alpha^3,
\end{aligned}
\end{equation}
from Eqs.~\eqref{eq:norm of [H2,[H2,H1]] in L when abs_lam <= 2} and \eqref{eq:norm of [H1,[H1,H2]] in L when abs_lam <= 2}; if $|\lambda|_{\max}(B) > 2$ then
\begin{equation}
\begin{aligned}
    \left\| \left[ \alpha \mathcal{H}_2(0, n) + s_0 \sigma_{00}^{\otimes n} + s_1 \sigma_{11}^{\otimes n}, [\alpha \mathcal{H}_2(0, n) + s_0 \sigma_{00}^{\otimes n} + s_1 \sigma_{11}^{\otimes n}, \alpha \mathcal{H}_1(0, n)] \right] \right\|
    &\leq 4 \alpha^3 + \alpha \sqrt{\alpha^4 + s^4 - \alpha^2 s^2}, \\
    \left\| \left[ \alpha \mathcal{H}_1(0, n), [\alpha \mathcal{H}_1(0, n), \alpha \mathcal{H}_2(0, n) + s_0 \sigma_{00}^{\otimes n} + s_1 \sigma_{11}^{\otimes n}] \right] \right\|
    &\leq 2 \alpha^3 \left( \frac{s}{\alpha} + \frac{\alpha}{s} \right),
\end{aligned}
\end{equation}
from Eqs.~\eqref{eq:norm of [H2,[H2,H1]] in L when abs_lam > 2} and \eqref{eq:norm of [H1,[H1,H2]] in L when abs_lam > 2}.

Via Lemmas~\ref{lem:select oracle}, \ref{lem:operator1}, Theorem~\ref{th:exp_sigma11} and Proposition~\ref{prop:exp_sigma00} and combining Eqs.~\eqref{eq:trotter of Uj under robin}, \eqref{eq:trotter of H under robin} and \eqref{eq:trotter of L under robin}, we construct the quantum circuit for the select oracle under the Robin BCs as
\begin{equation}\label{eq:select under robin}
\begin{aligned}
    \mathrm{SEL_R}(\tau)
    =& \sum_{j=0}^{M - 1}|j\rangle\langle j| \otimes U_j(\tau) \\ 
    \approx& W_1\left(\frac{\beta\tau}{4},\frac{\pi}{2}\right)
    \left[ \prod_{k=2}^{n} W_k\left(\frac{\beta\tau}{2},\frac{\pi}{2}\right) \right]  
    W_1\left(\frac{\beta\tau}{4},\frac{\pi}{2}\right) G\left(2\alpha \widetilde{R}\tau\right)
    \left[ \prod_{j = 1}^m P_j\left(-2^j \alpha \Delta r\tau\right) \right]
    W_1\left(\frac{\alpha \widetilde{R}\tau}{2},0\right) \\
    & \times \prod_{j = 1}^m \left[\mathrm{C} W_1\left(-\frac{2^{j-1} \alpha \Delta r\tau}{2},0 \right) \right]_{[1,n]}^j
    \left[ \prod_{k=2}^n W_k\left(\alpha \widetilde{R}\tau,0\right) \right]
    \prod_{j=1}^{m} \left[ \mathrm{C} \left( \prod_{k=2}^n W_k\left(-2^{j-1} \alpha \Delta r\tau,0 \right) \right) \right]_{[1,n]}^j \\
    & \times S_n^{(0)}\left(s_0 \widetilde{R}\tau \right)  
    \prod_{j=1}^m \left[\mathrm{C} S_n^{(0)}\left(-2^{j-1} s_0 \Delta r \tau \right) \right]_{[1,n]}^j
    S_n^{(1)}\left(s_1 \widetilde{R}\tau \right)
    \prod_{j=1}^m \left[\mathrm{C} S_n^{(1)}\left(-2^{j-1} s_1 \Delta r\tau \right) \right]_{[1,n]}^j \\
    & \times W_1\left(\frac{\alpha \widetilde{R}\tau}{2},0\right)
    \prod_{j = 1}^m \left[\mathrm{C} W_1\left(-\frac{2^{j-1} \alpha \Delta r\tau}{2},0 \right) \right]_{[1,n]}^j
    W_1\left(\frac{\beta\tau}{4},\frac{\pi}{2}\right)
    \left[ \prod_{k=2}^{n} W_k\left(\frac{\beta\tau}{2},\frac{\pi}{2}\right) \right]  
    W_1\left(\frac{\beta\tau}{4},\frac{\pi}{2}\right),
\end{aligned}
\end{equation}
where $G(\theta):=e^{i\theta}I^{\otimes(m+n)}$ denotes the global phase gate.

\begin{remark}\label{remark:trotter error under robin}
Denote $\widetilde{U}_j(\tau)$ as the second-order TS decomposition of $U_j(\tau)$.
Under the Robin BCs, the total Trotter error $e_j = e_{j,1} + 2e_{j,2} + e_{j,3}$ from $\widetilde{U}_j(\tau)$ to $U_j(\tau)$ satisfies $e_j \leq (w_0 + |r_j| w_1 + |r_j|^2 w_2 + |r_j|^3 w_3) \tau^3$ with
\begin{equation}
\begin{aligned}
    w_0 &= \frac{|\beta|^3}{8}, \quad
    w_1 = \frac{\beta^2}{12} \sqrt{\alpha^2 + s^2}, \quad
    w_2 = \frac{|\beta|}{12} (2\alpha^2 + s^2), \\
    w_3 &=
    \begin{cases}
        \frac{\alpha^3}{2}, &|\lambda|_{\max}(B) \leq 2; \\
        \frac{1}{12} \left( 4 \alpha^3 + \alpha \sqrt{\alpha^4 + s^4 - \alpha^2 s^2} + \alpha^3 \left( \frac{s}{\alpha} + \frac{\alpha}{s} \right) \right), &|\lambda|_{\max}(B) > 2.
    \end{cases}
\end{aligned}
\end{equation}
\end{remark}

Denote the approximate quantum circuit at the end of Eq.~\eqref{eq:select under robin} as $\widetilde{\mathrm{SEL}}_{\mathrm{R}}(\tau)$, we establish the following circuit error bound for implementing the select oracle $\mathrm{SEL_R}(\tau)$ by $\widetilde{\mathrm{SEL}}_{\mathrm{R}}(\tau)$.

\begin{theorem}\label{th:circuit error under robin}
Under the Robin BCs, the circuit error between $\sum_{j=0}^{M-1} c_j U_j(\tau)$ and $\sum_{j=0}^{M-1} c_j \widetilde{U}_j(\tau)$ satisfies
\begin{equation}
\begin{aligned}
    \left\| \sum_{j=0}^{M-1} c_j U_j(\tau) - \sum_{j=0}^{M-1} c_j \widetilde{U}_j(\tau) \right\| 
    \leq \sum_{j=0}^{M-1} |c_j| e_j
    < 2\tau^3 \sum_{k=0}^3 I_k(\gamma, \delta) w_k
\end{aligned}
\end{equation}
with
\begin{equation}
    I_k(\gamma,\delta)
    := \frac{1}{\sqrt{2\pi}} \int_{\mathbb{R}} |\hat{f}(r;\gamma,\delta)| |r|^k \, \mathrm{d} r
    = \frac{2 e^{\delta - \frac{1}{4\gamma^2}}}{\pi} \int_{0}^{+ \infty} \frac{r^k}{1+r^2} e^{-\frac{r^2}{4\gamma^2}} \, \mathrm{d} r
    = \frac{e^\delta}{\pi} \Gamma\left(\frac{k+1}{2}\right) \Gamma\left(\frac{1 - k}{2}, \frac{1}{4\gamma^2}\right),
\end{equation}
and by direct calculation and estimation we have
\begin{equation}
\begin{gathered}
    I_0(\gamma, \delta) = e^\delta \mathrm{erfc}\left(\frac{1}{2 \gamma}\right), \quad
    I_1(\gamma, \delta) = \frac{e^\delta}{\pi} \left( 2\ln \gamma + \frac{1}{4\gamma^2} +2 \ln 2 - \gamma_E + \mathcal{O}\left( \frac{1}{\gamma^4} \right) \right), \\
    I_2(\gamma, \delta) = e^\delta \left( \frac{2 \gamma}{\sqrt{\pi}} e^{-\frac{1}{4\gamma^2}} - \mathrm{erfc} \left( \frac{1}{2\gamma} \right) \right), \quad
    I_3(\gamma, \delta) = \frac{e^\delta}{\pi} \left( 4 \gamma^2 e^{-\frac{1}{4\gamma^2}} -2\ln \gamma - \frac{1}{4\gamma^2} -2 \ln 2 + \gamma_E + \mathcal{O}\left( \frac{1}{\gamma^4} \right) \right),
\end{gathered}
\end{equation}
where $\gamma_E = 0.5772\ldots$ is the Euler–Mascheroni constant.
\end{theorem}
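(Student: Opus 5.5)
The plan has three parts: a triangle-inequality bound for the operator sum, a comparison of the resulting quadrature sum with the integrals $I_k$, and the evaluation of those integrals.

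\textbf{Step 1 (operator bound).} By the triangle inequality,
\[
\Bigl\|\sum_j c_j (U_j(\tau)-\widetilde U_j(\tau))\Bigr\|\le \sum_j |c_j|\,\|U_j(\tau)-\widetilde U_j(\tau)\| = \sum_j |c_j| e_j .
\]
Inserting the bound of Remark~\ref{remark:trotter error under robin} gives
\[
\sum_j |c_j| e_j \le \tau^3\sum_{k=0}^3 w_k \sum_{j} |c_j|\,|r_j|^k .
\]
Here $|c_j| = \frac{\Delta r}{\sqrt{2\pi}}|\hat f(r_j;\gamma,\delta)|$ for the trapezoidal rule. The problem therefore reduces to showing $\frac{\Delta r}{\sqrt{2\pi}}\sum_j |\hat f(r_j)||r_j|^k < 2 I_k(\gamma,\delta)$ for $k=0,1,2,3$.

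\textbf{Step 2 (sum versus integral).} Write $g_k(r) := |\hat f(r)|\,|r|^k = \sqrt{2/\pi}\, e^{\delta-1/(4\gamma^2)} \frac{|r|^k}{1+r^2}e^{-r^2/(4\gamma^2)}$. This function is even, and on $[0,\infty)$ it is unimodal: for $k=0$ it is decreasing, and for $k\ge1$ it increases and then decreases. The nodes $r_j=-\widetilde R+j\Delta r$ are symmetric midpoints, so I would bound the half sum over $r_j>0$.

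For a unimodal function, a standard Riemann-sum comparison gives that $\Delta r \sum_{r_j>0} g_k(r_j)$ is at most $\int_0^\infty g_k + \Delta r \max g_k$. The task is then to control $\Delta r\max g_k$ by $\int_0^\infty g_k$.

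This is the main obstacle, and it needs $\Delta r$ to be small relative to the width of $g_k$. I would use the step-size condition $\Delta r\le \pi/(\tfrac12\|L\|_{L^1}+\log(\cdot))$, together with $\log(64e^{3\delta/2}/15\varepsilon_{\mathrm{quad}})\ge \log(16)+3\delta/2$, to get $\Delta r<\pi/\log 16 <1.2$ or smaller. I would then check case by case that $\Delta r\max g_k \le \int_0^\infty g_k$:
\begin{itemize}
\item For $k=0$, $\max g_0$ is attained at $r=0$, and $\int_0^\infty g_0 \gtrsim$ its value times a width of order $\min(1,\gamma)$.
\item For $k\ge1$, the maximum is at $r\sim 1$ or $r\sim\gamma$, and the width is of the same order.
\end{itemize}
If these direct estimates turn out to be tight, the fallback is the monotonicity of midpoint sums: midpoint sums of a function that is convex on each tail underestimate the integral. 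I would combine that with a crude bound near the mode, absorbing everything into the factor $2$.

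Combining the two halves by evenness gives the strict inequality $<2I_k$. The strictness comes from $\varepsilon_{\mathrm{quad}}>0$, which forces $\Delta r$ to be bounded strictly below the threshold.

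\textbf{Step 3 (the closed forms).} Substitute $u=r^2$ and use the identity
\[
\int_0^\infty \frac{u^{(k-1)/2}}{1+u}e^{-u/(4\gamma^2)}\,du = \Gamma\!\left(\tfrac{k+1}{2}\right)e^{1/(4\gamma^2)}\,\Gamma\!\left(\tfrac{1-k}{2},\tfrac{1}{4\gamma^2}\right),
\]
which is the Tricomi $U$-function representation. This yields the stated Gamma-function formula.

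The individual values then follow from standard facts:
\begin{itemize}
\item $k=0$: use $\Gamma(1/2,x)=\sqrt\pi\,\mathrm{erfc}(\sqrt x)$.
\item $k=2$: use the recurrence $\Gamma(-1/2,x)=2x^{-1/2}e^{-x}-2\Gamma(1/2,x)$.
\item $k=1$: use $\Gamma(0,x)=E_1(x)=-\gamma_E-\ln x+x+\mathcal O(x^2)$ with $x=1/(4\gamma^2)$, giving $2\ln\gamma+2\ln2-\gamma_E+\tfrac1{4\gamma^2}+\mathcal O(\gamma^{-4})$.
\item $k=3$: use $\Gamma(-1,x)=e^{-x}/x - E_1(x)$.
\end{itemize}
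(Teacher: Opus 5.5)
Your argument is correct. Its skeleton is the paper's: the triangle inequality, then the Trotter bound with $|c_j|=\Delta r\,|\hat f(r_j;\gamma,\delta)|/\sqrt{2\pi}$. The difference is in how the quadrature sum is compared with $I_k$.

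The paper only writes the sum as $\approx\frac{1}{\sqrt{2\pi}}\int_{-R}^{R}|\hat f|\,|r|^k\,\mathrm{d}r$ and enlarges the domain to $\mathbb{R}$. It then inserts the factor $2$ as unexplained slack for that ``$\approx$'', and it merely asserts the closed forms. Your route instead proves the factor $2$:
\begin{itemize}
\item the unimodal midpoint comparison $\Delta r\sum_{r_j>0}g_k(r_j)\le\int_0^\infty g_k+\Delta r\max g_k$, combined with $\Delta r\max g_k<\int_0^\infty g_k$, makes the factor $2$ a genuine inequality;
\item your Step 3 correctly reproduces all four formulas, via the Tricomi/incomplete-Gamma identity, $\Gamma(\tfrac12,x)=\sqrt{\pi}\,\mathrm{erfc}(\sqrt{x})$, the two recurrences and the expansion of $E_1$.
\end{itemize}

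The price is a hypothesis the statement leaves implicit: the trapezoidal step-size condition tied to the LCHS choice of $\gamma$. The paper's own experiments use $\Delta r=2R/M\approx1.79$ (and larger), which lies outside the regime your argument covers, although the heuristic bound appears to survive there.

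When you make the width estimates quantitative, note that $\Delta r<1.2$ alone is not enough for large $\delta$. There $\gamma\approx\delta^{-1/2}$, so the width of $g_k$ is only $\mathcal{O}(\gamma)$. You must use the $3\delta/2$ term in the step-size bound, which gives $\Delta r/\gamma\le0.73$ uniformly over admissible parameters. With the worst admissible $\varepsilon_{\mathrm{lchs}}=0.9027$ and $\varepsilon_{\mathrm{quad}}=4/15$, my rough numerical checks give:
\begin{itemize}
\item the tightest case is $k=0$ near $\delta\approx0.35$, where $\Delta r\,g_0(0)/\int_0^\infty g_0\approx0.77$;
\item for $k\ge1$ the ratio stays below roughly $0.45$.
\end{itemize}
So the plan closes with room to spare. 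Strictness then comes from this slack, not from $\varepsilon_{\mathrm{quad}}>0$.
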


\begin{proof}
Via Remark~\ref{remark:trotter error under robin} we directly get
\begin{equation}
    \sum_{j=0}^{M-1} |c_j| e_j
    \leq \sum_{j=0}^{M-1} \frac{\Delta r |\hat{f}  (r_j;\gamma,\delta)|}{\sqrt{2\pi}} \sum_{k=0}^3 |r_j|^k w_k \tau^3
    \approx \sum_{k=0}^3 \frac{w_k\tau^3}{\sqrt{2\pi}} \int_{-R}^R |\hat{f}(r;\gamma,\delta)| \cdot |r|^k \, \mathrm{d} r
    \leq \tau^3 \sum_{k=0}^3 I_k w_k
    < 2\tau^3 \sum_{k=0}^3 I_k w_k.
\end{equation}
\end{proof}

\begin{remark}\label{remark:time steps}
A standard approach to mitigate the Trotter error is to decompose the total evolution time $\sum_{k=1}^r \alpha_k \tau$ into $r$ successive steps with duration $\alpha_k\tau$ ($\alpha_k > 0$ for all $k$), and apply the corresponding evolution operators iteratively. By the triangle inequality, we derive
\begin{equation}
    \left\| \sum_{j=0}^{M-1} c_j U_j \left( \sum_{k=1}^r \alpha_k \tau \right) - \sum_{j=0}^{M-1} c_j \prod_{k=1}^r \widetilde{U}_j(\alpha_k \tau) \right\|
    \leq \sum_{k=1}^r \alpha_k^3 \sum_{j=0}^{M-1} |c_j| \left\| U_j(\tau) - \widetilde{U}_j(\tau) \right\|.
\end{equation}
Specifically, setting $\alpha_k = 1/r$ (uniform step partitioning), we obtain
\begin{equation}
    \left\| \sum_{j=0}^{M-1} c_j U_j(\tau) - \sum_{j=0}^{M-1} c_j \left( \widetilde{U}_j \left( \frac{\tau}{r} \right) \right)^r \right\|
    \leq \frac{1}{r^2} \sum_{j=0}^{M-1} |c_j| \left\| U_j(\tau) - \widetilde{U}_j(\tau) \right\|.
\end{equation}
\end{remark}

\subsection{Periodic boundary conditions}

Similar to the Robin BCs, we rewrite the coefficient matrix of the ODE system under the periodic BCs (Eqs.~\eqref{eq:periodci_central_matrix} and ~\eqref{eq:periodci_exp_matrix}) in a unified operator form as
\begin{equation}
    A = 2\alpha I - \left( (\alpha + \beta)(S^- + \sigma_{10}^{\otimes n}) + (\alpha - \beta)(S^+ + \sigma_{01}^{\otimes n}) \right),
\end{equation}
where $\alpha,\beta$ are defined as $\alpha = \frac{b}{h^2}, \beta = -\frac{a}{2h}$ for the central scheme, and $\alpha = \frac{a(e^{\lambda h}+1)}{2h(e^{\lambda h}-1)}$, $\beta = -\frac{a}{2h}$ for the exponential scheme.

Decomposes $A$ into its real and imaginary parts as
\begin{equation}\label{eq:L and H under periodic}
\begin{aligned}
    L &= \frac{A + A^\dagger}{2} = 2\alpha I - \alpha \left( S^- + S^+ + \sigma_{10}^{\otimes n} + \sigma_{01}^{\otimes n} \right), \\
    H &= \frac{A - A^\dagger}{2i} = \beta \left( e^{i\pi/2} S^- + e^{-i\pi/2} S^+ + e^{i\pi/2} \sigma_{10}^{\otimes n} + e^{-i\pi/2} \sigma_{01}^{\otimes n} \right),
\end{aligned}
\end{equation}
and by direct calculation, the eigenvalue of $L$ are
\begin{equation}
    \lambda_k(L) = 2\alpha - 2\alpha \cos \left( \frac{2(k-1) \pi}{N} \right), \quad
    k = 1,\cdots,N,
\end{equation}
which implies $\lambda_\text{min}(L)=0$.

\subsubsection{Quantum circuit for the periodic boundary conditions}\label{subsubsec:quantum circuit under the periodic boundary conditions}

For simplicity, we fist define the Hamiltonian component associated with periodic BCs as
\begin{equation}\label{eq:def of H3}
    \mathcal{H}_3(\lambda, n)
    := e^{i \lambda}\sigma_{10}^{\otimes n}
    + e^{-i \lambda}\sigma_{01}^{\otimes n}.
\end{equation}
Substituting Eqs.~\eqref{eq:def of H1 and H2} and \eqref{eq:def of H3} into Eq.~\eqref{eq:L and H under periodic}, $L$ and $H$ take the compact form
\begin{equation}
\begin{aligned}
    L &= 2\alpha I - \alpha \left( \mathcal{H}_1 \left(0,n\right) + \mathcal{H}_2 \left(0,n\right) + \mathcal{H}_3 \left(0,n\right) \right), \\
    H &=  \beta \left( \mathcal{H}_1 \left( \frac{\pi}{2}, n \right) + \mathcal{H}_2 \left( \frac{\pi}{2}, n \right) + \mathcal{H}_3 \left( \frac{\pi}{2}, n \right) \right).
\end{aligned}
\end{equation}
Here, we restrict our attention to the case $\alpha \neq 0$, while the scenario $\alpha = 0$ is analyzed separately in Appendix~\ref{subsec:periodic with alpha=0}.

A key simplification arises from the cyclic shift operator
\begin{equation}
    S := \sigma_{10}^{\otimes n} + \sum_{j=1}^{n} s_j^- = 
    \begin{bmatrix}
        0 & 1 & & \\
        & 0 & \ddots & \\
        & & \ddots & 1 \\
        1 & & & 0
    \end{bmatrix}_{N\times N},
\end{equation}
which satisfies $L = 2\alpha I - \alpha(S + S^\dagger)$ and $H = i \beta (S - S^\dagger)$.
Direct computation yields
\begin{equation}
    \left[ L, H \right] = -i \alpha \beta \left[ S + S^\dagger, S - S^\dagger \right] = 0,
\end{equation}
so $U_j(\tau)$ of the periodic BCs exactly factorizes as
\begin{equation}\label{eq:trotter of Uj under periodic}
    U_j(\tau) = \exp\left(-i(H + r_j L)\tau\right) = \exp\left(-iH\tau\right) \exp\left(-ir_j L\tau\right).
\end{equation}

Using the commutativity of $\mathcal{H}_2(\lambda,n)$ and $\mathcal{H}_3(\lambda,n)$ (cf.~Eq.~\eqref{eq:first order commutator relationships}), we apply the second-order TS formula to $\exp\left(-iH\tau\right)$ and get
\begin{equation}\label{eq:trotter of H under periodic}
\begin{aligned}
    \exp\left(-iH\tau\right)
    \approx& \exp\left(-i \beta \mathcal{H}_1\left(\frac{\pi}{2},n\right) \frac{\tau}{2} \right) 
    \exp\left(-i \beta \mathcal{H}_2\left(\frac{\pi}{2},n\right)\tau\right) \\
    &\exp\left(-i \beta \mathcal{H}_3\left(\frac{\pi}{2},n\right)\tau\right)
    \exp\left(-i \beta \mathcal{H}_1\left(\frac{\pi}{2},n\right) \frac{\tau}{2} \right),
\end{aligned}
\end{equation}
with the Trotter error bounded by
\begin{equation}
    e_{j,1}
    \leq \frac{|\beta|^3 \tau^3}{12} \left\| \left[ \mathcal{H}_2+\mathcal{H}_3, \left[ \mathcal{H}_2+\mathcal{H}_3, \mathcal{H}_1 \right] \right] \right\|_{\pi/2}
    + \frac{|\beta|^3 \tau^3}{24} \left\| \left[ \mathcal{H}_1, \left[ \mathcal{H}_1, \mathcal{H}_2+\mathcal{H}_3 \right] \right] \right\|_{\pi/2}
    \leq \frac{|\beta|^3 \tau^3}{2},
\end{equation}
where we use the facts that both double commutator norms equal $4$ from Eqs.~\eqref{eq:norm of [H2+H3,[H2+H3,H1]]} and \eqref{eq:norm of [H1,[H1,H2+H3]]} and the subscript $\pi/2$ denotes evaluation at $\lambda=\pi/2$.

Next, applying the second-order TS formula to $\exp\left(-ir_j L\tau\right)$ we get
\begin{equation}\label{eq:trotter of L under periodic}
\begin{aligned}
    \exp\left(-ir_jL\tau\right)
    \approx& \exp\left(-i2 \alpha r_j\tau\right) 
    \exp\left(i \alpha r_j \mathcal{H}_1(0,n) \frac{\tau}{2} \right) 
    \exp\left(i \alpha r_j \mathcal{H}_2(0,n) \tau \right)
    \exp\left(i \alpha r_j \mathcal{H}_3(0,n) \tau \right)
    \exp\left(i \alpha r_j \mathcal{H}_1(0,n) \frac{\tau}{2} \right) \\
    =& \exp\left(i 2\alpha \widetilde{R}\tau \right) 
    \exp\left(-i2\alpha \Delta r \tau j\right) 
    \exp\left(-i \alpha \widetilde{R} \mathcal{H}_1(0,n) \frac{\tau}{2} \right) 
    \exp\left(-i (-\alpha\Delta r) \mathcal{H}_1(0,n) \frac{\tau}{2}j \right) \\
    &\exp\left(-i \alpha \widetilde{R} \mathcal{H}_2(0,n) \tau \right) 
    \exp\left(-i (-\alpha\Delta r) \mathcal{H}_2(0,n) \tau j \right)
    \exp\left(-i \alpha \widetilde{R} \mathcal{H}_3(0,n) \tau \right) \\
    &\exp\left(-i (-\alpha\Delta r) \mathcal{H}_3(0,n) \tau j \right)
    \exp\left(-i \alpha \widetilde{R} \mathcal{H}_1(0,n) \frac{\tau}{2} \right) 
    \exp\left(-i (-\alpha\Delta r) \mathcal{H}_1(0,n) \frac{\tau}{2}j \right),
\end{aligned}
\end{equation}
the corresponding Trotter error satisfies
\begin{equation}
\begin{aligned}
    e_{j,2}
    \leq& \frac{\alpha^3 |r_j|^3 \tau^3}{12} \left\| \left[ \mathcal{H}_2+\mathcal{H}_3, \left[ \mathcal{H}_2+\mathcal{H}_3, \mathcal{H}_1 \right] \right] \right\|_{0}
    + \frac{\alpha^3 |r_j|^3 \tau^3}{24} \left\| \left[ \mathcal{H}_1, \left[ \mathcal{H}_1, \mathcal{H}_2+\mathcal{H}_3 \right] \right] \right\|_{0}
    = \frac{\alpha^3 |r_j|^3 \tau^3}{2},
\end{aligned}
\end{equation}
where the final equality uses the same norm bound $\|\cdot\|_0 = 4$ from Eqs.~\eqref{eq:norm of [H2+H3,[H2+H3,H1]]} and \eqref{eq:norm of [H1,[H1,H2+H3]]}.

Using Lemmas~\ref{lem:select oracle}, \ref{lem:operator1} and \ref{lem:operator2} and combining Eqs.~\eqref{eq:trotter of Uj under periodic}, \eqref{eq:trotter of H under periodic} and \eqref{eq:trotter of L under periodic}, the quantum circuit for the select oracle under the periodic BCs reads
\begin{equation}\label{eq:select under periodic}
\begin{aligned}
    \mathrm{SEL_P}(\tau)
    =& \sum_{j=0}^{M - 1}|j\rangle\langle j|\otimes U_j(\tau) \\ 
    \approx& W_1\left(\frac{\gamma\tau}{2},\frac{\pi}{2}\right)
    \left[ \prod_{k=2}^{n} W_k \left(\gamma\tau,\frac{\pi}{2}\right) \right]
    V_n\left(\gamma\tau, \frac{\pi}{2}\right) 
    W_1\left(\frac{\gamma\tau}{2},\frac{\pi}{2}\right)
    G\left(2\alpha \widetilde{R}\tau\right)
    \left[ \prod_{j=1}^m P_j\left(-2^j \alpha \Delta r\tau\right) \right] \\
    & \times W_1\left(\frac{\alpha \widetilde{R}\tau}{2},0\right)
    \prod_{j = 1}^m \left[\mathrm{C} W_1\left(-\frac{2^{j-1} \alpha \Delta r\tau}{2},0 \right) \right]_{[1,n]}^j
    \left[ \prod_{k=2}^n W_k \left(\alpha \widetilde{R}\tau,0\right) \right] \\
    & \times \prod_{j=1}^{m} \left[ \mathrm{C} \left( \prod_{k=2}^n W_k\left(-2^{j-1} \alpha \Delta r\tau,0 \right) \right) \right]_{[0,n]}^j
    V_n\left(\alpha \widetilde{R} \tau, 0\right)
    \prod_{j = 1}^m \left[\mathrm{C}V_n \left(-2^{j-1}\alpha \Delta r \tau, 0\right)\right]_{[0,n]}^j \\
    & \times W_1\left(\frac{\alpha \widetilde{R}\tau}{2},0\right)
    \prod_{j = 1}^m \left[\mathrm{C} W_1\left(-\frac{2^{j-1} \alpha \Delta r\tau}{2},0 \right) \right]_{[1,n]}^j.
\end{aligned}
\end{equation}

\begin{remark}\label{remark:trotter error under periodic}
Under the periodic BCs, the total Trotter error $e_j = e_{j,1} + e_{j,2}$ from $\widetilde{U}_j(\tau)$ to $U_j(\tau)$ satisfies $e_j \leq (\alpha^3 |r_j|^3 + |\beta|^3) \tau^3/2$.
\end{remark}

Using analogous Robin-case estimation, we establish the circuit error bound for implementing the select oracle $\mathrm{SEL_P}(\tau)$ by the approximate quantum circuit $\widetilde{\mathrm{SEL}}_{\mathrm{P}}(\tau)$ under the periodic BCs.

\begin{theorem}\label{th:circuit error under periodic}
Under the periodic BCs, the circuit error between $\sum_{j=0}^{M-1} c_j U_j(\tau)$ and $\sum_{j=0}^{M-1} c_j \widetilde{U}_j(\tau)$ satisfies
\begin{equation}
    \left\| \sum_{j=0}^{M-1} c_j U_j(\tau) - \sum_{j=0}^{M-1} c_j \widetilde{U}_j(\tau) \right\| 
    < I_0(\gamma, \delta) |\beta|^3 \tau^3
    + I_3(\gamma, \delta) \alpha^3 \tau^3,
\end{equation}
where $I_0(\gamma, \delta)$ and $I_3(\gamma, \delta)$ are defined in Theorem~\ref{th:circuit error under robin}.
\end{theorem}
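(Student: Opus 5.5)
The argument mirrors the proof of Theorem~\ref{th:circuit error under robin}, with the Trotter bound of Remark~\ref{remark:trotter error under periodic} in place of Remark~\ref{remark:trotter error under robin}.

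First, the triangle inequality gives
\[
\Bigl\| \sum_{j=0}^{M-1} c_j \bigl(U_j(\tau) - \widetilde{U}_j(\tau)\bigr) \Bigr\| \leq \sum_{j=0}^{M-1} |c_j|\, e_j .
\]
For this step to use only the two Trotter errors, I need $U_j(\tau)=e^{-iH\tau}e^{-ir_jL\tau}$ to hold exactly. This follows from $[L,H]=0$, via Eq.~\eqref{eq:trotter of Uj under periodic}. Since each factor is unitary, the error of the product of the two second-order TS approximations is bounded by $e_{j,1}+e_{j,2}$. Remark~\ref{remark:trotter error under periodic} then gives $e_j \leq (\alpha^3|r_j|^3 + |\beta|^3)\tau^3/2$.

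Next, substitute $c_j = \Delta r\, \hat f(r_j;\gamma,\delta)/\sqrt{2\pi}$ and split the sum into two pieces. The first is
\[
\frac{|\beta|^3\tau^3}{2}\cdot\frac{\Delta r}{\sqrt{2\pi}}\sum_j |\hat f(r_j)| ,
\]
and the second is
\[
\frac{\alpha^3\tau^3}{2}\cdot\frac{\Delta r}{\sqrt{2\pi}}\sum_j |\hat f(r_j)|\,|r_j|^3 .
\]
Each sum is a trapezoidal (midpoint-node) Riemann sum for $\frac{1}{\sqrt{2\pi}}\int_{-R}^R |\hat f(r)||r|^k\,\mathrm{d}r$ with $k=0,3$. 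Each such integral is at most $I_k(\gamma,\delta)$, since the integrand is nonnegative and extending the domain to $\mathbb{R}$ only increases it.

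The final step uses the factor $1/2$ in front as slack. The claimed bound has coefficient $1$ on $I_0$ and $I_3$, while the estimate gives $\tfrac12$ times (quadrature sum). So it suffices to show that each discrete sum is at most twice (strictly less than) the corresponding integral over $\mathbb{R}$; this yields the strict inequality $<I_0|\beta|^3\tau^3 + I_3\alpha^3\tau^3$.

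This comparison of the discrete sum with the integral is the main obstacle. The Robin proof simply wrote "$\approx$" here, and I would make it at least as rigorous.
\begin{itemize}
\item For $k=0$, I would use the normalization estimate stated earlier: $|\alpha_{\hat f,R,\Delta r} - \alpha_{\hat f,\infty}| \leq \varepsilon_{\mathrm{lchs}}/(1+2\pi) + \varepsilon_{\mathrm{quad}}e^{-(\|L\|_{L^1}+\delta)/2}$. For the admissible small $\varepsilon$'s this right-hand side is far below $\alpha_{\hat f,\infty}=I_0$, so the sum is $<2I_0$.
\item For $k=3$, the function $|\hat f(r)||r|^3 = C e^{-r^2/4\gamma^2} |r|^3/(1+r^2)$ is even and unimodal on $r\ge0$. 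The midpoint sum of a unimodal nonnegative function exceeds its integral by at most $\Delta r$ times its maximum. The maximum is $\mathcal{O}(e^\delta\gamma)$, while $I_3 \sim 4e^\delta\gamma^2/\pi$. With $\Delta r$ bounded as chosen and $\gamma$ large, the excess is therefore below $I_3$, giving the factor $2$.
\end{itemize}
If this unimodality argument proves delicate, I would fall back on the same "sufficiently small $\Delta r$" convention used in the proof of Theorem~\ref{th:circuit error under robin}.
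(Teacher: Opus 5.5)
Your proposal is correct and follows the paper's route. The paper's proof just invokes Remark~\ref{remark:trotter error under periodic} and repeats the Robin estimate. There the quadrature sum is replaced by the integral via an unjustified ``$\approx$'', and the factor $2$ absorbs that approximation, exactly as your $\tfrac{1}{2}$ slack does. Your explicit sum-versus-integral comparison (the normalization estimate for $k=0$, unimodality for $k=3$) makes rigorous what the paper leaves heuristic. Two details for the $k=3$ case: the even integrand has two unimodal halves, so the excess is at most $2\Delta r\max_r|\hat f(r)||r|^3/\sqrt{2\pi}$. Beating $I_3$ then requires $\Delta r$ small relative to $\gamma$, which is precisely your ``$\gamma$ large'' caveat.
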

\begin{proof}
    Using Remark~\ref{remark:trotter error under periodic} and the same estimations as in the proof of Theorem~\ref{th:circuit error under robin}, we immediately get the result.
\end{proof}





\section{Complexity analysis}\label{sec:complexity analysis}

\subsection{Quantum circuits for $d$-dimensional equation}
For the $d$-dimensional advection-diffusion equation in Eq.~\eqref{eq:gov_eq}, applying the FVM to each dimension yields the global semi-discrete system as Eq.~\eqref{eq:d-dimensional equation}, where the global coefficient matrix $A$ takes the form as Eq.~\eqref{eq:d-dimensional matrix}.
Analogous to the 1-dimensional case, we decompose $A$ into its real and imaginary parts:
\begin{equation}
\begin{aligned}
    L &= L_1 \otimes I^{\otimes n_2} \otimes \cdots\otimes I^{\otimes n_d} + \cdots + I^{\otimes n_1} \otimes \cdots \otimes I^{\otimes n_{d-1}} \otimes L_d + c I^{\otimes n_1} \otimes I^{\otimes n_2} \otimes \cdots\otimes I^{\otimes n_d}, \\
    H &= H_1 \otimes I^{\otimes n_2} \otimes \cdots\otimes I^{\otimes n_d} + \cdots + I^{\otimes n_1} \otimes \cdots \otimes I^{\otimes n_{d-1}} \otimes H_d,
\end{aligned}
\end{equation}
which implies the minimum eigenvalue of $L$ is $\lambda_{\min}(L) = c + \sum_{p=1}^d \lambda_{\min}(L_p)$.
\begin{remark}
To ensure the positive definiteness of $L$ required by the LCHS method, the attenuation coefficient $c$ must satisfy $c \geq -\sum_{p=1}^d \lambda_{\min}(L_p)$.
\end{remark}

Applying the LCHS method, we obtain the following approximate representation of the matrix exponential:
\begin{equation}
\begin{aligned}
    \exp(-A\tau)
    &= \frac{1}{\sqrt{2\pi}} \int_{\mathbb{R}} \hat{f}(r) \exp(-i c r \tau) \bigotimes_{p=1}^d \exp\left( -i \left( H_p + r L_p \right)  \tau \right) \, \mathrm{d}r \\
    &\approx \frac{1}{\sqrt{2\pi}} \int_{-R}^{R} \hat{f}(r) \exp(-i c r \tau) \bigotimes_{p=1}^d \exp\left( -i \left( H_p + r L_p \right)  \tau \right) \, \mathrm{d}r \\
    &\approx \sum_{j=0}^{M-1} c_j \hat{f}(r_j) \exp(-i c r_j \tau) \bigotimes_{p=1}^d \exp\left( -i \left( H_p + r_j L_p \right)  \tau \right) \\
    &= \sum_{j=0}^{M-1} c_j \hat{f}(r_j) \exp(-i c r_j \tau) U_j(\tau),
\end{aligned}
\end{equation}
with the error bounded by
\begin{equation}
    \left\| \exp(-A\tau) - \sum_{j=0}^{M-1} c_j \hat{f}(r_j) e^{-i c r_j \tau} U_j(\tau) \right\|
    \leq \varepsilon_{\mathrm{lchs}} + \varepsilon_{\mathrm{quad}}
    = \mathcal{O}\left( e^{-R}, e^{\frac{1}{2}\left\|L\right\|_{L^1} - \pi/\Delta r} \right),
\end{equation}
where
\begin{equation}
    U_j(\tau) := \bigotimes_{p=1}^d U_{p,j}(\tau), \quad
    U_{p,j}(\tau) := \exp\left( -i \left( H_p + r_j L_p \right)  \tau \right).
\end{equation}

For the phase factor, we have
\begin{equation}
    \exp\left( -i c r_j\tau \right) = \exp\left(-i c \left(j\Delta r - \widetilde{R}\right) \tau\right)
    = \exp\left(i c \widetilde{R}\tau \right) 
    \exp\left(-i c \Delta r \tau j\right),
\end{equation}
which directly leads to
\begin{equation}
    \sum_{j=0}^{M - 1}|j\rangle\langle j| \otimes \exp\left( -i c r_j\tau \right)
    = G\left( c \widetilde{R}\tau\right)
    \prod_{j = 1}^m P_j\left(-2^{j-1} c \Delta r\tau\right) .
\end{equation}

For the $p$-th dimension, we construct the quantum circuit $\widetilde{\mathrm{SEL}}_p(\tau)$ that approximates the select oracle $\mathrm{SEL}_p(\tau) = \sum_{j=0}^{M-1}|j\rangle\langle j|\otimes \exp\left( -i \left( H_p + r_j L_p \right)  \tau \right)$ based on the given boundary conditions and the derived matrices $L_p, H_p$. These results yield the global quantum circuit
\begin{equation}
    \widetilde{\mathrm{SEL}}(\tau)
    = G\left( c \widetilde{R}\tau\right)
    \left[ \prod_{j = 1}^m P_j\left(-2^{j-1} c \Delta r\tau\right) \right]
    \bigotimes_{p=1}^d \widetilde{\mathrm{SEL}}_p(\tau)
\end{equation}
for the global select oracle
\begin{equation}\label{eq:global select}
    \mathrm{SEL}(\tau)
    = G\left( c \widetilde{R}\tau\right)
    \left[ \prod_{j = 1}^m P_j\left(-2^{j-1} c \Delta r\tau\right) \right]
    \bigotimes_{p=1}^d \mathrm{SEL}_p(\tau).
\end{equation}

Denote $\widetilde{U}_j(\tau)$ and $\widetilde{U}_{p,j}(\tau)$ as the second-order TS decomposition of $U_j(\tau)$ and $U_{p,j}(\tau)$.
Notice that $\left\| U_{p,j}(\tau) - \widetilde{U}_{p,j}(\tau) \right\| \leq \tau^3 \sum_{k=0}^3 |r_j|^k w_{p,k}$, then using the unitarity of $U_{p,j}(\tau)$ and $\widetilde{U}_{p,j}(\tau)$, we have
\begin{equation}
\begin{aligned}
    \left\| U_j(\tau) - \widetilde{U}_j(\tau) \right\| 
    &\leq \sum_{p=1}^d \left( \prod_{q=1}^{p-1} \left\| \widetilde{U}_{q,j}(\tau) \right\| \right) \cdot \left\| U_{p,j}(\tau) - \widetilde{U}_{p,j}(\tau) \right\| \cdot \left( \prod_{q=p+1}^d \left\| U_{q,j}(\tau) \right\| \right) \\
    &= \sum_{p=1}^d \left\| U_{p,j}(\tau) - \widetilde{U}_{p,j}(\tau) \right\|
    \leq \tau^3 \sum_{k=0}^3 |r_j|^k \sum_{p=1}^d w_{p,k},
\end{aligned}
\end{equation}
which further yields the circuit error between $\sum_{j=0}^{M-1} c_j U_j(\tau)$ and $\sum_{j=0}^{M-1} c_j \widetilde{U}_j(\tau)$ satisfies
\begin{equation}\label{eq:lcu error of multi dimension}
    \left\| \sum_{j=0}^{M-1} c_j U_j(\tau) - \sum_{j=0}^{M-1} c_j \widetilde{U}_j(\tau) \right\|
    \leq \sum_{j=0}^{M-1} |c_j| \left\| U_j(\tau) - \widetilde{U}_j(\tau) \right\|
    < 2\tau^3 \sum_{k=0}^3 I_k \sum_{p=1}^d w_{p,k}.
\end{equation}

Let the circuit error be bounded by $\varepsilon_{\mathrm{circ}} > 0$.
By examining the right-hand side of Eq.~\eqref{eq:lcu error of multi dimension}, we find that $\varepsilon_{\mathrm{circ}}$ is dominated by $ \gamma^2 \tau^3 \sum_{p=1}^d w_{p,3}$, where $\gamma^2 = \mathcal{O}(R/\delta)$ and $\sum_{p=1}^d w_{p,3} = \mathcal{O}(d\alpha^3) = \mathcal{O}(db^3/h^6, da^3/h^3)$. 
Here, $b = \max_{p} b_p$, $a = \max_{p} |a_p|$, and $h = \min_{p} h_p$.

Considering $r$ time steps to evolve $\widetilde{U}_j(T)$, combining Eq.~\eqref{eq:quadrature error} and applying the triangle inequality, we obtain
\begin{equation}\label{eq:total error of d-dim equation}
    \left\| e^{-AT} - \sum_{j=0}^{M-1} c_j \left( \widetilde{U}_j \left( \frac{T}{r} \right) \right)^r \right\|
    \leq \varepsilon_{\mathrm{lchs}} + \varepsilon_{\mathrm{quad}} + \varepsilon_{\mathrm{circ}}
    = \mathcal{O} \left( e^{-R}, e^{\left\|L\right\|T/2 - \pi/\Delta r}, \frac{dRb^3T^3}{\delta r^2h^6}, \frac{dRa^3T^3}{\delta r^2h^3} \right).
\end{equation}

This yields the following result:
\begin{theorem}\label{th:r of homo term}
To bound the total error in Eq.~\eqref{eq:total error of d-dim equation} to $\mathcal{O}(\varepsilon)$ (for $\varepsilon>0$), it suffices to choose 
\begin{equation}
    r \geq \frac{d^{1/2} b^{3/2} T^{3/2} \log^{1/2}(1/\varepsilon)}{\varepsilon^{1/2} \delta^{1/2} h^3 }
\end{equation}
for advection-diffusion equations (non-vanishing diffusion), and
\begin{equation}
    r \geq \frac{d^{1/2} a^{3/2} T^{3/2} \log^{1/2}(1/\varepsilon)}{\varepsilon^{1/2} \delta^{1/2} h^{3/2} }
\end{equation}
for pure advection equations (diffusion-free case).    
\end{theorem}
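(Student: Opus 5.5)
The plan is to split the total error budget $\varepsilon$ equally among the three contributions in Eq.~\eqref{eq:total error of d-dim equation}, fixing $R$ and $\Delta r$ first and only then solving for $r$. Two of the three terms do not involve $r$. The truncation error $\varepsilon_{\mathrm{lchs}}=\mathcal{O}(e^{-R})$ is brought below $\varepsilon/3$ by taking $R=\Theta(\log(1/\varepsilon))$, which is compatible with the choice $R=2\delta\gamma^2$ recalled in Section~\ref{sec:overview of the LCHS method}. The quadrature error $\varepsilon_{\mathrm{quad}}$ is brought below $\varepsilon/3$ by choosing $\Delta r\le \pi/(\tfrac12\|L\|T+\log(3\cdot 64e^{3\delta/2}/(15\varepsilon)))$ with $R/\Delta r\in\mathbb{Z}$, exactly as in Eq.~\eqref{eq:quadrature error}. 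So the whole statement reduces to controlling $\varepsilon_{\mathrm{circ}}$.

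For the circuit error I will use Eq.~\eqref{eq:lcu error of multi dimension} together with Remark~\ref{remark:time steps}. With $r$ uniform steps of length $T/r$, the error of $\sum_j c_j(\widetilde U_j(T/r))^r$ is at most $r\cdot 2(T/r)^3\sum_{k}I_k\sum_p w_{p,k} = 2T^3 r^{-2}\sum_k I_k\sum_p w_{p,k}$.

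Next I need to see which term dominates, using the asymptotics in Theorem~\ref{th:circuit error under robin}. We have $I_3\sim e^\delta 4\gamma^2/\pi$, whereas $I_1,I_2$ grow only like $\log\gamma$ and $\gamma$, and $I_0=\mathcal{O}(e^\delta)$. Since $\gamma^2=R/(2\delta)=\mathcal{O}(\log(1/\varepsilon)/\delta)$, the leading term is $I_3\sum_p w_{p,3}$, treating $e^\delta$ as an absolute constant because $\delta$ is a fixed LCHS parameter.

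Then I would bound $w_{p,3}=\mathcal{O}(\alpha_p^3)$ using Remark~\ref{remark:trotter error under robin}, and Remark~\ref{remark:trotter error under periodic} in the periodic case. When $s=\mathcal{O}(\alpha)$ this is immediate. If $s\gg\alpha$ it needs a small remark: $s$ is itself $\mathcal{O}(b/h^2+a/h)$ for the schemes considered. In the diffusive case the central scheme gives $\alpha=b/h^2$, so $\sum_p w_{p,3}=\mathcal{O}(db^3/h^6)$. In the diffusion-free case the relevant scale is $\alpha,|\beta|=\mathcal{O}(a/h)$: the exponential-scheme limit gives $\alpha\to|a|/(2h)$, and Appendix~\ref{subsec:robin with alpha=0} covers $\alpha=0$. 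This gives $\mathcal{O}(da^3/h^3)$.

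Substituting these bounds, $\varepsilon_{\mathrm{circ}}\lesssim dRb^3T^3/(\delta r^2h^6)$. Requiring this to be at most $\varepsilon/3$ and using $R=\mathcal{O}(\log(1/\varepsilon))$ gives, up to constants,
\[
r\ge \frac{d^{1/2}b^{3/2}T^{3/2}\log^{1/2}(1/\varepsilon)}{\varepsilon^{1/2}\delta^{1/2}h^{3}}.
\]
The advection case follows in the same way with $b^3/h^6$ replaced by $a^3/h^3$.

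The main obstacle is justifying that the $k=3$ term dominates uniformly. Here I would argue that the lower-order terms $I_k w_{p,k}$ with $k<3$ are polynomially smaller in $\gamma$ and are at most comparable in their $\alpha,\beta$ dependence, since $|\beta|\le\alpha$-scale quantities in the regimes considered. They can therefore be absorbed into the $\mathcal{O}(\cdot)$ constant. Any residual case with large $|\beta|/\alpha$, such as a high Péclet number under the central scheme, would be stated as covered by the same bound with $\max(\alpha,|\beta|)$ in place of $\alpha$.
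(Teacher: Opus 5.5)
Your proposal is correct and follows essentially the same route as the paper. It fixes $R=\mathcal{O}(\log(1/\varepsilon))$ and $\Delta r$ to control $\varepsilon_{\mathrm{lchs}}$ and $\varepsilon_{\mathrm{quad}}$, and uses the $r$-step composition bound $\varepsilon_{\mathrm{circ}}\le 2T^3r^{-2}\sum_k I_k\sum_p w_{p,k}$. It then identifies $I_3\sum_p w_{p,3}=\mathcal{O}(dR\alpha^3/\delta)$ as the dominant term, with $\alpha=\mathcal{O}(b/h^2)$ or $\mathcal{O}(a/h)$, and solves for $r$. Your remarks on when the $k=3$ term actually dominates (bounded cell P\'eclet number, $s=\mathcal{O}(\alpha)$) spell out assumptions the paper leaves implicit in its claim $\sum_p w_{p,3}=\mathcal{O}(d\alpha^3)$.
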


\subsection{Quantum circuits for inhomogeneous term}

For the inhomogeneous term, by applying the trapezoidal rule (Eq.~\eqref{eq:outer trapezoidal}), we obtain
\begin{equation}
    \int_0^{T} \mathcal{T} e^{-A(T - s)} f(s) \, \mathrm{d}s
    \approx \sum_{k=0}^{M_o-1} \Delta t \left( \sum_{j=0}^{M-1} c_j U_{k,j}(T) \right) f(T_k),
\end{equation}
where $U_{k,j}(T) = U_j(k \Delta t + \Delta t/2)$, $T_k = T - (k+1/2) \Delta t$, and $\Delta t = T/M_o$.
For each integer $k$, let $k = k_{m_o-1}\cdots k_0$ denote the binary representation of $k$.
Denote $\widetilde{U}_{k,j}(T)$ as the approximate quantum circuit of $U_{k,j}(T)$, and  recall the construction of $\text{SEL-}U_{k,j}(T)$ (Eq.~\eqref{eq:sel-U_kj}), we then derive
\begin{equation}
     \widetilde{U}_{k,j}(T) = \left( \prod_{s=0}^{m_o-1} \widetilde{U}_j \left( k_s 2^s \Delta t \right) \right) \widetilde{U}_j \left( \frac{\Delta t}{2} \right),
\end{equation}
which implies
\begin{equation}
\begin{aligned}
    \left\| \sum_{j=0}^{M-1} c_j U_{k,j} (T) - \sum_{j=0}^{M-1} c_j \widetilde{U}_{k,j}(T) \right\|
    &\leq \sum_{j=0}^{M-1} |c_j| \left\| U_{k,j} (T) - \widetilde{U}_{k,j}(T) \right\| \\
    & = \sum_{j=0}^{M-1} |c_j| \left\| U_j \left( k \Delta t + \frac{\Delta t}{2} \right) - \left( \prod_{s=0}^{m_o-1} \widetilde{U}_j \left( k_s 2^s \Delta t \right) \right) \widetilde{U}_j \left( \frac{\Delta t}{2} \right) \right\| \\
    &\leq \frac{1}{M_o^3} \left( \sum_{s=0}^{m_o-1} k_s^3 2^{3s} + \frac{1}{2^3} \right) \sum_{j=0}^{M-1} |c_j| \left\| U_j (T) - \widetilde{U}_j (T) \right\|.
\end{aligned}
\end{equation}
This further leads to
\begin{equation}\label{eq:circuit error of inhomo term}
\begin{aligned}
    &\left\| \sum_{k=0}^{M_o-1} \Delta t \left( \sum_{j=0}^{M-1} c_j U_{k,j} (T) \right) f (T_k) - \sum_{k=0}^{L-1} \Delta t \left( \sum_{j=0}^{M-1} c_j \widetilde{U}_{k,j} (T) \right) f (T_k) \right\| \\
    \leq& \sum_{k=0}^{M_o-1} \Delta t \left\| \sum_{j=0}^{M-1} c_j U_{k,j} (T) - \sum_{j=0}^{M-1} c_j \widetilde{U}_{k,j}(T) \right\| \left\| f (T_k) \right\| \\
    \leq& \sum_{k=0}^{M_o-1} \Delta t \frac{1}{M_o^3} \left( \sum_{s=0}^{m_o-1} k_s^3 2^{3s} + \frac{1}{2^3} \right) \sum_{j=0}^{M-1} |c_j| \left\| U_j (T) - \widetilde{U}_j (T) \right\| \|f\|_T \\
    =& \left( \frac{1}{14} + \frac{3}{56M_o^3} \right) T \|f\|_T \sum_{j=0}^{M-1} |c_j| \left\| U_j (T) - \widetilde{U}_j (T) \right\|.
\end{aligned}
\end{equation}

By combining Lemma~\ref{lem:outer quadrature} with Eqs.~\eqref{eq:quadrature error} and \eqref{eq:circuit error of inhomo term}, we derive
\begin{equation}\label{eq:total error of inhomo d-dim equation}
\begin{aligned}
    &\left\| \int_0^{T} e^{-A(T-s)} b(s) \, \mathrm{d}s - \sum_{k=0}^{M_o-1} \Delta t \left( \sum_{j=0}^{M-1} c_j \left( \widetilde{U}_{k,j} \left( \frac{T}{r} \right) \right)^r \right) f (T_k) \right\| \\
    \leq& \varepsilon_{\mathrm{quad,o}}
    + \varepsilon_{\mathrm{lchs}} T \|f\|_T
    + \frac{64 T \|f\|_T}{15M_o} \frac{e^{\|L\|T/2}-1}{e^{\|L\|T/2M_o}-1} e^{3\delta/2 +\|L\|T/4M_o - \pi/\Delta r} \\
    &+ \frac{1}{r^2} \left( \frac{1}{14} + \frac{3}{56M_o^3} \right) T \|f\|_T \sum_{j=0}^{M-1} |c_j| \left\| U_j (T) - \widetilde{U}_j (T) \right\| \\
    \leq& \varepsilon_{\mathrm{quad,o}} + \varepsilon_{\mathrm{lchs}} T \|f\|_T
    + \frac{64 T \|f\|_T}{15M_o} \frac{e^{\|L\|T/2}-1}{e^{\|L\|T/2M_o}-1} e^{3\delta/2 +\|L\|T/4M_o - \pi/\Delta r}
    + \left( \frac{1}{14} + \frac{3}{56M_o^3} \right) \varepsilon_{\mathrm{circ}} T \|f\|_T \\
    =& \mathcal{O} \left( \varepsilon_{\mathrm{quad,o}}, e^{-R} T \|f\|_T, \frac{T \|f\|_T}{M_o} \frac{e^{\|L\|T/2}-1}{e^{\|L\|T/2M_o}-1} e^{3\delta/2 +\|L\|T/4M_o - \pi/\Delta r}, \frac{dRb^3T^4\|f\|_T}{\delta r^2h^6}, \frac{dRa^3T^4\|f\|_T}{\delta r^2h^3} \right),
\end{aligned}
\end{equation}
which yields the following result.
\begin{theorem}\label{th:r of inhomo term}
To bound the total error in Eq.~\eqref{eq:total error of inhomo d-dim equation} by $\varepsilon > 0$, where $R = \mathcal{O} \left( \log \frac{1}{\varepsilon}, \log \frac{T \|f\|_T}{\varepsilon} \right)$, it suffices to choose
\begin{equation}
    r \geq \frac{d^{1/2} R^{1/2} b^{3/2} T^2 \|f\|_T^{1/2}}{2\sqrt{2} \varepsilon^{1/2} \delta^{1/2} h^3}
\end{equation}
for advection-diffusion equations (non-vanishing diffusion), as well as
\begin{equation}
    r \geq \frac{d^{1/2} R^{1/2} a^{3/2} T^2 \|f\|_T^{1/2}}{2\sqrt{2}\varepsilon^{1/2} \delta^{1/2} h^{3/2}}
\end{equation}
for pure advection equations (diffusion-free case).
\end{theorem}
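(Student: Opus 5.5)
The plan is to read the requirement on $r$ directly off the last $\mathcal{O}$-expression in Eq.~\eqref{eq:total error of inhomo d-dim equation}. We split the total error budget $\varepsilon$ into five parts, one per term, each of size at most a fixed fraction of $\varepsilon$.

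The first three terms do not involve $r$, and we dispose of them first.
\begin{itemize}
\item The outer quadrature error $\varepsilon_{\mathrm{quad,o}}$ is controlled by choosing $M_o$ as in Lemma~\ref{lem:outer quadrature}.
\item The term $e^{-R}T\|f\|_T$ is made $\mathcal{O}(\varepsilon)$ by the stated choice $R=\mathcal{O}(\log\frac{1}{\varepsilon},\log\frac{T\|f\|_T}{\varepsilon})$.
\item The inner trapezoidal error, which decays like $e^{-\pi/\Delta r}$, is made small by choosing $\Delta r$ (hence $M$) as in the preceding lemma on the inhomogeneous term.
\end{itemize}
Only the circuit (Trotter) contribution then remains to be controlled by $r$.

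For the circuit term we start from the bound $\left(\frac{1}{14}+\frac{3}{56M_o^3}\right)\frac{T\|f\|_T}{r^2}\sum_j|c_j|\,\|U_j(T)-\widetilde U_j(T)\|$, which combines Remark~\ref{remark:time steps} with Eq.~\eqref{eq:circuit error of inhomo term}.
\begin{itemize}
\item By Eq.~\eqref{eq:lcu error of multi dimension}, the sum is at most $2T^3\sum_k I_k\sum_p w_{p,k}$.
\item As argued before Theorem~\ref{th:r of homo term}, this is dominated by the $k=3$ term. Since $I_3\sim \frac{4e^\delta}{\pi}\gamma^2$ and $\gamma^2=R/(2\delta)$ (because $R=2\delta\gamma^2$), we get $I_3=\mathcal{O}(R/\delta)$, with the constant $e^\delta$ absorbed.
\item We have $\sum_p w_{p,3}\le d\,\alpha^3$ times a constant. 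In the diffusive case $\alpha=b/h^2$, so $\alpha^3=b^3/h^6$. In the pure advection case $\alpha=\mathcal{O}(a/h)$ (from the exponential scheme's $\alpha$ as $b\to0$), so $\alpha^3=\mathcal{O}(a^3/h^3)$.
\item Since $M_o\ge1$, the prefactor satisfies $\frac{1}{14}+\frac{3}{56M_o^3}\le\frac{1}{8}$.
\end{itemize}
The circuit error is therefore bounded by roughly $\frac{dRb^3T^4\|f\|_T}{8\delta r^2h^6}$ (respectively with $a^3/h^3$).

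We require this bound to be at most $\varepsilon$ and solve for $r$, which gives $r^2\ge \frac{dRb^3T^4\|f\|_T}{8\varepsilon\delta h^6}$. Taking square roots yields exactly $r\ge \frac{d^{1/2}R^{1/2}b^{3/2}T^2\|f\|_T^{1/2}}{2\sqrt2\,\varepsilon^{1/2}\delta^{1/2}h^3}$. The advection case is identical, with $h^{3/2}$ in place of $h^3$.

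The main obstacle is bookkeeping the constants so that the factor $2\sqrt2$ comes out precisely. This requires showing that the remaining $I_k w_k$ terms ($k=0,1,2$) and the factor $e^\delta$ can be absorbed, and that the factor $2$ in Eq.~\eqref{eq:lcu error of multi dimension} cancels against the asymptotic form of $I_3$. I would first try to establish this absorption for $\gamma$ large, i.e.\ small $\varepsilon$. Failing that, I would state the result at the level of leading-order asymptotics, consistent with the $\mathcal{O}$-form of Eq.~\eqref{eq:total error of inhomo d-dim equation}.
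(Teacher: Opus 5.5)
Your proposal is correct and follows the same route as the paper. The paper gives no separate proof: it reads the bound on $r$ off the last line of Eq.~\eqref{eq:total error of inhomo d-dim equation}, where the Trotter term is dominated by $2T^3 I_3\sum_p w_{p,3}=\mathcal{O}(dR\alpha^3T^3/\delta)$ and the factor $2\sqrt{2}=\sqrt{8}$ comes from $\frac{1}{14}+\frac{3}{56M_o^3}\le\frac18$, exactly as in your argument. As you suspected, the paper simply drops the remaining constants, so $2\sqrt{2}$ is only meaningful at the level of that $\mathcal{O}$-estimate. Specifically, the $2$ in Eq.~\eqref{eq:lcu error of multi dimension} cancels the $\frac12$ in $w_3=\alpha^3/2$, not anything in $I_3$, and this leaves an unabsorbed factor of about $2e^{\delta}/\pi$ from $I_3\approx 4e^{\delta}\gamma^2/\pi$.
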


\subsection{Gate complexity}

As single-qubit and CNOT gates are fundamental to quantum computing, we estimate the upper bounds of gate counts for each component in the select oracle, based on the following observations:
\begin{enumerate}
    \item For a $2\times 2$ unitary matrix $U$, a $\mathrm{C}U$ gate decomposes into at most 3 single-qubit gates and 2 $\mathrm{CNOT}$ gates; a $\mathrm{C}^2U$ gate decomposes into at most 9 single-qubit gates and 8 $\mathrm{CNOT}$ gates; a $\mathrm{C}^3U$ gate decomposes into at most 21 single-qubit gates and 20 $\mathrm{CNOT}$ gates\cite{barenco1995_elementary}.
    \item A $\mathrm{C}^2\mathrm{NOT}$ gate decomposes into 9 single-qubit gates and 6 $\mathrm{CNOT}$ gates\cite{nielsen2010_quantum}.
    \item A $\mathrm{C}^j\mathrm{RZ}(\theta)$ gate ($j \geq4$) decomposes into single-qubit gates and at most $16j-24$ $\mathrm{CNOT}$ gates\cite{vale2024_circuit}.
    \item A $\mathrm{C}^j\mathrm{P}(\theta)$ gate ($j \geq4$) transforms to a $\mathrm{C}^{j+1}\mathrm{RZ}(-2\theta)$ gate acting on an additional ancilla qubit (maintained in state $\ket{0}$), hence decomposing into single-qubit gates and at most $16j-8$ $\mathrm{CNOT}$ gates\cite{rosa2025_optimizing}.
\end{enumerate}

For the validity of our estimation, we denote $\varphi(j)= \mathcal{O}(j^2)$ \cite{dasilva2022_linear} as the number of single-qubit gates in the decomposition of a $\mathrm{C}^j\mathrm{RZ}$ gate.
The resulting gate count upper bounds are summarized in Tables~\ref{tab:gate cost} and \ref{tab:gate cost for select}.

\begin{table}[htbp]
\centering
\caption{Quantum gate complexity estimation for the quantum circuit.}
\label{tab:gate cost}
\begin{tabular}{ccc}
    \toprule
    Quantum circuit & Single-qubit gates & $\mathrm{CNOT}$ gates \\
    \midrule
    $W_1(\gamma \tau, \lambda)$                & $5$                & $0$          \\
    $W_2(\gamma \tau, \lambda)$                & $7$                & $4$          \\
    $W_3(\gamma \tau, \lambda)$                & $13$               & $12$         \\
    $W_4(\gamma \tau, \lambda)$                & $25$               & $26$         \\
    $W_j(\gamma \tau, \lambda)$ ($j\geq5$)     & $4 + \varphi(j-1)$ & $18j-42$     \\
    $\mathrm{C}W_1(\gamma \tau, \lambda)$      & $15$               & $10$         \\
    $\mathrm{C}W_2(\gamma \tau, \lambda)$      & $39$               & $28$         \\
    $\mathrm{C}W_3(\gamma \tau, \lambda)$      & $69$               & $52$         \\
    $\mathrm{C}W_j(\gamma \tau, \lambda)$ ($j\geq4$)  & $18j-6+\varphi(j)$ & $28j-28$     \\
    $\mathrm{C}^2W_1(\gamma \tau, \lambda)$    & $45$               & $40$         \\
    $\mathrm{C}^2W_2(\gamma \tau, \lambda)$    & $99$               & $92$         \\
    $\mathrm{C}^2W_j(\gamma \tau, \lambda)$ ($j\geq3$) & $42j-6+\varphi(j+1)$ & $56j-16$   \\
    $S_n^{(1)}(\gamma \tau)$                   & $\varphi(n)$       & $16n-24$     \\
    $\mathrm{C}S_n^{(1)}(\gamma \tau)$         & $\varphi(n+1)$     & $16n-8$      \\
    $\mathrm{C}^2S_n^{(1)}(\gamma \tau)$       & $\varphi(n+2)$     & $16n+8$      \\
    $S_n^{(0)}(\gamma \tau)$                   & $2n+\varphi(n)$    & $16n-24$     \\
    $\mathrm{C}S_n^{(0)}(\gamma \tau)$         & $\varphi(n+1)$     & $18n-8$      \\
    $\mathrm{C}^2S_n^{(0)}(\gamma \tau)$       & $18n+\varphi(n+2)$ & $28n+8$      \\
    $V_n(\gamma \tau, \lambda)$                & $2n + 2 + \varphi(n-1)$ & $18n - 42$ \\
    $\mathrm{C}V_n(\gamma \tau, \lambda)$      & $18n-6 +\varphi(n)$ & $28n-28$    \\
    $\mathrm{C}^2V_n(\gamma \tau, \lambda)$    & $60n-24 +\varphi(n+1)$ & $68n-28$  \\
    \bottomrule
\end{tabular}
\end{table}

\begin{table}[htbp]
\centering
\caption{Quantum gate complexity estimation for the select oracle quantum circuit ($n\geq 5$).}
\label{tab:gate cost for select}
\begin{tabular}{ccc}
    \toprule
    Quantum circuit & Single-qubit gates & $\mathrm{CNOT}$ gates \\
    \midrule
    $\mathrm{SEL_R}(\tau)$ & 
    \makecell[c]{$9mn^2+(3m+12)n+51m+118$ \\ 
    $+(m+3)\sum_{j=4}^n\varphi(j) -\varphi(n) + 2m\varphi(n+1)$} & 
    \makecell[c]{$(14m+27)n^2+(20m-67)n$ \\ 
    $+42$} \\
    \midrule 
    $\mathrm{CSEL_R}(\tau)$ & 
    \makecell[c]{$(21m+27)n^2+(33m+9)n+78m+147$ \\ 
    $+(m+3)\sum_{j=4}^n\varphi(j)+(m+2)\varphi(n+1) + 2m\varphi(n+2)$} & 
    \makecell[c]{$(28m+42)n^2+(56m-8)n$ \\ 
    $+54m+34$} \\
    \midrule
    $\mathrm{SEL_P}(\tau)$ & 
    \makecell[c]{$9mn^2+(21m+12)n+43m+83$ \\ 
    $+(m+2)\sum_{j=4}^n\varphi(j) + 2\varphi(n-1)+(m-2)\varphi(n)$} & 
    \makecell[c]{$(14m+18)n^2+(42m-30)n$ \\ 
    $-40m-24$} \\
    \midrule
    $\mathrm{CSEL_P}(\tau)$ & 
    \makecell[c]{$(21m+18)n^2+(75m+42)n+54m+87$ \\ 
    $+(m+2) \sum_{j=4}^n \varphi(j) + 2\varphi(n) + 2m\varphi(n+1)$} & 
    \makecell[c]{$(28m+28)n^2+(80m+28)n$ \\ 
    $+10m-22$} \\
    \bottomrule
\end{tabular}
\end{table}


The single-qubit and $\mathrm{CNOT}$ gate complexities of both $\mathrm{SEL}(\tau)$ and $\mathrm{CSEL}(\tau)$ scale as $\mathcal{O}(mn^2)$.
Accordingly, the select oracle gate complexity is $\mathcal{O}(dmn^2)$ for a $d$-dimensional homogeneous equation, and $\mathcal{O}(dm_omn^2)$ for an inhomogeneous $d$-dimensional equation. 
Here we denote $n = \max_p n_p$ for simplicity.

Combining results from Theorems~\ref{th:r of homo term} and \ref{th:r of inhomo term}, we obtain the gate complexity scalings for time evolution as follows:

\begin{theorem}\label{th:gate complexity}
To prepare a $\varepsilon$-approximation of the normalized quantum state $\ket{u_1(T)}$ proportional to the homogeneous solution $u_1(T)$ with $\Omega(1)$ success probability and a flag indicating success, the gate complexity for the time evolution is
\begin{equation}
    \mathcal{C}_{\mathrm{qc,homo}} = \mathcal{O}\left( \frac{e^{2\delta} \left\| u(0) \right\|^2 d^{3/2} m n^2 b^{3/2} T^{3/2} \log^{1/2}(1/\varepsilon)}{\left\| u_1(T) \right\|^2 \delta^{1/2} h^3 \varepsilon^{1/2} } \right)
\end{equation}
for advection-diffusion equations (non-vanishing diffusion) and
\begin{equation}
    \mathcal{C}_{\mathrm{qc,homo}} = \mathcal{O}\left( \frac{e^{2\delta} \left\| u(0) \right\|^2 d^{3/2} m n^2 a^{3/2} T^{3/2} \log^{1/2}(1/\varepsilon) }{\left\| u_1(T) \right\|^2 \delta^{1/2} h^{3/2} \varepsilon^{1/2} } \right)
\end{equation}
for pure advection equations (diffusion-free case).

To prepare a $\varepsilon$-approximation of the normalized quantum state $\ket{u_2(T)}$ proportional to the inhomogeneous solution $u_2(T)$ with $\Omega(1)$ success probability and a flag indicating success with $R = \mathcal{O} \left( \log \frac{1}{\varepsilon}, \log \frac{T \|f\|_T}{\varepsilon} \right)$, the gate complexity for the time evolution is
\begin{equation}
    \mathcal{C}_{\mathrm{qc,inhomo}} = \mathcal{O}\left( \frac{e^{2\delta} d^{3/2} m_o m n^2 R^{1/2} b^{3/2} T^4 \|f\|_T^{5/2}}{\left\| u_2(T) \right\|^2 \delta^{1/2} h^3 \varepsilon^{1/2} } \right)
\end{equation}
for advection-diffusion equations (non-vanishing diffusion) and
\begin{equation}
    \mathcal{C}_{\mathrm{qc,inhomo}} = \mathcal{O}\left( \frac{e^{2\delta} d^{3/2} m_o m n^2 R^{1/2} a^{3/2} T^4 \|f\|_T^{5/2}}{\left\| u_2(T) \right\|^2 \delta^{1/2} h^{3/2} \varepsilon^{1/2} } \right)
\end{equation}
for pure advection equations (diffusion-free case).
\end{theorem}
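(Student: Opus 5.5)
The complexity will be assembled as a product of three factors: the number of repetitions needed to boost the postselection success probability to $\Omega(1)$, the number of Trotter time steps $r$, and the gate cost of one application of the approximate select oracle (together with its controlled versions). For the homogeneous term, Eq.~\eqref{eq:probability of homo} gives success probability $P_1 \gtrsim \|u_1(T)\|^2/(e^{2\delta}\|u(0)\|^2)$. Naive repetition then needs $\mathcal{O}(1/P_1)$ trials, which produces the prefactor $e^{2\delta}\|u(0)\|^2/\|u_1(T)\|^2$ in the stated bound. The statement keeps the squared ratio, so I will use plain repetition and not amplitude amplification; the latter would improve this to the square root. I will also note that producing an $\varepsilon$-approximation of the normalized state only requires an $\mathcal{O}(\varepsilon)$-accurate unnormalized operator, up to the factor $\|u(0)\|/\|u_1(T)\|$. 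Following the paper's convention, this factor is absorbed into the $\mathcal{O}$, or equivalently into $\varepsilon$.

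Next I count the per-step cost. By Tables~\ref{tab:gate cost} and \ref{tab:gate cost for select}, each one-dimensional $\widetilde{\mathrm{SEL}}_p(\tau)$ (Robin or periodic) costs $\mathcal{O}(mn_p^2)$ single-qubit and CNOT gates. The global oracle $\widetilde{\mathrm{SEL}}(\tau)$ is a tensor/product composition of the $d$ one-dimensional oracles plus $m$ phase gates, so it costs $\mathcal{O}(dmn^2)$ with $n=\max_p n_p$. The time evolution over $[0,T]$ uses $r$ sequential applications of $\widetilde{\mathrm{SEL}}(T/r)$, as in Remark~\ref{remark:time steps}. I will insert the lower bound on $r$ from Theorem~\ref{th:r of homo term}, namely $r \sim d^{1/2}b^{3/2}T^{3/2}\log^{1/2}(1/\varepsilon)/(\varepsilon^{1/2}\delta^{1/2}h^3)$ in the diffusive case and the $a^{3/2}h^{-3/2}$ version in the pure advection case. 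Multiplying gives $\mathcal{O}(d^{3/2}mn^2\cdots)$ per trial. The state-preparation oracles cost $\mathcal{O}((n+m)\log(1/\varepsilon))$, which is lower order and dropped. This yields $\mathcal{C}_{\mathrm{qc,homo}}$.

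For the inhomogeneous term the structure is the same, with two changes. First, the select oracle $\text{SEL-}U_{k,j}$ is realized through Eq.~\eqref{eq:implementation of sel-U_kj}: it uses $m_o$ controlled copies of $\mathrm{SEL}(2^{k-1}\Delta t)$ plus one $\mathrm{SEL}(\Delta t/2)$. Each controlled copy $\mathrm{CSEL}$ still costs $\mathcal{O}(mn^2)$ per dimension, so one Trotterized layer costs $\mathcal{O}(dm_omn^2)$, multiplied by $r$ from Theorem~\ref{th:r of inhomo term}. Second, the success probability is given by Eq.~\eqref{eq:probability of inhomo}, $P_2\gtrsim\|u_2(T)\|^2/(e^{2\delta}T^2\|f\|_T^2)$, so its inverse contributes $e^{2\delta}T^2\|f\|_T^2/\|u_2(T)\|^2$. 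The Theorem~\ref{th:r of inhomo term} bound carries $T^2\|f\|_T^{1/2}R^{1/2}$. The product therefore has $T^4\|f\|_T^{5/2}R^{1/2}$, matching the claim.

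The main obstacle is bookkeeping rather than any single hard estimate. I must confirm that the controlled variants in Eq.~\eqref{eq:implementation of sel-U_kj} do not add hidden cost. In particular, the powers $\mathrm{SEL}(2^{k-1}\Delta t)$ must be implemented by one Trotterized oracle with a rescaled time argument, not by $2^{k-1}$ repetitions. This is exactly what the error analysis preceding Eq.~\eqref{eq:circuit error of inhomo term} assumes, so the per-layer cost stays $m_o$ times the controlled-oracle cost. I also need to check that the per-trial accuracy and the postselection normalization compose consistently. I would handle this by requiring each trial's unnormalized error to be $\varepsilon\|u_i(T)\|/(\text{norm})$ and absorbing the resulting logarithmic or polynomial changes into the $\mathcal{O}$, as the paper does.
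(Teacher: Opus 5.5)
Your proposal is correct and follows the paper's own argument. The paper obtains Theorem~\ref{th:gate complexity} as the product of three factors, which you reproduce with correct exponent bookkeeping for all four bounds: the inverse success probability from Eqs.~\eqref{eq:probability of homo} and \eqref{eq:probability of inhomo}, the step count $r$ from Theorems~\ref{th:r of homo term} and \ref{th:r of inhomo term}, and the per-step select-oracle cost of $\mathcal{O}(dmn^2)$ (homogeneous) or $\mathcal{O}(dm_omn^2)$ (inhomogeneous) from Table~\ref{tab:gate cost for select}. Your caveats about plain repetition versus amplitude amplification and about absorbing the normalization factor into $\varepsilon$ are the same implicit conventions the paper adopts.
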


\subsection{Quantum advantage}

First, we analyze the computational complexity of solving a $d$-dimensional advection-diffusion equation via classical computing.
Consider a $k$-th order explicit time-stepping scheme with an additive error bounded by $\mathcal{O}((T/r)^{k+1})$, where $r$ denotes the total number of time steps.
To achieve a classical simulation up to time $T$ with additive error $\varepsilon$, we require $r = \mathcal{O}(k T^{(k+1)/k} / \varepsilon^{1/k})$.
Combined with the constraint imposed by the Courant-Friedrichs-Lewy (CFL) condition which requires $r = \mathcal{O}(aT/h, bT/h^2)$, the total number of time steps satisfies
\begin{equation}
    r = \mathcal{O} \left( k T^{\frac{k+1}{k}}/ \varepsilon^{\frac{1}{k}}, aT/h, bT/h^2 \right).
\end{equation}

At each time step, classical computing requires $\mathcal{O}(s2^{nd})$ arithmetic operations, where $s$ denotes the sparsity of the coefficient matrix of the ODE system ($s\leq3$ in this work). Consequently, the total classical computational complexity is
\begin{equation}
    \mathcal{C}_{\mathrm{cc}} = \mathcal{O}\left(s 2^{nd} r\right)
    = s 2^{nd} \mathcal{O} \left( k T^{\frac{k+1}{k}}/ \varepsilon^{\frac{1}{k}}, aT/h, bT/h^2 \right).
\end{equation}

Considering the relations $h = \mathcal{O}(\varepsilon) = \mathcal{O}(2^{-n})$, we can derive the quantum speedup for the time evolution of the homogeneous term.

For advection-diffusion equations with non-vanishing diffusion, the speedup is given by
\begin{equation}
    \mathcal{S}
    = \frac{\mathcal{C}_{\mathrm{cc}}}{\mathcal{C}_{\mathrm{qc,homo}}}
    = \frac{\left\| u_1(T) \right\|^2 s \delta^{1/2}}{e^{2\delta} \left\| u(0) \right\|^2 d^{3/2}mn^2} \mathcal{O} \left( \frac{ k \varepsilon^{7/2-d-1/k} }{b^{3/2} T^{1/2-1/k} \log^{1/2}(1/\varepsilon)}, \frac{\varepsilon^{3/2-d} }{b^{1/2} T^{1/2} \log^{1/2}(1/\varepsilon)} \right),
\end{equation}
which yields a theoretical speedup for $d\geq 4$.

For pure advection equations (diffusion-free), the speedup reads
\begin{equation}
    \mathcal{S}
    = \frac{\mathcal{C}_{\mathrm{cc}}}{\mathcal{C}_{\mathrm{qc,homo}}}
    = \frac{\left\| u_1(T) \right\|^2 s \delta^{1/2}}{e^{2\delta} \left\| u(0) \right\|^2 d^{3/2}mn^2} \mathcal{O} \left( \frac{ k \varepsilon^{2-d-1/k} }{a^{3/2} T^{1/2-1/k} \log^{1/2}(1/\varepsilon)}, \frac{\varepsilon^{1-d} }{a^{1/2} T^{1/2} \log^{1/2}(1/\varepsilon)} \right),
\end{equation}
which yields a theoretical speedup for $d\geq 2$.

Similarly, recalling $R = \mathcal{O} \left( \log \frac{1}{\varepsilon}, \log \frac{T \|f\|_T}{\varepsilon} \right)$, we obtain the quantum speedup for the time evolution of the inhomogeneous term.

For advection-diffusion equations with non-vanishing diffusion, we have
\begin{equation}
    \mathcal{S}
    = \frac{\mathcal{C}_{\mathrm{cc}}}{\mathcal{C}_{\mathrm{qc,inhomo}}}
    = \frac{\left\| u_2(T) \right\|^2 s \delta^{1/2}}{d^{3/2}m_o mn^2 \|f\|_T^{5/2}} \mathcal{O} \left( \frac{ k \varepsilon^{7/2-d-1/k} }{b^{3/2} T^{3-1/k} R^{1/2}}, \frac{\varepsilon^{3/2-d} }{b^{1/2} T^3 R^{1/2}} \right),
\end{equation}
which also yields a theoretical speedup for $d\geq 4$.

For pure advection equations (diffusion-free), we obtain
\begin{equation}
    \mathcal{S}
    = \frac{\mathcal{C}_{\mathrm{cc}}}{\mathcal{C}_{\mathrm{qc,inhomo}}}
    = \frac{\left\| u_2(T) \right\|^2 s \delta^{1/2}}{d^{3/2}m_o mn^2 \|f\|_T^{5/2}} \mathcal{O} \left( \frac{ k \varepsilon^{2-d-1/k} }{a^{3/2} T^{3-1/k} R^{1/2}}, \frac{\varepsilon^{1-d} }{a^{1/2} T^3 R^{1/2}} \right),
\end{equation}
which also yields a theoretical speedup for $d\geq 2$.

These results demonstrate that quantum computing offers significant advantages over classical computing for high-dimensional problems, with the computational complexity gap widening as the dimensionality $d$ increases.

\section{Numerical experiments}\label{sec:numerical experiments}

In this section, we conduct numerical experiments to validate the quantum circuits proposed in Section~\ref{sec:quantum circuits for different boundary conditions}.
The simulations are executed on a C++-based local simulator leveraging the QPanda2 package \cite{dou2022_qpanda}.
To verify the circuit effectiveness, we assume an ideal noise-free environment: no decoherence in quantum states; perfect, noise-free logic gates; and exact amplitude computations rather than frequencies obtained from repeated sampling.

For simplicity, we denote $n$ as the number of qubits encoding the system state (for multi-dimensional problems, $n_i$ for the $i$-th dimension), and $m$ and $m_o$ as the number of ancilla qubit for LCHS and LCU coefficients, respectively.
$T$ denotes the evolution time, $r$ the number of time steps for the select oracle (defined in Remark~\ref{remark:time steps}), and $R$ the truncation boundary of the LCHS method.
For the LCHS framework, we adopt the parameter settings for $\hat{f}$ as: $c = 0.4$, $\varepsilon_\text{lchs} = 0.001$, and $\gamma = \frac{1}{c}\sqrt{c + \log \frac{1 + 1/(2\pi)}{\varepsilon_{\text{lchs}}}} = 6.8261\ldots$.



\subsection{$1$-dimensional homogeneous advection-diffusion equation}

This section considers the $1$-dimensional advection-diffusion equation given by
\begin{equation}\label{eq:1d_adv_diff}
    \left\{
    \begin{aligned}
    \frac{\partial u}{\partial t}  + a\frac{\partial u}{\partial x} + cu &= b\frac{\partial^2 u}{\partial x^2} + f(t,x), \\
    u(0, x) &= u_0(x),
    \end{aligned}
    \right.
     \quad x\in\Omega=[0,l].
\end{equation}
We set the source term $f(t,x) = 0$ and impose homogeneous boundary conditions to obtain a homogeneous ODE system after spatial discretization.
This setup allows us to evaluate the effectiveness of both the LCHS method and the proposed quantum circuits for homogeneous terms across different flux construction schemes, as well as varying values of $n$ and $m$.
For the quantum circuits, we adopt the circuit in Eq.~\eqref{eq:inner LCHS}, with the select oracle implemented for the boundary conditions introduced in Sections~\ref{subsubsec:quantum circuit under the robin boundary conditions}, \ref{subsubsec:quantum circuit under the periodic boundary conditions} and Appendix~\ref{subsec:periodic with alpha=0}.

\begin{test}\label{experiment:diff_homo_dirichlet}
The first numerical experiment investigates the pure diffusion equation ($a = c = 0$) with Dirichlet BCs $u(t,x_L) = u(t,x_R) = 0$.
The initial condition is
\begin{equation}
    u_0(x) = \sin \frac{\pi x}{l}, \quad f(t,x)=0,
\end{equation}
yielding the exact solution
\begin{equation}
    u(t,x) = \exp\left(-\frac{b\pi^2}{l^2} t\right) \sin \frac{\pi x}{l}.
\end{equation}
\end{test}

For the numerical implementation of Experiment~\ref{experiment:diff_homo_dirichlet}, we set parameters as: $b = 1$, $l = 1$, $T = 0.01$, $r = 1$, and $R = 0.89375 \times 2^m$.
Applying the central scheme and the select oracle implemented as Eq.~\eqref{eq:select under robin}, numerical results are presented in Fig.~\ref{fig:diff_homo_dirichlet_1d}, with relative errors summarized in Tables~\ref{tab:diff_homo_dirichlet_error_change_n} and \ref{tab:diff_homo_dirichlet_error_change_m}.

When $m$ is fixed at 4, the relative errors increase slightly as $n$ increase because the error introduced by the quantum circuit dominates.
For fixed $n=9$, the relative errors decrease significantly as $m$ increases from $3$ to $4$, and continue to decrease as $m$ increases to $5$.

\begin{figure}[htbp]
    \centering
    \subfigure[Fixed $m=4$]{\includegraphics[width=0.32\textwidth]{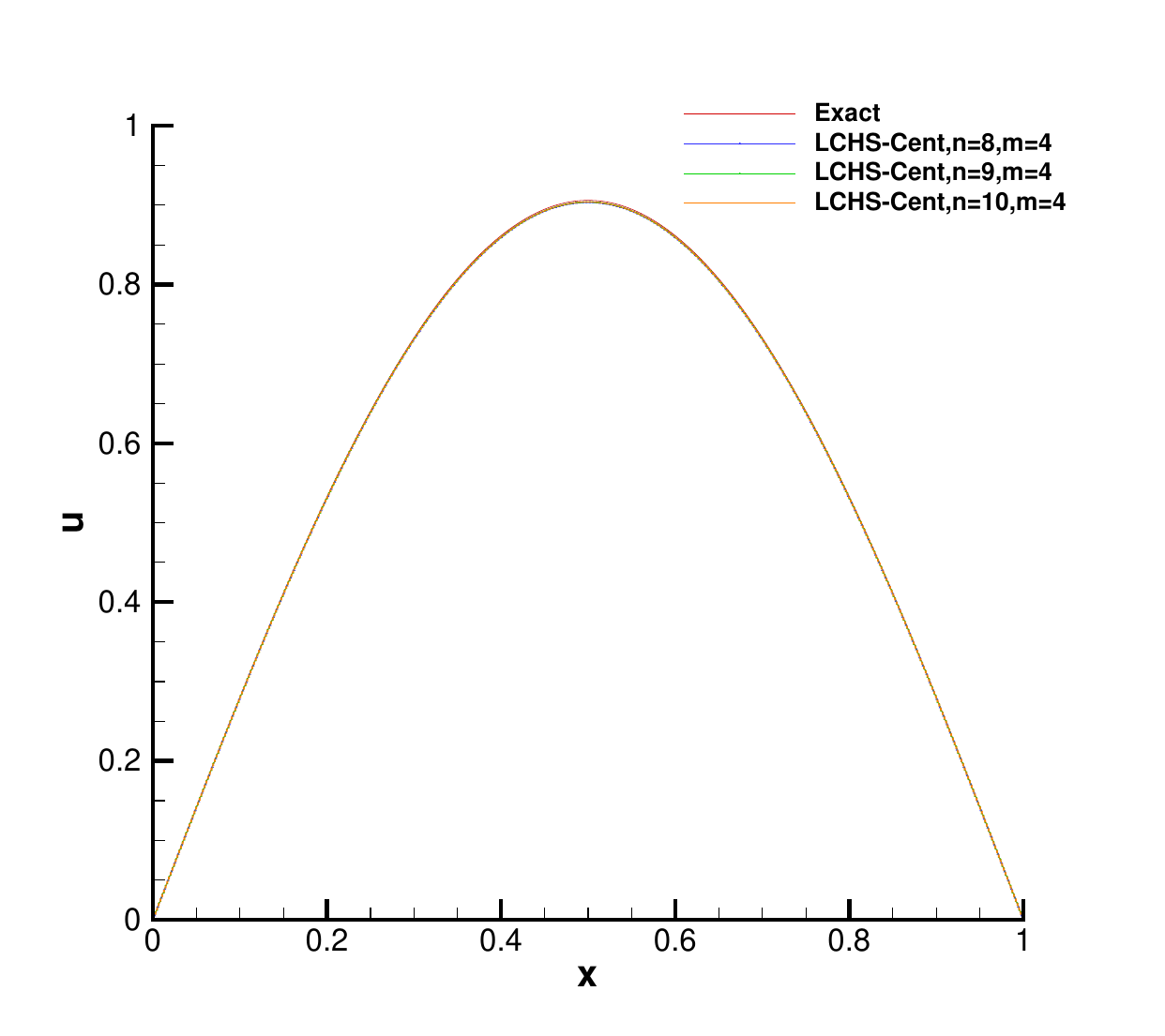}\label{fig:diff_homo_dirichlet_change_n}}
	\subfigure[Fixed $n=8$]{\includegraphics[width=0.32\textwidth]{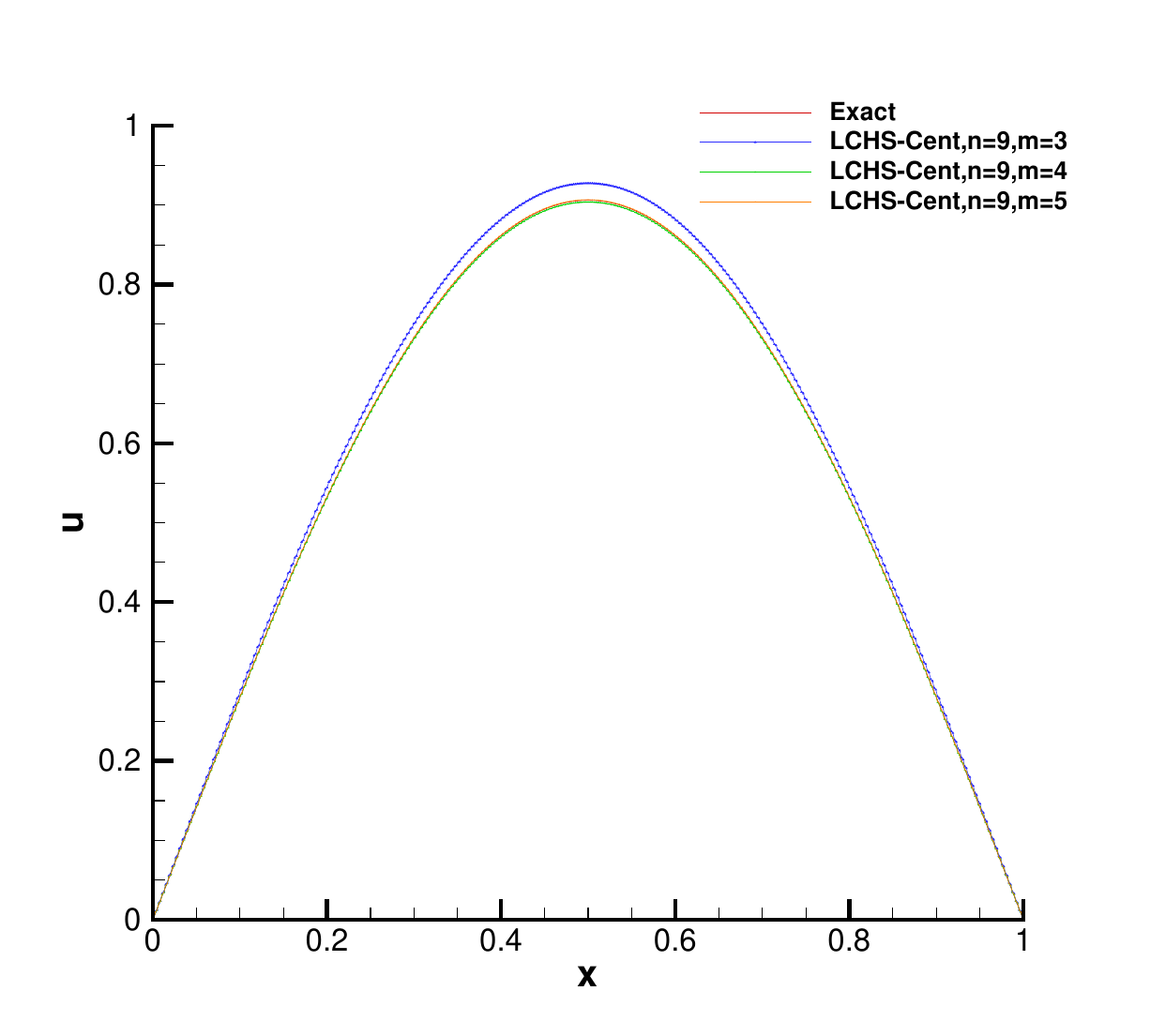}\label{fig:diff_homo_dirichlet_change_m}}
	\caption{Numerical results for Experiment~\ref{experiment:diff_homo_dirichlet} (homogeneous $1$D diffusion equation with Dirichlet  BCs).}
	\label{fig:diff_homo_dirichlet_1d}
\end{figure}

\begin{table}[htbp]
\centering
\begin{minipage}[t]{0.48\textwidth}
    \centering
    \caption{Relative errors for Experiment~\ref{experiment:diff_homo_dirichlet} (fixed $m=4$)}
    \begin{tabular}{c c c c}
        \toprule
        $n$ & $L_1$-norm & $L_2$-norm & $L_\infty$-norm \\
        \midrule
        8 & 2.6835e-3 & 2.6835e-3 & 2.6835e-3 \\
        9 & 2.9274e-3 & 2.9274e-3 & 2.9274e-3 \\
        10 & 3.6440e-3 & 3.6440e-3 & 3.6439e-3 \\
        \bottomrule
    \end{tabular}
    \label{tab:diff_homo_dirichlet_error_change_n}
\end{minipage}
\hfill
\begin{minipage}[t]{0.48\textwidth}
    \centering
    \caption{Relative errors for Experiment~\ref{experiment:diff_homo_dirichlet} (fixed $n=9$)}
    \begin{tabular}{c c c c}
        \toprule
        $m$ & $L_1$-norm & $L_2$-norm & $L_\infty$-norm \\
        \midrule
        3 & 2.3781e-2 & 2.3782e-2 & 2.3782e-2 \\
        4 & 2.9274e-3 & 2.9274e-3 & 2.9274e-3 \\
        5 & 1.9847e-3 & 1.9847e-3 & 1.9847e-3 \\
        \bottomrule
    \end{tabular}
    \label{tab:diff_homo_dirichlet_error_change_m}
\end{minipage}
\end{table}

\begin{test}\label{experiment:diff_homo_neumann}
This experiment examines the pure diffusion equation ($a = c = 0$) subject to Neumann  BCs $u_x(t,x_L) = u_x(t,x_R) = 0$.
The initial condition is
\begin{equation}
    u_0(x) = \cos \frac{\pi x}{l}, \quad f(t,x)=0,
\end{equation}
admitting the exact solution
\begin{equation}
    u(t,x) = \exp\left(-\frac{b\pi^2}{l^2} t\right) \cos \frac{\pi x}{l}.
\end{equation}
\end{test}

For the numerical implementation of Experiment~\ref{experiment:diff_homo_neumann}, we adopt parameter as: $b = 1$, $l = 1$, $T = 0.01$, $r = 1$, and $R = 0.89375 \times 2^m$.
Employing the central scheme and the select oracle implemented as Eq.~\eqref{eq:select under robin}, numerical results are reported in Fig.~\ref{fig:diff_homo_neumann_1d}, with relative errors compiled in Tables~\ref{tab:diff_homo_neumann_error_change_n} and \ref{tab:diff_homo_neumann_error_change_m}.

Consistent with Experiment~\ref{experiment:diff_homo_dirichlet}, the relative errors increase slightly as $n$ increases, and decrease with $m$ for fixed $n=8$.
We further observe that the relative errors exactly match those in Tables~\ref{tab:diff_homo_dirichlet_error_change_n} and \ref{tab:diff_homo_dirichlet_error_change_m}. This consistency arises because: (1) $s_0$ and $s_1$ are additive inverses across the two experiments, yielding identical error estimations by Theorem~\ref{th:circuit error under robin}; (2) the initial values correspond to the single-modality solutions of each experiment.

\begin{figure}[htbp]
    \centering
    \subfigure[Fixed $m=4$]{\includegraphics[width=0.32\textwidth]{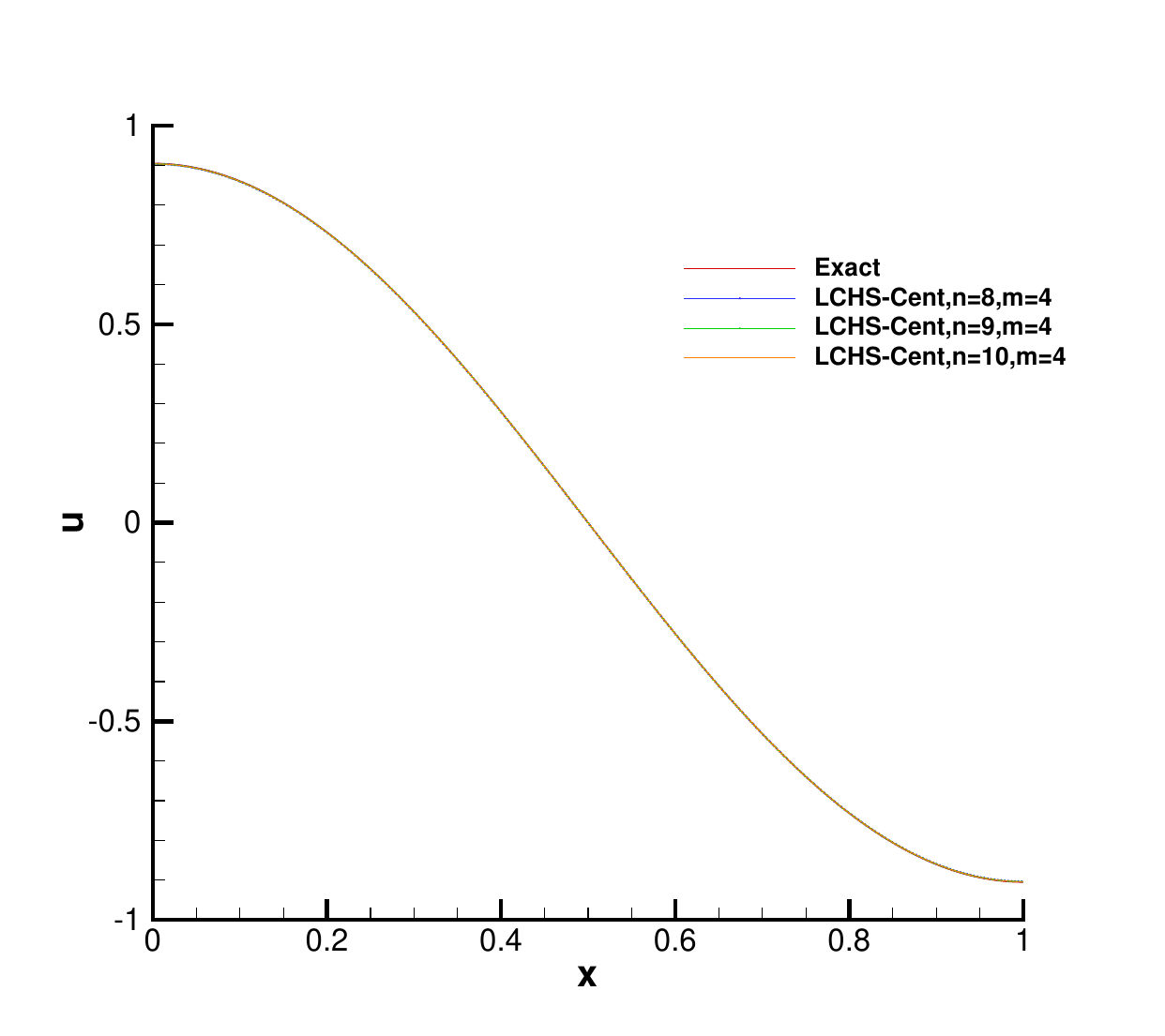}\label{fig:diff_homo_neumann_change_n}}
	\subfigure[Fixed $n=8$]{\includegraphics[width=0.32\textwidth]{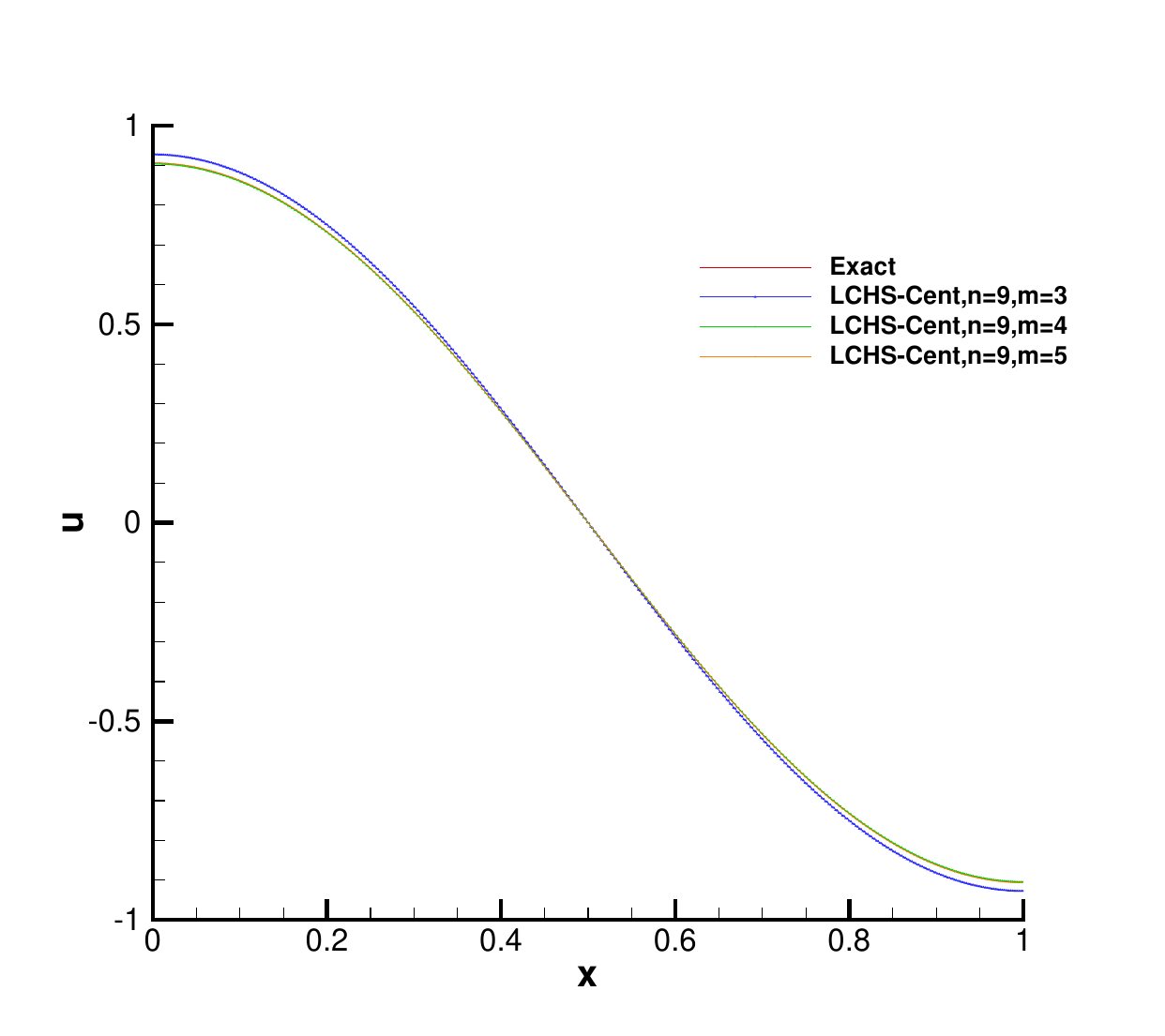}\label{fig:diff_homo_neumann_change_m}}
	\caption{Numerical results for Experiment~\ref{experiment:diff_homo_neumann} (homogeneous $1$D diffusion equation with Neumann  BCs).}
    \label{fig:diff_homo_neumann_1d}
\end{figure}

\begin{table}[htbp]
\centering
\begin{minipage}[t]{0.48\textwidth}
    \centering
    \caption{Relative errors for Experiment~\ref{experiment:diff_homo_neumann} (fixed $m=4$)}
    \begin{tabular}{c c c c}
        \toprule
        $n$ & $L_1$-norm & $L_2$-norm & $L_\infty$-norm \\
        \midrule
        8 & 2.6835e-3 & 2.6835e-3 & 2.6835e-3 \\
        9 & 2.9274e-3 & 2.9274e-3 & 2.9274e-3 \\
        10 & 3.6440e-3 & 3.6440e-3 & 3.6439e-3 \\
        \bottomrule
    \end{tabular}
    \label{tab:diff_homo_neumann_error_change_n}
  \end{minipage}
  \hfill
  \begin{minipage}[t]{0.48\textwidth}
    \centering
    \caption{Relative errors for Experiment~\ref{experiment:diff_homo_neumann} (fixed $n=8$)}
    \begin{tabular}{c c c c}
        \toprule
        $m$ & $L_1$-norm & $L_2$-norm & $L_\infty$-norm \\
        \midrule
        3 & 2.3781e-2 & 2.3782e-2 & 2.3782e-2 \\
        4 & 2.9274e-3 & 2.9274e-3 & 2.9274e-3 \\
        5 & 1.9847e-3 & 1.9847e-3 & 1.9847e-3 \\
        \bottomrule
    \end{tabular}
    \label{tab:diff_homo_neumann_error_change_m}
  \end{minipage}
\end{table}

\begin{test}\label{experiment:adv_homo_periodic}
This experiment considers the pure advection equation ($b = c = 0$) subject to periodic  BCs $u(t,x_L) = u(t,x_R)$ and $u_x(t,x_L) = u_x(t,x_R)$.
The initial condition is specified as
\begin{equation}
    u_0(x) = \sin\frac{2\pi x}{l}, \quad f(t,x)=0,
\end{equation}
and the corresponding exact solution is
\begin{equation}
    u(t,x) = \sin \frac{2\pi (x-at)}{l}.
\end{equation}
\end{test}

For the numerical implementation of Experiment~\ref{experiment:adv_homo_periodic}, we set parameters as $a = 1$, $l = 1$, $T = 0.01$, and $r = 8$.
For the central scheme, the resulting ODE system is a Schr\"{o}dinger equation, eliminating the need for the LCHS method.
Using the "select" oracle implemented as Eq.~\eqref{eq:select under periodic with alpha=0}, we present the numerical results in Fig.~\ref{fig:adv_homo_periodic_central_change_n}, with relative errors summarized in Table~\ref{tab:adv_homo_periodic_central_error_change_n}.
For the upwind scheme, we set $R = 1.6\times 2^m$ and adopt the select oracle implemented as Eq.~\eqref{eq:select under periodic}, with numerical results plotted in Figs.~\ref{fig:adv_homo_periodic_upwind_change_n} and \ref{fig:adv_homo_periodic_upwind_change_m} and relative errors documented in Tables~\ref{tab:adv_homo_periodic_upwind_error_change_n} and \ref{tab:adv_homo_periodic_upwind_error_change_m}.

Evidently, the central scheme outperforms the upwind scheme due to its pure quantum simulation of the Schr\"{o}dinger equation (without relying on the LCHS method).
Moreover, as $n$ increases, the relative errors of the central scheme grow at nearly $\mathcal{O}(h^{-2})$, demonstrating that the experimental results are superior to the $\mathcal{O}(h^{-3})$ error order estimated in Appendix~\ref{subsec:periodic with alpha=0}.
For the upwind scheme, the relative errors also increase with rising $n$; they decrease significantly as $m$ rises from $3$ to $4$, but increase slightly as $m$ increases from $4$ to $5$.

\begin{figure}[htbp]
    \centering
    \subfigure[Central scheme]{\includegraphics[width=0.32\textwidth]{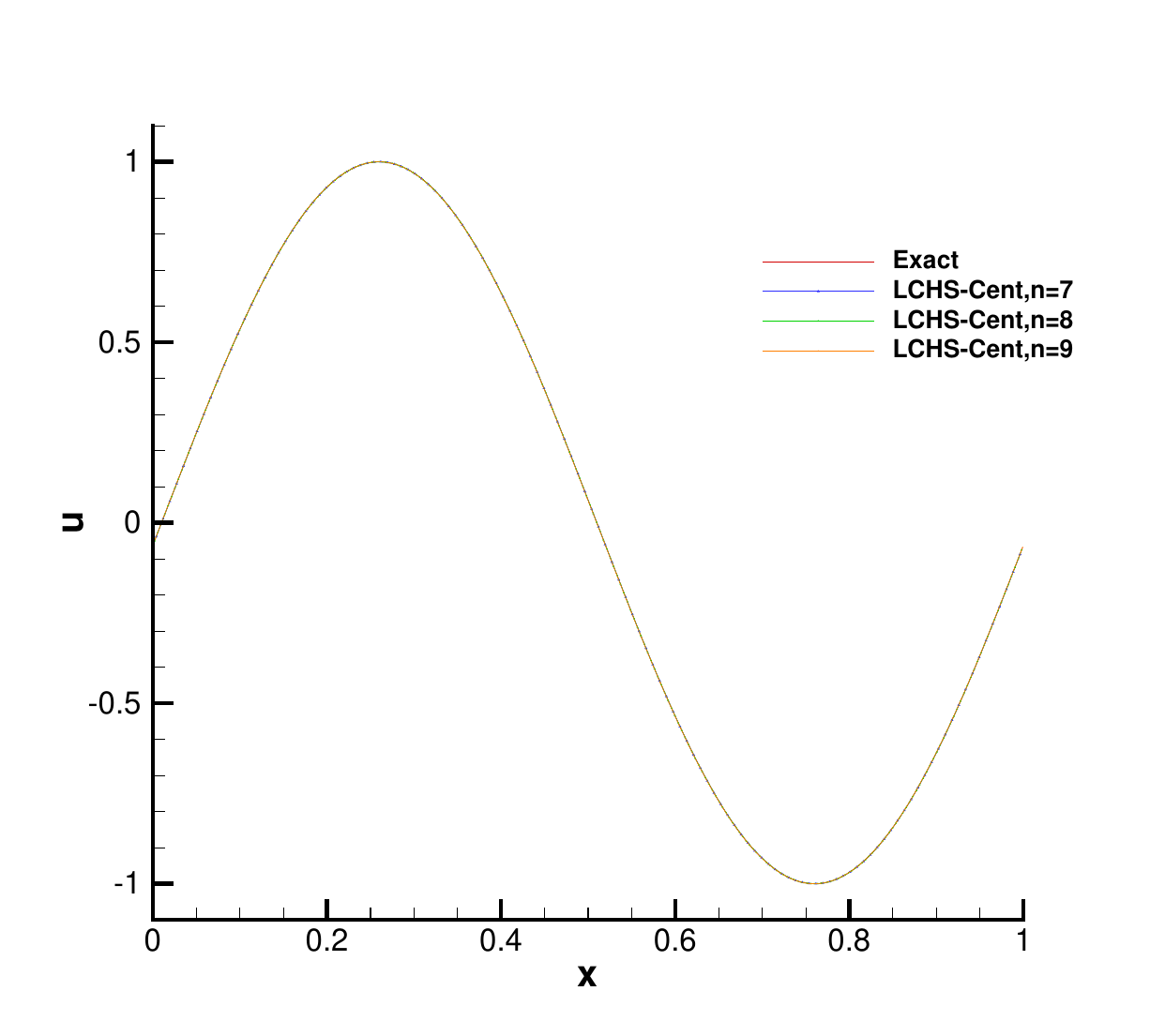}\label{fig:adv_homo_periodic_central_change_n}}
	\subfigure[Upwind scheme, fixed $m=4$]{\includegraphics[width=0.32\textwidth]{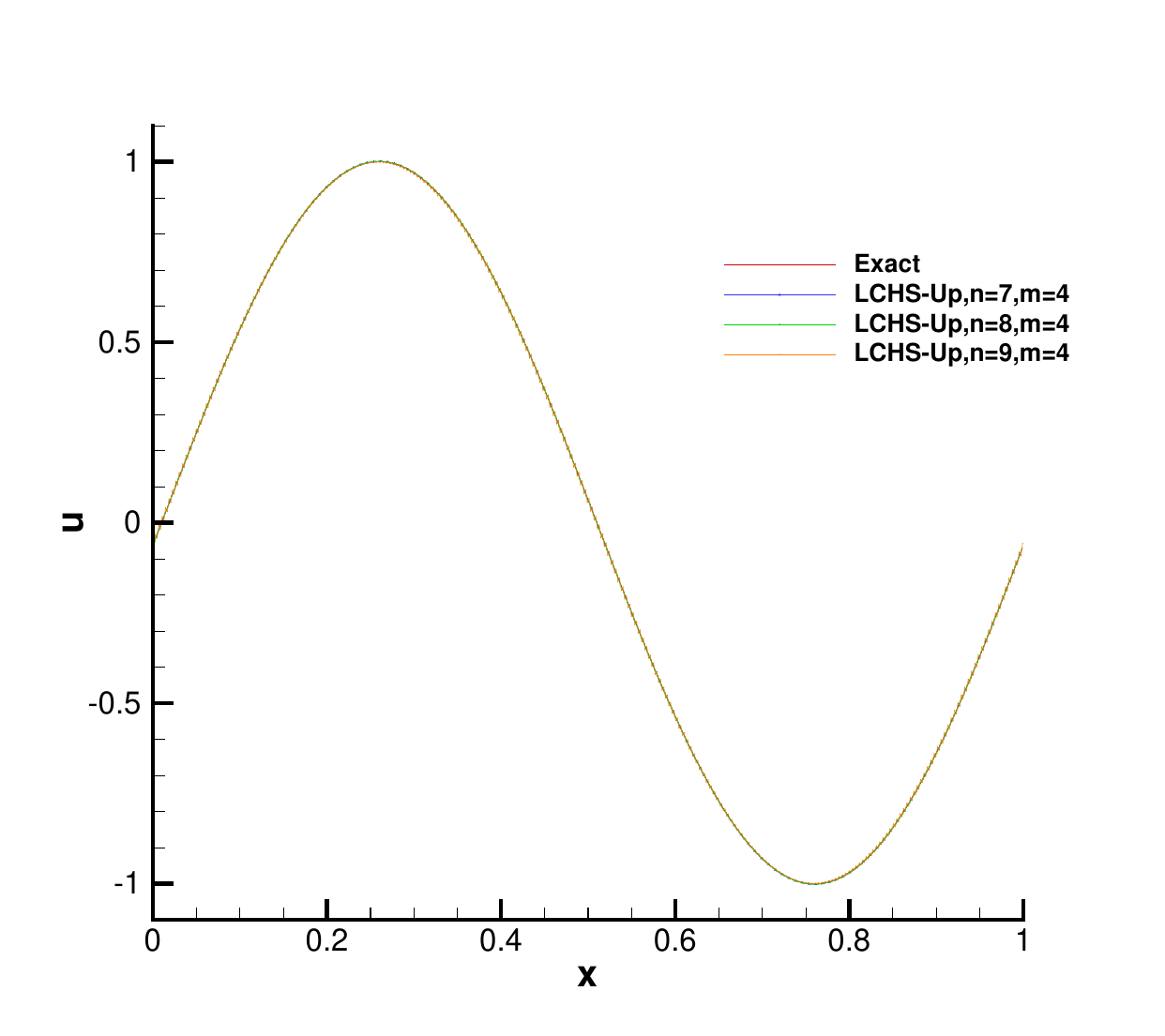}\label{fig:adv_homo_periodic_upwind_change_n}}
    \subfigure[Upwind scheme, fixed $n=8$]{\includegraphics[width=0.32\textwidth]{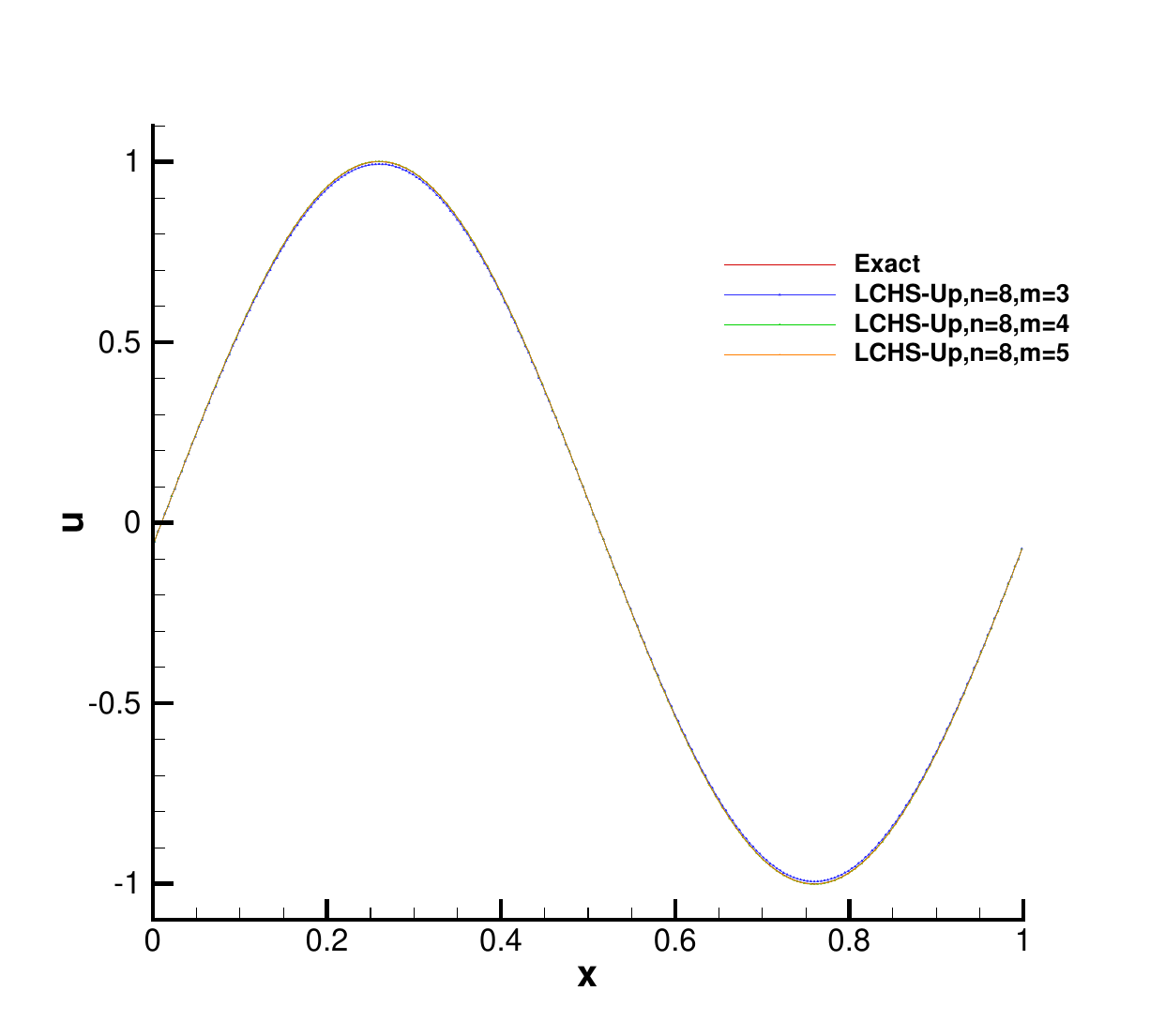}\label{fig:adv_homo_periodic_upwind_change_m}}
	\caption{Numerical results for Experiment~\ref{experiment:adv_homo_periodic} (homogeneous $1$D advection equation with periodic BCs).}
\end{figure}

\begin{table}[htbp]
\centering
\begin{minipage}[t]{0.48\textwidth}
    \centering
    \caption{Relative errors for Experiment~\ref{experiment:adv_homo_periodic} (central scheme)}
    \begin{tabular}{c c c c}
        \toprule
        $n$ & $L_1$-norm & $L_2$-norm & $L_\infty$-norm \\
        \midrule
        7 & 9.2580e-5 & 9.2565e-5 & 9.3085e-5 \\
        8 & 2.7457e-4 & 2.7456e-4 & 2.7571e-4 \\
        9 & 1.0690e-3 & 1.0690e-3 & 1.0713e-3 \\
        \bottomrule
    \end{tabular}
    \label{tab:adv_homo_periodic_central_error_change_n}
\end{minipage}
\end{table}

\begin{table}[htbp]
\centering
\begin{minipage}[t]{0.48\textwidth}
    \centering
    \caption{Relative errors for Experiment~\ref{experiment:adv_homo_periodic} (upwind scheme, fixed $m=4$)}
    \begin{tabular}{c c c c}
        \toprule
        $n$ & $L_1$-norm & $L_2$-norm & $L_\infty$-norm \\
        \midrule
        7 & 1.3747e-3 & 1.3769e-3 & 1.4580e-3 \\
        8 & 2.4183e-3 & 2.4230e-3 & 2.5701e-3 \\
        9 & 1.1064e-2 & 1.1113e-2 & 1.2106e-2 \\
        \bottomrule
    \end{tabular}
    \label{tab:adv_homo_periodic_upwind_error_change_n}
\end{minipage}
\hfill
\begin{minipage}[t]{0.48\textwidth}
    \centering
    \caption{Relative errors for Experiment~\ref{experiment:adv_homo_periodic} (upwind scheme, fixed $n=8$)}
    \begin{tabular}{c c c c}
        \toprule
        $m$ & $L_1$-norm & $L_2$-norm & $L_\infty$-norm \\
        \midrule
        3 & 6.6498e-3 & 6.6541e-3 & 6.9051e-3 \\
        4 & 2.4183e-3 & 2.4230e-3 & 2.5701e-3 \\
        5 & 2.4779e-3 & 2.4818e-3 & 2.6182e-3 \\
        \bottomrule
    \end{tabular}
    \label{tab:adv_homo_periodic_upwind_error_change_m}
\end{minipage}
\end{table}

\begin{test}\label{experiment:adv&diff_homo_dirichlet}
This numerical experiment investigates the advection-diffusion equation with Dirichlet BCs $u(t,x_L) = u(t,x_R)$, where $c = 0$.
The initial condition is given by
\begin{equation}
    u_0(x) = \exp \left( \frac{a}{2b}x \right) \sin \frac{\pi x}{l}, \quad f(t,x) = 0.
\end{equation}
This initial-boundary value problem yields the exact solution as
\begin{equation}
    u(t,x) = \exp \left( \frac{a}{2b}x - \left( \frac{a^2}{4b} + \frac{\pi^2 b}{l^2} \right)t \right) \sin \frac{\pi x}{l}.
\end{equation}
\end{test}

For the numerical implementation of Experiment~\ref{experiment:adv&diff_homo_dirichlet}, we set the parameters as follows: $a=1$, $b=0.5$, $l=1$, $T=0.01$, $r=1$, and $R=0.7625 \times 2^m$, and adopt the select oracle implemented as Eq.~\eqref{eq:select under robin}.
For the central scheme, numerical results are displayed in Fig.~\ref{fig:adv&diff_homo_dirichlet_central_1d}, with relative errors reported in Tables~\ref{tab:adv&diff_homo_dirichlet_central_error_change_n} and \ref{tab:adv&diff_homo_dirichlet_central_error_change_m}.
For the exponential scheme, numerical results are shown in Fig.~\ref{fig:adv&diff_homo_dirichlet_exp_1d}, with relative errors compiled in Tables~\ref{tab:adv&diff_homo_dirichlet_exp_error_change_n} and \ref{tab:adv&diff_homo_dirichlet_exp_error_change_m}.

The relative errors of the two schemes exhibit no significant differences, as both belong to second-order spatial discretization schemes ($\mathcal{O}(h^2)$).
As $n$ increases, the relative errors decrease, as the spatial discretization error still dominates the total error.
The relative errors decrease significantly as $m$ increases from $3$ to $4$ and continue to decrease up to $m=5$.

\begin{figure}[htbp]
    \centering
    \subfigure[Fixed $m=4$]{\includegraphics[width=0.32\textwidth]{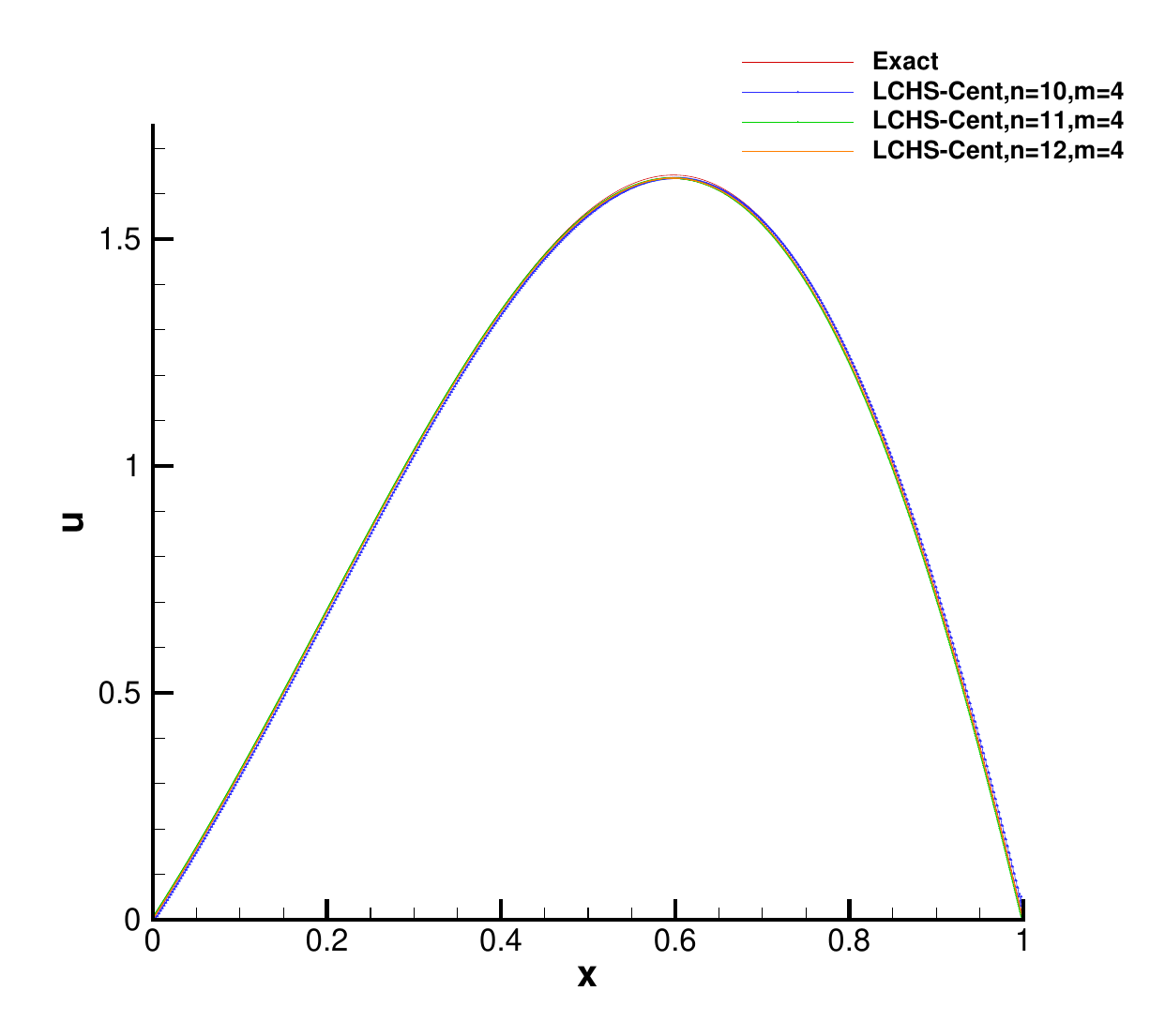}\label{fig:adv&diff_homo_dirichlet_central_change_n}}
	\subfigure[Fixed $n=11$]{\includegraphics[width=0.32\textwidth]{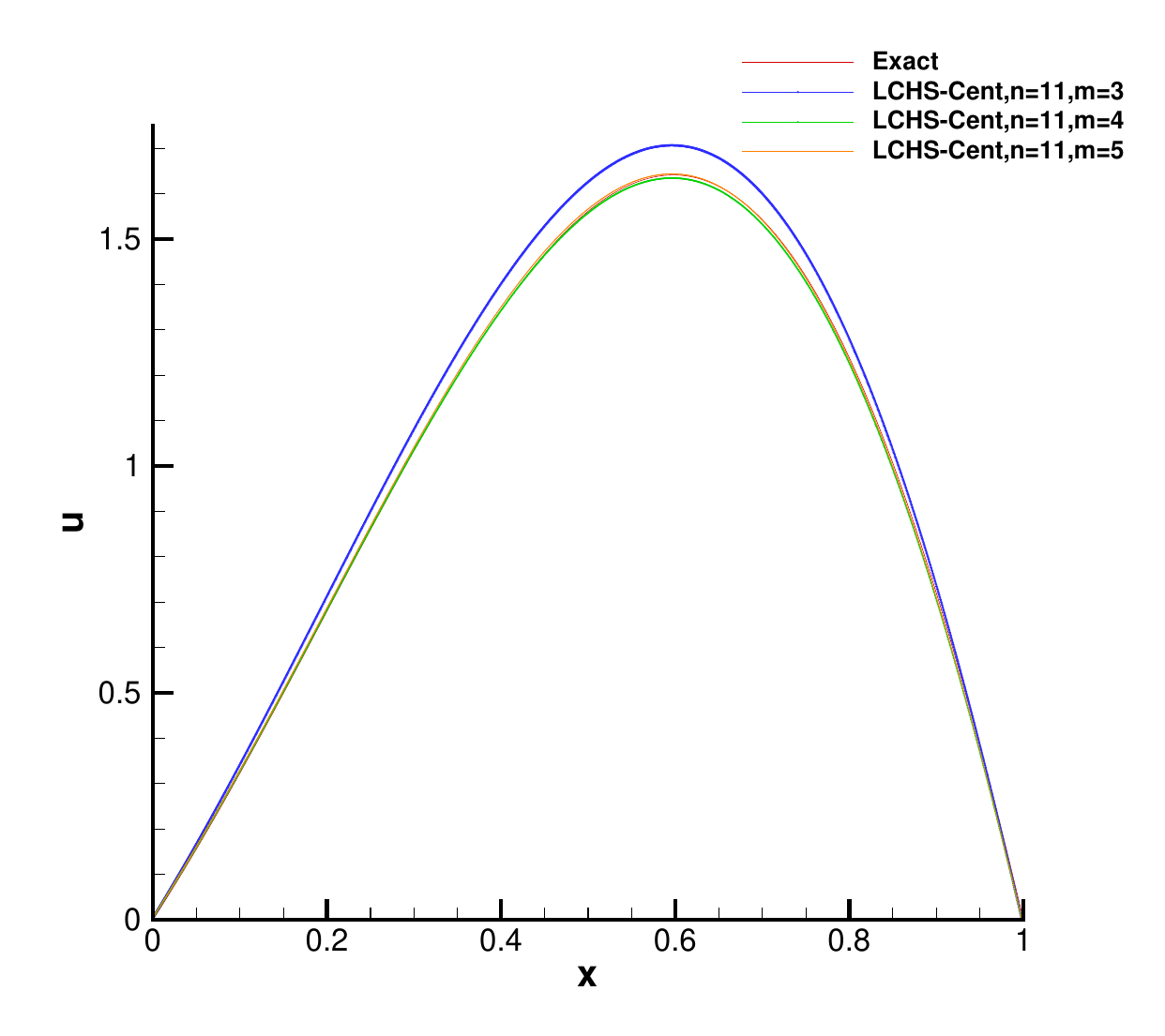}\label{fig:adv&diff_homo_dirichlet_central_change_m}}
	\caption{Numerical results for Experiment~\ref{experiment:adv&diff_homo_dirichlet} (homogeneous $1$D advection-diffusion equation with Dirichlet BCs, central scheme).}
    \label{fig:adv&diff_homo_dirichlet_central_1d}
\end{figure}

\begin{figure}[htbp]
    \centering
    \subfigure[Fixed $m=4$]{\includegraphics[width=0.32\textwidth]{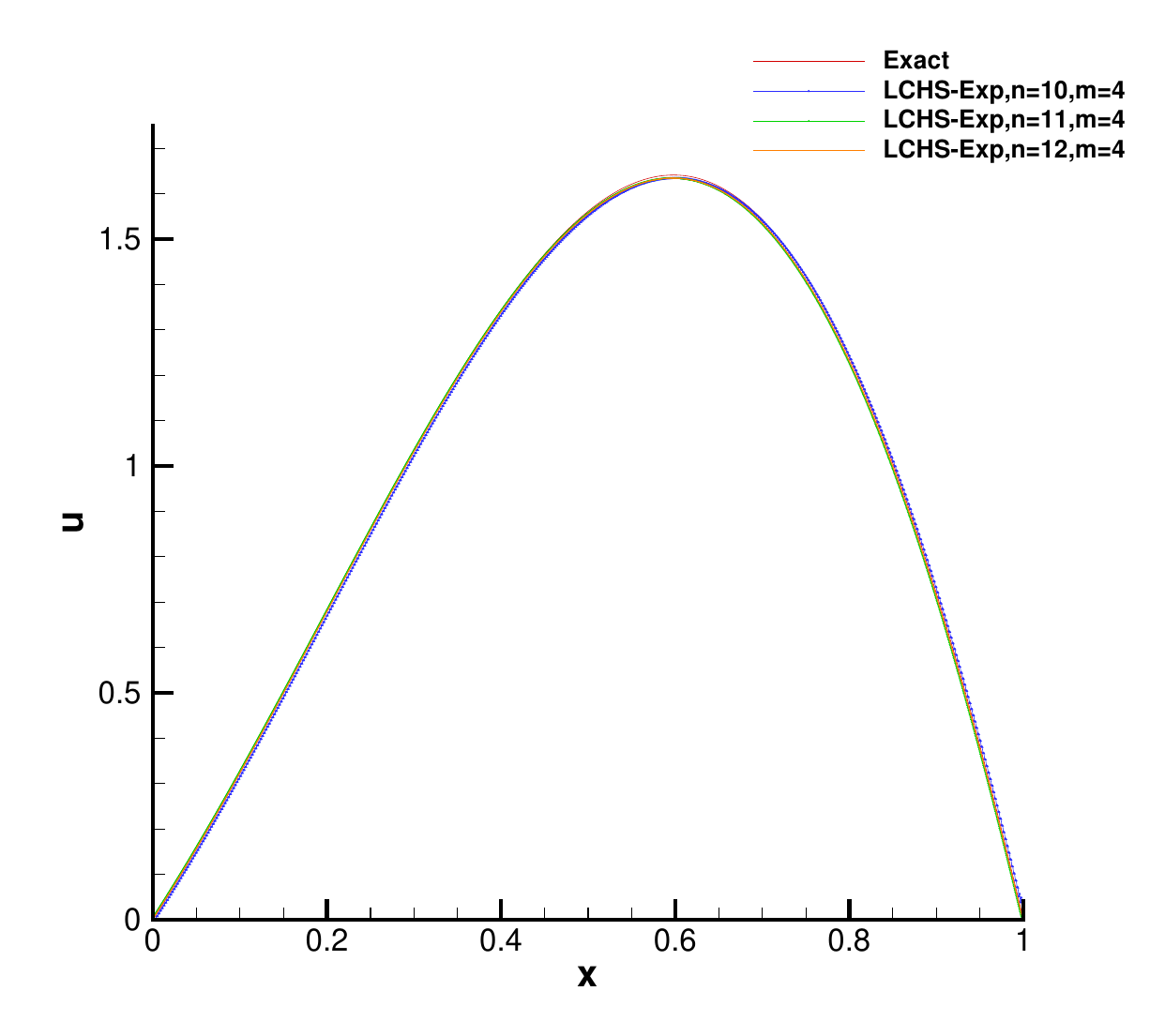}\label{fig:adv&diff_homo_dirichlet_exp_change_n}}
	\subfigure[Fixed $n=11$]{\includegraphics[width=0.32\textwidth]{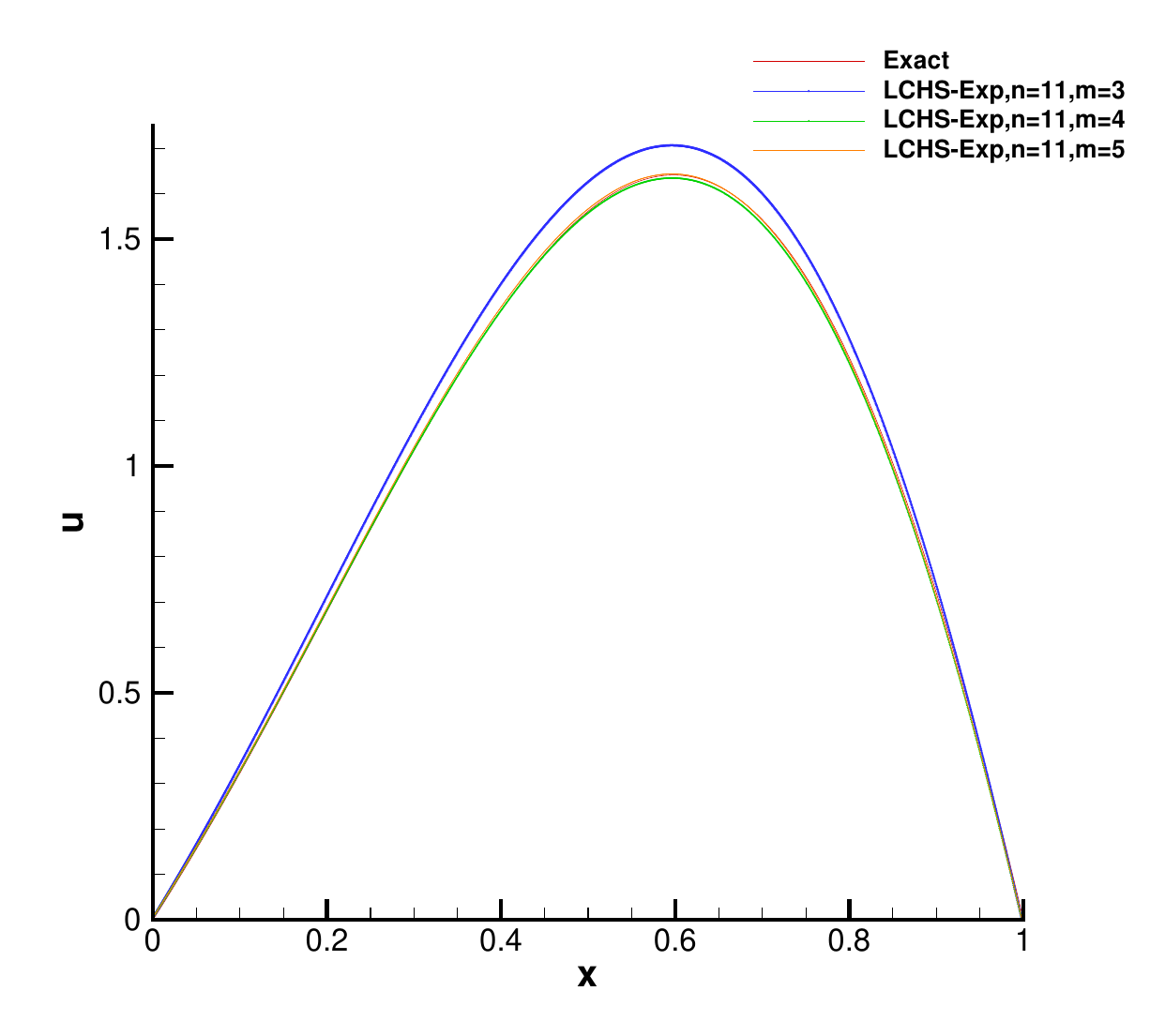}\label{fig:adv&diff_homo_dirichlet_exp_change_m}}
	\caption{Numerical results for Experiment~\ref{experiment:adv&diff_homo_dirichlet} (homogeneous $1$D advection-diffusion equation with Dirichlet BCs, exponential scheme).}
    \label{fig:adv&diff_homo_dirichlet_exp_1d}
\end{figure}

\begin{table}[htbp]
\centering
\begin{minipage}[t]{0.48\textwidth}
    \centering
    \caption{Relative errors for Experiment~\ref{experiment:adv&diff_homo_dirichlet} (central scheme, fixed $m=4$)}
    \begin{tabular}{c c c c}
        \toprule
        $n$ & $L_1$-norm & $L_2$-norm & $L_\infty$-norm \\
        \midrule
        10 & 8.4446e-3 & 8.3141e-3 & 2.1171e-2 \\
        11 & 6.6631e-3 & 7.1931e-3 & 8.8714e-3 \\
        12 & 4.5424e-3 & 4.7422e-3 & 4.9266e-3 \\
        \bottomrule
    \end{tabular}
    \label{tab:adv&diff_homo_dirichlet_central_error_change_n}
\end{minipage}
\hfill
\begin{minipage}[t]{0.48\textwidth}
    \centering
    \caption{Relative errors for Experiment~\ref{experiment:adv&diff_homo_dirichlet} (central scheme, fixed $n=11$)}
    \begin{tabular}{c c c c}
        \toprule
        $m$ & $L_1$-norm & $L_2$-norm & $L_\infty$-norm \\
        \midrule
        3 & 4.0368e-2 & 4.0201e-2 & 4.0182e-2 \\
        4 & 6.6631e-3 & 7.1931e-3 & 8.8714e-3 \\
        5 & 6.1391e-3 & 6.0865e-3 & 8.7803e-3 \\
        \bottomrule
    \end{tabular}
    \label{tab:adv&diff_homo_dirichlet_central_error_change_m}
\end{minipage}
\end{table}

\begin{table}[htbp]
\centering
\begin{minipage}[t]{0.48\textwidth}
    \centering
    \caption{Relative errors for Experiment~\ref{experiment:adv&diff_homo_dirichlet} (exponential scheme, fixed $m=4$)}
    \begin{tabular}{c c c c}
        \toprule
        $n$ & $L_1$-norm & $L_2$-norm & $L_\infty$-norm \\
        \midrule
        10 & 8.4521e-3 & 8.2909e-3 & 1.7769e-2 \\
        11 & 6.6630e-3 & 7.1940e-3 & 8.8713e-3 \\
        12 & 4.5436e-3 & 4.7428e-3 & 4.9268e-3 \\
        \bottomrule
    \end{tabular}
    \label{tab:adv&diff_homo_dirichlet_exp_error_change_n}
\end{minipage}
\hfill
\begin{minipage}[t]{0.48\textwidth}
    \centering
    \caption{Relative errors for Experiment~\ref{experiment:adv&diff_homo_dirichlet} (exponential scheme, fixed $n=11$)}
    \begin{tabular}{c c c c}
        \toprule
        $m$ & $L_1$-norm & $L_2$-norm & $L_\infty$-norm \\
        \midrule
        3 & 4.0368e-2 & 4.0201e-2 & 4.0182e-2 \\
        4 & 6.6630e-3 & 7.1940e-3 & 8.8713e-3 \\
        5 & 6.1390e-3 & 6.0877e-3 & 8.7804e-3 \\
        \bottomrule
    \end{tabular}
    \label{tab:adv&diff_homo_dirichlet_exp_error_change_m}
\end{minipage}
\end{table}

\begin{test}\label{experiment:adv&diff_homo_periodic}
This numerical experiment considers the advection-diffusion equation subject to periodic BCs $u(t,x_L) = u(t,x_R)$ and $u_x(t,x_L) = u_x(t,x_R)$, where $c = 0$.
The initial condition is specified as
\begin{equation}
    u_0(x) =  \sin \frac{2\pi x}{l}, \quad f(t,x)=0.
\end{equation}
This initial-boundary value problem admits an exact solution given by
\begin{equation}
    u(t,x) = \exp\left( - \frac{4\pi^2b}{l^2}  t\right)  \sin \frac{2\pi (x - at)}{l},
\end{equation}
\end{test}

For the numerical implementation of Experiment~\ref{experiment:adv&diff_homo_periodic}, we set parameters as: $a=1, b=0.25, l=1$, $T = 0.01$, $r = 16$, and $R = 0.88875 \times 2^m$, and adopt the select oracle implemented as Eq.~\eqref{eq:select under periodic}.
For the central scheme, numerical results are presented in Fig.~\ref{fig:adv&diff_homo_periodic_central_1d}, with relative errors summarized in Tables~\ref{tab:adv&diff_homo_periodic_central_error_change_n} and \ref{tab:adv&diff_homo_periodic_central_error_change_m}.
For the exponential scheme, numerical results are displayed in Fig.~\ref{fig:adv&diff_homo_periodic_exp_1d}, with relative errors listed in Tables~\ref{tab:adv&diff_homo_periodic_exp_error_change_n} and \ref{tab:adv&diff_homo_periodic_exp_error_change_m}.

These two flux construction schemes exhibit similar behavior with nearly identical relative errors, as observed in Experiment~\ref{experiment:adv&diff_homo_dirichlet}.
For fixed $m=4$, the relative errors decrease as $n$ increases from $9$ to $10$, but increase from $10$ to $11$ as the circuit error becomes dominant.
For fixed $n=10$, the relative errors decrease significantly as $m$ increases from $3$ to $4$, and increase slightly as $m$ rises to $5$.

\begin{figure}[htbp]
    \centering
    \subfigure[Fixed $m=4$]{\includegraphics[width=0.32\textwidth]{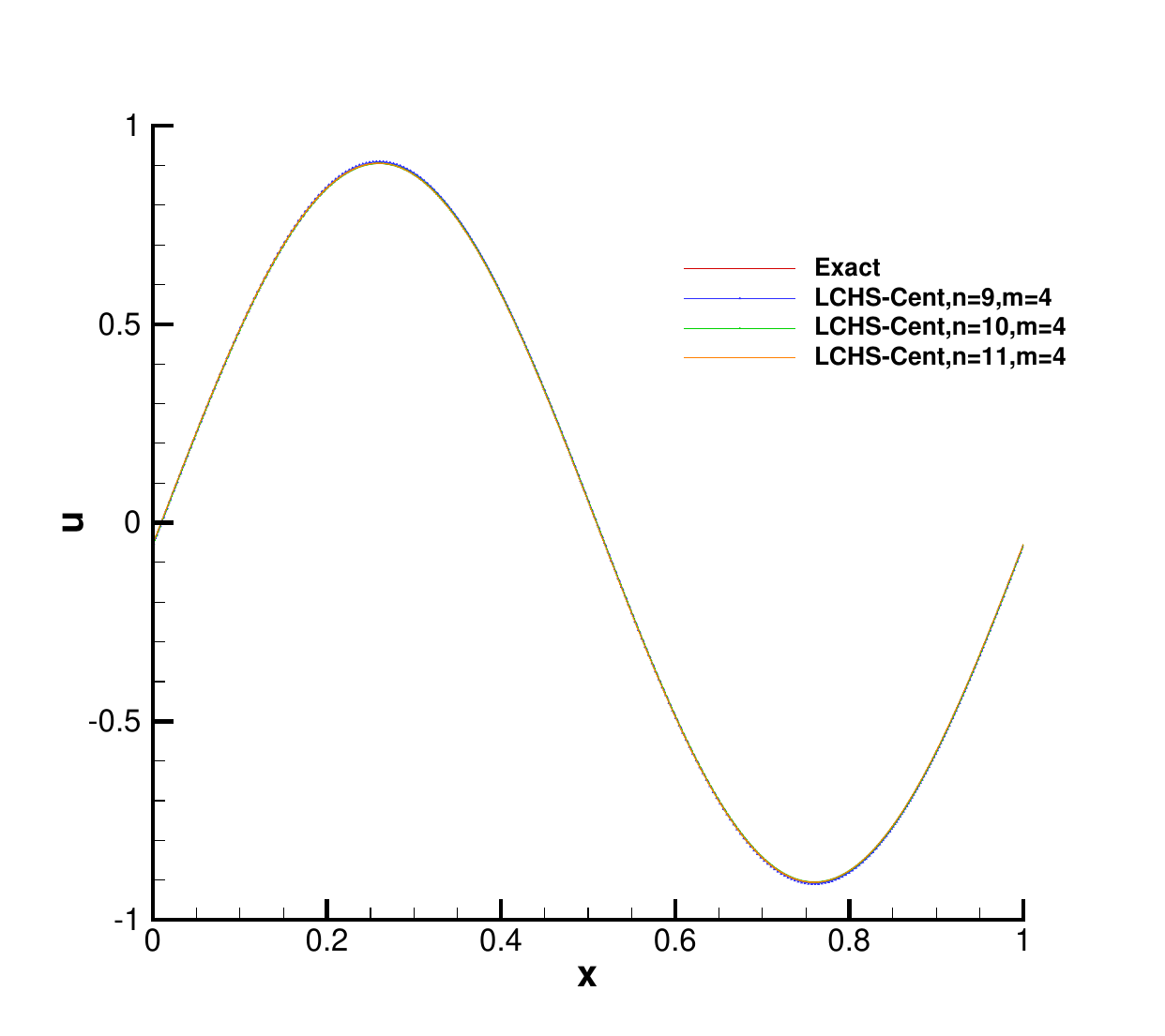}\label{fig:adv&diff_homo_periodic_central_change_n}}
	\subfigure[Fixed $n=11$]{\includegraphics[width=0.32\textwidth]{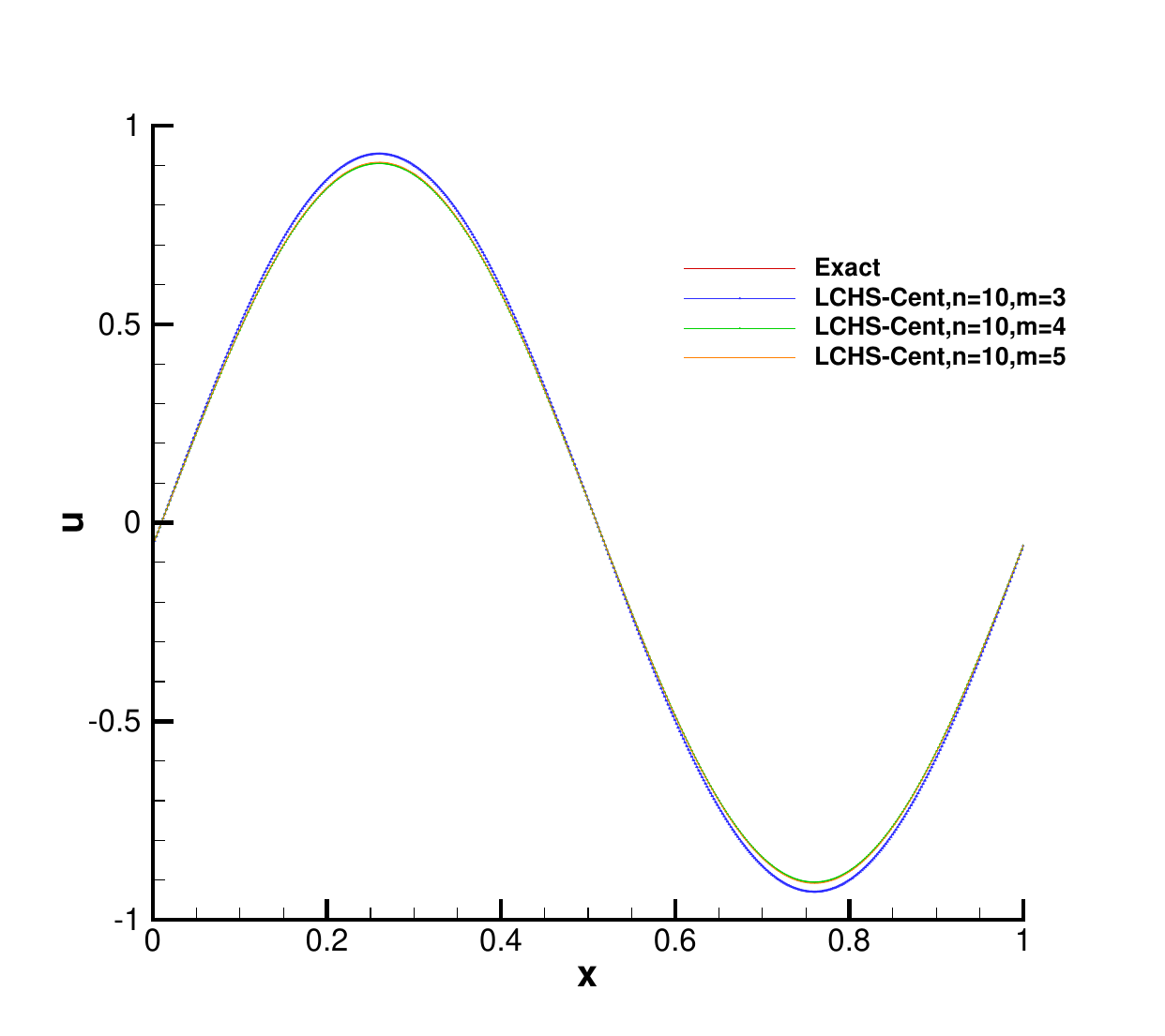}\label{fig:adv&diff_homo_periodic_central_change_m}}
	\caption{Numerical results for Experiment~\ref{experiment:adv&diff_homo_periodic} (homogeneous $1$D advection-diffusion equation with periodic BCs, central scheme).}
    \label{fig:adv&diff_homo_periodic_central_1d}
\end{figure}

\begin{figure}[htbp]
    \centering
    \subfigure[Fixed $m=4$]{\includegraphics[width=0.32\textwidth]{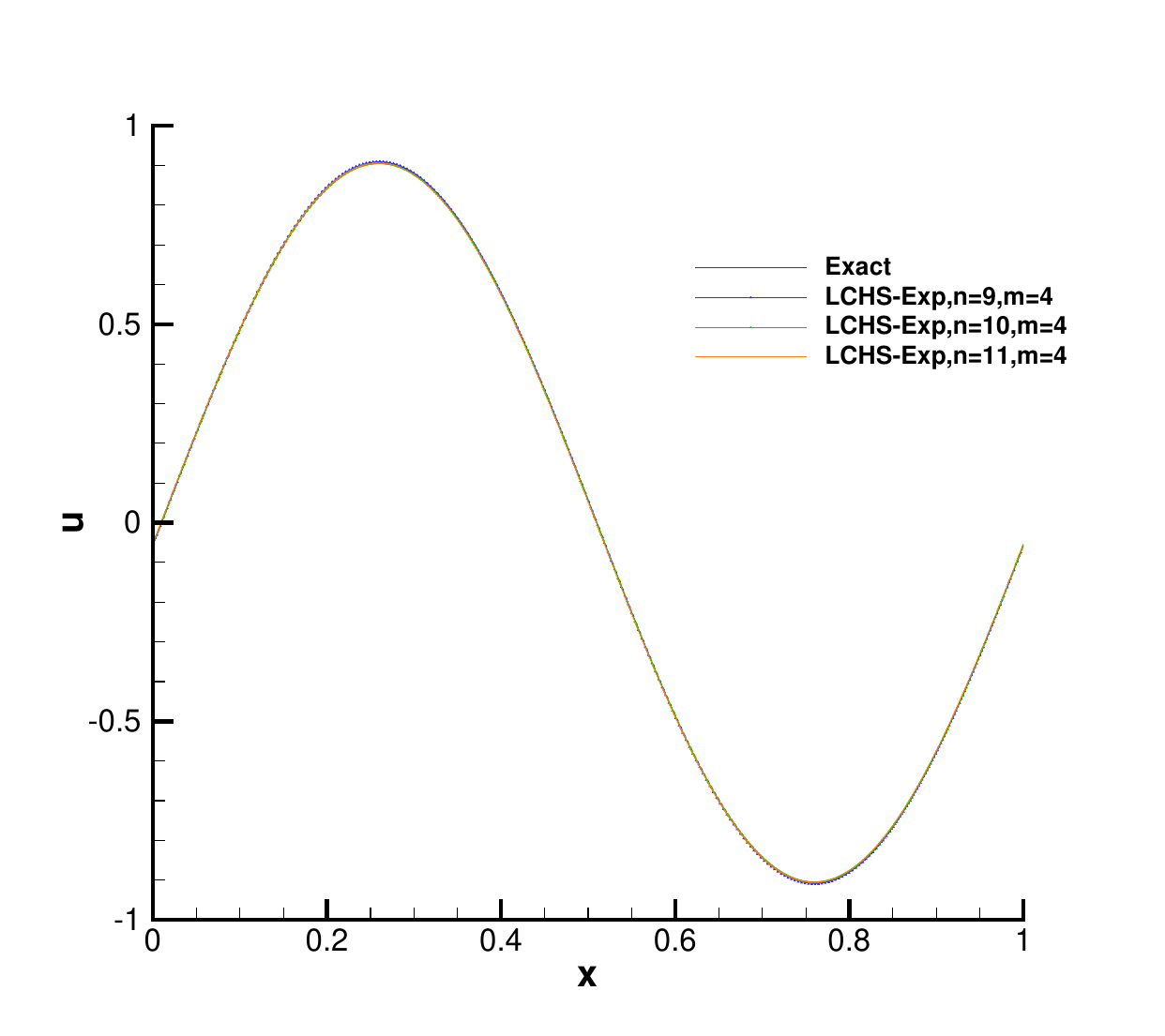}\label{fig:adv&diff_homo_periodic_exp_change_n}}
	\subfigure[Fixed $n=11$]{\includegraphics[width=0.32\textwidth]{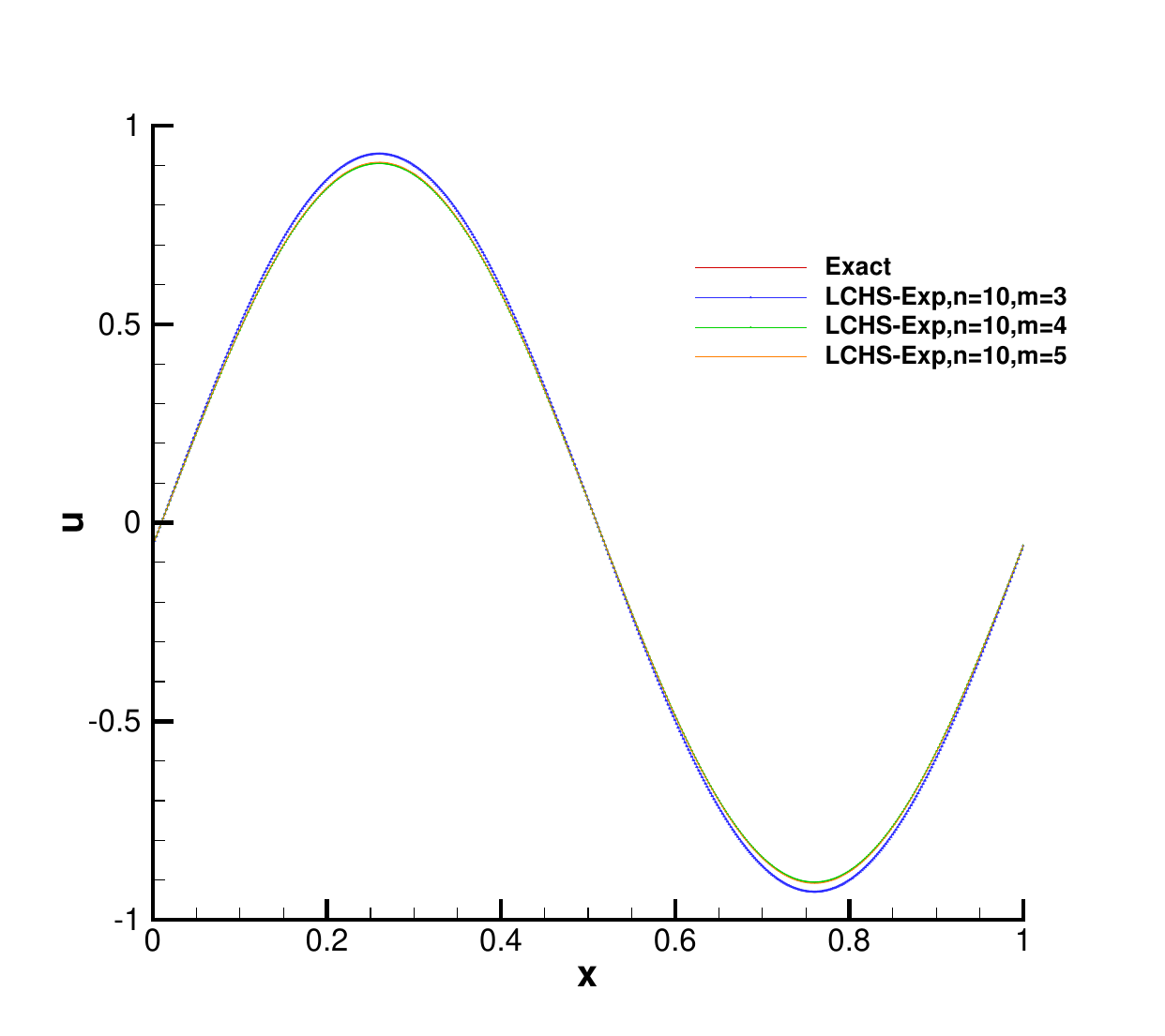}\label{fig:adv&diff_homo_periodic_exp_change_m}}
	\caption{Numerical results for Experiment~\ref{experiment:adv&diff_homo_periodic} (homogeneous $1$D advection-diffusion equation with periodic BCs, exponential scheme).}
    \label{fig:adv&diff_homo_periodic_exp_1d}
\end{figure}

\begin{table}[htbp]
\centering
\begin{minipage}[t]{0.48\textwidth}
    \centering
    \caption{Relative errors for Experiment~\ref{experiment:adv&diff_homo_periodic} (central scheme, fixed $m=4$)}
    \begin{tabular}{c c c c}
        \toprule
        $n$ & $L_1$-norm & $L_2$-norm & $L_\infty$-norm \\
        \midrule
        9 & 3.5575e-3 & 3.9240e-3 & 5.2133e-3 \\
        10 & 2.7758e-3 & 2.9517e-3 & 3.7794e-3 \\
        11 & 4.3413e-3 & 4.7290e-3 & 6.2177e-3 \\
        \bottomrule
    \end{tabular}
    \label{tab:adv&diff_homo_periodic_central_error_change_n}
\end{minipage}
\hfill
\begin{minipage}[t]{0.48\textwidth}
    \centering
    \caption{Relative errors for Experiment~\ref{experiment:adv&diff_homo_periodic} (central scheme, fixed $n=10$)}
    \begin{tabular}{c c c c}
        \toprule
        $m$ & $L_1$-norm & $L_2$-norm & $L_\infty$-norm \\
        \midrule
        3 & 2.5924e-2 & 2.5924e-2 & 2.5965e-2 \\
        4 & 2.7758e-3 & 2.9517e-3 & 3.7794e-3 \\
        5 & 3.5250e-3 & 3.6027e-3 & 4.2693e-3 \\
        \bottomrule
    \end{tabular}
    \label{tab:adv&diff_homo_periodic_central_error_change_m}
\end{minipage}
\end{table}

\begin{table}[htbp]
\centering
\begin{minipage}[t]{0.48\textwidth}
    \centering
    \caption{Relative errors for Experiment~\ref{experiment:adv&diff_homo_periodic} (exponential scheme, fixed $m=4$)}
    \begin{tabular}{c c c c}
        \toprule
        $n$ & $L_1$-norm & $L_2$-norm & $L_\infty$-norm \\
        \midrule
        9 & 3.4606e-3 & 3.8580e-3 & 5.1659e-3 \\
        10 & 2.7910e-3 & 2.9667e-3 & 3.7967e-3 \\
        11 & 4.3402e-3 & 4.7065e-3 & 6.1612e-3 \\
        \bottomrule
    \end{tabular}
    \label{tab:adv&diff_homo_periodic_exp_error_change_n}
\end{minipage}
\hfill
\begin{minipage}[t]{0.48\textwidth}
    \centering
    \caption{Relative errors for Experiment~\ref{experiment:adv&diff_homo_periodic} (exponential scheme, fixed $n=10$)}
    \begin{tabular}{c c c c}
        \toprule
        $m$ & $L_1$-norm & $L_2$-norm & $L_\infty$-norm \\
        \midrule
        3 & 2.5929e-2 & 2.5929e-2 & 2.5966e-2 \\
        4 & 2.7910e-3 & 2.9667e-3 & 3.7967e-3 \\
        5 & 3.5381e-3 & 3.6158e-3 & 4.2835e-3 \\
        \bottomrule
    \end{tabular}
    \label{tab:adv&diff_homo_periodic_exp_error_change_m}
\end{minipage}
\end{table}

\subsection{$1$-dimensional inhomogeneous advection-diffusion equation}

This section considers the 1D advection-diffusion equation (Eq.~\eqref{eq:1d_adv_diff}) with initial condition $u_0(x)=0$ and source term $f(t,x) \neq 0$, which yields an inhomogeneous ODE system after spatial discretization.
This setup is used to evaluate the performance of the LCHS method and the proposed quantum circuits for inhomogeneous terms under different flux construction schemes and varying $m_o$.
For the quantum circuits, we adopt the circuit in Eq.~\eqref{eq:outer LCHS}, with the outer select oracle formulated as Eq.~\eqref{eq:implementation of sel-U_kj} and the inner select oracle implemented under the boundary conditions introduced in Sections~\ref{subsubsec:quantum circuit under the robin boundary conditions}, \ref{subsubsec:quantum circuit under the periodic boundary conditions} and Appendix~\ref{subsec:periodic with alpha=0}.

\begin{test}\label{experiment:adv_inhomo_periodic}
This experiment investigates the pure advection equation ($b = c = 0$) subject to periodic BCs $u(t,x_L) = u(t,x_R)$ and $u_x(t,x_L) = u_x(t,x_R)$.
The initial condition is specified as
\begin{equation}
    u_0(x) = 0, \quad f(t,x) = \cos \frac{2\pi x}{l},
\end{equation}
yielding the exact solution
\begin{equation}
    u(t,x) = \frac{l}{2\pi a} \left( \sin \frac{2\pi x}{l} - \sin \frac{2\pi (x-at)}{l} \right).
\end{equation}
\end{test}

For the numerical implementation of Experiment~\ref{experiment:adv_inhomo_periodic}, we set parameters as: $a = 1$, $l = 1$, $T = 0.05$, $r = 8$, $n=8$ and $R=8$.
For the central scheme, we employ the inner select oracle implemented as Eq.~\eqref{eq:select under periodic with alpha=0}, with numerical results presented in Fig.~\ref{fig:adv_inhomo_periodic_central_change_mo} and relative errors summarized in Table~\ref{tab:adv_inhomo_periodic_central_error_change_mo}.
For the upwind scheme, we set $R = 34$, $m=4$ and adopt the inner select oracle implemented as Eq.~\eqref{eq:select under periodic}, with numerical results plotted in Fig.~\ref{fig:adv_inhomo_periodic_upwind_change_mo} and relative errors listed in Table~\ref{tab:adv_inhomo_periodic_upwind_error_change_mo}.

Numerical results show that the relative errors of these two flux construction schemes are of the same order of magnitude, as the error introduced by the outer quadrature dominates; accordingly, both errors decrease slightly as $m_o$ increases.
Owing to the employment of the LCHS method in the upwind scheme, its relative errors are slightly larger than those of the central scheme.

\begin{figure}[htbp]
    \centering
    \subfigure[Central scheme, fixed $n=8$]{\includegraphics[width=0.32\textwidth]{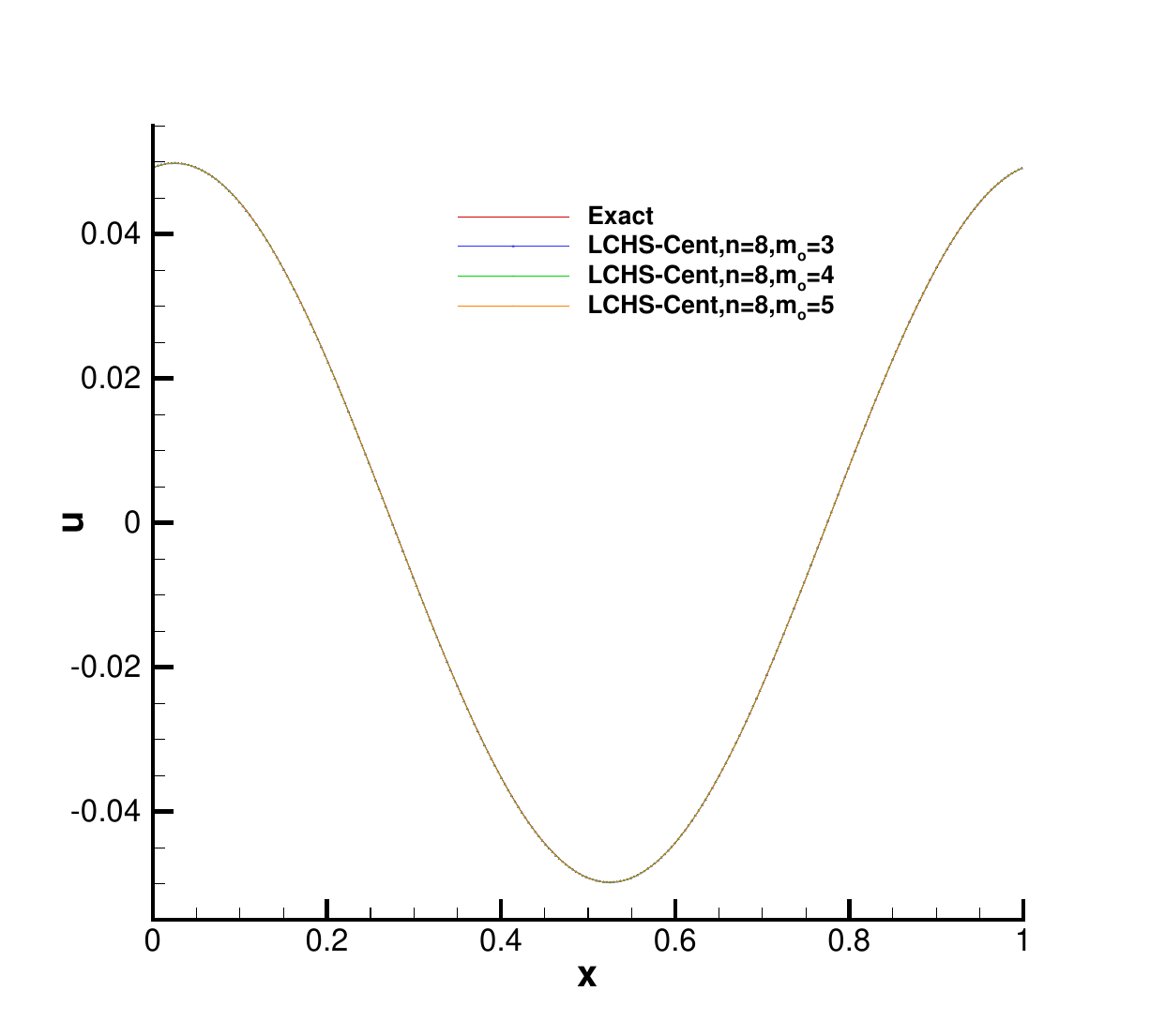}\label{fig:adv_inhomo_periodic_central_change_mo}}
	\subfigure[Upwind scheme, fixed $n=8,m=4$]{\includegraphics[width=0.32\textwidth]{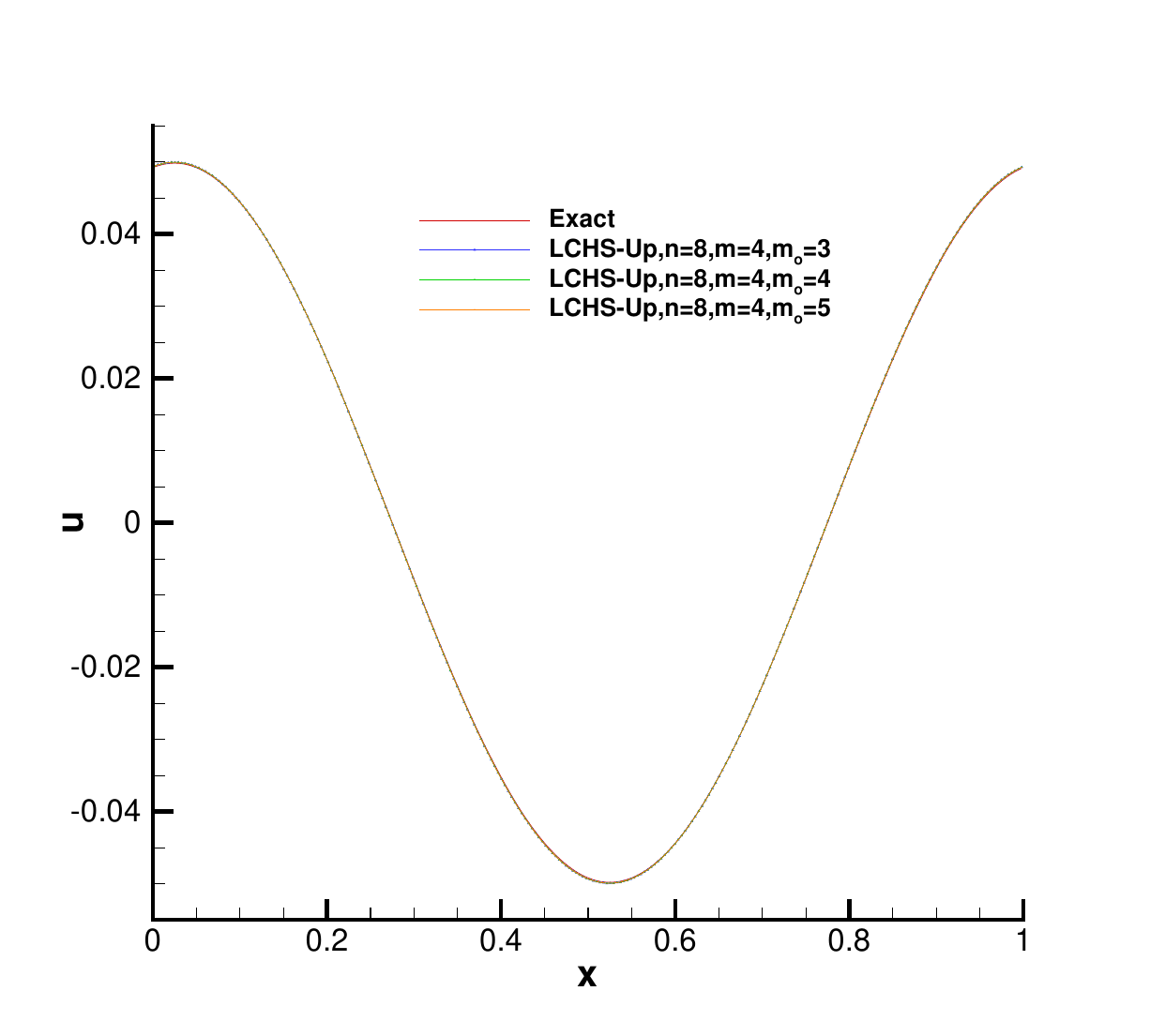}\label{fig:adv_inhomo_periodic_upwind_change_mo}}
	\caption{Numerical results for Experiment~\ref{experiment:adv_inhomo_periodic} (inhomogeneous $1$D advection equation with periodic BCs).}
\end{figure}

\begin{table}[htbp]
\centering
\begin{minipage}[t]{0.48\textwidth}
    \centering
    \caption{Relative errors for Experiment~\ref{experiment:adv_homo_periodic} (central scheme, fixed $n=8$)}
    \begin{tabular}{c c c c}
        \toprule
        $m_o$ & $L_1$-norm & $L_2$-norm & $L_\infty$-norm \\
        \midrule
        3 & 2.4119e-3 & 2.4122e-3 & 2.4441e-3 \\
        4 & 2.4046e-3 & 2.4048e-3 & 2.4337e-3 \\
        5 & 2.4035e-3 & 2.4035e-3 & 2.4315e-3 \\
        \bottomrule
    \end{tabular}
    \label{tab:adv_inhomo_periodic_central_error_change_mo}
\end{minipage}
\hfill
\begin{minipage}[t]{0.48\textwidth}
    \centering
    \caption{Relative errors for Experiment~\ref{experiment:adv_homo_periodic} (upwind scheme, fixed $n=8,m=4$)}
    \begin{tabular}{c c c c}
        \toprule
        $m_o$ & $L_1$-norm & $L_2$-norm & $L_\infty$-norm \\
        \midrule
        3 & 3.7480e-3 & 3.7580e-3 & 4.0244e-3 \\
        4 & 3.7127e-3 & 3.7223e-3 & 3.9848e-3 \\
        5 & 3.7039e-3 & 3.7135e-3 & 3.9753e-3 \\
        \bottomrule
    \end{tabular}
    \label{tab:adv_inhomo_periodic_upwind_error_change_mo}
\end{minipage}
\end{table}

\begin{test}\label{experiment:adv&diff_inhomo_dirichlet}
This numerical experiment considers the advection-diffusion equation with Dirichlet BCs $u(t,x_L) = u(t,x_R)$, where $c = 0$.
The initial condition is given by
\begin{equation}
    u_0(x) = 0, \quad
    f(t,x) = \exp\left( \frac{a}{2b}x \right) \left[ e^{-t} + \left( 1-e^{-t} \right) \left( \frac{a^2}{4b} + \frac{\pi^2 b}{l^2} \right) \right]  \sin \frac{\pi x}{l}.
\end{equation}
This initial-boundary value problem admits the exact solution given by
\begin{equation}
    u(t,x) = \left( 1-e^{-t} \right) \exp\left( \frac{a}{2b}x \right) \sin \frac{\pi x}{l},
\end{equation}
\end{test}

For the numerical implementation of Experiment~\ref{experiment:adv&diff_inhomo_dirichlet}, we set parameters as: $a = 1$, $b=0.25$, $l = 1$, $T = 0.01$, $r = 1$, $R = 9.6$, $n=11$ and $m=4$, and adopt the inner select oracle implemented as Eq.~\eqref{eq:select under robin}.
For the central scheme, numerical results are illustrated in Fig.~\ref{fig:adv&diff_inhomo_dirichlet_central_change_mo}, while the relative errors are recorded in Table~\ref{tab:adv&diff_inhomo_dirichlet_central_error_change_mo}.
For the exponential scheme, numerical results are depicted in Fig.~\ref{fig:adv&diff_inhomo_dirichlet_exp_change_mo}, while the relative errors are reported in Table~\ref{tab:adv&diff_inhomo_dirichlet_exp_error_change_mo}.

Consistent with the homogeneous case in Experiment~\ref{experiment:adv&diff_homo_dirichlet}, the two flux construction schemes yield nearly identical relative errors.

\begin{figure}[htbp]
    \centering
    \subfigure[Central scheme]{\includegraphics[width=0.32\textwidth]{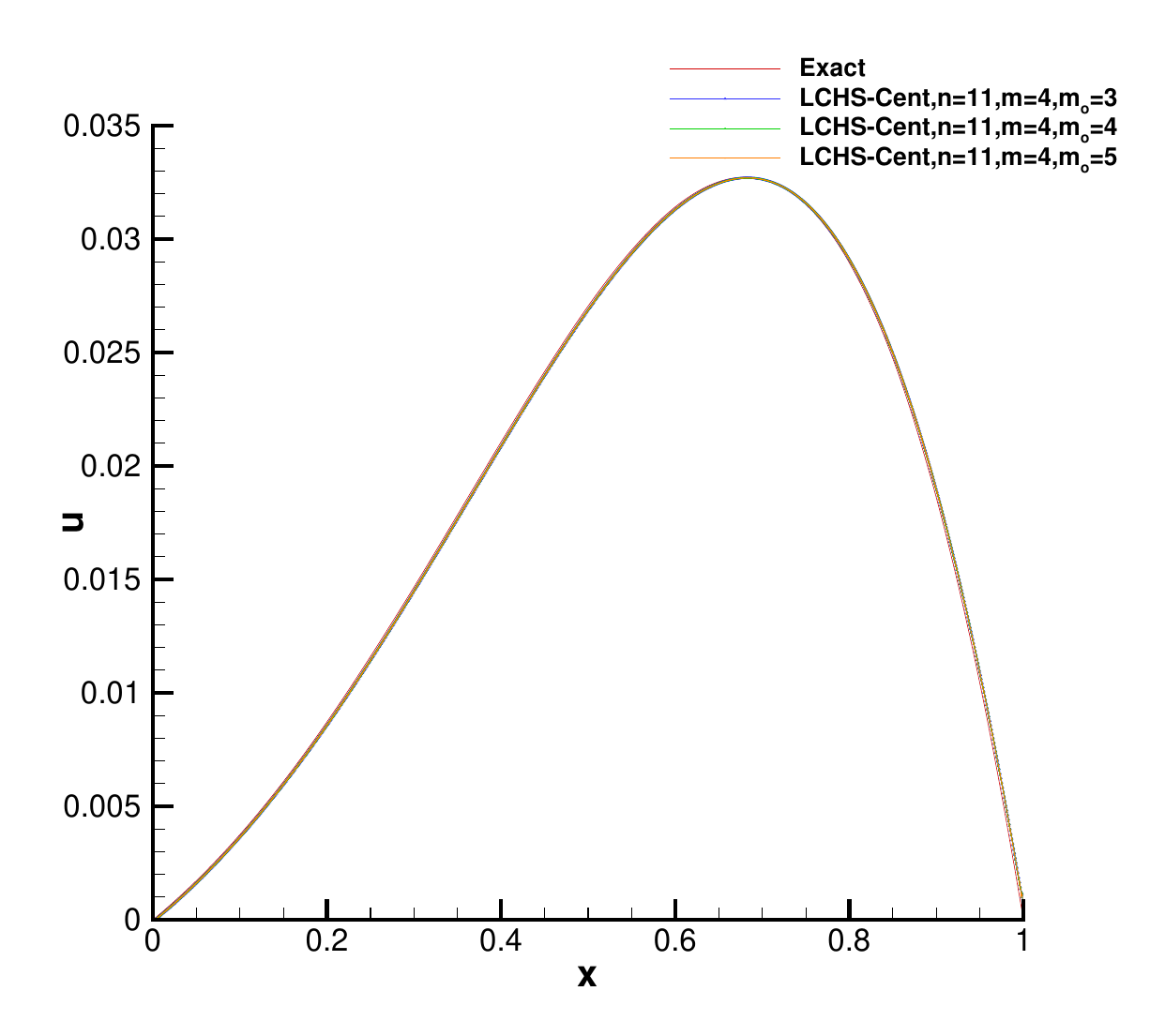}\label{fig:adv&diff_inhomo_dirichlet_central_change_mo}}
	\subfigure[Exponential scheme]{\includegraphics[width=0.32\textwidth]{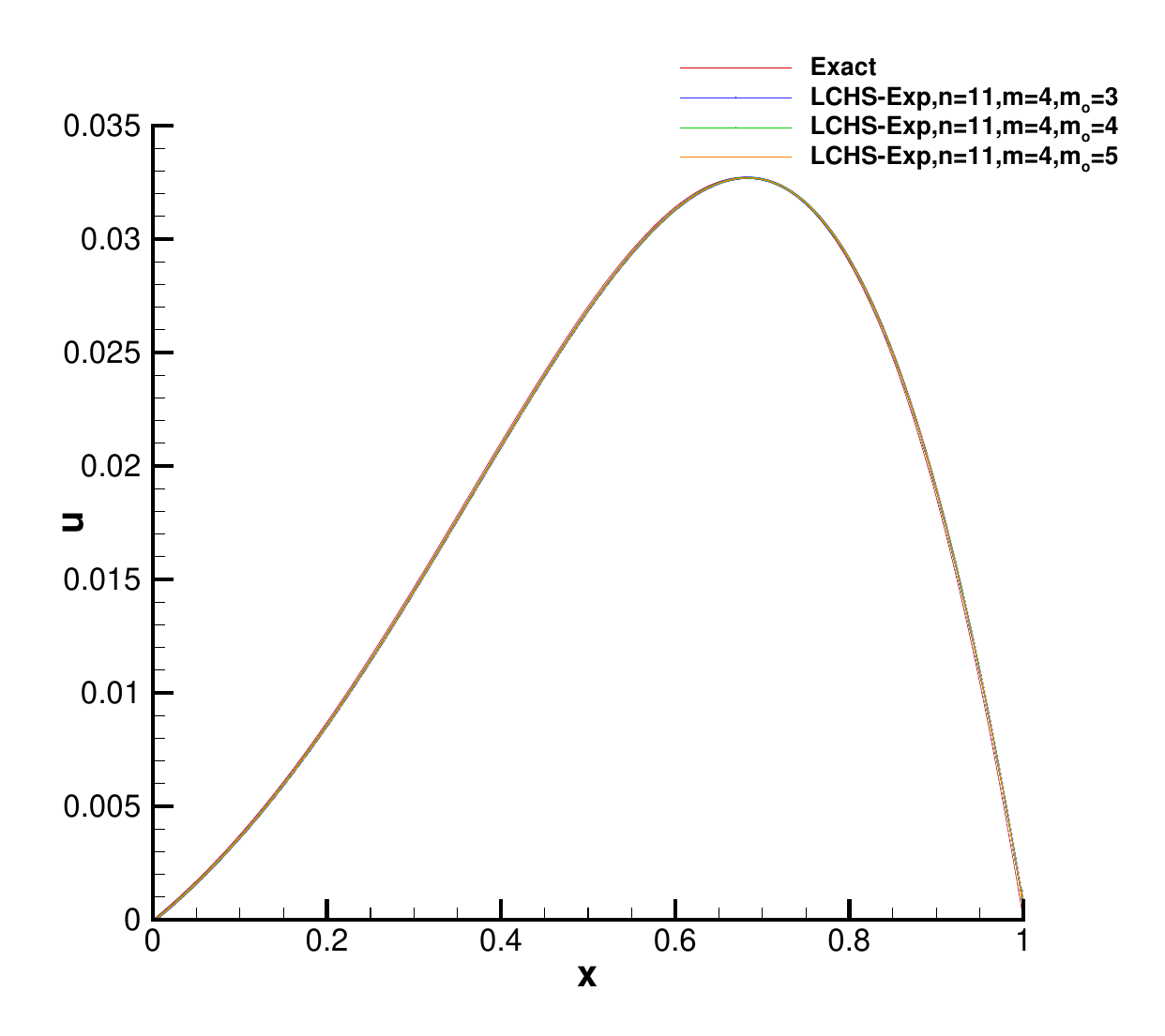}\label{fig:adv&diff_inhomo_dirichlet_exp_change_mo}}
	\caption{Numerical results for Experiment~\ref{experiment:adv&diff_inhomo_dirichlet} (inhomogeneous $1$D advection-diffusion equation with Dirichlet BCs, fixed $n=11, m=4$).}
\end{figure}

\begin{table}[htbp]
\centering
\begin{minipage}[t]{0.48\textwidth}
    \centering
    \caption{Relative errors for Experiment~\ref{experiment:adv&diff_inhomo_dirichlet} (central scheme, fixed $n=11,m=4$)}
    \begin{tabular}{c c c c}
        \toprule
        $m_o$ & $L_1$-norm & $L_2$-norm & $L_\infty$-norm \\
        \midrule
        3 & 9.4449e-3 & 1.0019e-2 & 2.6340e-2 \\
        4 & 9.4555e-3 & 1.0035e-2 & 2.6723e-2 \\
        5 & 9.4607e-3 & 1.0055e-2 & 2.5693e-2 \\
        \bottomrule
    \end{tabular}
    \label{tab:adv&diff_inhomo_dirichlet_central_error_change_mo}
\end{minipage}
\hfill
\begin{minipage}[t]{0.48\textwidth}
    \centering
    \caption{Relative errors for Experiment~\ref{experiment:adv&diff_inhomo_dirichlet} (exponential scheme, fixed $n=11,m=4$)}
    \begin{tabular}{c c c c}
        \toprule
        $m_o$ & $L_1$-norm & $L_2$-norm & $L_\infty$-norm \\
        \midrule
        3 & 9.4412e-3 & 1.0030e-2 & 2.6795e-2 \\
        4 & 9.4519e-3 & 1.0044e-2 & 2.7187e-2 \\
        5 & 9.4570e-3 & 1.0064e-2 & 2.5426e-2 \\
        \bottomrule
    \end{tabular}
    \label{tab:adv&diff_inhomo_dirichlet_exp_error_change_mo}
\end{minipage}
\end{table}

\subsection{$2$-dimensional advection-diffusion equation}

This section considers the $2$-dimensional advection-diffusion equation given by
\begin{equation}\label{eq:2d_adv_diff}
    \left\{
    \begin{aligned}
    \frac{\partial u}{\partial t} + a_1\frac{\partial u}{\partial x} + a_2\frac{\partial u}{\partial y} + cu &= b_1\frac{\partial^2 u}{\partial x^2} + b_2\frac{\partial^2 u}{\partial y^2} + f(t,x,y), \\
    u(0, x, y) &= u_0(x, y),
    \end{aligned}
    \right.
     \quad (x,y) \in \Omega=[0,l_1] \times [0,l_2].
\end{equation}
We set the source term $f(t,x,y) = 0$ and impose homogeneous boundary conditions, yielding a homogeneous ODE system after spatial discretization. This setup enables testing the effectiveness of the LCHS method and the proposed quantum circuits for multi-dimensional problems.
For the quantum circuit implementation, we employ the circuit in Eq.~\eqref{eq:inner LCHS} with the global select oracle formulated as Eq.~\eqref{eq:global select}, where the select oracles for each dimension are implemented according to the boundary conditions detailed in Sections~\ref{subsubsec:quantum circuit under the robin boundary conditions}, \ref{subsubsec:quantum circuit under the periodic boundary conditions}.

\begin{test}\label{experiment:diff_homo_robin_2d}
This numerical experiment examines the pure diffusion equation ($a_1 = a_2 = c = 0$) with Robin BCs:  $u_x(t,x_L,y) = u_y(t,x,y_L) = 0$, $u(t,x_R,y) = u(t,x,y_R) = 0$.
The initial condition is
\begin{equation}
    u_0(x,y) = \cos \left( \frac{\pi x}{2l_1} \right) \cos \left( \frac{\pi y}{2l_2} \right), \quad f(t,x,y) = 0,
\end{equation}
admitting the exact solution
\begin{equation}
    u(t,x,y) = \exp \left( -\left( \frac{b_1}{4l_1^2} + \frac{b_2}{4l_2^2} \right)\pi^2 t \right) \cos \left( \frac{\pi x}{2l_1} \right) \cos \left( \frac{\pi y}{2l_2} \right).
\end{equation}
\end{test}

For the numerical implementation of Experiment~\ref{experiment:diff_homo_robin_2d}, we set parameters as: $b_1=b_2=1$, $l_1 = l_2 = 1$, $T = 0.04$, $r = 1$, $R=17.8$, $n_1=n_2=7$ and $m=4$.
Applying the central scheme and the select oracle implemented as Eq.~\eqref{eq:select under robin} for each dimension, numerical results are presented in Fig.~\ref{fig:diff_homo_robin_2d}, with relative errors summarized in Table~\ref{tab:diff_homo_2d_error}.

Numerical results demonstrate that the absolute errors are pronounced at the peaks presented in Fig.~\ref{fig:diff_homo_robin_2d_error}, whereas they remain negligible in regions where the solution values are close to zero.

\begin{figure}[htbp]
    \centering
    \subfigure[Exact solution]{\includegraphics[width=0.32\textwidth]{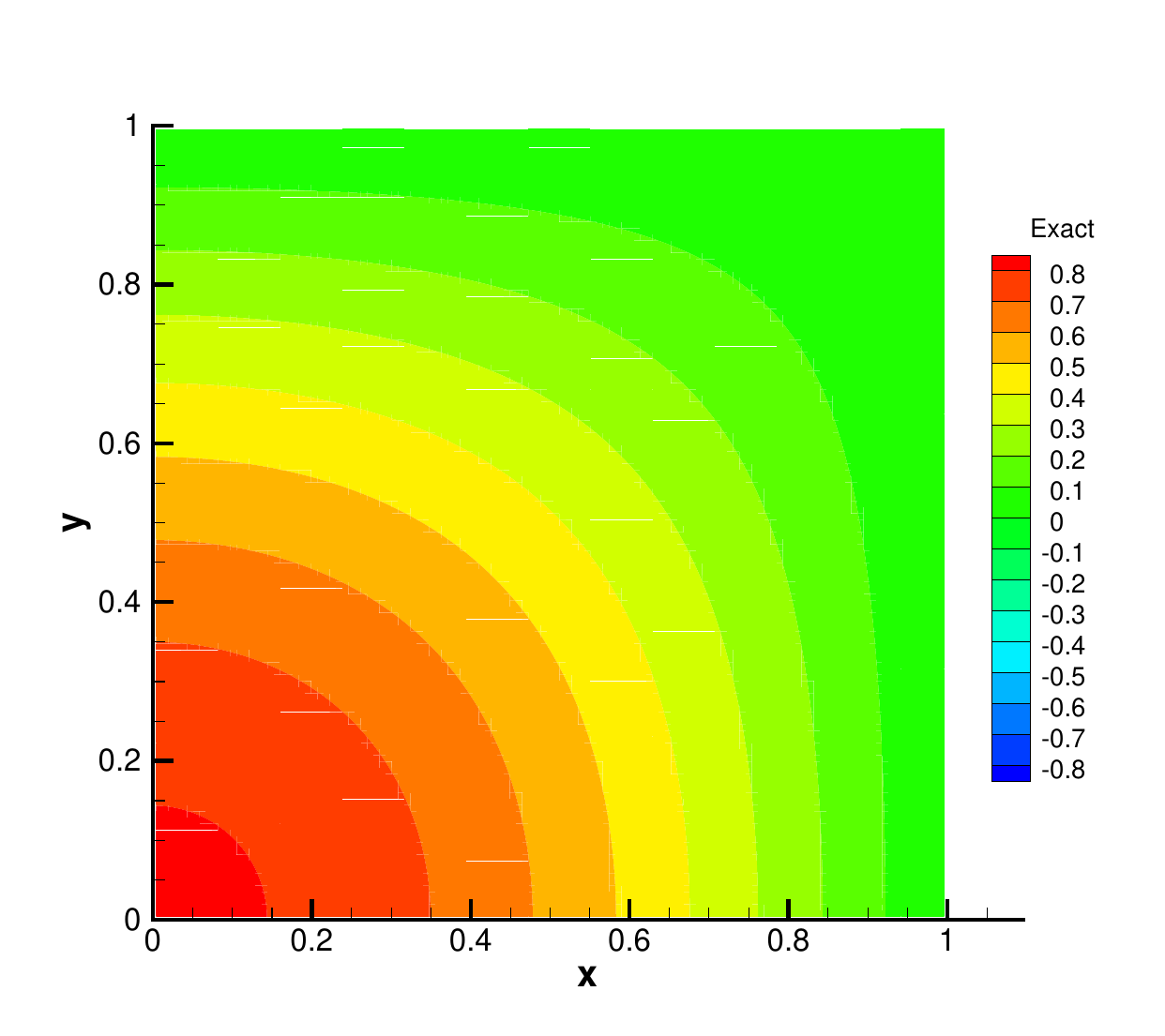}\label{fig:diff_homo_robin_2d_exact}}
	\subfigure[Central scheme]{\includegraphics[width=0.32\textwidth]{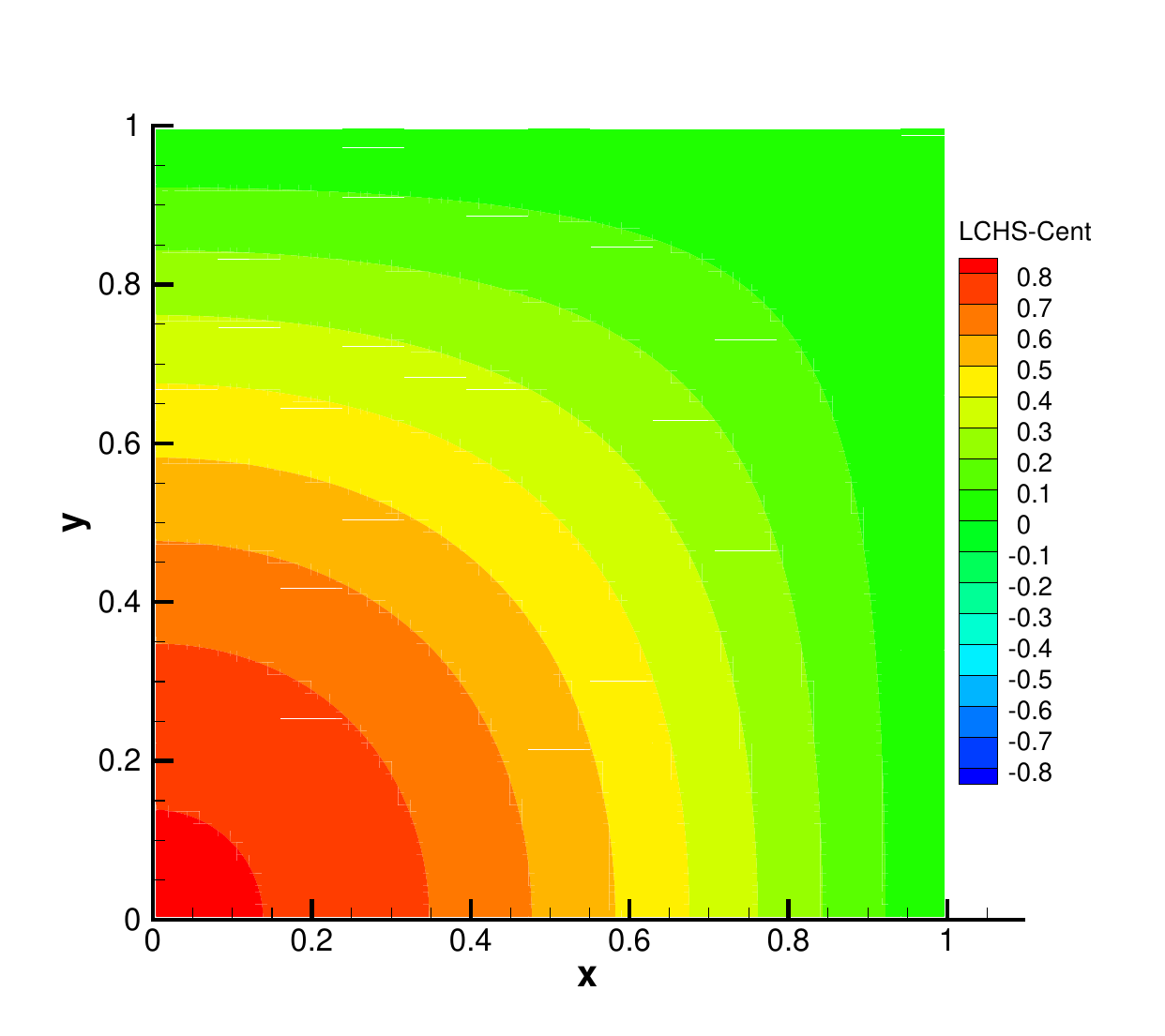}\label{fig:diff_homo_robin_2d_lchs}}
    \subfigure[Absolute error]{\includegraphics[width=0.32\textwidth]{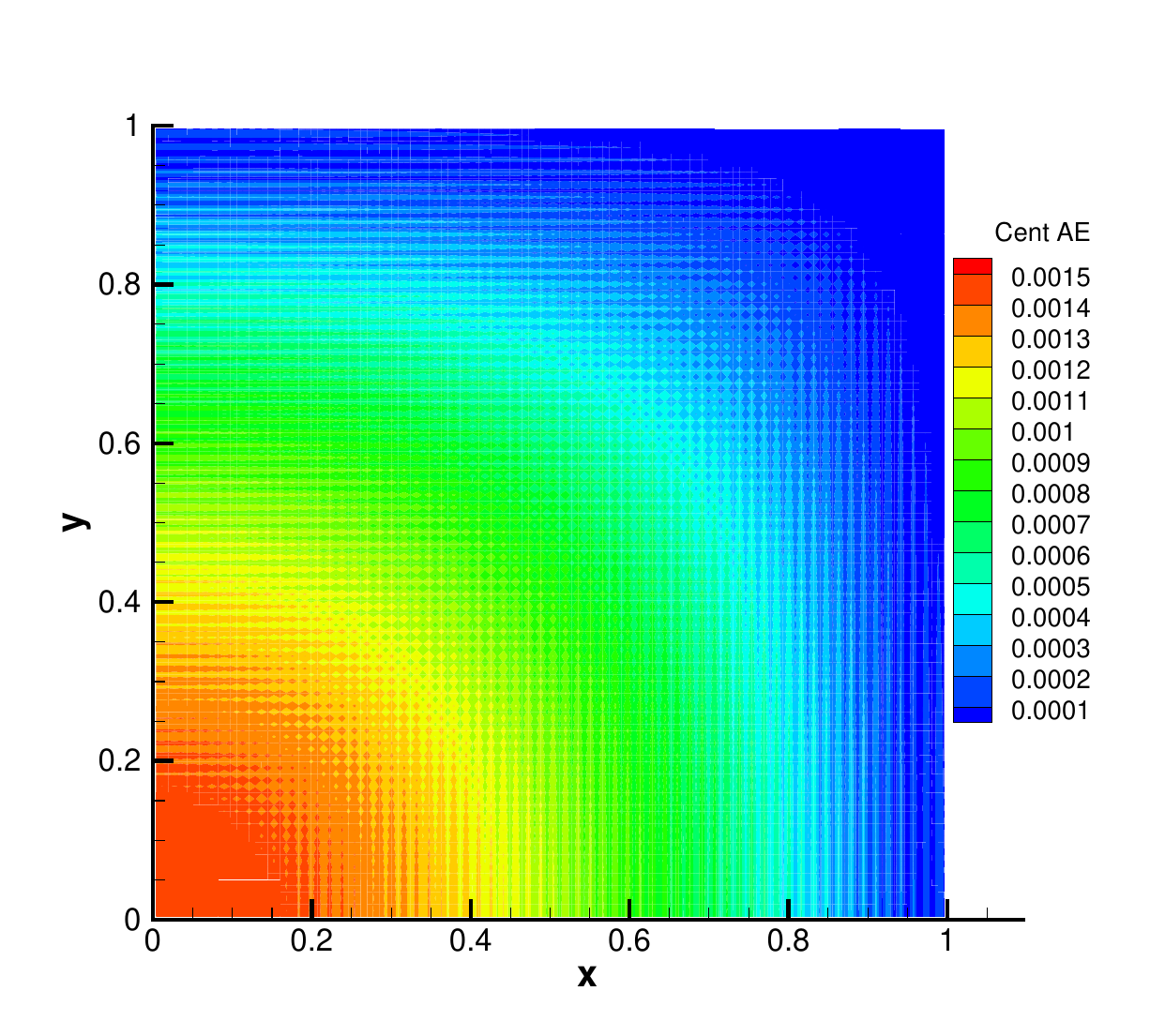}\label{fig:diff_homo_robin_2d_error}}
	\caption{Numerical results for Experiment~\ref{experiment:diff_homo_robin_2d} (homogeneous $2$D diffusion equation with Robin BCs).}
    \label{fig:diff_homo_robin_2d}
\end{figure}

\begin{table}[htbp]
\centering
\begin{minipage}[t]{0.48\textwidth}
    \centering
    \caption{Relative errors for Experiment~\ref{experiment:diff_homo_robin_2d}}
    \begin{tabular}{c c c c}
        \toprule
        Boundary conditions & $L_1$-norm & $L_2$-norm & $L_\infty$-norm \\
        \midrule
        Robin & 1.8134e-3 & 1.8133e-3 & 1.8132e-3 \\
        \bottomrule
    \end{tabular}
    \label{tab:diff_homo_2d_error}
\end{minipage}
\end{table}

\begin{test}\label{experiment:adv&diff_homo_periodic_2d}
The final numerical experiment considers the advection-diffusion equation subject to periodic BCs $u(t,x_L,y) = u(t,x_R,y), u_x(t,x_L,y) = u_x(t,x_R,y),  u(t,x,y_L) = u(t,x,y_R), u_y(t,x,y_L) = u_y(t,x,y_R)$ (where $c = 0$).
The initial condition is
\begin{equation}
    u_0(x,y) = \sin\left(\frac{2\pi x}{l_1}\right) \sin\left(\frac{2\pi y}{l_2}\right), \quad f(t,x,y) = 0,
\end{equation}
yielding the exact solution
\begin{equation}
    u(t,x,y) = \exp\left( -\left( \frac{4\pi^2 b_1}{l_1^2} + \frac{4\pi^2 b_2}{l_2^2} \right) t \right) \sin\left(\frac{2\pi (x - a_1 t)}{l_1}\right) \sin\left(\frac{2\pi (y - a_2 t)}{l_2}\right).
\end{equation}
\end{test}

For the numerical implementation of the problem in Experiment~\ref{experiment:adv&diff_homo_periodic_2d}, we set parameters as: $a_1=a_2=1$, $b_1=b_2=0.25$, $l_1=l_2=1$, $T = 0.01$, $r = 8$, $R = 17.61$, $n_1=n_2=7$, and $m=4$, and apply the select oracle implemented as Eq.~\eqref{eq:select under periodic} for each dimension.
By applying the central scheme and the exponential scheme, numerical results are displayed in Fig.~\ref{fig:adv&diff_homo_periodic_2d}, while the relative errors are listed in Table~\ref{tab:adv&diff_homo_periodic_2d_error}.

Consistent with the 1D case in Experiment~\ref{experiment:adv&diff_homo_periodic}, both flux construction schemes maintain comparable performance.
While the central scheme outperforms the exponential scheme slightly in the $L_\infty$-norm, the exponential scheme is marginally better in the $L_1$- and $L_2$-norms; overall, their differences remain negligible.

\begin{figure}[htbp]
    \centering
    \begin{tabular}{m{0.32\textwidth} m{0.32\textwidth} m{0.32\textwidth}}
        \multirow{2}{*}{\subfigure[Exact solution]{\includegraphics[width=\linewidth]{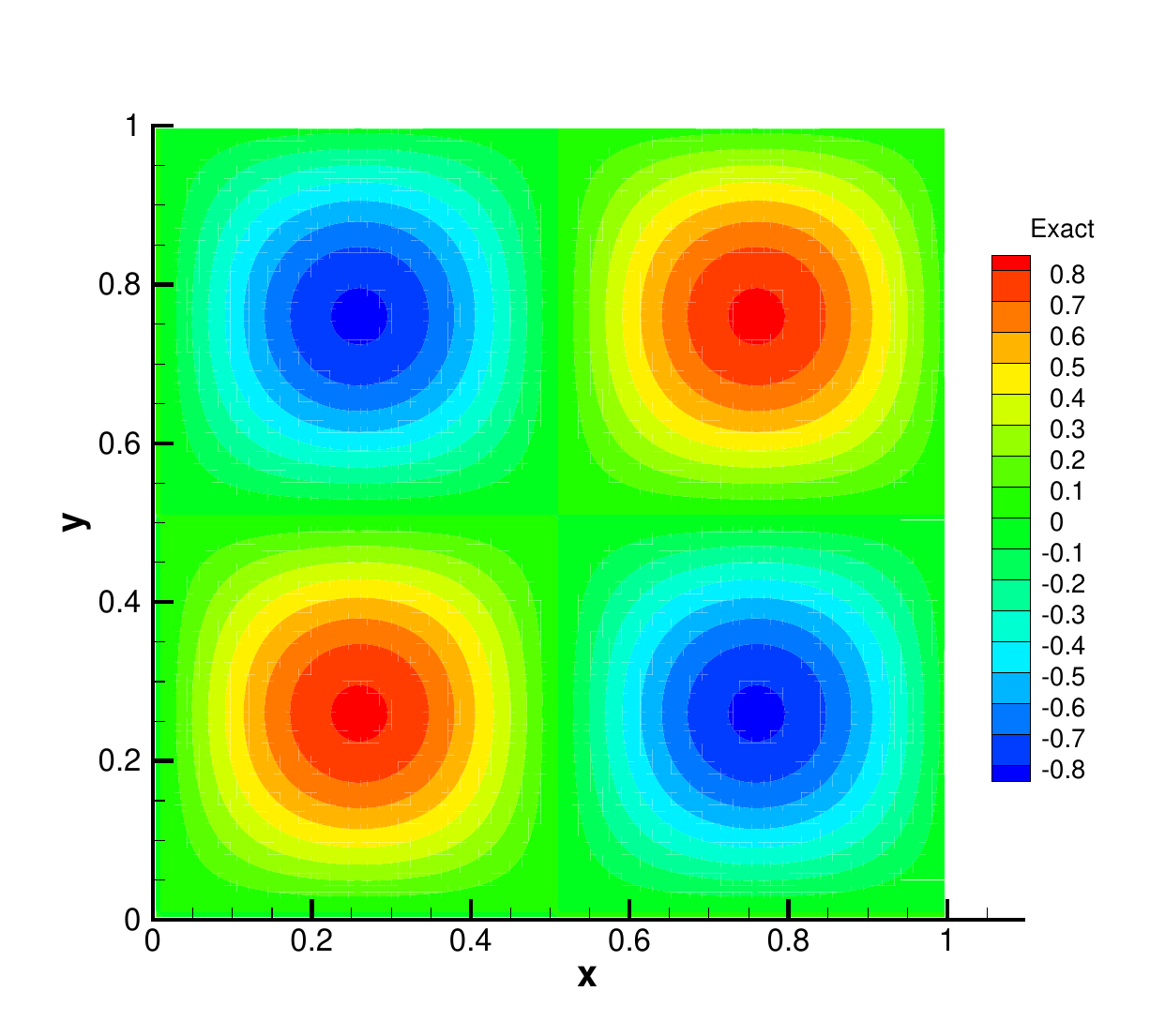}}} &
        \subfigure[Central scheme]{\includegraphics[width=\linewidth]{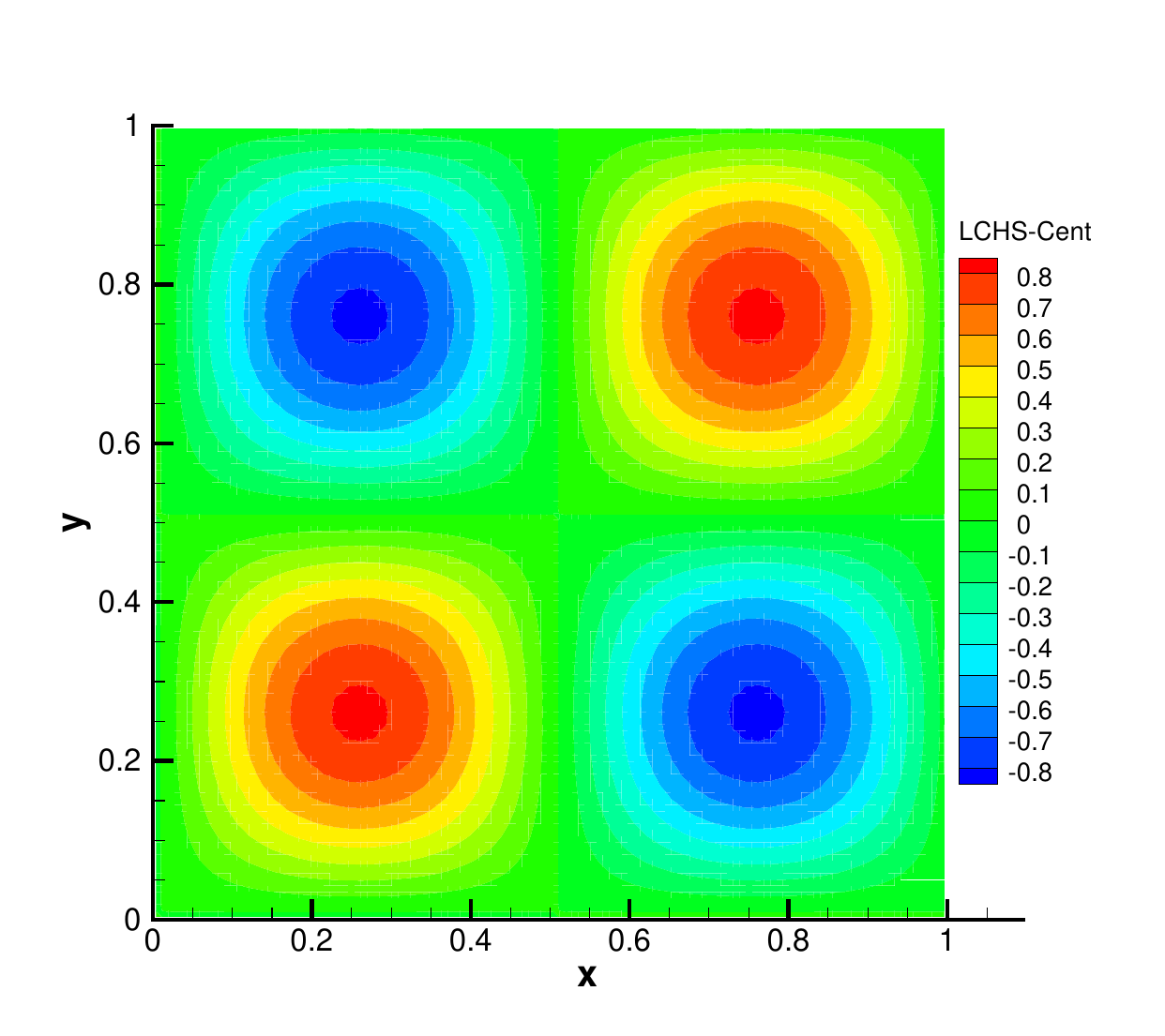}} &
        \subfigure[Absolute error of central scheme]{\includegraphics[width=\linewidth]{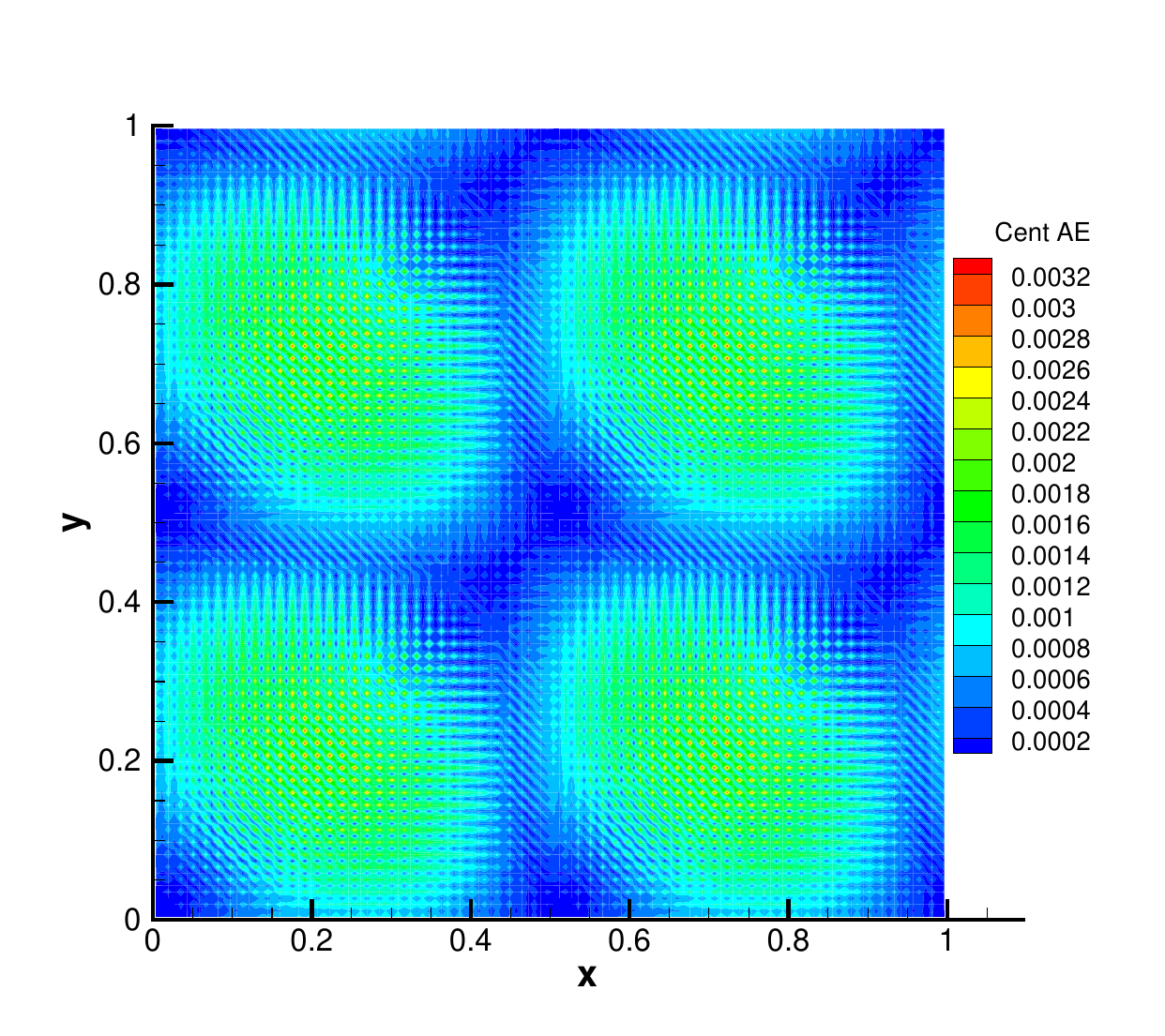}} \\
        &
        \subfigure[Exponential scheme]{\includegraphics[width=\linewidth]{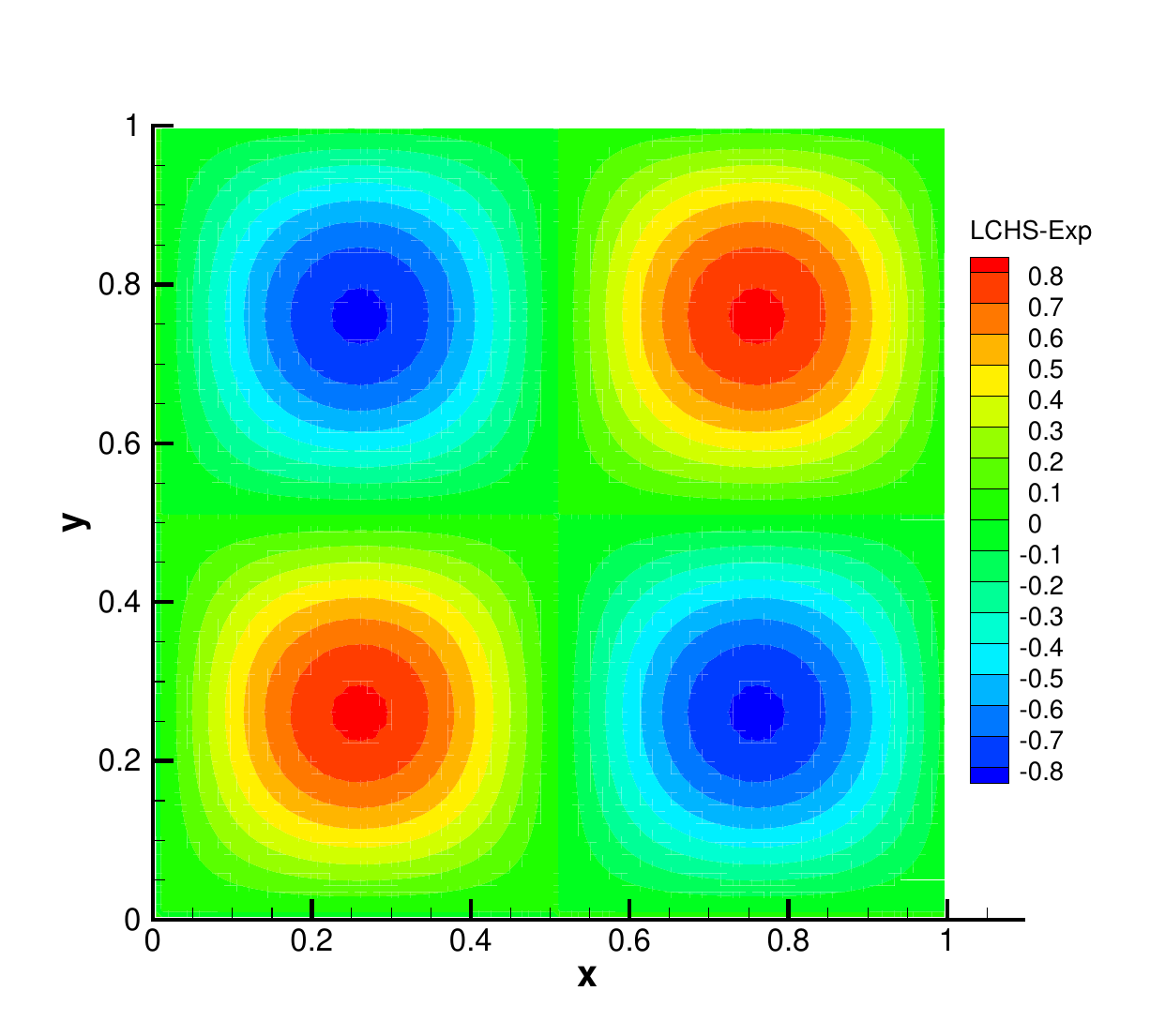}} &
        \subfigure[Absolute error of exponential scheme]{\includegraphics[width=\linewidth]{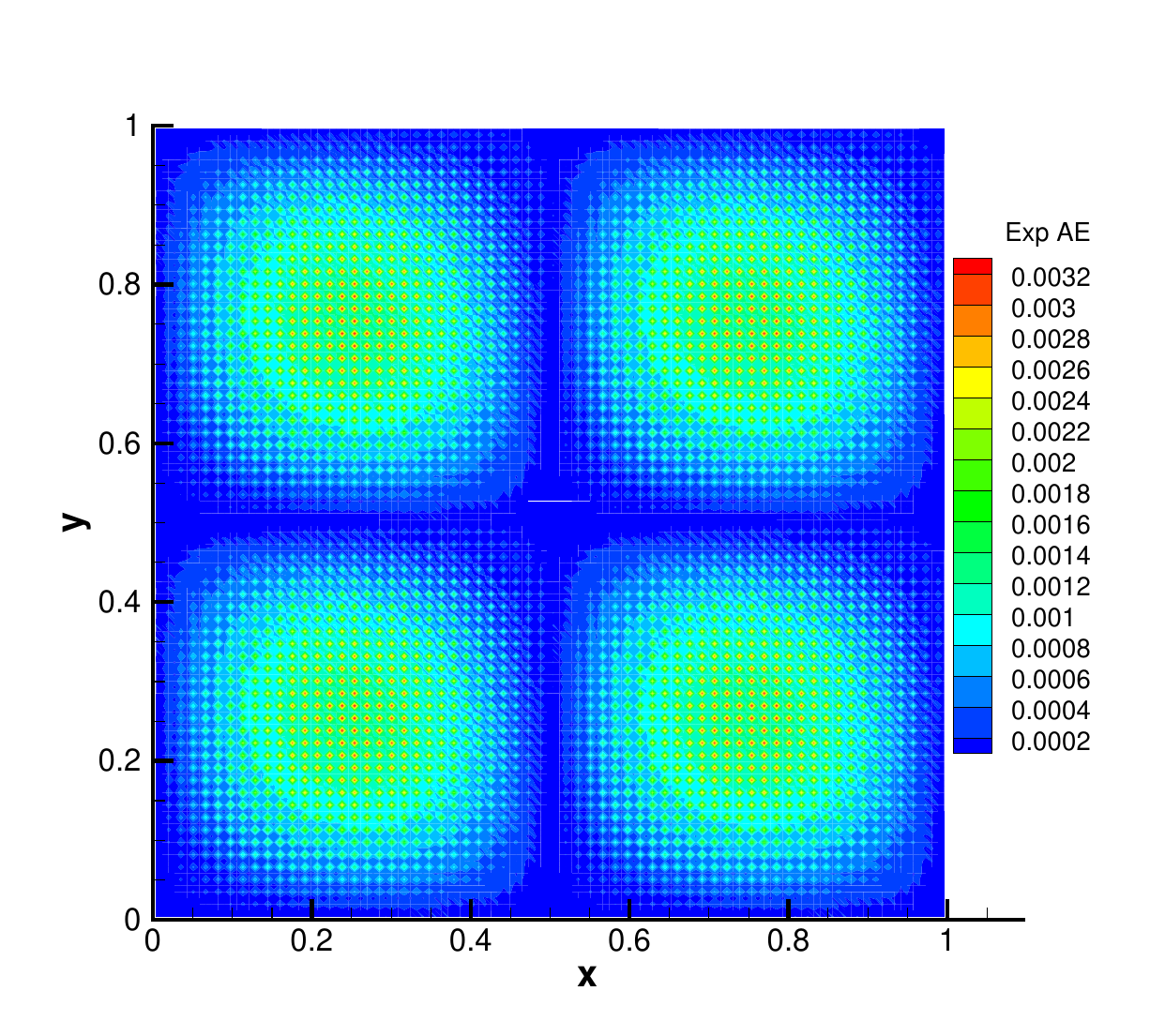}}
    \end{tabular}
    \caption{Numerical results for Experiment~\ref{experiment:adv&diff_homo_periodic_2d} (homogeneous $2$D advection-diffusion equation with periodic BCs).}
    \label{fig:adv&diff_homo_periodic_2d}
\end{figure}

\begin{table}[htbp]
\centering
\begin{minipage}[t]{0.48\textwidth}
    \centering
    \caption{Relative errors for Experiment~\ref{experiment:adv&diff_homo_periodic_2d}}
    \begin{tabular}{c c c c}
        \toprule
        Scheme & $L_1$-norm & $L_2$-norm & $L_\infty$-norm \\
        \midrule
        Central & 2.4825e-3 & 2.5783e-3 & 3.9348e-3 \\
        Exponential & 2.1343e-3 & 2.4494e-3 & 4.2411e-3 \\
        \bottomrule
    \end{tabular}
    \label{tab:adv&diff_homo_periodic_2d_error}
\end{minipage}
\end{table}

\subsection{Discussion}

Based on the comprehensive numerical results presented above, we distill the key observations and insights as follows.

First, numerical results demonstrate that the relative error is on the order of $10^{-3}$ in most test cases, which validates the effectiveness of quantum circuits designed based on the LCHS method.
For homogeneous cases, encoding the system state with $n=8$ achieves satisfactory performance across most scenarios.
Notably, an excessive number of ancilla qubits is unnecessary for encoding the coefficients of the LCHS method, and $m=4$ suffices to yield satisfactory results in most cases.
For inhomogeneous cases, we similarly find that an excessive number of ancilla qubits is not required for encoding the outer LCU coefficients, with $m_o=3$ achieving reliable performance.
Across all cases, the time step $r$ does not need to be set to an excessively large value as suggested by theoretical analysis.
Setting $r \leq 16$ is sufficient to yield favorable numerical results, and we merely set $r=1$ in most experiments.

However, oscillatory behavior is prevalent in the numerical results, with Trotter error identified as the most probable cause.
Concurrently, overshoot and dissipation are observed in several results; the primary contributing factor is the choice of the truncation boundary $R$ within the LCHS method.
In practice, the choice $R = \mathcal{O}(2^m)$ is an empirical observation rather than a theoretical result; this setting delivers strong performance, particularly for $m\geq4$.
This may suggest that a more appropriate node distribution for the trapezoidal quadrature rule might exist for the LCHS method.
Under such a distribution, retaining the leading integration nodes with relatively large weights could achieve promising numerical performance.

\section{Conclusion}\label{sec:conclusion}

We present a practical implementation of quantum circuits for solving advection-diffusion equations with boundary conditions, based on the LCHS method.
We elaborate on the complete workflow of the proposed framework: commencing with the transformation of advection-diffusion equations into a source-term-included ODE system via the FVM and diverse flux construction schemes, and proceeding to the design of tailored quantum circuits for different boundary conditions.
The error analysis and gate complexity of the proposed quantum circuits are discussed in detail, which reveals the quantum advantages of our approach over classical methods in high-dimensional scenarios.
The constructed circuits have been verified through simulation on the QPanda digital emulator of fault-tolerant quantum computers, utilizing 7--12 qubits for spatial encoding and 3--5 qubits for inner and outer LCU coefficients.

Numerical results from the proposed quantum circuits exhibit excellent agreement with analytical solutions (achieving $\leq 2\%$ relative error in almost all cases), thereby validating the efficacy of our circuit design.
In addition, the proposed framework is broadly applicable to various spatial discretizations and numerical schemes, and it generalizes naturally to other linear PDEs that incorporate boundary conditions.
This flexibility stems from the fact that our proposed circuit architecture focuses solely on the coefficient matrix of the ODE system, rather than imposing any constraints on the underlying spatial discretization methods and numerical schemes.

Nevertheless, several inherent limitations of the current quantum circuits merit further improvement.
First, the boundary conditions investigated herein are limited to time-independent cases, which restricts the framework’s applicability to dynamic scenarios.
Second, although multi-controlled gates can be decomposed into single-qubit gates and CNOT gates, the gate complexity (quantified in Theorem~\ref{th:gate complexity}) renders practical execution of these circuits on near-term NISQ~\cite{preskill2018quantum} devices highly challenging.
Third, while we observe oscillation, overshoot, and dissipation phenomena in numerical experiments and confirm their correlation with the LCHS method and the TS decomposition, the underlying mechanisms remain unclear, and effective strategies for suppressing or mitigating these phenomena await further exploration.

This study establishes a foundational framework for the quantum simulation of PDEs.
To address the aforementioned limitations and further advance the practicality of quantum PDE simulations, future research directions will focus on:
(1) extending the framework to accommodate time-dependent boundary conditions;
(2) simplifying the quantum circuit architecture and compressing its depth to reduce gate complexity;
and (3) elucidating the mechanisms of oscillation, overshoot, and dissipation phenomena and developing targeted suppression or mitigation strategies.

\section*{Acknowledgments}


We would like to express our gratitude to Prof. Jin-Peng Liu and Dr. Zhaoyuan Meng for their valuable suggestions related to this article.

\appendix

\section{Maximum and minimum eigenvalues of matrix $B$}\label{sec:eigenvalue_of_B}
Define the tridiagonal matrix
\begin{equation}
    B(\phi, \mu_0, \mu_1) := 
    \begin{bmatrix}
    \mu_0 & e^{i \phi} & & \\
    e^{-i \phi} & 0 & e^{i \phi} & & \\
    & \ddots & \ddots & \ddots & \\
    & & e^{-i \phi} & 0 & e^{i \phi} & \\
    & & & e^{-i \phi} & \mu_1
    \end{bmatrix}_{N\times N},
\end{equation}
and consider its maximum and minimum eigenvalues.
Since $B(\phi, \mu_0, \mu_1)$ is Hermitian, its eigenvalues coincide with those of $B(0, \mu_0, \mu_1)$. Hence, without loss of generality, we denote $B = B(0, \mu_0, \mu_1)$ and focus on its spectrum.

Let $\lambda$ be an eigenvalue of $B$ with corresponding eigenvector $\vec{x} = (x_1, x_2, \dots, x_N)^\mathrm{T}$.
The component-wise expansion of $B\vec{x} = \lambda\vec{x}$ yields
\begin{equation}\label{eq:recur_sys}
\begin{aligned}
    x_2 &= (\lambda - \mu_0)x_1, \quad k = 1; \\
    x_{k+1} &= \lambda x_k - x_{k-1}, \quad 2 \leq k \leq N-1; \\
    x_{N-1} &= (\lambda - \mu_1)x_N, \quad k = N.
\end{aligned}
\end{equation}
The middle relation is a second-order linear homogeneous recurrence, whose characteristic equation reads
\begin{equation}\label{eq:r_char}
    r^2 - \lambda r + 1 = 0.
\end{equation}
The form of solutions is determined by the discriminant $\lambda^2 - 4$:
\begin{enumerate}
    \item $\lambda^2 < 4$: complex conjugate roots $r_{1,2} = e^{\pm i\theta}$ with $\theta = \arccos(\lambda/2)$, giving trigonometric solutions;
    \item $\lambda^2 = 4$: repeated roots $r_{1,2} = \lambda/2 = \pm 1$, leading to polynomial solutions;
    \item $\lambda^2 > 4$: real roots $r_{1,2} = e^{\pm \xi}$ with $\xi = \text{arccosh}(\lambda/2)$, yielding hyperbolic solutions.
\end{enumerate}

\subsection{Trigonometric case}
Let $\lambda = 2\cos\theta$ ($\theta \in (0,\pi)$) and assume $x_k = C\cos((k-1)\theta) + D\sin((k-1)\theta)$.
Substituting these into the boundary conditions and solving the characteristic equation yields
\begin{equation}\label{eq:trigo_char}
    \sin((N+1)\theta) - (\mu_0+\mu_1) \sin(N\theta) + \mu_0\mu_1 \sin((N-1)\theta) = 0.
\end{equation}

Solving Eq.~\eqref{eq:trigo_char} yields the eigenvalues of matrix $B$. For specific boundary parameter pairs $(\mu_0, \mu_1)$, the eigenvalues are explicitly given by
\begin{equation}
    \lambda_k = 2\cos\left( \frac{M_k \cdot \pi}{L_N} \right), \quad k=1,\cdots,N,
\end{equation}
where $M_k$ and $L$ depend on $(\mu_0, \mu_1)$ as summarized in Table~\ref{tab:eigenvalue for trigonometric case}.
For all cases listed in Table~\ref{tab:eigenvalue for trigonometric case}, we have $\lambda_{\max}(B) \leq 2$ and $\lambda_{\min}(B) \geq -2$.

\begin{table}[htbp]
\centering
\caption{Parameters $M_k$ and $L$ for different boundary parameter pairs $(\mu_0, \mu_1)$.}
\label{tab:eigenvalue for trigonometric case}
\begin{tabular}{ccccccc}
    \toprule
    $(\mu_0,\mu_1)$ & $(-1,-1)$ & $(0,-1)$ & $(0,0)$ & $(1,-1)$ & $(1,0)$ & $(1,1)$ \\ \midrule
    $M_k$ & $k$ & $2k$ & $k$ & $2k-1$ & $2k-1$ & $k-1$ \\ \midrule
    $L_N$ & $N$ & $2N+1$ & $N+1$ & $2N$ & $2N+1$ & $N$ \\ \bottomrule
\end{tabular}
\end{table}

For the general case in which the boundary parameters $\mu_0, \mu_1 \in [-1, 1]$ are non-infinitesimal (and thus non-negligible), we leverage the perturbation method and the analytical results from \cite{yueh2005_eigenvalues, willms2008_analytic} to estimate the maximum and minimum eigenvalues of the matrix $B$, leading to
\begin{equation}\label{eq:trigo_eigenvalue}
\begin{aligned}
    \lambda_{\max} &= 2\cos\left( \frac{\pi}{N - 1 + \frac{1}{1-\mu_0} + \frac{1}{1-\mu_1}} \right) + \mathcal{O}\left( \frac{1}{N^4} \right), \\
    \lambda_{\min} &= -2\cos\left( \frac{\pi}{N - 1 + \frac{1}{1 + \mu_0} + \frac{1}{1 + \mu_1}} \right) + \mathcal{O}\left( \frac{1}{N^4} \right).
\end{aligned}
\end{equation}

\subsection{Polynomial case}\label{subsec:poly case}

For $\lambda = 2$, we derive the characteristic equation of $B$ as
\begin{equation}
    (N + 1) - N(\mu_0+\mu_1) + (N-1)\mu_0\mu_1 = 0,
\end{equation}
which yields
\begin{equation}\label{eq:lam=2}
    \mu_1 = \frac{(N+1) - N \mu_0}{N - (N-1)\mu_0}.
\end{equation}
If the point $(\mu_0, \mu_1)$ lies above the curve shown in Fig.~\ref{fig:mu1_vs_mu0(lambda=2)}, i.e., $\mu_1 > \frac{(N+1) - N \mu_0}{N - (N-1)\mu_0}$, then $\lambda_{\max} > 2$; otherwise, $\lambda_{\max} \leq 2$.

For $\lambda = -2$, the characteristic equation of $M$ is derived as
\begin{equation}
    (N + 1) + N(\mu_0+\mu_1) + (N-1)\mu_0\mu_1 = 0,
\end{equation}
which leads to
\begin{equation}\label{eq:lam=-2}
    \mu_1 = -\frac{(N+1) + N\mu_0}{N + (N-1)\mu_0}.
\end{equation}
If the point $(\mu_0, \mu_1)$ lies below the curve in Fig.~\ref{fig:mu1_vs_mu0(lambda=-2)}, i.e., $\mu_1 < -\frac{(N+1) + N\mu_0}{N + (N-1)\mu_0}$, then $\lambda_{\min} < -2$; otherwise, $\lambda_{\min} \geq -2$.

\begin{figure}[htbp]
\centering
\begin{minipage}[b]{0.32\textwidth}
    \centering
    \includegraphics[width=1\textwidth]{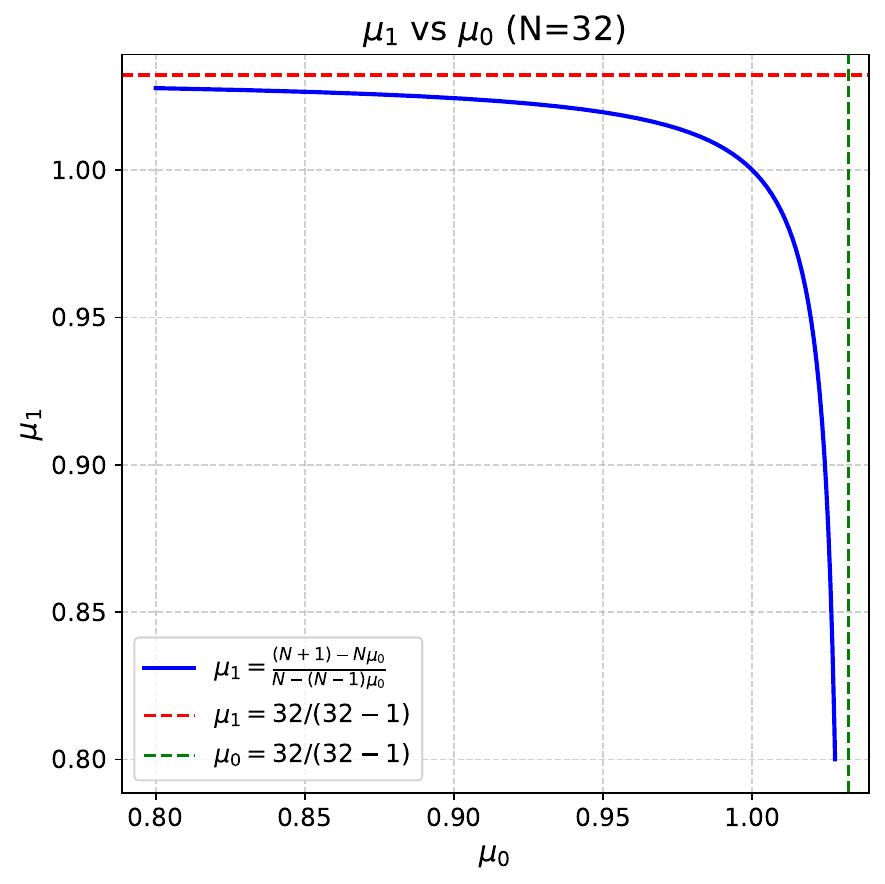}
    \caption{$\mu_1$ vs $\mu_0$($N=32$) of Eq.~\eqref{eq:lam=2}.}
    \label{fig:mu1_vs_mu0(lambda=2)}
\end{minipage}
\quad
\begin{minipage}[b]{0.32\textwidth}
    \centering
    \includegraphics[width=1\textwidth]{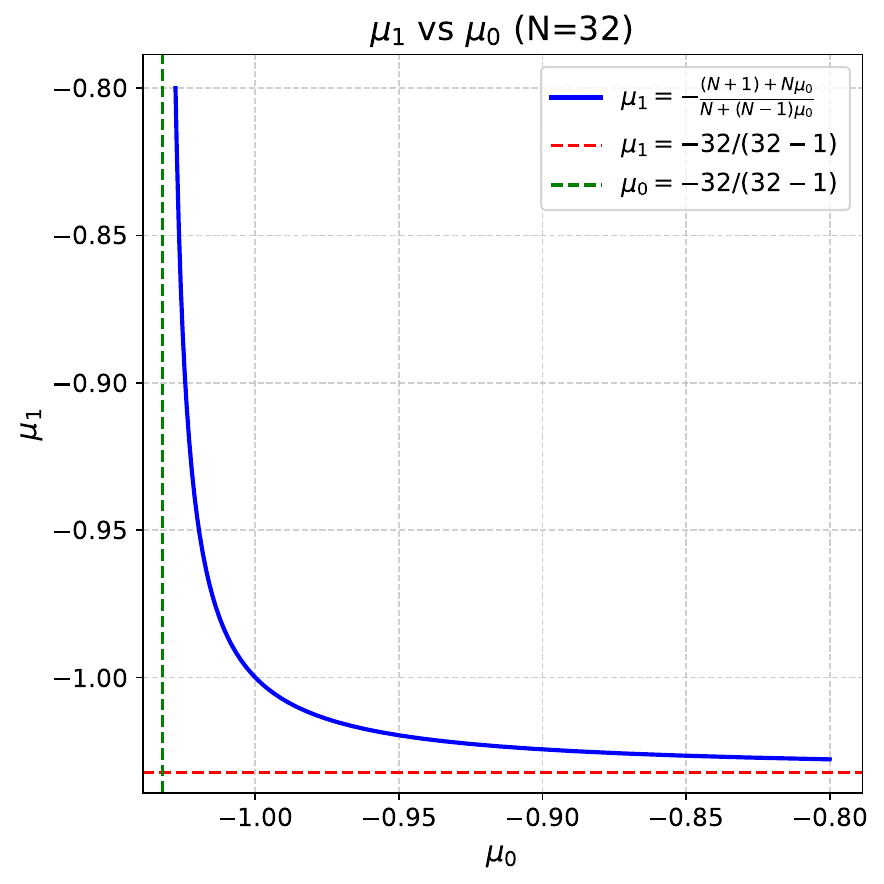}
    \caption{$\mu_1$ vs $\mu_0$($N=32$) of Eq.~\eqref{eq:lam=-2}.}
    \label{fig:mu1_vs_mu0(lambda=-2)}
\end{minipage}
\end{figure}

\subsection{Hyperbolic case}
For the maximum eigenvalue, we set $\lambda = 2\cosh\xi$ and assume $x_k = C\cosh((k-1)\xi) + D\sinh((k-1)\xi)$.
Substituting these into the boundary conditions yields the characteristic equation
\begin{equation}\label{eq:hyper_char1}
    \sinh((N+1)\xi) - (\mu_0+\mu_1) \sinh(N\xi) + \mu_0\mu_1 \sinh((N-1)\xi) = 0,
\end{equation}
which can be rewritten as
\begin{equation}\label{eq:hyper_char2}
    e^{(N-1)\xi} (e^{\xi} - \mu_0) (e^{\xi} - \mu_1) = e^{-(N-1)\xi} (e^{-\xi} - \mu_0) (e^{-\xi} - \mu_1).
\end{equation}

Dividing by $\sinh(N\xi)$ and taking the limit $n \to \infty$ (where $\frac{\sinh((N\pm1)\xi)}{\sinh(N\xi)} \approx e^{\pm \xi}$), the equation simplifies to:
\begin{equation}
    e^\xi - (\mu_0 + \mu_1) + \mu_0\mu_1 e^{-\xi} \approx 0.
\end{equation}
The solutions are $e^\xi \approx \mu_0$ or $e^\xi \approx \mu_1$.
Thus the maximum eigenvalue is determined by $\mu := \max \left\{ \mu_0, \mu_1 \right\} > 1$. Setting $e^\xi = \mu$, we find
\begin{equation}
    \lambda_{\max} = 2\cosh \xi = e^\xi + e^{-\xi}  = \mu + \frac{1}{\mu} + \mathcal{O} \left( \left( \mu + \frac{1}{\mu} \right) \mu^{-2N} \right) \approx \mu + \frac{1}{\mu},
\end{equation}
where the approximation becomes exact as $N \to \infty$.

Similarly, for the minimum eigenvalue we set $\lambda = -2\cosh\xi$
and let $\nu = \min\{\mu_0,\mu_1\} < -1$.
With $e^\xi = -\nu$, the minimum eigenvalue satisfies
\begin{equation}
    \lambda_{\min} = -2\cosh\xi = \nu + \frac{1}{\nu} + \mathcal{O} \left( \left(|\nu|+\frac1{|\nu|}\right)|\nu|^{-2N} \right)
    \approx \nu + \frac{1}{\nu},
\end{equation}
which also becomes exact in the limit $N\to\infty$.

\section{Trotter error analysis}\label{sec:trotter error}
\begin{lemma}[\cite{Childs2021_Theory}]
Given a general Hamiltonian $\mathcal{H} = \sum_{\gamma=1}^{\Gamma} H_{\gamma}$, let $\mathcal{H}_2(t) = \prod_{\gamma=\Gamma}^1 e^{-i H_\gamma \frac{t}{2}} \prod_{\gamma=1}^\Gamma e^{-i H_\gamma \frac{t}{2}}$ be the second-order Trotter-Suzuki formula. Then, the additive Trotter error can be bounded as
\begin{equation}
    \|\mathcal{H}_2(t) - e^{-i\mathcal{H}t}\|
    \leq \frac{t^3}{12} \sum_{\gamma_1=1}^\Gamma \left\| \left[ \sum_{\gamma_3=\gamma_1+1}^\Gamma H_{\gamma_3}, \left[ \sum_{\gamma_2=\gamma_1+1}^\Gamma H_{\gamma_2}, H_{\gamma_1} \right] \right] \right\|
    + \frac{t^3}{24} \sum_{\gamma_1=1}^\Gamma \left\| \left[ H_{\gamma_1}, \left[ H_{\gamma_1}, \sum_{\gamma_2=\gamma_1+1}^\Gamma H_{\gamma_2} \right] \right] \right\|.
\end{equation}
\end{lemma}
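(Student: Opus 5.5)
The plan is to reduce the $\Gamma$-term statement to a two-term estimate by peeling off the outermost exponential and using a telescoping induction on $\Gamma$. Write $\mathcal{H}_{>\gamma} := \sum_{\gamma'=\gamma+1}^{\Gamma} H_{\gamma'}$. With the product convention of the remark following Lemma~\ref{lem:select oracle}, the symmetric formula has the nested structure
\begin{equation*}
    \mathcal{H}_2^{(\gamma)}(t) = e^{-iH_\gamma t/2}\, \mathcal{H}_2^{(\gamma+1)}(t)\, e^{-iH_\gamma t/2}, \qquad \mathcal{H}_2^{(\Gamma)}(t)=e^{-iH_\Gamma t}.
\end{equation*}
Here $\mathcal{H}_2^{(\gamma)}$ is the second-order formula for $H_\gamma + \mathcal{H}_{>\gamma}$.

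The induction step inserts and subtracts $e^{-iH_\gamma t/2} e^{-i\mathcal{H}_{>\gamma} t} e^{-iH_\gamma t/2}$. Since conjugation by unitaries preserves the operator norm, this gives
\begin{equation*}
    \bigl\|\mathcal{H}_2^{(\gamma)}(t) - e^{-i(H_\gamma+\mathcal{H}_{>\gamma})t}\bigr\| \le \bigl\|\mathcal{H}_2^{(\gamma+1)}(t) - e^{-i\mathcal{H}_{>\gamma}t}\bigr\| + \bigl\|e^{-iH_\gamma t/2} e^{-i\mathcal{H}_{>\gamma} t} e^{-iH_\gamma t/2} - e^{-i(H_\gamma+\mathcal{H}_{>\gamma})t}\bigr\|.
\end{equation*}
Unrolling from $\gamma=1$ to $\Gamma$ yields exactly the sum over $\gamma_1$ in the statement, provided one has the two-term bound with $A=\mathcal{H}_{>\gamma_1}$ and $B=H_{\gamma_1}$:
\begin{equation*}
    \bigl\|e^{-iBt/2}e^{-iAt}e^{-iBt/2} - e^{-i(A+B)t}\bigr\| \le \frac{t^3}{12}\bigl\|[A,[A,B]]\bigr\| + \frac{t^3}{24}\bigl\|[B,[B,A]]\bigr\|.
\end{equation*}

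For the two-term bound I would use the variation-of-parameters (error-generator) representation. Let $S(t)=e^{-iBt/2}e^{-iAt}e^{-iBt/2}$. Differentiating gives $S'(t) = -i\bigl(A+B+\mathcal{E}(t)\bigr)S(t)$, where
\begin{equation*}
    \mathcal{E}(t) = \tfrac{B}{2} + e^{-iBt/2}Ae^{iBt/2} + e^{-iBt/2}e^{-iAt}\tfrac{B}{2}e^{iAt}e^{iBt/2} - (A+B).
\end{equation*}
Duhamel's formula then gives
\begin{equation*}
    S(t)-e^{-i(A+B)t} = -i\int_0^t e^{-i(A+B)(t-\tau)}\mathcal{E}(\tau)S(\tau)\,\mathrm{d}\tau,
\end{equation*}
so the error is at most $\int_0^t\|\mathcal{E}(\tau)\|\,\mathrm{d}\tau$.

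Next I would expand each conjugation $e^{-iXs}Ye^{iXs}$ to second order with integral remainder, namely $Y - is[X,Y] - \int_0^s (s-u)\, e^{-iXu}[X,[X,Y]]e^{iXu}\,\mathrm{d}u$. The zeroth-order terms reproduce $A+B$. The first-order terms $-\tfrac{i\tau}{2}[B,A] - \tfrac{i\tau}{2}[A,B]$ cancel exactly, which is where the symmetry of the formula is used. What remains is a sum of integrals of unitary conjugates of $[B,[B,A]]$ and $[A,[A,B]]$, plus cross terms from composing the two conjugations in the third summand.

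The main obstacle is obtaining the sharp constants $1/12$ and $1/24$ rather than something cruder, for example $1/6$. A naive remainder bound on the nested conjugation $e^{-iBt/2}e^{-iAt}(\cdot)e^{iAt}e^{iBt/2}$ also produces a mixed term $[B,[A,B]]$. My first attempt would be to expand the outer $B$-conjugation only to first order and combine it with the second summand, so that the $\tau[B,A]$-type pieces cancel before any norms are taken. This should leave $\|\mathcal{E}(\tau)\|\le \tfrac{\tau^2}{4}\|[A,[A,B]]\| + \tfrac{\tau^2}{8}\|[B,[B,A]]\|$, and integrating over $[0,t]$ gives the constants $1/12$ and $1/24$. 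If the cross term cannot be absorbed this way, I would fall back on the exact BCH-type bookkeeping of \cite{Childs2021_Theory}, which organizes the remainder by order conditions.
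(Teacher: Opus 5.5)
Your proposal is correct and is essentially the standard argument behind this result; the paper gives no proof of its own, only the citation \cite{Childs2021_Theory}, and your route (telescope in $\Gamma$ with the triangle inequality, then bound the two-term error generator via Duhamel and Taylor expansions with integral remainders) is that argument. Your ``first attempt'' does close with the sharp constants: with $s=\tau/2$, one has $\mathcal{E}(\tau)=h(s)-h(0)+e^{-iBs}R_A e^{iBs}$, where $R_A$ is the second-order remainder of $e^{-iA\tau}\tfrac{B}{2}e^{iA\tau}$ (norm at most $\tfrac{\tau^2}{4}\|[A,[A,B]]\|$), and $h(v)=e^{-iBv}(A-iv[A,B])e^{iBv}$ satisfies $h'(v)=v\,e^{-iBv}[B,[B,A]]e^{iBv}$ because your ``mixed'' term is just $[B,[A,B]]=-[B,[B,A]]$, so this piece contributes $\tfrac{\tau^2}{8}\|[B,[B,A]]\|$ (bounding the cross term and the Taylor remainder of $e^{-iBs}Ae^{iBs}$ separately would instead give $\tfrac{3\tau^2}{8}$).
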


Recall
\begin{equation}
    s_{j,n}^- = I^{\otimes (n-j)} \otimes \sigma_{01} \otimes \sigma_{10}^{\otimes (j-1)},\quad
    s_{j,n}^+ = I^{\otimes (n-j)} \otimes \sigma_{10} \otimes \sigma_{01}^{\otimes (j-1)},
\end{equation}
directly calculation of first-order commutators gives 
\begin{equation}
\begin{aligned}
    \left[s_{j,n}^-, s_{j',n}^+ \right] &= \delta_{j j'} I^{\otimes(n-j)} \otimes \left( \sigma_{00} \otimes \sigma_{11}^{\otimes(j-1)} - \sigma_{11} \otimes \sigma_{00}^{\otimes(j-1)} \right) , \\
    \left[s_{j,n}^-, s_{j',n}^- \right] &= 0, \quad j\geq j'>1, \quad \left[s_{1,n}^-, s_{1,n}^- \right] = 0, \\
    \left[s_{1,n}^-, s_{j,n}^- \right] &= I^{\otimes(n-j)} \otimes \sigma_{01} \otimes \sigma_{10}^{\otimes(j-2)} \otimes Z, \quad j > 1, \\
    \left[s_{j,n}^+, s_{j',n}^+ \right] &= 0, \quad j\geq j'>1, \quad \left[s_{1,n}^+, s_{1,n}^+ \right] = 0, \\
    \left[s_{1,n}^+, s_{j,n}^+ \right] &= -I^{\otimes(n-j)} \otimes \sigma_{10} \otimes \sigma_{01}^{\otimes(j-2)} \otimes Z, \quad j > 1, \\
    \left[ \sigma_{10}^{\otimes n}, \sigma_{01}^{\otimes n} \right] &= \sigma_{11}^{\otimes n} - \sigma_{00}^{\otimes n}, \\
    \left[ s_{j,n}^-, \sigma_{01}^{\otimes n} \right] &= 0, \quad \left[ s_{j,n}^-, \sigma_{10}^{\otimes n} \right] = \delta_{j1} \sigma_{10}^{\otimes(n-1)} \otimes Z  , \\
    \left[ s_{j,n}^+, \sigma_{10}^{\otimes n} \right] &= 0, \quad \left[ s_{j,n}^+, \sigma_{01}^{\otimes n} \right] = -\delta_{j1} \sigma_{01}^{\otimes(n-1)} \otimes Z, \\
    \left[ \sigma_{00}^{\otimes n}, \sigma_{11}^{\otimes n} \right] &= 0, \\
    \left[ \sigma_{00}^{\otimes n}, s_{j,n}^- \right] &= \delta_{j1} \sigma_{00}^{\otimes(n-1)} \otimes \sigma_{01}, \quad \left[ \sigma_{00}^{\otimes n}, s_{j,n}^+ \right] = -\delta_{j1} \sigma_{00}^{\otimes(n-1)} \otimes \sigma_{10}, \\
    \left[ \sigma_{11}^{\otimes n}, s_{j,n}^- \right] &= -\delta_{j1} \sigma_{11}^{\otimes(n-1)} \otimes \sigma_{01}, \quad \left[ \sigma_{11}^{\otimes n}, s_{j,n}^+ \right] = -\delta_{j1} \sigma_{11}^{\otimes(n-1)} \otimes \sigma_{10}.
\end{aligned}
\end{equation}


Recall the Hamiltonians employed in constructing the coefficient matrix, defined as
\begin{equation}
    \mathcal{H}_1(\lambda, n) =  e^{i\lambda} s_{1,n}^- + e^{-i\lambda} s_{1,n}^+, \quad
    \mathcal{H}_2(\lambda, n) = \sum_{j=2}^{n} \left(e^{i\lambda} s_{j,n}^- + e^{-i\lambda} s_{j,n}^+\right), \quad
    \mathcal{H}_3(\lambda, n) = e^{i \lambda}\sigma_{10}^{\otimes n} + e^{-i \lambda}\sigma_{01}^{\otimes n},
\end{equation}
and direct calculation of the first-order commutators of these operators yields
\begin{equation}\label{eq:first order commutator relationships}
\begin{aligned}
    \left[ \mathcal{H}_1(\lambda_1, n), \mathcal{H}_1(\lambda_2, n) \right]
    &= 2i \sin(\lambda_1-\lambda_2) I^{\otimes(n-1)} \otimes Z, \\
    \left[ \mathcal{H}_1(\lambda_1, n), \mathcal{H}_2(\lambda_2, n) \right] 
    &= \sum_{j=2}^n I^{\otimes(n-j)} \otimes \left( e^{i(\lambda_1+\lambda_2)} \sigma_{01} \otimes \sigma_{10}^{\otimes(j-2)} - e^{-i(\lambda_1+\lambda_2)} \otimes \sigma_{10} \otimes \sigma_{01}^{\otimes(j-2)} \right) \otimes Z, \\
    \left[ \mathcal{H}_1(\lambda_1, n), \mathcal{H}_3(\lambda_2, n) \right]
    &= \left( e^{i(\lambda_1+\lambda_2)} \sigma_{10}^{\otimes(n-1)} - e^{-i(\lambda_1+\lambda_2)} \sigma_{01}^{\otimes(n-1)} \right) \otimes Z, \\
    \left[ \mathcal{H}_2(\lambda_1, n), \mathcal{H}_1(\lambda_2, n) \right]
    &= -\left[ \mathcal{H}_1(\lambda_2, n), \mathcal{H}_2(\lambda_1, n) \right]
    = -\left[ \mathcal{H}_1(\lambda_1, n), \mathcal{H}_2(\lambda_2, n) \right], \\
    \left[ \mathcal{H}_2(\lambda_1, n), \mathcal{H}_2(\lambda_2, n) \right]
    &= 2i \sin(\lambda_1-\lambda_2) \sum_{j=2}^n I^{\otimes(n-j)} \otimes \left( \sigma_{00} \otimes \sigma_{11}^{\otimes(j-1)} - \sigma_{11} \otimes \sigma_{00}^{\otimes(j-1)} \right), \\
    \left[ \mathcal{H}_2(\lambda_1, n), \mathcal{H}_3(\lambda_2, n) \right]
    &= 0, \\
    \left[ \mathcal{H}_3(\lambda_1, n), \mathcal{H}_3(\lambda_2, n) \right]
    &= 2i \sin(\lambda_1-\lambda_2) \left( \sigma_{11}^{\otimes n} - \sigma_{00}^{\otimes n} \right), \\
    \left[ \sigma_{00}^{\otimes n}, \mathcal{H}_1( \lambda, n) \right]
    &= \sigma_{00}^{\otimes(n-1)} \otimes \left( e^{i \lambda} \sigma_{01} - e^{-i \lambda} \sigma_{10} \right), \\
    \left[ \sigma_{00}^{\otimes n}, \mathcal{H}_2(\lambda, n) \right]
    &= 0, \\
    \left[ \sigma_{11}^{\otimes n},
    \mathcal{H}_1(\lambda, n) \right]
    &= -\sigma_{11}^{\otimes(n-1)} \otimes \left( e^{i \lambda} \sigma_{01} - e^{-i \lambda} \sigma_{10} \right), \\
    \left[ \sigma_{11}^{\otimes n}, \mathcal{H}_2(\lambda, n) \right]
    &= 0,
\end{aligned}
\end{equation}

Direct calculation of the second-order commutators of the operators that used in constructing the coefficient matrix under the Robin BCs gives
\begin{equation}
\begin{aligned}
    \left[ \mathcal{H}_1(\lambda_1, n), [\mathcal{H}_1(\lambda_1, n), \mathcal{H}_1(\lambda_2, n)] \right]
    =& -4i \sin(\lambda_1-\lambda_2) I^{\otimes (n-1)} \otimes \left( e^{i\lambda_1} \sigma_{01} - e^{-i\lambda_1} \sigma_{10} \right), \\
    \left[ \mathcal{H}_1(\lambda_1, n), [\mathcal{H}_1(\lambda_1, n), \mathcal{H}_2(\lambda_2, n)] \right]
    =& 2 e^{i\lambda_2} \sum_{j=2}^n I^{\otimes(n-j)} \otimes \sigma_{01} \otimes \sigma_{10}^{\otimes(j-2)} \otimes \begin{pmatrix}
        & -e^{i2\lambda_1} \\
        1 &
    \end{pmatrix} \\
    &+ 2 e^{-i\lambda_2} \sum_{j=2}^n I^{\otimes(n-j)} \otimes \sigma_{10} \otimes \sigma_{01}^{\otimes(j-2)} \otimes \begin{pmatrix}
        & 1 \\
        -e^{-i2\lambda_1} &
    \end{pmatrix}, \\
    \left[ \mathcal{H}_1(\lambda_1, n), [\mathcal{H}_2(\lambda_1, n), \mathcal{H}_1(\lambda_2, n)] \right]
    =& - \left[ \mathcal{H}_1(\lambda_1, n), [\mathcal{H}_1(\lambda_1, n), \mathcal{H}_2(\lambda_2, n)] \right], \\
    \left[ \mathcal{H}_1(\lambda_1, n), [\mathcal{H}_2(\lambda_1, n), \mathcal{H}_2(\lambda_2, n)] \right]
    =& 2i \sin(\lambda_1-\lambda_2)  \left( 2I^{\otimes (n-1)} - \sigma_{00}^{\otimes (n-1)} - \sigma_{11}^{\otimes (n-1)} \right) \otimes \left( e^{i \lambda_1} \sigma_{01} - e^{-i \lambda_1} \sigma_{10} \right),
\end{aligned}
\end{equation}





\begin{equation}
\begin{aligned}
    \left[ \mathcal{H}_1(\lambda_1, n), [\sigma_{00}^{\otimes n}, \mathcal{H}_1(\lambda_2, n)] \right]
    &= -2  \cos(\lambda_1 - \lambda_2) \sigma_{00}^{\otimes (n-1)} \otimes Z, \\
    \left[ \mathcal{H}_1(\lambda_1, n), [\sigma_{00}^{\otimes n}, \mathcal{H}_2(\lambda_2, n)] \right]
    &= 0, \\
    \left[ \mathcal{H}_1(\lambda_1, n), [\sigma_{11}^{\otimes n}, \mathcal{H}_1(\lambda_2, n)] \right]
    &= 2 \cos(\lambda_1 - \lambda_2) \sigma_{11}^{\otimes (n-1)} \otimes Z, \\
    \left[ \mathcal{H}_1(\lambda_1, n), [\sigma_{11}^{\otimes n}, \mathcal{H}_2(\lambda_2, n)] \right]
    &= 0,
\end{aligned}
\end{equation}

\begin{equation}
\begin{aligned}
    \left[ \mathcal{H}_2(\lambda_1, n), [\mathcal{H}_1(\lambda_1, n), \mathcal{H}_1(\lambda_2, n)] \right]
    =& 4i \sin(\lambda_1-\lambda_2) e^{i\lambda_1} \sum_{j=2}^{n} I^{\otimes (n-j)} \otimes \sigma_{01} \otimes \sigma_{10}^{\otimes (j-1)} \\
    &- 4i \sin(\lambda_1-\lambda_2) e^{-i\lambda_1} \sum_{j=2}^{n} I^{\otimes (n-j)} \otimes \sigma_{10} \otimes \sigma_{01}^{\otimes (j-1)}, \\
    \left[ \mathcal{H}_2(\lambda_1, n), [\mathcal{H}_1(\lambda_1, n), \mathcal{H}_2(\lambda_2, n)] \right]
    =& 2 e^{i(2\lambda_1+\lambda_2)} \sum_{j=1}^{n-2} I^{\otimes(n-2-j)} \otimes \sigma_{01} \otimes \sigma_{10}^{\otimes(j-1)} \otimes I \otimes \sigma_{10} \\
    & - e^{-i\lambda_2} \left( 2I^{\otimes (n-1)} - \sigma_{00}^{\otimes (n-1)} -  \sigma_{11}^{\otimes (n-1)} \right) \otimes \sigma_{10} \\
    & - e^{i\lambda_2} \left( 2I^{\otimes (n-1)} - \sigma_{00}^{\otimes (n-1)} -  \sigma_{11}^{\otimes (n-1)} \right) \otimes \sigma_{01} \\
    & + 2 e^{-i(2\lambda_1+\lambda_2)} \sum_{j=1}^{n-2} I^{\otimes(n-2-j)} \otimes \sigma_{10} \otimes \sigma_{01}^{\otimes(j-1)} \otimes I \otimes \sigma_{01}, \\
    \left[ \mathcal{H}_2(\lambda_1, n), [\mathcal{H}_2(\lambda_1, n), \mathcal{H}_1(\lambda_2, n)] \right]
    =& - \left[ \mathcal{H}_2(\lambda_1, n), [\mathcal{H}_1(\lambda_1, n), \mathcal{H}_2(\lambda_2, n)] \right], \\
    \left[ \mathcal{H}_2(\lambda_1, n), [\mathcal{H}_2(\lambda_1, n), \mathcal{H}_2(\lambda_2, n)] \right]
    =& -4i \sin(\lambda_1-\lambda_2) e^{i\lambda_1} \sum_{j=2}^n I^{\otimes (n-j)} \otimes \sigma_{01} \otimes \sigma_{10}^{\otimes (j-1)} \\
    &+ 4i \sin(\lambda_1-\lambda_2) e^{-i\lambda_1} \sum_{j=2}^n I^{\otimes (n-j)} \otimes \sigma_{10} \otimes \sigma_{01}^{\otimes (j-1)},
\end{aligned}
\end{equation}

\begin{equation}
\begin{aligned}
    \left[ \mathcal{H}_2(\lambda_1, n), [\sigma_{00}^{\otimes n}, \mathcal{H}_1(\lambda_2, n)] \right]
    &= 0, \\
    \left[ \mathcal{H}_2(\lambda_1, n), [\sigma_{00}^{\otimes n}, \mathcal{H}_2(\lambda_2, n)] \right]
    &= 0, \\
    \left[ \mathcal{H}_2(\lambda_1, n), [\sigma_{11}^{\otimes n}, \mathcal{H}_1(\lambda_2, n)] \right]
    &= 0, \\
    \left[ \mathcal{H}_2(\lambda_1, n), [\sigma_{11}^{\otimes n}, \mathcal{H}_2(\lambda_2, n)] \right]
    &= 0,
\end{aligned}
\end{equation}

\begin{equation}
\begin{aligned}
    \left[\sigma_{00}^{\otimes n}, [\mathcal{H}_1(\lambda_1, n), \mathcal{H}_1(\lambda_2, n)] \right]
    &= 0, \\
    \left[\sigma_{00}^{\otimes n}, [\mathcal{H}_1(\lambda_1, n), \mathcal{H}_2(\lambda_2, n)] \right]
    &= \sigma_{00}^{\otimes (n-2)} \otimes\left( e^{i(\lambda_1+\lambda_2)} \sigma_{01} + e^{-i(\lambda_1+\lambda_2)} \sigma_{10} \right) \otimes \sigma_{00}, \\
    \left[\sigma_{00}^{\otimes n}, [\mathcal{H}_2(\lambda_1, n), \mathcal{H}_1(\lambda_2, n)] \right]
    &= -\left[\sigma_{00}^{\otimes n}, [\mathcal{H}_1(\lambda_1, n), \mathcal{H}_2(\lambda_2, n)] \right], \\
    \left[\sigma_{00}^{\otimes n}, [\mathcal{H}_2(\lambda_1, n), \mathcal{H}_2(\lambda_2, n)] \right]
    &= 0,
\end{aligned}
\end{equation}

\begin{equation}
\begin{aligned}
    \left[ \sigma_{00}^{\otimes n}, [ \sigma_{00}^{\otimes n}, \mathcal{H}_1(\lambda, n) ] \right]
    &= \sigma_{00}^{\otimes (n-1)} \otimes\left( e^{i\lambda} \sigma_{01} + e^{-i\lambda} \sigma_{10} \right), \\
    \left[ \sigma_{00}^{\otimes n}, [ \sigma_{00}^{\otimes n}, \mathcal{H}_2(\lambda, n) ] \right]
    &= 0, \\
    \left[ \sigma_{00}^{\otimes n}, [ \sigma_{11}^{\otimes n}, \mathcal{H}_1(\lambda, n) ] \right]
    &= 0, \\
    \left[ \sigma_{00}^{\otimes n}, [ \sigma_{11}^{\otimes n}, \mathcal{H}_2(\lambda, n) ] \right]
    &= 0,
\end{aligned}
\end{equation}

\begin{equation}
\begin{aligned}
    \left[\sigma_{11}^{\otimes n}, [\mathcal{H}_1(\lambda_1, n), \mathcal{H}_1(\lambda_2, n)] \right]
    &= 0, \\
    \left[ \sigma_{11}^{\otimes n}, [\mathcal{H}_1(\lambda_1, n), \mathcal{H}_2(\lambda_2, n) ] \right]
    &= \sigma_{11}^{\otimes (n-2)} \otimes\left( e^{i(\lambda_1+\lambda_2)} \sigma_{01} + e^{-i(\lambda_1+\lambda_2)} \sigma_{10} \right) \otimes \sigma_{11}, \\
    \left[ \sigma_{11}^{\otimes n}, [ \mathcal{H}_2(\lambda_1, n), \mathcal{H}_1(\lambda_2, n) ] \right]
    &= -\left[ \sigma_{11}^{\otimes n}, [\mathcal{H}_1(\lambda_1, n), \mathcal{H}_2(\lambda_2, n) ] \right], \\
    \left[\sigma_{11}^{\otimes n}, [ \mathcal{H}_2(\lambda_1, n), \mathcal{H}_2(\lambda_2, n) ] \right]
    &= 0,
\end{aligned}
\end{equation}

\begin{equation}
\begin{aligned}
    \left[ \sigma_{11}^{\otimes n}, [\sigma_{00}^{\otimes n}, \mathcal{H}_1(\lambda, n)] \right]
    &= 0, \\
    \left[ \sigma_{11}^{\otimes n}, [\sigma_{00}^{\otimes n}, \mathcal{H}_2(\lambda, n)] \right]
    &= 0, \\
    \left[ \sigma_{11}^{\otimes n}, [\sigma_{11}^{\otimes n}, \mathcal{H}_1(\lambda, n)] \right]
    &= \sigma_{11}^{\otimes (n-1)} \otimes\left( e^{i\lambda} \sigma_{01} + e^{-i\lambda} \sigma_{10} \right), \\
    \left[ \sigma_{11}^{\otimes n}, [\sigma_{11}^{\otimes n}, \mathcal{H}_2(\lambda, n)] \right]
    &= 0.
\end{aligned}
\end{equation}

Direct calculation of the second-order commutators of the operators that used in constructing the coefficient matrix under the periodic BCs gives
\begin{equation}
\begin{aligned}
    \left[ \mathcal{H}_2(\lambda, n), [\mathcal{H}_3(\lambda, n), \mathcal{H}_1(\lambda, n)] \right]
    =& - e^{3i\lambda} \sigma_{10}^{\otimes(n-2)} \otimes I \otimes \sigma_{10}
    - e^{-3i\lambda} \sigma_{01}^{\otimes(n-2)} \otimes I \otimes \sigma_{01}, \\
    \left[ \mathcal{H}_3(\lambda, n), [\mathcal{H}_2(\lambda, n), \mathcal{H}_1(\lambda, n)] \right]
    =& - e^{3i\lambda} \sigma_{10}^{\otimes(n-2)} \otimes I \otimes \sigma_{10}
    - e^{-3i\lambda} \sigma_{01}^{\otimes(n-2)} \otimes I \otimes \sigma_{01}, \\
    \left[ \mathcal{H}_3(\lambda, n), [\mathcal{H}_3(\lambda, n), \mathcal{H}_1(\lambda, n)] \right]
    =& \left( \sigma_{00}^{\otimes(n-1)} + \sigma_{11}^{\otimes(n-1)} \right) \otimes \left( e^{i\lambda} \sigma_{01} + e^{-i\lambda} \sigma_{10} \right), \\
    \left[ \mathcal{H}_1(\lambda, n), [\mathcal{H}_1(\lambda, n), \mathcal{H}_3(\lambda, n)] \right]
    =& 2 e^{i\lambda} \sigma_{10}^{\otimes (n-1)} \otimes
    \begin{pmatrix}
        & -e^{i2\lambda} \\
        1 &
    \end{pmatrix}
    - 2 e^{-i\lambda} \sigma_{01}^{\otimes (n-1)} \otimes
    \begin{pmatrix}
        & 1 \\
        -e^{-i2\lambda} &
    \end{pmatrix}.
\end{aligned}
\end{equation}

\begin{remark}
To derive the calculation results presented above, we utilize the following key identity:
\begin{equation}
    \sum_{j=2}^{n} I^{\otimes (n-j)} \otimes \left( \sigma_{00} \otimes \sigma_{11}^{\otimes (j-2)} + \sigma_{11} \otimes \sigma_{00}^{\otimes (j-2)} \right)
    = 2 I^{\otimes (n-1)} - \left( \sigma_{00}^{\otimes (n-1)} + \sigma_{11}^{\otimes (n-1)} \right).
\end{equation}
\end{remark}

\subsection{Robin boundary conditions}\label{subsec: robin bc}
Denote $L = \gamma_1 \mathcal{H}_1(\lambda_1, n) + \gamma_1 \mathcal{H}_2(\lambda_1, n) + s_0 \sigma_{00}^{\otimes n} + s_1 \sigma_{11}^{\otimes n}$ with $B:=L /\gamma_1$, and $H = \gamma_2 \mathcal{H}_1(\lambda_2, n) + \gamma_2 \mathcal{H}_2(\lambda_2, n)$.
Without loss of generality, we assume that $\gamma_1, \gamma_2 \geq 0$ and define $s = \max\left\{|s_0|, |s_1|\right\}$.
Then we have
\begin{equation}
\begin{aligned}
    \left[L, H \right]
    =& \gamma_1 \gamma_2 \left[ \mathcal{H}_1(\lambda_1, n), \mathcal{H}_1(\lambda_2, n) \right]
    + \gamma_1 \gamma_2 \left[ \mathcal{H}_2(\lambda_1, n), \mathcal{H}_2(\lambda_2, n) \right] \\
    &+ \gamma_2 s_0 \left[ \sigma_{00}^{\otimes n}, \mathcal{H}_1(\lambda_2, n) \right]
    + \gamma_2  s_1 \left[ \sigma_{11}^{\otimes n}, \mathcal{H}_1(\lambda_2, n) \right] \\
    =& 2i  \gamma_1 \gamma_2 \sin(\lambda_1-\lambda_2) \sum_{j=1}^n I^{\otimes(n-j)} \otimes \left( \sigma_{00} \otimes \sigma_{11}^{\otimes(j-1)} - \sigma_{11} \otimes \sigma_{00}^{\otimes(j-1)} \right) \\
    &+ \gamma_2 \left( s_0 \sigma_{00}^{\otimes(n-1)} - s_1 \sigma_{11}^{\otimes(n-1)} \right) \otimes \left( e^{i \lambda_2} \sigma_{01} - e^{-i \lambda_2} \sigma_{10} \right),
\end{aligned}
\end{equation}
which indicates that $[L, H] \neq 0$ in general cases.

By canceling out the terms equal to zero and those that cancel each other, we obtain
\begin{equation}
\begin{aligned}
    \left[L, [L, H] \right]
    =& \gamma_1^2 \gamma_2 \left[ \mathcal{H}_1(\lambda_1, n), [\mathcal{H}_1(\lambda_1, n), \mathcal{H}_1(\lambda_2, n)] \right]
    + \gamma_1 \gamma_2 s_0 \left[ \mathcal{H}_1(\lambda_1, n), [\sigma_{00}^{\otimes n}, \mathcal{H}_1(\lambda_2, n)] \right] \\
    &+ \gamma_1^2 \gamma_2 \left[ \mathcal{H}_1(\lambda_1, n), [\mathcal{H}_2(\lambda_1, n), \mathcal{H}_2(\lambda_2, n)] \right]
    + \gamma_1 \gamma_2 s_1 \left[ \mathcal{H}_1(\lambda_1, n), [\sigma_{11}^{\otimes n}, \mathcal{H}_1(\lambda_2, n)] \right] \\
    &+ \gamma_2 s_0^2 \left[ \sigma_{00}^{\otimes n}, [\sigma_{00}^{\otimes n}, \mathcal{H}_1(\lambda_2, n)] \right]
    + \gamma_2 s_1^2 \left[ \sigma_{11}^{\otimes n}, [\sigma_{11}^{\otimes n}, \mathcal{H}_1(\lambda_2, n)] \right] \\
    =& - 2i \gamma_1^2 \gamma_2 \sin(\lambda_1-\lambda_2)  \left( \sigma_{00}^{\otimes (n-1)} + \sigma_{11}^{\otimes (n-1)} \right) \otimes \left( e^{i \lambda_1} \sigma_{01} - e^{-i \lambda_1} \sigma_{10} \right) \\
    &- 2 \gamma_1 \gamma_2 s_0 \cos(\lambda_1 - \lambda_2) \sigma_{00}^{\otimes (n-1)} \otimes Z
    + 2 \gamma_1 \gamma_2 s_1 \cos(\lambda_1 - \lambda_2) \sigma_{11}^{\otimes (n-1)} \otimes Z \\
    &+ \gamma_2 s_0^2 \sigma_{00}^{\otimes (n-1)} \otimes\left( e^{i\lambda_2} \sigma_{01} + e^{-i\lambda_2} \sigma_{10} \right)
    + \gamma_2 s_1^2 \sigma_{11}^{\otimes (n-1)} \otimes\left( e^{i\lambda_2} \sigma_{01} + e^{-i\lambda_2} \sigma_{10} \right),
\end{aligned}
\end{equation}
then direct calculation gives
\begin{equation}\label{eq:norm of [L,[L,H]]}
    \left\| \left[L, [L, H] \right] \right\|
    = \gamma_2 \sqrt{4\gamma_1^4\sin^2(\lambda_1-\lambda_2) + 4\gamma_1^2s^2 + s^4}
    \leq \gamma_2 \sqrt{4\gamma_1^4 + 4\gamma_1^2s^2 + s^4}
    =\gamma_2 \left( 2\gamma_1^2 +s^2 \right).
\end{equation}

Similarly, we obtain
\begin{equation}
\begin{aligned}
    \left[H, [H, L] \right]
    =& \gamma_1 \gamma_2^2 \left[ \mathcal{H}_1(\lambda_2, n), [\mathcal{H}_1(\lambda_2, n), \mathcal{H}_1(\lambda_1, n)] \right]
    + \gamma_2^2 s_0 \left[ \mathcal{H}_1(\lambda_2, n), [\mathcal{H}_1(\lambda_2, n), \sigma_{00}^{\otimes n}] \right] \\
    &+ \gamma_1 \gamma_2^2 \left[ \mathcal{H}_1(\lambda_2, n), [\mathcal{H}_2(\lambda_2, n), \mathcal{H}_2(\lambda_1, n)] \right]
    + \gamma_2^2 s_1 \left[ \mathcal{H}_1(\lambda_2, n), [\mathcal{H}_1(\lambda_2, n), \sigma_{11}^{\otimes n}] \right] \\
    =& 2i \gamma_1 \gamma_2^2 \sin(\lambda_1-\lambda_2)  \left( \sigma_{00}^{\otimes (n-1)} + \sigma_{11}^{\otimes (n-1)} \right) \otimes \left( e^{i \lambda_2} \sigma_{01} - e^{-i \lambda_2} \sigma_{10} \right) \\
    &+ 2 \gamma_2^2 s_0 \sigma_{00}^{\otimes (n-1)} \otimes Z - 2 \gamma_2^2 s_1 \sigma_{11}^{\otimes (n-1)} \otimes Z,
\end{aligned}
\end{equation}
then direct calculation gives
\begin{equation}\label{eq:norm of [H,[H,L]]}
    \left\| \left[H, [H, L] \right] \right\|
    = 2 \gamma_2^2 \sqrt{\gamma_1^2 \sin^2(\lambda_1-\lambda_2) + s^2}
    \leq 2 \gamma_2^2 \sqrt{\gamma_1^2 + s^2}.
\end{equation}

The first commutator for computing the Trotter error of $\exp(-iH\tau)$ is given by
\begin{equation}
\begin{aligned}
    \left[ \mathcal{H}_2(\lambda_2, n), [\mathcal{H}_2(\lambda_2, n), \mathcal{H}_1(\lambda_2, n)] \right]
    =& -2 e^{i3\lambda_2} \sum_{j=1}^{n-2} I^{\otimes(n-2-j)} \otimes \sigma_{01} \otimes \sigma_{10}^{\otimes(j-1)} \otimes I \otimes \sigma_{10} \\
    & + e^{-i\lambda_2} \left( 2I^{\otimes (n-1)} - \sigma_{00}^{\otimes (n-1)} -  \sigma_{11}^{\otimes (n-1)} \right) \otimes \sigma_{10} \\
    & + e^{i\lambda_2} \left( 2I^{\otimes (n-1)} - \sigma_{00}^{\otimes (n-1)} -  \sigma_{11}^{\otimes (n-1)} \right) \otimes \sigma_{01} \\
    & - 2 e^{-i3\lambda_2} \sum_{j=1}^{n-2} I^{\otimes(n-2-j)} \otimes \sigma_{10} \otimes \sigma_{01}^{\otimes(j-1)} \otimes I \otimes \sigma_{01},
\end{aligned}
\end{equation}
and through a similar analysis to that in Appendix~\ref{sec:eigenvalue_of_B}, we obtain
\begin{equation}\label{eq:norm of [H2,[H2,H1]]}
    \left\| \left[ \mathcal{H}_2(\lambda_2, n), [\mathcal{H}_2(\lambda_2, n), \mathcal{H}_1(\lambda_2, n)] \right] \right\|
    \leq 4.
\end{equation}

The corresponding second commutator is expressed as
\begin{equation}\label{eq:H1_H1_H2_in_H}
\begin{aligned}
    \left[ \mathcal{H}_1(\lambda_2, n), [\mathcal{H}_1(\lambda_2, n), \mathcal{H}_2(\lambda_2, n)] \right]
    =& 2 e^{i\lambda_2} \sum_{j=2}^n I^{\otimes(n-j)} \otimes \sigma_{01} \otimes \sigma_{10}^{\otimes(j-2)} \otimes \begin{pmatrix}
        & -e^{i2\lambda_2} \\
        1 &
    \end{pmatrix} \\
    &+ 2 e^{-i\lambda_2} \sum_{j=2}^n I^{\otimes(n-j)} \otimes \sigma_{10} \otimes \sigma_{01}^{\otimes(j-2)} \otimes \begin{pmatrix}
        & 1 \\
        -e^{i2\lambda_2} &
    \end{pmatrix}.
\end{aligned}
\end{equation}
The eigenvalues of the operator in Eq.~\eqref{eq:H1_H1_H2_in_H} are
\begin{equation}
    \lambda_k = \pm 4 \gamma_2^3 \cos \left( \frac{k\pi}{2^{n-1}+1} \right), \quad k=1,2,\cdots,2^{n-2},
\end{equation}
where each eigenvalue has a multiplicity of 2. We thus obtain
\begin{equation}\label{eq:norm of [H1,[H1,H2]]}
    \left\| \left[ \mathcal{H}_1(\lambda_2, n), \left[ \mathcal{H}_1(\lambda_2, n), \mathcal{H}_2(\lambda_2, n) \right] \right] \right\|
    = 4 \cos \left( \frac{\pi}{2^{n-1}+1} \right)
    \leq 4.
\end{equation}

The first commutator for computing the Trotter error of $\exp(-iL\tau)$ is given by
\begin{equation}\label{eq:5 term of robin}
\begin{aligned}
    &\left[ \gamma_1 \mathcal{H}_2(\lambda_1, n) + s_0 \sigma_{00}^{\otimes n} + s_1 \sigma_{11}^{\otimes n}, [\gamma_1 \mathcal{H}_2(\lambda_1, n) + s_0 \sigma_{00}^{\otimes n} + s_1 \sigma_{11}^{\otimes n}, \gamma_1 \mathcal{H}_1(\lambda_1, n)] \right] \\
    =& -2 \gamma_1^3 e^{i3\lambda_1} \sum_{j=1}^{n-2} I^{\otimes(n-2-j)} \otimes \sigma_{01} \otimes \sigma_{10}^{\otimes(j-1)} \otimes I \otimes \sigma_{10} + 2 \gamma_1^3 e^{-i\lambda_1}  I^{\otimes (n-1)} \otimes \sigma_{10} \\
    & - 2 \gamma_1^3 e^{-i3\lambda_1} \sum_{j=1}^{n-2} I^{\otimes(n-2-j)} \otimes \sigma_{10} \otimes \sigma_{01}^{\otimes(j-1)} \otimes I \otimes \sigma_{01}
    + 2 \gamma_1^3 e^{i\lambda_1} I^{\otimes (n-1)} \otimes \sigma_{01} \\
    &+ \gamma_1^2 s_0 \sigma_{00}^{\otimes (n-2)} \otimes\left( e^{2i\lambda_1} \sigma_{01} + e^{-2i\lambda_1} \sigma_{10} \right) \otimes \sigma_{00}
    + \gamma_1 (s_0^2 - \gamma_1^2) \sigma_{00}^{\otimes (n-1)} \otimes\left( e^{i\lambda_1} \sigma_{01} + e^{-i\lambda_1} \sigma_{10} \right) \\
    &+ \gamma_1^2 s_1 \sigma_{11}^{\otimes (n-2)} \otimes\left( e^{2i\lambda_1} \sigma_{01} + e^{-2i\lambda_1} \sigma_{10} \right) \otimes \sigma_{11}
    + \gamma_1 (s_1^2 - \gamma_1^2) \sigma_{11}^{\otimes (n-1)} \otimes\left( e^{i\lambda_1} \sigma_{01} + e^{-i\lambda_1} \sigma_{10} \right).
\end{aligned}
\end{equation}
Through a similar analysis to that presented in Appendix~\ref{sec:eigenvalue_of_B}, we find that if $|\lambda|_{\max}(B) \leq 2$, it holds
\begin{equation}\label{eq:norm of [H2,[H2,H1]] in L when abs_lam <= 2}
    \left\| \left[ \gamma_1 \mathcal{H}_2(\lambda_1, n) + s_0 \sigma_{00}^{\otimes n} + s_1 \sigma_{11}^{\otimes n}, [\gamma_1 \mathcal{H}_2(\lambda_1, n) + s_0 \sigma_{00}^{\otimes n} + s_1 \sigma_{11}^{\otimes n}, \gamma_1 \mathcal{H}_1(\lambda_1, n)] \right] \right\|
    \leq 4 \gamma_1^3.
\end{equation}
Conversely, if $|\lambda|_{\max}(B) > 2$, then for the first four term $\mathcal{H}$ of Eq.~\eqref{eq:5 term of robin} holds $\left\| \mathcal{H}\right\| \leq 4\gamma_1^3$, and for the last four term $\mathcal{H'}$ of Eq.~\eqref{eq:5 term of robin} holds $\left\| \mathcal{H}'\right\| = \gamma_1 \sqrt{\gamma_1^4 + s^4 - \gamma_1^2 s^2}$, which implies
\begin{equation}\label{eq:norm of [H2,[H2,H1]] in L when abs_lam > 2}
    \left\| \left[ \gamma_1 \mathcal{H}_2(\lambda_1, n) + s_0 \sigma_{00}^{\otimes n} + s_1 \sigma_{11}^{\otimes n}, [\gamma_1 \mathcal{H}_2(\lambda_1, n) + s_0 \sigma_{00}^{\otimes n} + s_1 \sigma_{11}^{\otimes n}, \gamma_1 \mathcal{H}_1(\lambda_1, n)] \right] \right\|
    \leq 4 \gamma_1^3 + \gamma_1 \sqrt{\gamma_1^4 + s^4 - \gamma_1^2 s^2}.
\end{equation}

The corresponding second commutator is expressed as
\begin{equation}
\begin{aligned}
    &\left[ \gamma_1 \mathcal{H}_1(\lambda_1, n), [\gamma_1 \mathcal{H}_1(\lambda_1, n), \gamma_1 \mathcal{H}_2(\lambda_1, n) + s_0 \sigma_{00}^{\otimes n} + s_1 \sigma_{11}^{\otimes n}] \right] \\
    =& 2\gamma_1^3 e^{i\lambda_1} \sum_{j=2}^n I^{\otimes(n-j)} \otimes \sigma_{01} \otimes \sigma_{10}^{\otimes(j-2)} \otimes \begin{pmatrix}
        & -e^{i2\lambda_1} \\
        1 &
    \end{pmatrix} \\
    &+ 2\gamma_1^3 e^{-i\lambda_1} \sum_{j=2}^n I^{\otimes(n-j)} \otimes \sigma_{10} \otimes \sigma_{01}^{\otimes(j-2)} \otimes \begin{pmatrix}
        & 1 \\
        -e^{-i2\lambda_1} &
    \end{pmatrix} \\
    &+ 2 \gamma_1^2 s_0 \sigma_{00}^{\otimes (n-1)} \otimes Z - 2 \gamma_1^2 s_1 \sigma_{11}^{\otimes (n-1)} \otimes Z.
\end{aligned}
\end{equation}
Following a similar analysis to that for the first commutator, if $|\lambda|_{\max}(B) \leq 2$, it holds
\begin{equation}\label{eq:norm of [H1,[H1,H2]] in L when abs_lam <= 2}
    \left\| \left[ \gamma_1 \mathcal{H}_1(\lambda_1, n), [\gamma_1 \mathcal{H}_1(\lambda_1, n), \gamma_1 \mathcal{H}_2(\lambda_1, n) + s_0 \sigma_{00}^{\otimes n} + s_1 \sigma_{11}^{\otimes n}] \right] \right\|
    \leq 4 \gamma_1^3,
\end{equation}
else if $|\lambda|_{\max}(B) > 2$ we obtain
\begin{equation}\label{eq:norm of [H1,[H1,H2]] in L when abs_lam > 2}
    \left\| \left[ \gamma_1 \mathcal{H}_1(\lambda_1, n), [\gamma_1 \mathcal{H}_1(\lambda_1, n), \gamma_1 \mathcal{H}_2(\lambda_1, n) + s_0 \sigma_{00}^{\otimes n} + s_1 \sigma_{11}^{\otimes n}] \right] \right\|
    \leq 2 \gamma_1^3 \left( \frac{s}{\gamma_1} + \frac{\gamma_1}{s} \right).
\end{equation}


\subsection{Periodic boundary conditions}
Denote $L = \gamma_1 \left( \mathcal{H}_1(\lambda_1, n) + \mathcal{H}_2(\lambda_1, n) + \mathcal{H}_3(\lambda_1, n) \right), H = \gamma_2 \left( \mathcal{H}_1(\lambda_2, n) + \mathcal{H}_2(\lambda_2, n) + \mathcal{H}_3(\lambda_2, n) \right)$ and assume $\gamma_1,\gamma_2\geq0$, direct calculation gives $\left[ L,H \right] = 0$.
Applying the second-order Trotter formula to $\exp(-iLt)$ and $\exp(-iHt)$ gives the uniform result as
\begin{equation}
\begin{aligned}
    &\exp \left( i\gamma \left( \mathcal{H}_1(\lambda, n) + \mathcal{H}_2(\lambda, n) + \mathcal{H}_3(\lambda, n) \right)t \right) \\
    \approx& \exp \left( i \gamma \mathcal{H}_1(\lambda, n) \frac{t}{2} \right)
    \exp \left( i \gamma \mathcal{H}_2(\lambda, n) t \right)
    \exp \left( i \gamma \mathcal{H}_3(\lambda, n) t \right)
    \exp \left( i \gamma \mathcal{H}_1(\lambda, n) \frac{t}{2} \right).
\end{aligned}
\end{equation}

The first commutator for computing the Trotter error is given by
\begin{equation}\label{eq:1st_term_in_Periodic}
\begin{aligned}
    \left[ \mathcal{H}_2 + \mathcal{H}_3, [\mathcal{H}_2 + \mathcal{H}_3, \mathcal{H}_1] \right]
    =& -2 e^{3i\lambda} \left( \sigma_{10}^{\otimes(n-2)} + \sum_{j=1}^{n-2} I^{\otimes(n-2-j)} \otimes \sigma_{01} \otimes \sigma_{10}^{\otimes(j-1)} \right) \otimes I \otimes \sigma_{10} \\
    & +2 e^{-i\lambda} I^{\otimes(n-1)} \otimes \sigma_{10}
    +2 e^{i\lambda} I^{\otimes(n-1)} \otimes \sigma_{01} \\
    & -2 e^{-3i\lambda} \left( \sigma_{01}^{\otimes(n-2)} + \sum_{j=1}^{n-2} I^{\otimes(n-2-j)} \otimes \sigma_{10} \otimes \sigma_{01}^{\otimes(j-1)} \right) \otimes I \otimes \sigma_{01}.
\end{aligned}
\end{equation}
The eigenvalue of Eq.~\eqref{eq:1st_term_in_Periodic} are
\begin{equation}
    \lambda_k = 4 \sin \left( \frac{(k-1)\pi}{2^{n-1}} + 2\lambda \right), \quad k=1,2,\cdots,2^n,
\end{equation}
then we have
\begin{equation}\label{eq:norm of [H2+H3,[H2+H3,H1]]}
    \left\| \left[ \mathcal{H}_2 + \mathcal{H}_3, [\mathcal{H}_2 + \mathcal{H}_3, \mathcal{H}_1] \right] \right\|
    \leq 4.
\end{equation}

The second term takes the form
\begin{equation}\label{eq:2nd_term_in_Periodic}
\begin{aligned}
    \left[ \mathcal{H}_1, [\mathcal{H}_1, \mathcal{H}_2 + \mathcal{H}_3] \right]
    =& 2 e^{i \lambda} X \otimes \sigma_{10}^{\otimes (n-2)} \otimes \begin{pmatrix}
        & -e^{i 2\lambda} \\
        1 & \\
    \end{pmatrix}
    + 2 e^{-i \lambda} X \otimes \sigma_{01}^{\otimes (n-2)} \otimes \begin{pmatrix}
        & 1 \\
        -e^{-i 2\lambda} & \\
    \end{pmatrix} \\
    &+ 2 e^{i \lambda} \sum_{j=2}^{n-1} I^{\otimes(n-j)} \otimes \sigma_{01} \otimes \sigma_{10}^{\otimes(j-2)} \otimes \begin{pmatrix}
        & -e^{i 2\lambda} \\
        1 & \\
    \end{pmatrix} \\
    &+ 2 e^{-i \lambda} \sum_{j=2}^{n-1} I^{\otimes(n-j)} \otimes \sigma_{10} \otimes \sigma_{01}^{\otimes(j-2)} \otimes \begin{pmatrix}
        & 1 \\
        -e^{-i 2\lambda} & \\
    \end{pmatrix}.
\end{aligned}
\end{equation}
Similarly the eigenvalues of Eq.~\eqref{eq:2nd_term_in_Periodic} are
\begin{equation}
    \lambda_k = 4 \sin \left( \frac{(k-1)\pi}{2^{n-1}} + 2\lambda \right), \quad k=1,2,\cdots,2^n,
\end{equation}
then we have
\begin{equation}\label{eq:norm of [H1,[H1,H2+H3]]}
    \left\| \left[ \mathcal{H}_1, [\mathcal{H}_1, \mathcal{H}_2 + \mathcal{H}_3] \right] \right\|
    \leq 4.
\end{equation}


\section{Matrices with $\alpha=0$}\label{sec:BCs with alpha=0}

\subsection{Robin boundary conditions}\label{subsec:robin with alpha=0}

Under this condition, $L = -s_0 r_j \sigma_{00}^{\otimes n} - s_1 r_j \sigma_{11}^{\otimes n}$, then for $U_j(\tau) = \exp\left(-i(H + r_j L)\tau \right)$ we have
\begin{equation}\label{eq:Uj of robin with alpha=0}
\begin{aligned}
    U_j(\tau)
    =& \exp \left( -i \left( \beta \mathcal{H}_1 \left( \frac{\pi}{2}, n \right) + \beta \mathcal{H}_2 \left( \frac{\pi}{2}, n  \right) -s_0 r_j \sigma_{00}^{\otimes n} - s_1 r_j \sigma_{11}^{\otimes n} \right) \right) \\
    \approx& \exp\left(-i \beta \mathcal{H}_1\left(\frac{\pi}{2},n\right) \frac{\tau}{2} \right)
    \exp\left(-i \beta \mathcal{H}_2\left(\frac{\pi}{2},n\right)\tau \right) \\
    &\exp\left(i s_0 r_j \sigma_{00}^{\otimes n} \tau \right)
    \exp\left(i s_1 r_j \sigma_{11}^{\otimes n} \tau \right)
    \exp\left(-i \beta \mathcal{H}_1\left(\frac{\pi}{2},n\right) \frac{\tau}{2} \right) \\
    =& \exp\left(-i \beta \mathcal{H}_1\left(\frac{\pi}{2},n\right) \frac{\tau}{2} \right)
    \exp\left(-i \beta \mathcal{H}_2\left(\frac{\pi}{2},n\right)\tau \right)
    \exp\left(-i s_0 \widetilde{R} \sigma_{00}^{\otimes n} \tau \right) \\
    &\exp\left(-i (-s_0\Delta r) \sigma_{00}^{\otimes n} \tau j \right)
    \exp\left(-i s_1 \widetilde{R} \sigma_{11}^{\otimes n} \tau \right) \\
    &\exp\left(-i (-s_1\Delta r) \sigma_{11}^{\otimes n} \tau j \right)
    \exp\left(-i \beta \mathcal{H}_1\left(\frac{\pi}{2},n\right) \frac{\tau}{2} \right),
\end{aligned}
\end{equation}
where the associated Trotter error is bounded by
\begin{equation}\label{eq:error when alpha=0}
\begin{aligned}
    e_j
    =& \left\| U_j(\tau) - \widetilde{U}_j(\tau) \right\| \\
    \leq& \frac{\tau^3}{12} \left\| \left[ \beta \mathcal{H}_2 \left(\frac{\pi}{2}, n \right) - s_0 r_j \sigma_{00}^{\otimes n} - s_1 r_j\sigma_{11}^{\otimes n}, [ \beta \mathcal{H}_2 \left(\frac{\pi}{2}, n \right) - s_0 r_j \sigma_{00}^{\otimes n} - s_1 r_j\sigma_{11}^{\otimes n}, \beta \mathcal{H}_1 \left(\frac{\pi}{2}, n \right) ] \right] \right\| \\
    &+ \frac{\tau^3}{24} \left\| \left[ \beta \mathcal{H}_1 \left(\frac{\pi}{2}, n \right), [ \beta \mathcal{H}_1 \left(\frac{\pi}{2}, n \right), \beta \mathcal{H}_2 \left(\frac{\pi}{2}, n \right) - s_0 r_j \sigma_{00}^{\otimes n} - s_1 r_j\sigma_{11}^{\otimes n} ] \right] \right\| \\
    \leq& \begin{cases}
        \frac{|\beta|^3\tau^3}{2}, &|\lambda|_{\max}(B_j) \leq 2; \\
        \frac{\tau^3}{12} \left( 4 |\beta|^3 + |\beta| \sqrt{\beta^4 + s^4 r_j^4 - \beta^2 s^2 r_j^2} + |\beta|^3 \left( \frac{s |r_j|}{|\beta|} + \frac{|\beta|}{s |r_j|} \right) \right), &|\lambda|_{\max}(B_j) > 2.
    \end{cases}
\end{aligned}
\end{equation}
Here, we define $B_j(\pi/2, \mu_0, \mu_1) := (H + r_j L)/\beta$ with $\mu_0 = -s_0 r_j/\beta$ and $\mu_1 = -s_1 r_j/\beta$, and employ the following results:

If $|\lambda|_{\max}(B_j) \leq 2$, then based on Eqs.~\eqref{eq:norm of [H2,[H2,H1]] in L when abs_lam <= 2} and \eqref{eq:norm of [H1,[H1,H2]] in L when abs_lam <= 2}, for the first term $\mathcal{H}_j$ of the right hand side of Eq.~\eqref{eq:error when alpha=0} holds $\|\mathcal{H}_j\| \leq 4 |\beta|^3$, and for the second term $\mathcal{H}_j'$ of the right hand side of Eq.~\eqref{eq:error when alpha=0} holds $\|\mathcal{H}_j'\| \leq 4 |\beta|^3$.
If $|\lambda|_{\max}(B_j) > 2$, then from Eqs.~\eqref{eq:norm of [H2,[H2,H1]] in L when abs_lam > 2} and \eqref{eq:norm of [H1,[H1,H2]] in L when abs_lam > 2} we have
\begin{equation}
    \|\mathcal{H}_j\| \leq 4 |\beta|^3 + |\beta| \sqrt{\beta^4 + s^4 r_j^4 - \beta^2 s^2 r_j^2}, \quad
    \|\mathcal{H}_j'\| \leq 2 |\beta|^3 \left( \frac{s |r_j|}{|\beta|} + \frac{|\beta|}{s |r_j|} \right).
\end{equation}

By applying Lemmas~\ref{lem:select oracle}, \ref{lem:operator1}, Theorem~\ref{th:exp_sigma11} and Proposition~\ref{prop:exp_sigma00} and combining Eq.~\eqref{eq:Uj of robin with alpha=0}, we construct the quantum circuit as
\begin{equation}\label{eq:select under robin, alpha=0}
\begin{aligned}
    \mathrm{SEL_R}(\tau)
    =& \sum_{j=0}^{M - 1}|j\rangle\langle j| \otimes U_j(\tau) \\ 
    \approx& W_1\left(\frac{\beta\tau}{2},\frac{\pi}{2}\right)
    \left[ \prod_{l=2}^{n} W_l\left( \beta\tau,\frac{\pi}{2}\right) \right]
    S_n^{(0)}\left(s_0 \widetilde{R}\tau \right)  
    \prod_{j=1}^m \left[\mathrm{C} S_n^{(0)}\left(-2^{j-1} s_0 \Delta r \tau \right) \right]_{[0,n]}^j \\
    & \times S_n^{(1)}\left(s_1 \widetilde{R}\tau \right)
    \prod_{j=1}^m \left[\mathrm{C} S_n^{(1)}\left(-2^{j-1} s_1 \Delta r\tau \right) \right]_{[0,n]}^j
    W_1\left(\frac{\beta\tau}{2},\frac{\pi}{2}\right).
\end{aligned}
\end{equation}

Consider the extreme situation that $|\lambda|_{\max}(B_j) > 2$ for all $j$, we find that
\begin{equation}
\begin{aligned}
    e_j
    &\leq \frac{\tau^3}{12} \left( 4 |\beta|^3 + |\beta| \sqrt{\beta^4 + s^4 r_j^4 - \beta^2 s^2 r_j^2} + |\beta|^3 \left( \frac{s |r_j|}{|\beta|} + \frac{|\beta|}{s |r_j|} \right) \right) \\
    &\leq \frac{\tau^3}{12} \left( 4 |\beta|^3 + |\beta| (\beta^2 + s^2r_j^2) + \beta^2 s r_j + |\beta|^3 \right) \\
    &= \frac{|\beta|^3 \tau^3}{2} + \frac{\beta^2 s \tau^3}{12} r_j + \frac{|\beta| s^2 \tau^3}{12} r_j^2,
\end{aligned}
\end{equation}
then similarly to the analysis of Theorem~\ref{th:circuit error under robin}, we have the circuit error between $\sum_{j=0}^{M-1} c_j U_j(\tau)$ and $\sum_{j=0}^{M-1} c_j \widetilde{U}_j(\tau)$ satisfies
\begin{equation}
\begin{aligned}
    \left\| \sum_{j=0}^{M-1} c_j U_j(\tau) - \sum_{j=0}^{M-1} c_j \widetilde{U}_j(\tau) \right\| 
    \leq \sum_{j=0}^{M-1} |c_j| e_j
    < |\beta| \tau^3 \left( \beta^2 I_0(\gamma, \delta) + \frac{|\beta| s}{6} I_1(\gamma, \delta) + \frac{s^2}{6} I_2(\gamma, \delta) \right).
\end{aligned}
\end{equation}

\subsection{Periodic boundary conditions}\label{subsec:periodic with alpha=0}

In this case, $L \equiv O$, meaning the system reduces to a Schr\"{o}dinger equation; this further yields $A = iH$ and $\exp(-A\tau) = \exp\left(-iH\tau\right)$.

Based on the analysis in Section~\ref{subsubsec:quantum circuit under the periodic boundary conditions} and Lemmas~\ref{lem:operator1} and \ref{lem:operator2}, we construct the quantum circuit for the "select oracle" of the Schr\"{o}dinger equation (referred to as "select oracle" for consistency with prior circuit implementations) as

\begin{equation}\label{eq:select under periodic with alpha=0}
\begin{aligned}
    \exp\left(-iH\tau\right)
    \approx& \exp\left(-i \beta \mathcal{H}_1\left(\frac{\pi}{2},n\right) \frac{\tau}{2} \right) 
    \exp\left(-i \beta \mathcal{H}_2\left(\frac{\pi}{2},n\right)\tau\right) \\
    &\exp\left(-i \beta \mathcal{H}_3\left(\frac{\pi}{2},n\right)\tau\right)
    \exp\left(-i \beta \mathcal{H}_1\left(\frac{\pi}{2},n\right) \frac{\tau}{2} \right) \\
    =& W_1\left(\frac{\gamma\tau}{2},\frac{\pi}{2}\right)
    \left[ \prod_{k=2}^{n} W_k \left(\gamma\tau,\frac{\pi}{2}\right) \right]
    V_n\left(\gamma\tau, \frac{\pi}{2}\right) 
    W_1\left(\frac{\gamma\tau}{2},\frac{\pi}{2}\right)
\end{aligned}
\end{equation}
with the Trotter error bounded by $\frac{|\beta|^3 \tau^3}{2} = \frac{|a|^3\tau^3}{16h^3}$.

\bibliographystyle{unsrt} 
\bibliography{reference}

\end{document}